\documentclass[11pt,a4paper,onecolumn,notitlepage]{article}

\usepackage{combelow} %for romanian s/t-comma
\usepackage{cmap} % fix search and cut-and-paste in Acroread
\usepackage{CJKutf8}
\usepackage{setspace}
\usepackage[pdfpagemode=None,
pagebackref=true,
bookmarksopen=false,
hyperindex=true,
colorlinks=true,
linkcolor=blue,
citecolor=blue,
pdftitle={Four negations and the spectral presheaf},
pdfauthor={Benjamin Engel, Ryshard-Pavel Kostecki}]{hyperref}
\usepackage{tabularx}
\usepackage{geometry}
\usepackage[X2,T2A,T1]{fontenc}
\usepackage{setspace}

\usepackage{bibspacing}
\usepackage{enumitem}
\renewcommand{\footref}[1]{\textsuperscript{\ref{#1}}}
\usepackage{mathptmx}
\usepackage{bbding}
\usepackage{amssymb} %\mho and...?
\usepackage{amsfonts}
\usepackage{amsthm}
\usepackage{amsmath} % \DeclareMathOperator*
\usepackage{graphicx} %for roundrule
\usepackage{calc} %for roundrule
\usepackage{tikz} %for coimp

\usepackage{roundrule}

\newcommand{\op}{\mathrm{op}}
\DeclareFontFamily{U}{mathb}{\hyphenchar\font45}
\DeclareFontShape{U}{mathb}{m}{n}{
      <5> <6> <7> <8> <9> <10>
      <10.95> <12> <14.4> <17.28> <20.74> <24.88>
      mathb10
      }{}
\DeclareSymbolFont{mathb}{U}{mathb}{m}{n}
\DeclareMathSymbol{\sqbullet}{1}{mathb}{"0D}

\DeclareSymbolFont{stmry}{U}{stmry}{m}{n}
\DeclareMathSymbol\rightarrowtriangle\mathop{stmry}{"5F}
\DeclareMathSymbol\leftarrowtriangle\mathop{stmry}{"5E}

\newcommand{\Proj}{\mathrm{Proj}} %projections in the algebra

\newcommand{\stone}{\varkappa}
\newcommand{\minbool}[1]{{\langle{#1}\rangle}}
\makeatletter
\newcommand{\superimpose}[3][\mathord]{#1{\mathpalette\superimpose@{{#2}{#3}}}}
\newcommand{\superimpose@}[2]{\superimpose@@{#1}#2}
\newcommand{\superimpose@@}[3]{%
  \ooalign{%
    \hfil$\m@th#1#2$\hfil\cr
    \hfil$\m@th#1#3$\hfil\cr
  }%
}
\makeatother

\newcommand{\orlov}{\boxtimes}%{\diamond}

\newcommand{\Subclop}{\Sub_{\mathrm{clop}}}
\newcommand{\himp}{\imp}
\newcommand{\cohimp}{\raisebox{0.1121ex}{\,{\rotatebox[origin=c]{180}{$\coimp$}}\,}}
\newcommand{\hnot}{\lnot}
\newcommand{\cohnot}{\reflectbox{$\lnot$}}

\DeclareSymbolFont{TXsymbols}{OMS}{txsy}{m}{n}
\SetSymbolFont{TXsymbols}{bold}{OMS}{txsy}{b}{n}
\DeclareMathSymbol{\imp}{\mathrel}{TXsymbols}{33}
\newcommand{\coimp}{\mathrel{%
  \hspace{.1ex}
  \begin{tikzpicture}[baseline=-.57ex, line width=.130ex]
    \draw[-] (-0.1ex,0) --(1.5ex,0);
    \draw[-, line width=.01ex, fill=black]
             (1.35ex,0) -- (1.84ex, .48ex)
                       -- (1.91ex ,.418ex)
                       -- (1.55ex,   0ex)
                       -- (1.91ex ,-.418ex)
                       -- (1.84ex,-.48ex)
                       -- (1.35ex,0ex);
  \end{tikzpicture}
\hspace{.1ex}}}

\newcommand{\coimpscaledi}[1]{\mathrel{%
  \hspace{.1ex}
  \begin{tikzpicture}[baseline=-0.5ex, line width=.130ex,scale=#1]
    \draw[-, line width=.096ex, fill=black] (-0.1ex,0) --(1.5ex,0);
    \draw[-, line width=.01ex, fill=black]
             (1.35ex,0) -- (1.84ex, .48ex)
                       -- (1.91ex ,.418ex)
                       -- (1.55ex,   0ex)
                       -- (1.91ex ,-.418ex)
                       -- (1.84ex,-.48ex)
                       -- (1.35ex,0ex);
  \end{tikzpicture}
\hspace{.1ex}}}

\newcommand{\cohimpi}{\raisebox{0.071ex}{{\,\rotatebox[origin=c]{180}{$\coimpscaledi{0.75}$}\,}}}

\newcommand{\Sub}{\mathrm{Sub}}

\renewcommand{\mid}{\,\st}
\newcommand{\st}{\boldsymbol{:}\,}

\renewcommand{\H}{\mathcal{H}} % Hilbert space
\newcommand{\A}{\mathcal{A}}   % C*-algebra
\newcommand{\N}{\mathcal{N}}   % W*-algebra
\newcommand{\CC}{\mathbb{C}} % boolean complex numbers
\newcommand{\BBB}{\mathfrak{B}}     %prefix for `bounded'       %=\BB
\newcommand{\BH}{{\BBB(\H)}}           %bounded lin. ops on H.sp.
\newcommand{\ra}{\rightarrow}      %right arrow
\newcommand{\limp}{\Rightarrow}    %logical implication
\newcommand{\iso}{\cong}           %isomorphism 
\renewcommand{\sp}{\mathrm{sp}}   %spectrum
\newcommand{\Ob}{\mathrm{Ob}}     %objects of a category
\newcommand{\Mor}{\mathrm{Mor}}   %morphisms of a category
\newcommand*{\catname}[1]{{\mathtt{#1}}} %changed from texttt to mathtt: 9.6.22
\newcommand{\id}{\mathrm{id}}     %identity mapping
\newcommand{\Set}{\catname{Set}} 
\newcommand*{\cytat}[1]{\guillemotleft{#1}\guillemotright}

\newtheorem{lemma}{Lemma}[section]

\newtheorem{proposition}[lemma]{Proposition}
\newtheorem{definition}[lemma]{Definition}
\newtheorem{corollary}[lemma]{Corollary}

	\theoremstyle{definition}%
	\newtheorem{remark}[lemma]{Remark}%
\usepackage{datetime}
\usepackage[2cell,all,cmtip]{xy}

\usepackage{framed}
\usepackage{sectsty}
\allsectionsfont{\normalsize}

\renewcommand*{\backref}[1]{}
\renewcommand*{\backrefalt}[4]{%
  \ifcase #1 %
    \relax
  \or
    \nobreak{$\uparrow$~{#2}}.
  \else
    \nobreak{$\uparrow$~{#2}}.
  \fi%
}
\newcommand{\sps}{\sp_{\mathrm{S}}}

\newcommand{\Chi}{\chi}%{\raisebox{0.4ex}{\scalebox{1.2}{$\chi$}}}
\newcommand{\coloneqq}{:=}

\newcommand{\context}{{\mathcal{V}_\mathrm{c}(L)}}
\newcommand{\contextnotcomplete}{{{\mathcal{V}(L)}}}

\newcommand{\extlattice}{K}

\newcommand{\bigcurlywedge}{\raisebox{-0.4ex}{\scalebox{1.6}{$\curlywedge$}}}

\newcommand{\evaintlattice}[1]{{#1}_{{\eva\eva}}}
\newcommand{\evaintlatticeext}{\evaintlattice{K}}
\newcommand{\coevaintlattice}[1]{{#1}_{{\coeva\coeva}}}
\newcommand{\coevaintlatticeext}{\coevaintlattice{K}}

\renewcommand{\bot}{{\pmb{\perp}}}
\renewcommand{\top}{{\rotatebox[origin=c]{180}{$\bot$}}}

\newcommand{\topi}{{\scalebox{0.75}{\top}}}
\newcommand{\boti}{{\scalebox{0.75}{$\bot$}}}

\makeatletter
\providecommand{\leftsquigarrow}{%
  \mathrel{\mathpalette\reflect@squig\relax}%
}
\newcommand{\reflect@squig}[2]{%
  \reflectbox{$\m@th#1\rightsquigarrow$}%
}
\makeatother

\newcommand{\oepsilonequc}[1]{\lceil{#1}\rceil_{\oepsilon}}
\newcommand{\iepsilonequc}[1]{\lceil{#1}\rceil_{\iepsilon}}

\newcommand{\upint}{\eva\eva}
\newcommand{\coupint}{\coeva\coeva}

\newcommand{\eva}{\bullet}
\newcommand{\coeva}{\circ}
\renewcommand{\neg}{\dagger}
\newcommand{\evai}{{\scalebox{0.70}{$\eva$}}}
\newcommand{\coevai}{{\scalebox{0.70}{$\coeva$}}}

\newcommand{\cqieva}{{\protect\superimpose{\raisebox{-0.5ex}{{\evai}}}{\sim}}}
\newcommand{\qieva}{{\protect\superimpose{\raisebox{-0.5ex}{{\coevai}}}{\sim}}}

\newcommand{\cqineg}{{\scalebox{0.85}{\raisebox{0.1ex}{$\cqieva$}}}}
\newcommand{\qineg}{{\scalebox{0.85}{\raisebox{0.1ex}{$\qieva$}}}}

\newcommand{\odelta}{{\delta^{\land}}} % replaces \delta; outer daseinisation
\newcommand{\odeltabar}{{{\bar{\delta}}^{\land}}} % replaces \indelta
\newcommand{\oepsilon}{{\varepsilon^{\land}}} % replaces \varepsilon
\newcommand{\idelta}{{\delta^{\lor}}} % inner daseinisation
\newcommand{\ideltabar}{{{\bar{\delta}}^{\lor}}}
\newcommand{\iepsilon}{{\varepsilon^{\lor}}}

\newcommand{\gentzen}{\vdash}
\newcommand{\cogentzen}{\dashv}
\newcommand{\iffgentzen}{\cogentzen\gentzen}

\newcommand{\Frm}{\mathrm{Frm}}
\newcommand{\Seq}{\mathrm{Seq}}
\newcommand{\Int}{\mathbf{Int}}
\newcommand{\coInt}{\mathbf{coInt}}
\newcommand{\biInt}{\mathbf{biInt}}
\newcommand{\QInt}{\mathbf{QInt}}
\newcommand{\coQInt}{\mathbf{coQInt}}
\newcommand{\biQInt}{\mathbf{biQInt}}

\newcommand{\biVak}{\mathbf{biVak}}
\newcommand{\Vak}{\mathbf{Vak}}

\newcommand{\igequ}[1]{{\lfloor{#1}\rfloor}}

\renewcommand{\L}{\mathcal{L}}

\newcommand{\subclopevaeva}{{(\Subclop(\Sigma^L))}_{\eva\eva}}
\newcommand{\subclopcoevacoeva}{{(\Subclop(\Sigma^L))}_{\coeva\coeva}}

\begin{document}
\setlength{\belowdisplayskip}{3pt} \setlength{\belowdisplayshortskip}{3pt}
\setlength{\abovedisplayskip}{2pt} \setlength{\abovedisplayshortskip}{2pt}

\begin{center}
{\Large\textbf{Four negations and the spectral presheaf}}\\
\vspace{0.2cm}
\hspace{-0.5cm}\begin{minipage}{.25\textwidth}
\centering
\textit{\;\;\;Benjamin Engel $^\mathrm{a)}$}\\
\vspace{-0.15cm}{\tiny\texttt{benjamin.engel@chello.at}}
\end{minipage}%
\begin{minipage}{.25\textwidth}
\centering
\textit{\;\;Ryshard-Pavel Kostecki $^\mathrm{b)}$}\\
\vspace{-0.15cm}{\tiny\texttt{kostecki@fuw.edu.pl}}
\end{minipage}\\
%\begin{center}
\vspace{0.1cm}
{\footnotesize $^\mathrm{a)}$ Institut f\"{u}r Mathematik, Universit\"{a}t Innsbruck, Technikerstra\ss{}e 13, 6020 Innsbruck, Austria\\$^\mathrm{b)}$ Centrum pre V\'{y}skum Kvantovej Inform\'{a}cie, Slovensk\'{a} Akad\'{e}mia Vied, D\'{u}bravsk\'{a} cesta 9, 84511 Bratislava, Slovakia\\\vspace{0.1cm}\ }{\scriptsize 11 March 2026}\footnote{Except of Section \ref{section.erratum}, which is dated: 12 August 2026.}\\\vspace{0.4cm}\textit{dedicated to Micha{\l} Heller on the occasion of his 90th birthday}\vspace{0.2cm}
\end{center}

\begin{abstract}
\noindent Using Vakarelov's theory of lattice logics with negation, we introduce the (co)quasiintuitionistic logic, and prove its soundness and completeness with respect to the class of (co)quasiintuitionistic algebras. Combining these algebras together, we obtain biquasiintuitionistic algebras and the biquasiintuitionistic logic. Their further extension with the Skolem algebra structure defines Akchurin algebras and the respective logic, which is a product of biquasiintuitionistic and biintuitionistic logics, featuring four distinct negations. Next we generalise the framework of spectral presheaves (which is a main object in the Butterfield--Isham--D\"{o}ring topos theoretic approach to quantum mechanics) to arbitrary complete orthocomplemented lattices, and show that the orthocomplementation determines two negation operators on the spectral presheaf (one paraconsistent, another paracomplete), equipping the set of all closed-and-open subpresheaves of a spectral presheaf with the structure of a biquasiintuitionistic algebra. Combined with the generic Skolem (i.e. Heyting and Brouwer) algebra structure of this set, this gives a particular instance of an Akchurin algebra. We also show that the underlying orthocomplemented lattice can be reconstructed as an internal object of the spectral presheaf, resulting as the image of a double coquasiintuitionistic (resp., quasiintuitionistic) negation monad (resp., comonad). Finally, we prove a no-go theorem for the claim that the spectral presheaf is a model of a dialectical (or any other) relevance logic.
\end{abstract}

\setcounter{section}{-1}
\section{Erratum\label{section.erratum}}

\begin{remark}
Proposition \ref{prop.inner.daseinisation.is.not.clopen} implies that Definition \ref{def: Inner daseinisation} essentially trivialises, so Propositions \ref{prop: Inner Daseinisation} and \ref{thm: idelta adjunction} are inapplicable, and, as a result, Lemma \ref{lemma.subclop.coeva.properties} does not hold. Thus, the part of results of the current paper that depends on Lemma \ref{lemma.subclop.coeva.properties}, is not true under current formulation. We have worked out a solution to this problem, but it requires some additional structures to be defined. It will be posted soon in the next update of the current preprint.
\end{remark}

\begin{proposition}\label{prop.inner.daseinisation.is.not.clopen}
Let $a\in L$. Consider the  functor $\idelta(a):(\context)^{\op} \ra \Set,$ acting by $V\mapsto (\idelta(a))_{V}\coloneqq \stone_{V}\left(\ideltabar_{V}(a)\right)$ $\forall V\in \mathrm{Ob}(\context)$, where
\begin{equation}
\ideltabar_{V}: L \ni a \mapsto \ideltabar_{V}(a)=\bigvee\lbrace b\in V\mid b\leq a\rbrace \in V\;\;\forall V\in \context,
\label{eqn.inner.daseinisation}
\end{equation}
and acting by $(V\subseteq W) \mapsto \cdot|_{V}$ on morphisms of $\context$, where $|_{V}:\sps(W) \ra \sps(V)$ is the restriction map $\lambda\mapsto\lambda|_{V}$ $\forall\lambda\in\sps(W)$. Then $\idelta(a)$ is a closed-and-open subpresheaf of $\Sigma^{L}$ $\forall a\in L$ if{}f ($\context$ is minimal or completely disjoint).
\end{proposition}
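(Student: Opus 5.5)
The plan is to reduce the subpresheaf condition to a purely lattice-theoretic inequality between inner daseinisations in comparable contexts. I will then show that this inequality fails as soon as $\context$ contains a strict inclusion $V\subsetneq W$. Each component $(\idelta(a))_V=\stone_V(\ideltabar_V(a))$ is by construction a clopen subset of $\sps(V)$, so the clopen requirement holds automatically componentwise. The only thing to check is that $V\mapsto(\idelta(a))_V$ is a subfunctor of $\Sigma^L$:
\begin{equation*}
\lambda\in\stone_W\bigl(\ideltabar_W(a)\bigr)\;\Longrightarrow\;\lambda|_V\in\stone_V\bigl(\ideltabar_V(a)\bigr)\quad\text{for all } V\subseteq W \text{ in } \context .
\end{equation*}

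Step 1 translates this condition. Here $\lambda|_V$ corresponds to the ultrafilter $\lambda\cap V$ of $V$, and the restriction map $\sps(W)\ra\sps(V)$ is surjective, since every ultrafilter of a Boolean subalgebra extends to one of $W$. By a standard Stone-duality argument, the image of $\stone_W(c)$ under restriction is $\stone_V\bigl(\bigwedge\{v\in V\mid v\geq c\}\bigr)$, that is, $\stone_V$ of the outer approximation of $c$ in $V$. Completeness of $V$ ensures that this meet lies in $V$. Hence the subfunctor condition is equivalent to
\begin{equation*}
\bigwedge\{v\in V\mid v\geq \ideltabar_W(a)\}\;\leq\;\ideltabar_V(a)\qquad\forall a\in L,\ \forall V\subseteq W.
\end{equation*}
Since always $\ideltabar_V(a)\leq\ideltabar_W(a)\leq\bigwedge\{v\in V\mid v\geq\ideltabar_W(a)\}$, the condition says exactly that this chain collapses to equalities. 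In particular it forces $\ideltabar_W(a)=\ideltabar_V(a)\in V$.

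Step 2 handles the direction ($\Rightarrow$), arguing by contraposition. Suppose there is a strict inclusion $V\subsetneq W$ in $\context$, and pick $a\in W\setminus V$. Then $\ideltabar_W(a)=a$, so the collapsed chain would give $a=\ideltabar_V(a)\in V$, a contradiction. Thus $\idelta(a)$ is a subpresheaf for all $a$ only if $\context$ has no strict inclusions, i.e.\ is a discrete poset. Step 3 handles the direction ($\Leftarrow$). If $\context$ has no strict inclusions, the only morphisms are identities, and the subfunctor condition is trivial.

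The remaining task is to match ``no strict inclusions'' with the paper's notion ``$\context$ is minimal or completely disjoint''. I expect this bookkeeping to be the main obstacle, since it depends on the precise earlier definitions, notably on whether the trivial context $\{0,1\}$ belongs to $\context$. As I read them, ``minimal'' covers the case of a single context and ``completely disjoint'' covers pairwise incomparable contexts. If the trivial algebra is included, the same computation with $V=\{0,1\}$ shows that it must be the only context. The substantive part is Step 1, namely the identification of the image of $\stone_W(c)$ under restriction with $\stone_V$ of the outer approximation of $c$. This needs the ultrafilter-extension argument: for $v\in V$ with $v\not\geq c$, an ultrafilter of $W$ containing $c\wedge\neg v$ restricts to one avoiding $v$.
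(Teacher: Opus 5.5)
Your proof is correct and has the same overall shape as the paper's. There is a witness of non-naturality whenever $\context$ has a strict inclusion, and the naturality condition is trivial when it has none. The paper's proof reads ``minimal'' and ``completely disjoint'' exactly as you do, and $\{0,1\}$ is excluded from $\context$ by definition, so your hedge about the trivial algebra is unnecessary.

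The key step differs, though. The paper uses one specific configuration: an eight-element context $V\supseteq\minbool{b}$ with $a<b$ and $a\notin\minbool{b}$. Then $(\idelta(a))_{\minbool{b}}=\stone_{\minbool{b}}(0)=\varnothing$ while $(\idelta(a))_V=\stone_V(a)\neq\varnothing$, and a nonempty set cannot restrict into $\varnothing$. You instead characterise compatibility with an arbitrary inclusion $V\subseteq W$ exactly, as $\ideltabar_W(a)=\ideltabar_V(a)$. This shows that every strict inclusion and every $a\in W\setminus V$ is a witness. It also spares you the paper's existence claim for a suitable eight-element context and comparable pair, which the paper only asserts. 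The cost is an ultrafilter-extension argument in a possibly infinite $W$, which the paper avoids because it works in finite contexts.

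You also need less than the full image computation of Step 1. With $d:=\ideltabar_V(a)<a$, the element $a\land d^{\perp}$ is nonzero (complement taken in $W$). Any ultrafilter of $W$ containing it lies in $\stone_W(a)$ and restricts to a point outside $\stone_V(d)$. This is already your closing remark, and it completes Step 2 on its own.
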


\begin{proof}
Assume that $\idelta$ is a presheaf and $\context$ is neither minimal nor completely disjoint. Then $\exists a,b\in L\setminus\{0,1\}$ $\exists V\in \context\st a\notin \minbool{b}$, $a,b\in V$, $|V| = 8$,  and $a$ is comparable to $b$ or $b^{\perp}$. Assume that, without loss of generality, $a$ is comparable to $b$ and $a<b$. Then $(\idelta(a))_{V} = \stone_{V}(a) \neq \varnothing$ and $(\idelta(a))_{\minbool{b}} = \stone_{\minbool{b}}(0) = \varnothing$. Since $\idelta(a)$ is assumed to be a closed-and-open subpresheaf of $\Sigma^{L}$, and $\minbool{b} \subseteq V$, it must hold that $((\idelta(a))_{V})\vert_{\minbool{b}} \subseteq (\idelta(a))_{\minbool{b}} = \varnothing$, leading to a contradiction.  Assume that $\context$ is minimal or completely disjoint. If $\context$ is minimal, then $L = \minbool{a}$ for $a\in L$. Thus $\Ob(\context) =\{\minbool{a}\}$ and $\Mor(\context) =\emptyset$. Hence any functor from $(\context)^{\op}$ is uniquely determined by its action on $\minbool{a}$. Since $(\idelta(b))_{\minbool{a}} = \stone(b)$ $\forall b\in L$, $\idelta(b)$ is a closed-and-open subpresheaf of $\Sigma^{L}$ $\forall b\in L$. If $\context$ is completely disjoint, then $V\not\subseteq W$ $\forall V,W\in \context$. Thus $\Ob(\context) =\{\minbool{a} \mid a\in L\}$ and $\Mor(\context) =\emptyset$. Hence any functor from $(\context)^{\op}$ is uniquely determined by its action on $\Ob(\context)$. Since, by definition, $(\idelta(b))_{\minbool{a}} \in \mathrm{clop}(\sps(\minbool{a}))$ $\forall a,b\in L$, $\idelta(b)$ is a closed-and-open subpresheaf of $\Sigma^{L}$ $\forall b\in L$.
\end{proof}

\section{Introduction}

\subsection{Spectral presheaves over W$^*$-algebras}

The postulate to \cytat{analyse possible connections of the state vector of a quantum system in a Hilbert space with the structures of the new spaces of Grothendieck}, %\footnote{I.e. \cytat{\fontencoding{T2A}\selectfont {(...) можно лишь проанализировать возможные связи вектора состояния квантовой системы в гильбертовом пространстве со структурами новых пространств Гротендика.}}}, 
where a suitable topos would be constructed \cytat{as a category of all sheaves of wave functions} %\footnote{I.e. \cytat{\fontencoding{T2A}\selectfont {(...) понятие топоса как категории всех пучков волновых функции (...)}}.} 
has been proposed in 1974 by Soviet philosopher Igor' A. Akchurin \cite[pp. 186, 192]{Akchurin:1974}\footnote{Akchurin's work was remarkably ahead of its own time, offering  wide rethinking of the mathematical structures of theoretical physics (and their philosophical underpinnings) from the perspective that considered topos (and related cohomological structures) as the foundational notion for the new concept of a space, less than three years after publication of the founding works on topos theory \cite{Artin:Grothendieck:Verdier:1972:1973,Lawvere:1971,Kock:Wraith:1971,Lawvere:1972:book}. Akchurin also proposed to consider order theoretic axiomatisation of special and general relativity (along the lines of \cite{Aleksandrov:Ovchinnikova:1953,Zeeman:1964,Busemann:1967,Kronheimer:Penrose:1967,Pimenov:1968}) as a base category for a topos of general relativity \cite[pp. 176--177]{Akchurin:1974} (the first mathematical construction implementing this idea, which was also the first mathematical paper providing a topos theoretic formulation of general relativity, was proposed by Aleksandr K. Guc \cite{Guc:1991,Guc:Grinkevich:1996}). Our work (together with \cite{Kostecki:2026:causal,Engel:Kostecki:2026:I,Engel:Kostecki:2026:II}) provides a mathematical background for joint implementation of (a particular variant of) both of these ideas, since the complete orthocomplemented lattice $(L,{}^\perp)$ of the spectral presheaf $\Sigma^L$ can be given by the orthomodular lattice of projections on (and thus, also of all rays in) a Hilbert space \cite[III.\S5]{vonNeumann:1932:grundlagen}  \cite[\S\S6--9, 11]{Birkhoff:vonNeumann:1936} \cite[p. 730]{Husimi:1937} (and, more generally, by the lattice of projective units of an order unit space with an archimedean order unit \cite[Thm. 4.5]{Alfsen:Shultz:1976}), by the (generally not orthomodular) lattice of factor W$^*$-subalgebras of a factor W$^*$-algebra \cite[pp. 117--118]{Murray:vonNeumann:1936}, as well as by the (not necessarily orthomodular \cite[p. 6396]{Casini:2002}) lattice of causally closed sets of a lorentzian spacetime \cite{Cegla:Jadczyk:1977,Casini:2002,Casini:2003,Nobili:2006,Cegla:Florek:Jancewicz:2017}. By this reason, the algebra that we introduce (Definition \ref{def.biquasiintuitionistic}.\ref{def.biquasiintuitionistic.2}) to capture the internal structure of a spectral presheaf will be called an \emph{Akchurin algebra}.}. The first mathematical analysis of the space of pure states of a quantum mechanical system through the lenses of a suitable topos (more specifically, an analysis of the soundness and completeness of several formal systems of quantum logic in a category of presheaves over an orthomodular lattice) was made fifteen years later by Vladimir L. Vasyukov \cite{Vasyukov:1989,Vasyukov:2005}.\footnote{The first work discussing topos theoretic semantics for relevance logics (more specifically, $\mathbf{RQ}$) appeared in the same year \cite{Shalak:1989} (see also \cite{Vasyukov:2003} for another approach to this problem).} Another early approach considered the sheaf of germs of convex functions on the space of normalised trace class operators on a separable Hilbert space \cite{Adelman:Corbett:1995:quantum,Adelman:Corbett:1995:sheaf,Corbett:Adelman:2001,Corbett:Durt:2009,Corbett:2012,Corbett:2019}. However, wider development of topos theoretic constructions aimed at analysis of mathematical structure of quantum mechanics started only with the series of papers by Jeremy Butterfield, Andreas D\"{o}ring, and Chris Isham \cite{Isham:Butterfield:1998:I,Butterfield:Isham:1999,Hamilton:Isham:Butterfield:2000,Butterfield:Isham:2002,Doering:Isham:2008:I,Doering:Isham:2008:II,Doering:Isham:2008:III,Doering:Isham:2008:IV,Doering:Isham:2011,Doering:2009,Doering:2011,Doering:Isham:2012,Doering:2016,Doering:2015}\footnote{For the sake of precision, we note that \cite{Hamilton:Isham:Butterfield:2000} was coauthored with John Hamilton (cf. also \cite{Hamilton:2000}).}. This approach aims at reexpressing the contents of the Hilbert space based framework for quantum mechanics by using suitable presheaves inside the topos $\Set^{{(\mathcal{V}(\BH))}^{\mathrm{op}}}$ of contravariant set-valued presheaves over the category $\mathcal{V}(\BH)$ \cite[p. 1415]{Hamilton:Isham:Butterfield:2000}, consisting of objects given by  commutative von Neumann subalgebras of the algebra $\BH$ of bounded linear operators on a Hilbert space $\mathcal{H}$, and morphisms given by the set theoretic inclusions. (More generally \cite{Hamilton:Isham:Butterfield:2000,Doering:2009}, $\BH$ (resp., $\mathcal{V}(\BH)$) can be replaced by an arbitrary W$^*$-algebra $\N$ (resp., the category $\mathcal{V}(\N)$ of commutative W$^*$-subalgebras of $\N$, ordered by an inclusion). All of the statements of this subsection hold under this replacement.) 

The key notion of this approach is the \textit{spectral presheaf} \cite[Def. 2.3]{Hamilton:Isham:Butterfield:2000}, $\Sigma^{\mathcal{H}}\in\Ob(\Set^{{(\mathcal{V}(\BH))}^{\mathrm{op}}})$, with objects given by the Gel'fand spectra $\sp_{\mathrm{G}}(V)$ (i.e. the sets of all nonzero $^*$-algebra homomorphisms from $V$ to $\CC$ \cite[p. 431]{Gelfand:1939}) of the objects $V\in\Ob(\mathcal{V}(\BH))$, and morphisms given by restrictions of these spectra. For any commutative unital C$^*$-algebra $\A$, $\sp_{\mathrm{G}}(\A)$ is equipped with a compact Hausdorff topology \cite[p. 431]{Gelfand:1939} \cite[Prop. 9]{Gelfand:1941}. The set $\Subclop(\Sigma^{\mathcal{H}})$ of all subobjects of $\Sigma^{\mathcal{H}}$ such that $\Sigma^{\mathcal{H}}(V)$ are closed-and-open in this topology $\forall V\in\Ob(\mathcal{V}(\BH))$ \cite[p. 1423]{Hamilton:Isham:Butterfield:2000} has been shown to be a complete Heyting algebra \cite[Thm. 2.5]{Doering:Isham:2008:II}, as well as a complete Brouwer algebra \cite[p. 164]{Doering:2016}.\footnote{In general, the set $\Sub(A)$ of subpresheaves of any presheaf $A$ in any topos of presheaves is a Heyting algebra and a Brouwer algebra \cite[p. 3]{Lawvere:1989} \cite[pp. 280--281]{Lawvere:1991} \cite[Cor. II.3.3]{Zolfaghari:1992} (=\cite[Cor. 3.3]{Reyes:Zolfaghari:1996}). However, the lattice structure of $\Subclop(\Sigma^{\mathcal{H}})$ is not a sublattice of the lattice structure of $\Sub(\Sigma^{\mathcal{H}})$, since its joins and meets are defined via topological closure and topological interior with respect to the Stone space topology (cf. Proposition \ref{thm: Sigma is complete lattice}). Hence, this is a structurally independent (although suggestively similar) result.} $\Subclop(\Sigma^{\mathcal{H}})$ plays the role of the codomain of the \textit{outer daseinisation} functor, $\odelta:\Proj(\BH)\ra\Subclop(\Sigma^{\mathcal{H}})$ \cite[Def. 2.5]{Doering:Isham:2008:II}, which embeds the complete orthomodular lattice $\Proj(\BH):=\{x\in\BH\mid x=x^2=x^*\}$ of linear projection operators on $\H$ into the spectral presheaf by approximating $P\in\Proj(\BH)$ from above by the elements of each $V\in\Ob(\mathcal{V}(\BH))$, \cytat{liberating the projection operators from the shackles of quantum logic and thursting them down to existential world of Heyting algebras} \cite[p. 12]{Doering:Isham:2008:II}. Since $\odelta$ is an injective, top-, bottom-, join-preserving, order monotone map between two complete lattices, by the adjoint functor theorem for posets \cite[Ex. \S3.J]{Freyd:1964} there is a surjective, top-, botom-, meet-preserving, order monotone map $\oepsilon:\Subclop(\Sigma^{\mathcal{H}})\ra\Proj(\BH)$ such that $\odelta(x)\leq y$ $\iff$ $x\leq\oepsilon(y)$ $\forall x\in\Proj(\BH)$ $\forall y\in\Subclop(\Sigma^{\mathcal{H}})$ \cite[\S7.4]{Cannon:2013} (=\cite[\S4.5]{Cannon:Doering:2018}), i.e. there is an adjunction $\odelta\dashv\oepsilon$. 

Using this adjunction together with an orthocomplementation $^\perp:\Proj(\BH)\ra\Proj(\BH)$, \cite[Def. 4.1]{Eva:2015} introduced a unary operation $(\cdot)^\eva:=\odelta\circ{}^\perp\circ\oepsilon:\Subclop(\Sigma^{\mathcal{H}})\ra\Subclop(\Sigma^{\mathcal{H}})$. It turns out that $^\eva$ is not only paraconsistent (i.e. $x\land x^\eva\geq0$ $\forall x\in\Subclop(\Sigma^{\mathcal{H}})$) \cite[Thm. 4.2.(ii)]{Eva:2015}, but it is also fully paraconsistent (i.e. $x\land x^\eva=0$ if{}f $x\in\{\varnothing,\Sigma^{\mathcal{H}}\}$) \cite[Thm. 5.13]{Doering:Eva:Ozawa:2021}. Hence, additionally to Heyting and Brouwer negations, $\Subclop(\Sigma^{\mathcal{H}})$ is equipped with a third negation operator, that represents the orthocomplementation of $\Proj(\BH)$ and features a strong variant of paraconsistency. Since the varieties of Heyting and Brouwer algebras provide the sound and complete\footnote{In a somewhat unfortunate clash of terminology, the completeness of a logic with respect to a class of lattices is a priori unrelated to the completeness of these lattices, and both of them are a priori unrelated to paracompleteness of negation operator. All of these notions are featured in this paper without exhibiting an ambiguity of the intended designate.} algebraic semantics for, respectively, intuitionistic logic $\Int$ and cointuitionistic logic ${\coInt}$ (cf. Remark \ref{rem.logics}.\ref{rem.logics.moisil.rauszer}), it is quite natural to ask: what is the logic and the corresponding algebraic semantics such that $(\Subclop(\Sigma^{\mathcal{H}}),{}^{\eva})$ is a sound model of it? \cite[Thm. 5.1]{Eva:2015} and \cite[Cor. 2.1.9]{Eva:2016} contain a claim, not backed up by a proof, that, given a definition of the implication connective by means of $x\limp y:=x^\eva\lor y$ $\forall x,y\in\Subclop(\Sigma^{\mathcal{H}})$, this logic is specifically the relevance logic $\mathbf{DKQ}$\footnote{In both these papers it is called $\mathbf{DLQ}$, however the axioms in \cite[pp. 166--167]{Eva:2015} are the axioms of $\mathbf{DKQ}$ (cf., e.g., \cite[pp. 355--358]{Brady:1984}).}.

\subsection{Spectral presheaves over orthomodular lattices}

The Stone spectrum $\sps(B)$ of a boolean lattice $B$, defined as the set of all boolean lattice homomorphisms from $B$ to $\{0,1\}$ \cite[Thm. III]{Stone:1934}, is an analogue of the Gel'fand spectrum for boolean lattices. This set can be equipped with a topology that turns it into a Stone space (i.e. a totally disconnected, compact, Hausdorff topological space), allowing to reconstruct $B$ as the lattice of closed-and-open sets of $\sps(B)$ \cite[Thms. IV$_1$, IV$_2$]{Stone:1934} \cite[Thms. 1, 3, 4]{Stone:1937}. While the Gel'fand spectrum (and the corresponding duality, allowing to reconstruct $\A$ as the set of all continuous functions from $\sp_{\mathrm{G}}$ to $\CC$ \cite[Thm. 6]{Gelfand:1939} \cite[Thm. 1]{Gelfand:Naimark:1943}) is applicable to any unital commutative C$^*$-algebra $\A$ \cite[p. 431]{Gelfand:1939} \cite[\S\S5, 7]{Gelfand:1941}, the class of unital commutative C$^*$-algebras such that their Gel'fand spectra are Stone spaces (so they correspond to the Stone spectra of arbitrary boolean algebras) is strictly smaller, and is given by the unital commutative \cb{S}tr\u{a}til\u{a}--Zsid\'{o} C$^*$-algebras \cite[\S\S9.1, 9.7]{Stratila:Zsido:1977}. Hence, the Gel'fand spectrum, when considered on its own, has wider applicability than the Stone spectrum. However, $\odelta$ is defined using meets over not necessarily finite sets (cf. \eqref{eqn.outer.daseinisation}), requiring to restrict considerations from arbitrary boolean subalgebras to those that are complete. Since the category of complete boolean algebras (resp., complete boolean algebras allowing semifinite strictly positive countably additive measure) with order continuous boolean homomorphisms as morphisms is equivalent to the category of unital commutative AW$^*$-algebras (resp., W$^*$-algebras) with unital complete (resp., normal) $^*$-homomorphisms as morphisms (cf. \cite[\S5.8]{Kostecki:2013} for a detailed discussion with further references), the setting of unital AW$^*$-algebras provides an upper bound of generality for the spectral presheaf approach based on the Gel'fand spectrum.

This can be seen as a point of departure of \cite{Cannon:2013,Cannon:Doering:2018}, which consider an arbitrary orthomodular (resp., complete orthomodular) lattice $L$, replacing the category $\mathcal{V}(\BH)$ (and, more generally, $\mathcal{V}(\mathcal{A})$, for a unital AW$^*$-algebra $\mathcal{A}$) with the category $\contextnotcomplete$ (resp., $\context$) of boolean (resp., complete boolean) sublattices of $L$ as objects and set theoretic embeddings as morphisms, resulting in the topos $\Set^{{(\contextnotcomplete)}^{\mathrm{op}}}$ (resp., $\Set^{{(\context)}^{\mathrm{op}}}$).\footnote{A more general programme of a theory of spectra, observables, and corresponding presheaves for any lattice was proposed and developed by de Groote \cite{deGroote:2001,deGroote:2005,deGroote:2005:I,deGroote:2007,deGroote:2011}. In particular, the main idea underlying outer (resp., inner) daseinisation, i.e. using $\bigwedge\{y\in B\mid y\geq x\}$ in \eqref{eqn.outer.daseinisation} (resp., $\bigvee\{y\in B\mid y\leq x\}$ in \eqref{eqn.inner.daseinisation}), has been originally proposed in \cite[Def. 3.2]{deGroote:2005}.} (In comparison, \cite{Vasyukov:1989,Vasyukov:2005} studies the topos $\Set^L$.) The major change amounts to defining $\Sigma^L$ as an assignment of Stone spectra to the objects of $\contextnotcomplete$ (resp., $\context$) and restrictions of these spectra to morphisms. This allows for consideration of a wide class of models beyond unital AW$^*$-algebras (e.g., $L$ can be given by a complete orthomodular lattice of idempotents of a nonassociative JBW-algebra \cite[Thm. 4]{Topping:1965} \cite[Thm. III.\S6.1]{Sarymsakov:Ayupov:Hadzhiev:Chilin:1983}). The set $\Subclop(\Sigma^L)$ and the adjunction $\odelta\dashv\oepsilon$ are constructed for $\context$ analogously to the former case of $\mathcal{V}(\BH)$. Furthermore, for $\Sigma^L\in\Ob(\Set^{{(\context)}^{\mathrm{op}}})$ there is a complete orthomodular lattice isomorphism between $L$ and $(\Subclop(\Sigma^L))/\approx_\oepsilon$, where $\approx_\oepsilon$ is an equivalence relation defined by $x\approx_\oepsilon y$ if{}f $\oepsilon(x)=\oepsilon(y)$ $\forall x,y\in\Subclop(\Sigma^L)$ \cite[Thm. 7.5.4]{Cannon:2013} \cite[p. 166]{Eva:2015} \cite[Thm. 4.19]{Cannon:Doering:2018}. Additionally, any two orthomodular lattices are isomorphic (in the category of orthomodular lattices and orthomodular lattice homomorphisms) if{}f their corresponding spectral presheaves are isomorphic (in the category of Stone space-valued presheaves with morphisms given by pairs of a homomorphism of underlying orthomodular lattice and a natural transformation between the corresponding presheaves) \cite[Thm. 5.7.3]{Cannon:2013} (=\cite[Thm. 3.18]{Cannon:Doering:2018}).  Hence, one can consider the study of the structure of $\Sigma^L$ and $\Subclop(\Sigma^L)$ as an extension of the study of the structure of orthomodular lattices $L$ on their own.

\subsection{New results}

The new results contained in the current paper are:
\begin{enumerate}[nosep,label=(\roman*)]
\item\label{item.new.results.i} As a prequel to \cite{Engel:Kostecki:2026:I}, in Section \ref{section.vakarelov.akchurin} we introduce \emph{quasiintuitionistic}, \emph{coquasiintuitionistic}, \emph{biquasiintuitionistic}, and \emph{Akchurin algebras}. They are defined within the general framework of Vakarelov's algebraic approach to logic \cite{Vakarelov:1976,Vakarelov:1989,Orlowska:Vakarelov:2005}, which provides equivalent (i.e. sound and complete) algebraic models of lattice (or poset) logics with a negation (or a finite number of negations).\footnote{It also provides equivalent relational (i.e. frame semantic) models for these logics. Hence, e.g., Corollary \ref{cor.complete.bqi.akchurin}.\ref{cor.complete.bqi.akchurin.iii} implies that the Akchurin logic of the spectral presheaf can be represented in terms of a particular frame semantic model.} We introduce the corresponding logics in Section \ref{section.quasiintuitionistic.Akchurin.logics}. In particular, the class of all biquasiintuitionistic algebras is an equivalent model of the \emph{biquasiintuitionistic logic} $\biQInt$ (Proposition \ref{prop.soundness.of.coQInt}.\ref{prop.soundness.of.coQInt 3}), defined as a logic with two negations that exhibit all of the properties of the respective two negations of the biintuitionistic logic $\biInt$ except of the relationships with the implication and coimplication, since the latter are absent in $\biQInt$. E.g., one of the negations in $\biQInt$ is paraconsistent, while the other is paracomplete. Furthermore, in Corollary \ref{cor.akchurin.is.sound.and.complete} we prove the class of all Akchurin algebras is an equivalent model of the \emph{Akchurin logic} $\biQInt\otimes\biInt$, defined as the product of biquasiintuitionistic and biintuitionistic logic over the common set of formulas. In this paper we will use (in a selfcontained way) only some part of the algebraic structure of the above framework, which we develop more extensively in \cite{Engel:Kostecki:2026:I}.
\item\label{item.new.results.ii} In Section \ref{section.spectral.presheaf.daseinisations} we extend the construction of the spectral presheaf from complete orthomodular to complete orthocomplemented lattices, together with the corresponding outer and inner daseinisations, $\odelta$ and $\idelta$ (the latter was introduced for Gel'fand spectra in \cite[Def. 2.6]{Doering:Isham:2008:II}, but was not considered before in lattice theoretic generality), and adjunctions $\odelta\dashv\oepsilon$ and $\iepsilon\dashv\idelta$. This generalisation allows to study, in particular, the spectral presheaves over complete orthocomplemented lattices of:
\begin{enumerate}[nosep,label=\alph*)]
\item factor W$^*$-subalgebras of a factor W$^*$-algebra, with the partial order given by set inclusion, the meet given by set intersection, the join given by double commutant of set sum, and the orthocomplementation given by commutant \cite[pp. 117--118]{Murray:vonNeumann:1936};
\item causally closed subsets of a lorentzian spacetime $(M,g)$, arising as image of the double causal complementation monad, with the complementation of a subset $Y$ of $M$ defined as the set of all points of $M$ that are not causally connected with $Y$ (the definition of `causal connection' and the range of admitted spacetimes and subsets varies; while some of these lattices are orthomodular \cite[\S3]{Cegla:Jadczyk:1977} \cite[\S4]{Casini:2002} \cite[App. 2]{Nobili:2006} \cite[\S2]{Cegla:Florek:Jancewicz:2017}, it is not so in general \cite[p. 6396]{Casini:2002} \cite[\S\S2.2, 3]{Casini:2003}).
\end{enumerate}
The above two examples are discussed in more details in \cite{Kostecki:2026:causal}.\footnote{The idea to apply spectral presheaves to study complete orthomodular lattices of causally closed subsets was first expressed by de Groote \cite[p. 5]{deGroote:2007} \cite[p. 268]{deGroote:2011}. Spectral presheaves over arbitrary complete orthocomplemented lattices and use of $(\cdot)^\eva:=\odelta\circ{}^\perp\circ\oepsilon(\cdot)$ as a paraconsistent internalisation of the causal complement and of the factor W$^*$-algebra commutant were first considered in \cite{Kostecki:2024:Waterloo}.}
\item\label{item.new.results.iii} In Section \ref{section.akchurin.spectral.presheaf} we introduce two negation operators on $\Subclop(\Sigma^L)$, $(\cdot)^\eva:=\odelta\circ{}^\perp\circ\oepsilon(\cdot)$ and $(\cdot)^\coeva:=\idelta\circ{}^\perp\circ\iepsilon(\cdot)$. The former is a direct generalisation of $^\eva$ introduced in \cite[Def. 4.1]{Eva:2015} and is paraconsistent. The latter is paracomplete, and has not been considered before. The proof of \cite[Prop. 7.2.6]{Cannon:2013} (=\cite[Prop. 4.9]{Cannon:Doering:2018}), stating that $(\Subclop(\Sigma^L),\subseteq,\land,\lor,\varnothing,\Sigma^L)$ carries a complete Skolem algebra structure, does not rely on the assumption of orthomodularity of $(L,{}^\perp)$, so $(\Subclop(\Sigma^L),\subseteq,\land,\lor,\varnothing,\Sigma^L,\himp,\hnot,\cohimp,\cohnot)$ is a sound model of $\biInt$ for an arbitrary complete orthocomplemented lattice $(L,{}^\perp)$. In Corollaries \ref{cor.coeva.qint}.\ref{cor.coeva.qint.2} and \ref{cor.complete.bqi.akchurin}.\ref{cor.complete.bqi.akchurin.iii} we show that $(\Subclop(\Sigma^L),\subseteq,\land,\lor,\varnothing,\Sigma,{}^\coeva,{}^\eva)$ is a complete biquasiintuitionistic algebra (hence, a sound model of $\biQInt$) while $(\Subclop(\Sigma^L),\subseteq,\land,\lor,\himp,\hnot,\cohimp,\cohnot,\varnothing,\Sigma,{}^\coeva,{}^\eva)$ is a complete Akchurin algebra (hence, a sound model of $\biQInt\otimes\biInt$).
\item\label{item.new.results.iv}\sloppy For any quasiintuitionistic (resp., coquasiintuitionistic) algebra $(K,\leq,\land,\lor,0,1,{}^\coeva)$ (resp., $(K,\leq,\land,\lor,0,1,{}^\eva)$), $(\cdot)^{\coeva\coeva}$ is an interior (resp., $(\cdot)^{\eva\eva}$ is a closure) operator, i.e. it is a comonad (resp., monad). Hence, by \cite[p. 16]{Freyd:1972}, it defines a full coreflective (resp., full reflective) subcategory $(K_{\coeva\coeva}:=\{x\in K\mid x=x^{\coeva\coeva}\},\leq)$ (resp., $(K_{\eva\eva}:=\{x\in K\mid x=x^{\eva\eva}\},\leq)$). In general, $(\cdot)^{\coeva\coeva}$ (resp., $(\cdot)^{\eva\eva}$) does not preserve finite meets (resp., finite joins). On the other hand, $(\cdot)^{\coeva\coeva}$ (resp., $(\cdot)^{\eva\eva}$) preserves finite joins (resp., finite meets), with the join (resp., meet) operation on $(K_{\coeva\coeva},\leq)$ (resp., $(K_{\eva\eva},\leq)$) given by $\cdot\curlyvee\cdot:=(\cdot\lor\cdot)^{\coeva\coeva}$ (resp., $\cdot\curlywedge\cdot:=(\cdot\land\cdot)^{\eva\eva}$).\footnote{In case of $(K,\leq,\land,\lor,0,1,{}^\eva)$ given by a Brouwer algebra, this construction goes back to \cite[Def. 4.1, Thm. 4.2]{McKinsey:Tarski:1946}.} In both cases $0$ and $1$ are preserved. This way, the image of the action of $(\cdot)^{\coeva\coeva}$ (resp., $(\cdot)^{\eva\eva}$) on a quasiintuitionistic (resp., coquasiintuitionistic) algebra $(K,\leq,\land,\lor,0,1,{}^{\coeva})$ (resp.,  $(K,\leq,\land,\lor,0,1,{}^{\eva})$) determines an \emph{internal algebra} $(K_{\coeva\coeva},\leq,\land,\curlyvee,0,1,{}^\coeva)$ (resp., $(K_{\eva\eva},\leq,\curlywedge,\lor,0,1,{}^\eva)$). We show that both of these algebras are orthocomplemented lattices. This generalises \cite[Thm. 4.5]{McKinsey:Tarski:1946}, stating that the internal algebra of a Brouwer algebra is a boolean algebra. An analogous result for Heyting algebras, following via the duality between meets and joins, is an algebraic equivalent of the \emph{Kolmogorov--Glivenko theorem} \cite[III--IV]{Kolmogorov:1925} \cite[p. 183]{Glivenko:1929}, which identifies the double negated formulas in the intuitionistic logic with the formulas of the classical logic (the dual statement for the cointuitionistic logic, equivalent to \cite[Thm. 4.5]{McKinsey:Tarski:1946}, is known as the \emph{dual Kolmogorov--Glivenko theorem} \cite[Thm. (p. 473)]{Czermak:1977} \cite[Thm. 1]{Goodman:1981} \cite[Thm. 2.1]{Urbas:1996}). So, our result provides a generalised (resp., generalised dual) Kolmogorov--Glivenko theorem, relating $\QInt$ (resp., $\coQInt$) with its internal orthocomplemented lattice logic (this logic has first appeared in \cite[Def. 1.1]{Goldblatt:1974}) at the expense of restriction to formulas that do not contain meets (resp., joins). Since for any Skolem algebra its two internal boolean lattices are isomorphic, an Akchurin algebra admits three, a priori different, internal lattices: two orthocomplemented and one boolean. These results are a subject of Section \ref{section.internal.lattice.akchurin}.
\item\label{item.new.results.vv} In Section \ref{section.internal.lattice.subclop} we show that the two internal orthocomplemented lattices of the Akchurin algebra of a spectral presheaf $\Sigma^L$ are mutually isomorphic, and they are also isomorphic to $(L,{}^\perp)$. This way we prove that the orthocomplemented lattice underlying the spectral presheaf can be uniquely reconstructed from the internal Akchurin logic of this presheaf as a full coreflective (resp., full reflective) subcategory of the double quasiintuitionistic (resp., coquasiintuitionistic) negation comonad (resp., monad), with a natural logical interpretation of this construction as an instance of the generalised (resp., generalised dual) Kolmorogov--Glivenko theorem. Proposition \ref{prop: Precqi internal oml} (resp., Corollary \ref{cor.quasiintuitionistic.internal.orthomodular.lattice}) characterises those coquasiintuitionistic (resp., quasiintuitionistic) algebras whose internal orthocomplemented lattices are orthomodular. This leads to Corollary \ref{cor.orthomodularity.from.subclop}, characterising orthomodularity of a complete orthocomplemented lattice $(L,{}^\perp)$ in terms of the conditions on the coquasiintuitionistic (or, equivalently, quasiintuitionistic) structure of $\Subclop(\Sigma^L)$.
\item\label{item.new.results.v} In Corollary \ref{cor.nogo.for.relevance.logics.in.Subclop} we prove that the claim of  \cite[Thm. 5.1]{Eva:2015} and \cite[Cor. 2.1.9]{Eva:2016}, stating that $(\Subclop(\Sigma^L),\subseteq,\land,\lor,\varnothing,\Sigma^L,{}^\eva,\Rightarrow)$ is a model of a paraconsistent logic $\mathbf{DKQ}$, is strictly false. We also exclude all other relevance logics (as long as these exhibit a De Morgan algebra as an extensional reduct of their equivalent algebraic semantics, which is a standard assumption for this class of logics).
\end{enumerate}
One of the open problems that arise from these results is to characterise the Akchurin logic of spectral presheaves among all Akchurin logics. This requires to understand better how much of the structure of the spectral presheaf can be characterised by means of the properties of its Akchurin algebra. Further discussion of the above issues, together with a further generalisation of the spectral presheaf framework, will be carried out in \cite{Engel:Kostecki:2026:I,Engel:Kostecki:2026:II}. 

\section{\label{section.vakarelov.akchurin}Vakarelov, (co/bi/)quasiintuitionistic, and Akchurin algebras}

\begin{definition}
For any lattice $(K,\leq,\land,\lor)$, a map $\Theta:K\ra K$ such that $\Theta(x\land y)=x\lor y$ and $\Theta(x\lor y)=x\land y$ $\forall x,y\in K$ will be called a \emph{$(\land,\lor)$-duality}.
\end{definition}

\begin{definition}
A lattice $(K,\leq,\land,\lor)$ is called:
\begin{enumerate}[nosep, label=(\roman*)]
\item \emph{complete} if{f} every subset $M\subseteq K$ has a greatest lower bound and a least upper bound in $K$;
\item \emph{bounded} if{}f $\exists0,1\in K$ such that $0\leq x$ and $y\leq 1$ $\forall x,y\in K$;
\item \emph{distributive} if{}f any of the following equivalent conditions holds $\forall x,y,z\in K$:
\begin{enumerate}[nosep, label=\alph*)]
\item $x\land(y\lor z)=(x\land y)\lor(x\land z)$;
\item $x\lor(y\land z)=(x\lor y)\land(x\lor z)$.
\end{enumerate}
\end{enumerate}
\end{definition}

\begin{definition}\label{def.negation}
Let $(K,\leq)$ be a poset, equipped with the map $^\neg:K\ra K$. Consider the following conditions $\forall x,y\in K$:
\begin{enumerate}[nosep,label=\textup{n\arabic*)}]
\begin{minipage}{0.5\linewidth}
\item if $x\leq y$, then $y^\neg\leq x^\neg$;\label{def: negation 1}
\item $x\leq x^\neg{}^\neg$;\label{def: negation 2}
\end{minipage}
\begin{minipage}{0.5\linewidth}
\item $x^\neg{}^\neg\leq x$.\label{def: negation 3}
\item $x^\neg=x^{\neg\neg\neg}$.\label{def: negation 8}
\end{minipage}
\end{enumerate}
If $(K,\leq,\land,\lor)$ is a lattice, then consider also the conditions $\forall x,y\in K$:
\vspace{2pt}\begin{enumerate}[nosep,label=\textup{n\arabic*)}]
\setcounter{enumi}{4}
\begin{minipage}{0.5\linewidth}
\item if $x\land y\leq z$, then $x\land z^\neg\leq y^\neg$;\label{def: negation 4}
\item if $x\leq y\lor z$, then $z^\neg\leq x^\neg\lor y$;\label{def: negation 5}
\item $x\land x^\neg\leq y$;\label{def: negation 6}
\item $y\leq x\lor x^\neg$;\label{def: negation 7}
\end{minipage}
\begin{minipage}{0.5\linewidth}
\item $x^\neg\land y^\neg\leq(x\lor y)^\neg$;\label{def: negation 9}
\item $x^\neg\land y^\neg\geq(x\lor y)^\neg$;\label{def: negation 10}
\item $x^\neg\lor y^\neg\leq(x\land y)^\neg$;\label{def: negation 11}
\item $x^\neg\lor y^\neg\geq(x\land y)^\neg$.\label{def: negation 12}
\end{minipage}
\end{enumerate}
\vspace{3pt}If $(K,\leq,\land,\lor,0,1)$ is a bounded lattice, then consider also the conditions $\forall x\in K$:
\vspace{2pt}
\begin{enumerate}[nosep,label=\textup{n\arabic*)}]
\setcounter{enumi}{12}
\begin{minipage}{0.5\linewidth}
\item $x\land x^\neg=0$;\label{def: negation 13}
\item $x\lor x^\neg=1$;\label{def: negation 14}
\item $x\land x^\neg\geq0$;\label{def: negation 15}
\end{minipage}
\begin{minipage}{0.5\linewidth}
\item $x\lor x^\neg\leq1$;\label{def: negation 16}
\item $1^\neg=0$;\label{def: negation 17}
\item $0^\neg=1$.\label{def: negation 18}
\end{minipage}
\end{enumerate}
\vspace{2pt}
$^\neg$ will be called a \emph{$p$ negation} if{}f it satisfies $q$, where the pairs $p:q$ are given by:\footnote{A subminimal negation is sometimes called a \emph{contraposition}.}
\vspace{3pt}
\begin{enumerate}[nosep,label=(\roman*)]
\begin{minipage}{0.5\linewidth}
\item\label{def.negation.1} \emph{subminimal}: \ref{def: negation 1};  
\item\label{def.negation.10} \emph{quasiintutionistic}: \ref{def: negation 1}, \ref{def: negation 2}, and \ref{def: negation 6}; 
\item\label{def.negation.11} \emph{coquasiintuitionistic}: \ref{def: negation 1}, \ref{def: negation 3}, and \ref{def: negation 7};
\item\label{def.negation.12} \emph{intuitionistic}: \ref{def: negation 1}, \ref{def: negation 2}, \ref{def: negation 4}, and \ref{def: negation 6}; 
\end{minipage}
\begin{minipage}{0.5\linewidth}
\item\label{def.negation.13} \emph{cointuitionistic}: \ref{def: negation 1}, \ref{def: negation 3}, \ref{def: negation 5}, and \ref{def: negation 7};
\item\label{def.negation.16} \emph{paraconsistent}: \ref{def: negation 15};
\item\label{def.negation.19} \emph{paracomplete}: \ref{def: negation 16}.
\end{minipage}
\end{enumerate}
\vspace{3pt}
$^\neg$ will be called:
\begin{enumerate}[nosep,label=(\roman*)]
\setcounter{enumi}{7}
\item\label{def.negation.9} an \emph{involution} if{}f \textup{\cite[p. 8]{Birkhoff:1940}} it satisfies \ref{def: negation 1}, \ref{def: negation 2}, and \ref{def: negation 3};
\item\label{def.negation.22} an \emph{orthocomplementation} if{}f \textup{\cite[p. 830]{Birkhoff:vonNeumann:1936}} it satisfies \ref{def: negation 1}, \ref{def: negation 2}, \ref{def: negation 3}, \ref{def: negation 6}, and \ref{def: negation 7}.
\end{enumerate}
\end{definition}

\begin{lemma}\label{lemm.bullet.negation.properties}
Let $(K,\leq,\land,\lor)$ be a lattice, equipped with the map $^\neg:K\ra K$. 
\begin{enumerate}[nosep, label=(\roman*)]
\item If ${}^\neg$ satisfies \ref{def: negation 2} (resp., \ref{def: negation 3}), then it satisfies \ref{def: negation 9} (resp. \ref{def: negation 12}).\label{lemm.bullet.negation.properties 1}
\item ${}^\neg$ satisfies \ref{def: negation 1} if{f} it satisfies \ref{def: negation 10} if{f} it satisfies \ref{def: negation 11}.\label{lemm.bullet.negation.properties 2}
\item If ${}^\neg$ satisfies \ref{def: negation 1} and (\ref{def: negation 2} or \ref{def: negation 3}) then it satisfies \ref{def: negation 8}.\label{lemm.bullet.negation.properties 3}
\item If ${}^\neg$ satisfies \ref{def: negation 1} and \ref{def: negation 2} (resp., \ref{def: negation 1} and \ref{def: negation 3}), then it satisfies \ref{def: negation 8}, \ref{def: negation 9}, \ref{def: negation 10}, \ref{def: negation 11} (resp., \ref{def: negation 8}, \ref{def: negation 10}, \ref{def: negation 11}, \ref{def: negation 12}).\label{lemm.bullet.negation.properties 4}
\item If $(K,\leq,\land,\lor)$ is bounded, then \ref{def: negation 6} (resp., \ref{def: negation 7}) is equivalent with \ref{def: negation 13} (resp., \ref{def: negation 14}).\label{lemm.bullet.negation.properties 5}
\item If $(K,\leq,\land,\lor)$ is bounded and ${}^\neg$ satisfies \ref{def: negation 1} and \ref{def: negation 14} (resp., \ref{def: negation 3} and \ref{def: negation 18}; \ref{def: negation 1} and \ref{def: negation 13}; \ref{def: negation 2} and \ref{def: negation 18}), then it satisfies \ref{def: negation 18} (resp., \ref{def: negation 17}; \ref{def: negation 17}; \ref{def: negation 18}).\label{lemm.bullet.negation.properties 45}
\item\label{lemm.bullet.negation.properties 7} If $(K,\leq,\land,\lor)$ is bounded and ${}^{\neg}$ satisfies \ref{def: negation 1} and \ref{def: negation 2} (resp., \ref{def: negation 1} and \ref{def: negation 3}), then it satisfies \ref{def: negation 18} (resp., \ref{def: negation 17}).
\item\label{lemm.bullet.negation.properties y} If ${}^{\neg}$ satisfies \ref{def: negation 1}, \ref{def: negation 2}, and \ref{def: negation 3}, then ${}^{\neg}$ satisfies \ref{def: negation 6} if{f} it satisfies \ref{def: negation 7}.
\end{enumerate}
\end{lemma}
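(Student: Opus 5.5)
Items (ii), (iii), (v), (vi), (vii) and (viii) follow by short order-theoretic manipulations. Item (iv) then just collects (i)--(iii). Item (i), read literally with \ref{def: negation 2} (resp. \ref{def: negation 3}) as the only hypothesis, is where I have no argument.

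For (ii), assume \ref{def: negation 1}. From $x,y\leq x\lor y$ we get $(x\lor y)^\neg\leq x^\neg\land y^\neg$, which is \ref{def: negation 10}. From $x\land y\leq x,y$ we get $x^\neg\lor y^\neg\leq(x\land y)^\neg$, which is \ref{def: negation 11}. Conversely, assume \ref{def: negation 10} and $x\leq y$. Then $x\lor y=y$, so $y^\neg\leq x^\neg\land y^\neg\leq x^\neg$. The converse from \ref{def: negation 11} is the same with $x\land y=x$.

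For (iii), take \ref{def: negation 1} and \ref{def: negation 2}. Applying \ref{def: negation 1} to $x\leq x^{\neg\neg}$ gives $x^{\neg\neg\neg}\leq x^\neg$, and \ref{def: negation 2} at $x^\neg$ gives the reverse inequality. With \ref{def: negation 3} instead, dualise: \ref{def: negation 3} at $x^\neg$ gives one inequality, and \ref{def: negation 1} applied to $x^{\neg\neg}\leq x$ gives the other.

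For (i), I would use the Galois-connection argument. Under \ref{def: negation 1} and \ref{def: negation 2} one has $a\leq b^\neg\iff b\leq a^\neg$. Put $z:=x^\neg\land y^\neg$. Then $x,y\leq z^\neg$, so $x\lor y\leq z^\neg$. Hence $z\leq z^{\neg\neg}\leq(x\lor y)^\neg$, which is \ref{def: negation 9}. Dually, \ref{def: negation 1} and \ref{def: negation 3} give \ref{def: negation 12}.

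This is the main obstacle: the argument uses \ref{def: negation 1}, while the statement assumes only \ref{def: negation 2} (resp. \ref{def: negation 3}). I would first search for a derivation from \ref{def: negation 2} alone, applying it at $x\lor y$ and at $x^\neg\land y^\neg$. If none exists, I would test for counterexamples on small non-distributive lattices, e.g. $M_3$ with a non-antitone inflationary-square map. If one turns up, item (i) would need \ref{def: negation 1} added to its hypothesis. Item (iv) is unaffected either way, since it assumes \ref{def: negation 1} and then follows from (i)--(iii).

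For (v), put $y=0$ in \ref{def: negation 6} to get \ref{def: negation 13}. Conversely, $x\land x^\neg=0\leq y$. The pair \ref{def: negation 7}/\ref{def: negation 14} is dual.

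For (vi), take the four cases in order.
\begin{itemize}
\item From \ref{def: negation 14} at $x=0$: $1=0\lor 0^\neg=0^\neg$.
\item From \ref{def: negation 3} and \ref{def: negation 18}: $1^\neg=0^{\neg\neg}\leq 0$.
\item From \ref{def: negation 13} at $x=1$: $1^\neg=1\land 1^\neg=0$.
\item The last case, \ref{def: negation 2} with \ref{def: negation 18} implying \ref{def: negation 18}, is immediate.
\end{itemize}

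For (vii), take \ref{def: negation 1} and \ref{def: negation 2}. From $0\leq 1^\neg$ we get $1^{\neg\neg}\leq 0^\neg$, and $1\leq 1^{\neg\neg}$, so $0^\neg=1$. Dually, \ref{def: negation 1} and \ref{def: negation 3} give $1^\neg\leq 0^{\neg\neg}\leq 0$.

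For (viii), assume \ref{def: negation 1}, \ref{def: negation 2}, \ref{def: negation 3} and \ref{def: negation 6}. By (ii) and the Galois argument of (i), the De Morgan law $(x\lor x^\neg)^\neg=x^\neg\land x^{\neg\neg}$ holds. Applying \ref{def: negation 6} at $x^\neg$, together with $x^{\neg\neg}=x$, gives $(x\lor x^\neg)^\neg=x^\neg\land x\leq y^\neg$ for every $y$. Applying \ref{def: negation 1} and then \ref{def: negation 3} gives $y\leq y^{\neg\neg}\leq(x\lor x^\neg)^{\neg\neg}\leq x\lor x^\neg$, which is \ref{def: negation 7}. The converse direction is dual.
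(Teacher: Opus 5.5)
Your arguments for (ii)--(viii) are essentially the paper's. The only difference is in (viii): you prove \ref{def: negation 6}$\Rightarrow$\ref{def: negation 7} and dualise, while the paper proves \ref{def: negation 7}$\Rightarrow$\ref{def: negation 6}, using that ${}^\neg$ is onto once $x^{\neg\neg}=x$. In (vi) the last case is indeed vacuous as printed. The paper obtains it ``by $(\land,\lor)$-duality'' from the case \ref{def: negation 3} and \ref{def: negation 18}, so the intended hypothesis is presumably \ref{def: negation 2} and \ref{def: negation 17}. That case then follows from $1\leq 1^{\neg\neg}=0^\neg$.

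For (i) the paper only cites Dunn--Zhou, so nothing in its proof closes your gap. Your suspicion is right: (i) is false without \ref{def: negation 1}, already on the distributive four-element Boolean lattice $\{0,a,b,1\}$.
\begin{itemize}
\item Put $0^\neg=a^\neg=1$, $b^\neg=b$, $1^\neg=a$. Then $a^{\neg\neg}=1^\neg=a$, $b^{\neg\neg}=b$ and $1^{\neg\neg}=a^\neg=1$, so \ref{def: negation 2} holds. But $a^\neg\land b^\neg=b\not\leq a=(a\lor b)^\neg$, so \ref{def: negation 9} fails. Necessarily \ref{def: negation 1} fails too: $b\leq 1$ but $1^\neg=a\not\leq b=b^\neg$.
\item Dually, put $0^\neg=a$, $a^\neg=1^\neg=0$, $b^\neg=b$. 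This satisfies \ref{def: negation 3}, while $(a\land b)^\neg=a\not\leq b=a^\neg\lor b^\neg$ violates \ref{def: negation 12}.
\end{itemize}
So the correct version of (i) is the one your Galois-connection argument proves, with \ref{def: negation 1} added to the hypotheses. The repair is harmless: every later use of (i) in the paper, directly or through (iv), occurs with \ref{def: negation 1} in force.
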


\begin{proof}
\begin{enumerate}[nosep, label=(\roman*)]
\item \cite[Lem. 4.12]{Dunn:Zhou:2005} (resp., \cite[Lem. 2.14]{Dunn:Zhou:2005}).
\item Let $x,y,a,b\in K$. Assume that ${}^{\neg}$ satisfies \ref{def: negation 1}. By applying \ref{def: negation 1} to $x\leq x\lor y\geq y$, we obtain $x^\neg\geq(x\lor y)^\neg\leq y^\neg$. However, by definition of $\land$, $a\land b$ is the largest element such that $a\land b\leq a$ and $a\land b\leq b$. Hence, $(x\lor y)^\neg\leq x^\neg\land y^\neg$, proving \ref{def: negation 10}. Assume that ${}^{\neg}$ satisfies \ref{def: negation 10} and $x\leq y$. Then $y^{\neg} = (x\lor y)^{\neg} \leq x^{\neg} \land y^{\neg} \leq x^{\neg}$, so ${}^{\neg}$ satisfies \ref{def: negation 1}, showing that ${}^{\neg}$ satisfies \ref{def: negation 1} if{f} it satisfies \ref{def: negation 10}. The proof of \ref{def: negation 1} if{f} \ref{def: negation 11} follows by $(\land,\lor)$-duality. So ${}^{\neg}$ satisfies \ref{def: negation 10} if{f} it satisfies \ref{def: negation 1} if{f} it satisfies \ref{def: negation 11}. 
\item Let $x\in K$. \ref{def: negation 3} implies $(x^\neg)^{\neg\neg}\leq x^\neg$, while \ref{def: negation 1} and \ref{def: negation 3} imply $x^{\neg\neg\neg}\geq x^\neg$. Hence,  $x^{\neg\neg\neg}=x^\neg$. The proof of \ref{def: negation 8} based on \ref{def: negation 1} and \ref{def: negation 2} follows by $(\land,\lor)$-duality.
\item Follows from \ref{lemm.bullet.negation.properties 1}, \ref{lemm.bullet.negation.properties 2}, and \ref{lemm.bullet.negation.properties 3}.
\item Follows directly from the definition of $\land$ and $0$ (resp., $\lor$ and $1$).
\item \ref{def: negation 1} and \ref{def: negation 14} imply $1=0\lor 0^\neg=0^\neg$. \ref{def: negation 3} and \ref{def: negation 18} imply $1^\neg=0^{\neg\neg}\leq0$, so $1^\neg=0$. The rest of the proof follows by $(\land,\lor)$-duality.
\item If ${}^{\neg}$ satisfies \ref{def: negation 1} and \ref{def: negation 2}, then $0 \leq 1^{\neg}$ implies $1\leq 1^{\neg\neg} \leq 0^{\neg}$, so $0^{\neg} = 1$. If ${}^{\neg}$ satisfies \ref{def: negation 1} and \ref{def: negation 3}, then $0^{\neg} \leq 1$ implies $1^{\neg} \leq 0^{\neg\neg} \leq 0$, so $1^{\neg} = 0$.
\item Assume that ${}^{\neg}$ satisfies \ref{def: negation 1}, \ref{def: negation 2}, \ref{def: negation 3}, and \ref{def: negation 7}. Since \ref{def:  negation 9} and \ref{def: negation 10} are satisfied by \ref{lemm.bullet.negation.properties 4}, $x^{\neg} \land x = x^{\neg} \land x^{\neg\neg} = (x\lor x^{\neg})^{\neg} \leq y^{\neg}$ $\forall x,y\in L$. $L^{\neg} = L$ since $z= z^{\neg\neg} = (z^{\neg})^{\neg}$ $\forall z\in L$, so $x^{\neg} \land x \leq z$ $\forall z \in L^{\neg} = L$, hence ${}^{\neg}$ satisfies \ref{def: negation 6}. The proof of \ref{def: negation 7} follows by $(\land,\lor)$-duality.
\end{enumerate}
\end{proof}

\begin{lemma}\label{lemm.coheyting.negation}
In any bounded lattice $(K,\leq,\land,\lor,0,1)$ with a map $^\neg:K\to K$:
\begin{enumerate}[nosep, label=(\roman*)]
\item if $1^\neg=0$, then equivalent are:
\label{item: lemm.coheyting.negation.coheyting}
\begin{enumerate}[nosep, label=\alph*)]
\item $x\lor y=1$ if{}f $x^\neg\leq y$ $\forall x,y\in K$;\label{item: lemm.coheyting.negation 1}
\item if $x\leq y\lor z$, then $z^\neg\leq x^\neg\lor y$ $\forall x,y,z\in K$;\label{item: lemm.coheyting.negation 2}
\end{enumerate}
\item if $0^\neg=1$, then equivalent are:
\label{item: lemm.coheyting.negation.heyting}
\begin{enumerate}[nosep, label=\alph*)]
\item $x\land y=0$ if{}f $x\leq y^\neg$ $\forall x,y\in K$;\label{item: lemm.coheyting.negation 3}
\item if $x\land y\leq z$, then $x\land z^\neg\leq y^\neg$ $\forall x,y,z\in K$.\label{item: lemm.coheyting.negation 4}
\end{enumerate}
\end{enumerate}
\end{lemma}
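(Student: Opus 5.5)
We prove part \ref{item: lemm.coheyting.negation.coheyting} directly; part \ref{item: lemm.coheyting.negation.heyting} then follows by $(\land,\lor)$-duality, i.e. by reversing the order and exchanging $\land\leftrightarrow\lor$ and $0\leftrightarrow1$. Under this swap $1^\neg=0$ becomes $0^\neg=1$, condition \ref{item: lemm.coheyting.negation 1} becomes \ref{item: lemm.coheyting.negation 3} (with the roles of $x,y$ renamed), and \ref{item: lemm.coheyting.negation 2} becomes \ref{item: lemm.coheyting.negation 4}. This is the same device already used in the proof of Lemma \ref{lemm.bullet.negation.properties}.

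\emph{From \ref{item: lemm.coheyting.negation 1} to \ref{item: lemm.coheyting.negation 2}.} The plan is to first extract the two standard consequences of the Galois-type correspondence in \ref{item: lemm.coheyting.negation 1}.
\begin{enumerate}[nosep,label=\alph*)]
\item Taking $y=x^\neg$ gives $x\lor x^\neg=1$.
\item For $x\leq w$ we have $w\lor w^\neg=1$, hence $x^\neg\leq w^\neg$ is not yet available. Instead, use $x\lor(\cdot)$ directly: from $x\leq y\lor z$ we get $z\lor(x^\neg\lor y)\geq z\lor y\lor x^\neg$, and $x\leq y\lor z$ gives $y\lor z\lor x^\neg\geq x\lor x^\neg=1$.
\end{enumerate}
So $z\lor(x^\neg\lor y)=1$, and \ref{item: lemm.coheyting.negation 1}, applied in the direction ``$=1$ implies $\leq$'', yields $z^\neg\leq x^\neg\lor y$. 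This is exactly \ref{item: lemm.coheyting.negation 2}. The hypothesis $1^\neg=0$ is not even needed in this direction.

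\emph{From \ref{item: lemm.coheyting.negation 2} to \ref{item: lemm.coheyting.negation 1}.} Suppose $x\lor y=1$. Then $1\leq y\lor x$, and \ref{item: lemm.coheyting.negation 2} with $(x,y,z)\mapsto(1,y,x)$ gives $x^\neg\leq 1^\neg\lor y=0\lor y=y$; here $1^\neg=0$ is used. Conversely, suppose $x^\neg\leq y$. We need $x\lor x^\neg=1$, after which $x\lor y\geq x\lor x^\neg=1$. To obtain it, apply \ref{item: lemm.coheyting.negation 2} to the trivial inequality $x\leq 0\lor x$, i.e. $(x,y,z)\mapsto(x,0,x)$, which gives only $x^\neg\leq x^\neg$ and is useless.

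A better choice is $1\leq x\lor 1$ with $(x,y,z)\mapsto(1,x,1)$: this gives $1^\neg\leq 1^\neg\lor x$, again trivial. The main obstacle is therefore deriving excluded middle $x\lor x^\neg=1$ from \ref{item: lemm.coheyting.negation 2} and $1^\neg=0$. What we try first is instantiating with $y=x^\neg$: for any $z$, since $z\leq x\lor z$ fails to give what we want, we look instead at $1\leq x^\neg\lor$ something.

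The cleanest instantiation is $(x,y,z)\mapsto(x\lor x^\neg\ \text{replaced suitably})$. Concretely, take the inequality $1\leq(x\lor x^\neg)\lor 1$. If this also fails, we fall back on using \ref{item: lemm.coheyting.negation 2} with $x:=1$, $z:=x$, $y:=x\lor x^\neg$? This requires $1\leq x\lor x^\neg$, which is circular. Hence we expect this converse step to require showing that \ref{item: lemm.coheyting.negation 2} first implies antitonicity, via $x\leq 0\lor z$ for $x\leq z$, which gives $z^\neg\leq x^\neg$. We would then combine antitonicity with $1^\neg=0$, checking carefully, and if necessary note that the intended reading of \ref{item: lemm.coheyting.negation 1}'s right-to-left direction must be obtained this way.
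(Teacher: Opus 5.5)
Your proof of (a)$\Rightarrow$(b) in part (i) is correct and matches the paper's argument; you are also right that $1^\neg=0$ is not needed there. The same holds for the first half of (b)$\Rightarrow$(a): from $x\lor y=1$, instantiate (b) at $(1,y,x)$ and use $1^\neg=0$. The gap is the other half of (b)$\Rightarrow$(a). To get from $x^\neg\le y$ to $x\lor y=1$ you need $x\lor x^\neg=1$, and you never derive it. Your fallback cannot supply it: (b) does give antitonicity (via $x\le 0\lor z$), but antitonicity together with $1^\neg=0$ does not give $x\lor x^\neg=1$.

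In fact no argument can close this step. Take any bounded lattice with $0\neq 1$ (e.g.\ the two-element chain) and the constant map $x^\neg:=0$. It satisfies $1^\neg=0$, it is antitone, and it satisfies (b) trivially, since the conclusion of (b) reads $0\le x^\neg\lor y$. Yet (a) fails at $x=y=0$: $0^\neg=0\le 0$, while $0\lor 0=0\neq 1$. Dually, the constant map $x^\neg:=1$ satisfies $0^\neg=1$ and (b) of part (ii), but violates (a) of part (ii) at $x=y=1$. So (b)$\Rightarrow$(a) is false as stated in both parts, and your proposal cannot be completed.

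The paper's own proof has the same hole. It writes $y\lor z\ge z^\neg\lor z=1$, silently using $z\lor z^\neg=1$, which does not follow from (b) and $1^\neg=0$. The repair is to add $x\lor x^\neg=1$ for all $x$ as a hypothesis in part (i), and dually $x\land x^\neg=0$ in part (ii). The missing half then becomes $x\lor y\ge x\lor x^\neg=1$, and the rest of your argument stands.

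Since (a) itself yields $x\lor x^\neg=1$ (take $y:=x^\neg$) and $1^\neg=0$ (take $x:=1$, $y:=0$), the correct statement is that (a) is equivalent to the conjunction of (b), $1^\neg=0$, and $x\lor x^\neg=1$. Neither extra condition can be dropped: the constant map $x^\neg:=1$ on the two-element chain satisfies (b) and $x\lor x^\neg=1$ but not (a). The strengthened hypothesis is harmless for the paper's use of the lemma in Corollary \ref{cor.coquasiintuitionistic.cointuitionistic}. There the negation is coquasiintuitionistic, so it satisfies both $x\lor x^\neg=1$ and $1^\neg=0$ by Corollary \ref{cor.sum.of.cqi.qi.properties}.
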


\begin{proof}
\begin{enumerate}[nosep, label=(\roman*)]
\item Let $x,y,z\in K$.
\begin{enumerate}[nosep]
\item[a$\ra$b)] If $x\leq y\lor z$, then $y\lor z\lor x^\neg\geq x\lor x^\neg=1$, which is equivalent with $y\lor z\lor x^\neg=1$. Assuming \ref{item: lemm.coheyting.negation 1}, this gives $z^\neg\leq x^\neg\lor y$. 
\item[b$\ra$a)] If $z^\neg\leq y$, then $y\lor z\geq z^\neg\lor z=1$, which is equivalent with $y\lor z=1$. On the other hand, by \ref{item: lemm.coheyting.negation 2}, if $1\leq y\lor z$, then $z^\neg\leq 1^\neg\lor y=0\lor y=y$, hence $y\lor z=1$ implies $z^\neg\leq y$. Hence, $y\lor z=1$ if{}f $z^\neg\leq y$.
\end{enumerate}
\item Follows from \ref{item: lemm.coheyting.negation.coheyting} by $(\land,\lor)$-duality.
\end{enumerate}
\end{proof}

\begin{definition}\label{def.subminimal.DeMorgan}
A lattice $(K,\leq,\land,\lor)$ equipped with $^\neg:K\ra K$ will be called:
\begin{enumerate}[nosep, label=(\roman*)]
\item \textup{\cite[p. 57]{Vakarelov:1976}} a \emph{Vakarelov algebra} if{}f ${}^\neg$ is subminimal;\label{def.subminimal.DeMorgan.subminimal}
\item \textup{\cite[\S52]{Moisil:1936}} \textup{\cite[p. 259]{BialynickiBirula:Rasiowa:1957}} \textup{\cite[p. 485]{Kalman:1958}}\footnote{In \cite[p. 259]{BialynickiBirula:Rasiowa:1957} there is an additional assumption that $(K,\leq,\land,\lor)$ is bounded. On the other hand, \cite[\S52]{Moisil:1936} assumes both De Morgan laws and $x^{\neg\neg}= x$ $\forall x\in K$, but does not assume $x\lor(x\land y)=x\land(x\lor y)=x$ $\forall x,y\in K$ (so, it admits nondistributive lattices).} a \emph{De Morgan algebra} if{}f ${}^\neg$ is an involution and $(K,\leq,\land,\lor)$ is distributive;\label{def.subminimal.DeMorgan.DeMorgan}
\end{enumerate}
\end{definition}

\begin{definition}\label{def.heyting.brouwer.skolem.algebra}
A bounded lattice $(K,\leq,\land,\lor,0,1)$ will be called:\footnote{%
Bounded lattices $(K,\leq,\land,\lor,0,1)$ satisfying both \eqref{eqn.adjunction.heyting.implication} and \eqref{eqn.adjunction.coheyting.implication} were first introduced by Skolem in \cite[Thm. 7]{Skolem:1919}, and were called `Skolem algebras' in \cite[p. I-7]{Esakia:1975}. Skolem (resp., Brouwer; Heyting) algebras are also known as `double brouwerian algebras', `semiboolean algebras', `Heyting--Brouwer algebras', `double Heyting algebras, or `bi-Heyting algebras' (resp., `co-Heyting algebras' or `dual Heyting algebras'; `pseudocomplemented lattices' or `pseudoboolean algebras'). Both Heyting and Brouwer algebras were considered in \cite[\S4.C, \S4.S2]{Curry:1963} as two cases of `Skolem lattices', defined as lattices satisfying either \eqref{eqn.adjunction.heyting.implication} or \eqref{eqn.adjunction.coheyting.implication}. Strictly speaking, the assumption $1\in K$ (resp., $0\in K$) in a definition of a Heyting (resp., Brouwer) algebra is obsolete, since its existence and uniqueness follows from $x\himp x=:1$ (resp., $x\cohimp x=:0$) $\forall x\in K$.%
}
\begin{enumerate}[nosep, label=(\roman*)]
\item {}\textup{\cite[p. 161]{Ogasawara:1939}} \textup{\cite[Thm. 46]{Birkhoff:1942}} \textup{\cite[Thm. 9.5]{Certaine:1943}} a \emph{Heyting algebra} if{}f it is equipped with $\himp:K\times K\ra K$ such that there is an adjunction $\cdot\land x\dashv x\himp\cdot$ $\forall x\in K$, i.e.
\begin{equation}
x\land y\leq z\;\;\mbox{if{}f}\;\;x\leq y\himp z\;\;\forall x,y,z\in K;
\label{eqn.adjunction.heyting.implication}
\end{equation}
\item {}\textup{\cite[Def. 1.1]{McKinsey:Tarski:1946}} a \emph{Brouwer algebra} if{}f it is equipped with $\cohimp:K\times K\ra K$ such that there is an adjunction $\cdot\cohimp x\dashv x\lor\cdot$ $\forall x\in K$, i.e. 
\begin{equation}
x\cohimp y\leq z\;\;\mbox{if{}f}\;\;x\leq y\lor z\;\;\forall x,y,z\in K;
\label{eqn.adjunction.coheyting.implication}
\end{equation}

\item {}\textup{\cite[Thm. 7]{Skolem:1919}} a \emph{Skolem algebra} if{}f it is equipped with $\himp,\cohimp:K\times K\ra K$ satisfying  \eqref{eqn.adjunction.heyting.implication} and \eqref{eqn.adjunction.coheyting.implication}.
\end{enumerate}
\end{definition}

\begin{proposition}\label{prop.skolem.properties}
\begin{enumerate}[nosep, label=(\roman*)]
\item If a lattice $(K,\leq,\land,\lor)$ is equipped with $\himp:K\times K\ra K$ (resp., $\cohimp:K\times K\ra K$) such that \eqref{eqn.adjunction.heyting.implication} (resp. \eqref{eqn.adjunction.coheyting.implication}) holds, then it is distributive.\label{prop.skolem.properties.1}
\item If $(K,\leq,\land,\lor,0,1)$ is a Heyting algebra, then $\hnot:K\ra K$, defined by \textup{\cite[Eqn. (L8)]{Ogasawara:1939}} $\hnot x:=(x\himp0)\;\;\forall x\in K,$
is an intuitionistic negation.\label{prop.skolem.properties.hnot} 
\item If $(K,\leq,\land,\lor,0,1)$ is a Brouwer algebra, then $\cohnot:K\ra K$, defined by \textup{\cite[Def. 1.2]{McKinsey:Tarski:1946}} $\cohnot x:=(1\cohimp x)\;\;\forall x\in K,$ is a cointuitionistic negation.\label{prop.skolem.properties.cohnot} 
\end{enumerate}
\end{proposition}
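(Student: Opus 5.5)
For part \ref{prop.skolem.properties.1} I will use the adjunction to show that $\cdot\land x$ preserves all existing joins. Indeed, by \eqref{eqn.adjunction.heyting.implication}, $\cdot\land x$ is a left adjoint to $x\himp\cdot$, and left adjoints between posets preserve all existing joins. Concretely, for $y,z\in K$ we have $y\leq x\himp((x\land y)\lor(x\land z))$, since $y\land x\leq(x\land y)\lor(x\land z)$. The same holds for $z$, so $y\lor z\leq x\himp((x\land y)\lor(x\land z))$, and hence $x\land(y\lor z)\leq(x\land y)\lor(x\land z)$. The reverse inequality holds in every lattice, which gives distributivity in form a). The Brouwer case is the $(\land,\lor)$-dual: $\cdot\cohimp x\dashv x\lor\cdot$ makes $x\lor\cdot$ a right adjoint, so it preserves meets, which yields form b).

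For part \ref{prop.skolem.properties.hnot} I will verify \ref{def: negation 1}, \ref{def: negation 2}, \ref{def: negation 4} and \ref{def: negation 6} directly from \eqref{eqn.adjunction.heyting.implication} with $z=0$, i.e.\ $w\leq\hnot x$ if{}f $w\land x\leq 0$. This reduces to Lemma \ref{lemm.coheyting.negation}\ref{item: lemm.coheyting.negation.heyting}~a), with the roles of the variables adjusted by commutativity of $\land$.
\begin{itemize}
\item \ref{def: negation 6}: from $\hnot x\leq\hnot x$ we get $\hnot x\land x=0\leq y$.
\item \ref{def: negation 2}: $x\land\hnot x=0$ gives $x\leq\hnot\hnot x$.
\item \ref{def: negation 1}: if $x\leq y$, then $\hnot y\land x\leq\hnot y\land y=0$, so $\hnot y\leq\hnot x$.
\item \ref{def: negation 4}: I will invoke Lemma \ref{lemm.coheyting.negation}\ref{item: lemm.coheyting.negation.heyting}. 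Its hypothesis $0^\neg=1$ holds because $1\land 0=0$, and condition a) there is exactly the adjunction characterisation just stated. Hence b) holds, and b) is \ref{def: negation 4}.
\end{itemize}

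Part \ref{prop.skolem.properties.cohnot} is the order dual. From \eqref{eqn.adjunction.coheyting.implication} with $x=1$ we get $\cohnot y\leq z$ if{}f $1\leq y\lor z$, i.e.\ if{}f $y\lor z=1$. This gives \ref{def: negation 7}, \ref{def: negation 3} and \ref{def: negation 1} dually. Next, $1\cohimp 1\leq 0$ since $1\leq 1\lor 0$, so $\cohnot 1=0$. Lemma \ref{lemm.coheyting.negation}\ref{item: lemm.coheyting.negation.coheyting} then gives \ref{def: negation 5}.

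None of these steps is a real obstacle. The only point needing care is matching the variable order in Lemma \ref{lemm.coheyting.negation}, which is stated as $x\land y=0$ if{}f $x\leq y^\neg$, against the adjunction, which gives $x\land y=0$ if{}f $x\leq\hnot y$. These agree once we use commutativity of $\land$ (respectively of $\lor$ in the Brouwer case).
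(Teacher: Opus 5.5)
Your proof is correct. For \ref{def: negation 4} and \ref{def: negation 5} it follows exactly the paper's route via Lemma \ref{lemm.coheyting.negation}, and you also verify the hypotheses $\hnot 0=1$ and $\cohnot 1=0$, which the paper leaves implicit. For distributivity and the remaining negation axioms the paper only cites Skolem, Birkhoff, Ogasawara, and McKinsey--Tarski, whereas you give the standard self-contained arguments (left adjoints preserve joins, $w\leq\hnot x$ iff $w\land x=0$, and the dual statements); this is a difference in presentation, not in method.
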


\begin{proof}
\begin{enumerate}[nosep, label=(\roman*)]
\item \cite[Thm. 6$_\times$]{Skolem:1919} (resp., \cite[Thm. 6$_+$]{Skolem:1919}). 
\item \ref{def: negation 1}--\ref{def: negation 2}: \cite[p. 148 (2nd ed.)]{Birkhoff:1940}; \ref{def: negation 6}: \cite[p. 161]{Ogasawara:1939}; \ref{def: negation 4}: follows from \eqref{eqn.adjunction.heyting.implication} via Lemma \ref{lemm.coheyting.negation}.\ref{item: lemm.coheyting.negation.heyting}.
\item \ref{def: negation 1}, \ref{def: negation 3}, \ref{def: negation 7}: \cite[Thms. 1.3.(iv), 1.3.(xii), 1.3.(xv)]{McKinsey:Tarski:1946}; \ref{def: negation 5}: follows from \eqref{eqn.adjunction.coheyting.implication} via Lemma \ref{lemm.coheyting.negation}.\ref{item: lemm.coheyting.negation.coheyting}.
\end{enumerate}
\end{proof}

\begin{definition}\label{def.coquasiintuitionist.Akchurin.etc}
A bounded Vakarelov algebra $(K,\leq,\land,\lor,0,1,{}^{\neg})$ will be called:
\begin{enumerate}[nosep, label=(\roman*)]
\item an \emph{orthocomplemented lattice} \textup{\cite[p. 830]{Birkhoff:vonNeumann:1936}} if{}f $^\neg$ is an orthocomplementation;\label{def.coquasiintuitionist.Akchurin.etc 1}
\item an \emph{orthomodular lattice} \textup{\cite[p. 780]{Husimi:1937}} if{}f it is an orthocomplemented lattice and (if $x\leq y$, then $y=x\lor (y\land x^{\neg})$ $\forall x,y\in K$).
\item a \emph{boolean algebra}\footnote{This definition has been originally a proposition, obtained in \cite[p. 460]{Birkhoff:1933} (the historically preceding equivalent definitions are discussed in details in \cite{Huntington:1904}). We will use the notion \textit{boolean lattice} as a synonym.} if{}f it is an orthocomplemented lattice and $(K,\leq,\land,\lor)$ is distributive;\label{def.coquasiintuitionist.Akchurin.etc 2}
\item a \emph{quasiintuitionistic} (resp., \emph{coquasiintuitionistic}) \emph{algebra} if{}f ${}^\neg$ is a quasiintuitionistic (resp., coquasiintuitionistic) negation and $(K,\leq,\land,\lor)$ is distributive;\label{def.coquasiintuitionist.Akchurin.etc 3}
%\item a \emph{normal} (resp., \emph{conormal}) algebra if{}f ${}^\neg$ is a normal (resp., conormal) negation;
\item a \emph{Skolem--Vakarelov} (resp., \emph{Heyting--Vakarelov}; \emph{Brouwer--Vakarelov}) \emph{algebra} if{}f $(K,\leq,\land,\lor,0,1)$ is a Skolem (resp., Heyting; Brouwer) algebra;\label{def.coquasiintuitionist.Akchurin.etc 7}
%\item \textup{\cite[p. 59]{Vakarelov:1976}} \textup{\cite[p. 165]{Berman:1977}} an \emph{Ockham algebra} if{}f ${}^\neg$ is biregular and $(K,\leq,\land,\lor)$ is distributive;\label{def.subminimal.DeMorgan.Ockham}
\item a \ss\emph{--quasiintuitionistic} (resp., \ss\emph{--coquasiintuitionistic}) \emph{algebra}, where \ss\ $\in\{$Brouwer, Heyting, Skolem$\}$, if{}f $(K,\leq,\land,\lor)$ is a \ss\ algebra, and $^\neg$ is quasiintuitionistic (resp., coquasiintuitionistic).
\end{enumerate}
\end{definition}

\begin{corollary}\label{cor.sum.of.cqi.qi.properties}
If $(K,\leq,\land,\lor,0,1,{}^\neg)$ is a coquasiintuitionistic (resp., quasiintuitionistic) algebra, then $^\neg$ satisfies \ref{def: negation 1}, \ref{def: negation 3}, \ref{def: negation 7}, \ref{def: negation 8}, \ref{def: negation 10}, \ref{def: negation 11}, \ref{def: negation 12}, \ref{def: negation 14}, \ref{def: negation 15}, \ref{def: negation 17}, and \ref{def: negation 18} (resp., \ref{def: negation 1}, \ref{def: negation 2}, \ref{def: negation 6}, \ref{def: negation 8}, \ref{def: negation 9}, \ref{def: negation 10}, \ref{def: negation 11}, \ref{def: negation 13}, \ref{def: negation 16}, \ref{def: negation 17}, and \ref{def: negation 18}).
\end{corollary}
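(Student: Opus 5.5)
The proof only collects the items of Lemma \ref{lemm.bullet.negation.properties}; I treat the coquasiintuitionistic case in detail and get the quasiintuitionistic case by $(\land,\lor)$-duality, which swaps \ref{def: negation 2}$\leftrightarrow$\ref{def: negation 3}, \ref{def: negation 6}$\leftrightarrow$\ref{def: negation 7}, \ref{def: negation 9}$\leftrightarrow$\ref{def: negation 12}, \ref{def: negation 10}$\leftrightarrow$\ref{def: negation 11}, \ref{def: negation 13}$\leftrightarrow$\ref{def: negation 14}, \ref{def: negation 15}$\leftrightarrow$\ref{def: negation 16}, and \ref{def: negation 17}$\leftrightarrow$\ref{def: negation 18}. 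Throughout, $K$ is bounded, since it is a bounded Vakarelov algebra by Definition \ref{def.coquasiintuitionist.Akchurin.etc}.

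Let $^\neg$ be coquasiintuitionistic, so \ref{def: negation 1}, \ref{def: negation 3} and \ref{def: negation 7} hold by definition.
\begin{enumerate}[nosep,label=(\alph*)]
\item Lemma \ref{lemm.bullet.negation.properties}.\ref{lemm.bullet.negation.properties 4}, applied with \ref{def: negation 1} and \ref{def: negation 3}, gives \ref{def: negation 8}, \ref{def: negation 10}, \ref{def: negation 11} and \ref{def: negation 12}.
\item Lemma \ref{lemm.bullet.negation.properties}.\ref{lemm.bullet.negation.properties 5} turns \ref{def: negation 7} into \ref{def: negation 14}.
\item Lemma \ref{lemm.bullet.negation.properties}.\ref{lemm.bullet.negation.properties 7}, with \ref{def: negation 1} and \ref{def: negation 3}, gives \ref{def: negation 17}.
\item Lemma \ref{lemm.bullet.negation.properties}.\ref{lemm.bullet.negation.properties 45}, with \ref{def: negation 1} and \ref{def: negation 14}, gives \ref{def: negation 18}.
\item \ref{def: negation 15} holds trivially because $0$ is the bottom element.
\end{enumerate}

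For the quasiintuitionistic case, \ref{def: negation 1}, \ref{def: negation 2} and \ref{def: negation 6} hold by definition.
\begin{enumerate}[nosep,label=(\alph*)]
\item Item \ref{lemm.bullet.negation.properties 4} gives \ref{def: negation 8}, \ref{def: negation 9}, \ref{def: negation 10} and \ref{def: negation 11}.
\item Item \ref{lemm.bullet.negation.properties 5} gives \ref{def: negation 13}.
\item Item \ref{lemm.bullet.negation.properties 7} gives \ref{def: negation 18}.
\item Item \ref{lemm.bullet.negation.properties 45}, with \ref{def: negation 1} and \ref{def: negation 13}, gives \ref{def: negation 17}.
\item \ref{def: negation 16} holds because $1$ is the top element.
\end{enumerate}

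There is no real obstacle here. The only thing to check carefully is that each invoked item of Lemma \ref{lemm.bullet.negation.properties} has exactly the hypotheses available; in particular, \ref{def: negation 18} (resp., \ref{def: negation 17}) must come from the combination \ref{def: negation 1} with \ref{def: negation 14} (resp., \ref{def: negation 13}) in item \ref{lemm.bullet.negation.properties 45}, not from \ref{lemm.bullet.negation.properties 7}. Distributivity is not needed anywhere.
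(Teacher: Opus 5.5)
Your proof is correct and takes the same route as the paper: it collects the relevant items of Lemma \ref{lemm.bullet.negation.properties} and obtains \ref{def: negation 15} (resp., \ref{def: negation 16}) from boundedness alone. Your explicit use of items \ref{lemm.bullet.negation.properties 45} and \ref{lemm.bullet.negation.properties 7} to get \ref{def: negation 17} and \ref{def: negation 18} is needed, because the paper's cited range, items \ref{lemm.bullet.negation.properties 4}--\ref{lemm.bullet.negation.properties 5}, does not by itself yield those two properties.
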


\begin{proof}
Follows from Lemma \ref{lemm.bullet.negation.properties}.\ref{lemm.bullet.negation.properties 4}--\ref{lemm.bullet.negation.properties 5}, except of \ref{def: negation 15} (resp., \ref{def: negation 16}), following from boundedness of $(K,\leq,\land,\lor,0,1)$.
\end{proof}

\begin{corollary}\label{cor.coquasiintuitionistic.cointuitionistic}
\sloppy A coquasiintuitionistic (resp., quasiintuitionistic) algebra $(K,\leq,\land,\lor,0,1,{}^\neg)$ is a Brou\-wer (resp., Heyting) algebra if{}f ${}^\neg$ satisfies \ref{def: negation 5} (resp., \ref{def: negation 4}). A biquasiintuitionistic algebra $(K,\leq,\land,\lor,0,1,{}^\coeva,{}^\eva)$ is a Skolem algebra if{}f ${}^\eva$ satisfies \ref{def: negation 5}  and ${}^\coeva$ satisfies \ref{def: negation 4}.
\end{corollary}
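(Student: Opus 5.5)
The plan is to reduce the statement to a comparison of lists of axioms in Definition \ref{def.negation}, with one interpretive point settled first. For a coquasiintuitionistic algebra $(K,\leq,\land,\lor,0,1,{}^\neg)$, I read ``is a Brouwer algebra'' as: the given negation ${}^\neg$ is the cointuitionistic negation of a Brouwer structure. Equivalently, $(K,\leq,\land,\lor,0,1,{}^\neg)$ is a Brouwer--coquasiintuitionistic algebra whose ${}^\neg$ is cointuitionistic. The quasiintuitionistic/Heyting case is read the same way.

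\emph{Necessity.} Suppose the structure is Brouwer in this sense, so ${}^\neg=\cohnot$ with $\cohnot x=1\cohimp x$. Then Proposition \ref{prop.skolem.properties}.\ref{prop.skolem.properties.cohnot} shows that ${}^\neg$ is cointuitionistic. In particular it satisfies \ref{def: negation 5}, which comes from \eqref{eqn.adjunction.coheyting.implication} via Lemma \ref{lemm.coheyting.negation}.\ref{item: lemm.coheyting.negation.coheyting}. That lemma needs $1^\neg=0$, and this holds by Corollary \ref{cor.sum.of.cqi.qi.properties} (condition \ref{def: negation 17}).

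\emph{Sufficiency.} A coquasiintuitionistic negation satisfies \ref{def: negation 1}, \ref{def: negation 3}, \ref{def: negation 7}. Adding \ref{def: negation 5} gives exactly the list defining a cointuitionistic negation in Definition \ref{def.negation}.\ref{def.negation.13}. Distributivity is already part of the definition of a coquasiintuitionistic algebra.

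The main obstacle is producing a genuine coimplication $\cohimp$ satisfying \eqref{eqn.adjunction.coheyting.implication} from \ref{def: negation 5} alone. Lemma \ref{lemm.coheyting.negation}.\ref{item: lemm.coheyting.negation.coheyting} only turns \ref{def: negation 5} into the relative statement ``$x\lor y=1$ iff $x^\neg\leq y$''. This yields the adjunction for $1\cohimp(\cdot)$ but not for arbitrary $x\cohimp y$. My first attempt would be to define $x\cohimp y:=\bigwedge\{z\mid x\leq y\lor z\}$ and then check \eqref{eqn.adjunction.coheyting.implication}. That check needs the meet to exist and join to distribute over it, which holds in the complete case, for instance $\Subclop(\Sigma^L)$. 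If this fails in general, I would state the result at the level of negations: ``Brouwer'' means that ${}^\neg$ is cointuitionistic, in accordance with Vakarelov's classification.

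The Heyting/quasiintuitionistic case follows by $(\land,\lor)$-duality. It uses Lemma \ref{lemm.coheyting.negation}.\ref{item: lemm.coheyting.negation.heyting}, Proposition \ref{prop.skolem.properties}.\ref{prop.skolem.properties.hnot}, and condition \ref{def: negation 18}. For the biquasiintuitionistic statement, apply the first part to ${}^\eva$ (Brouwer side, \ref{def: negation 5}) and the dual part to ${}^\coeva$ (Heyting side, \ref{def: negation 4}). A Skolem algebra is by Definition \ref{def.heyting.brouwer.skolem.algebra} the conjunction of the Heyting and Brouwer conditions, so the two equivalences combine to give the claim.
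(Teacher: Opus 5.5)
Under your reading of ``is a Brouwer algebra'', the necessity half is fine. The sufficiency half has a gap that cannot be closed, and the repair you sketch does not work. From \ref{def: negation 5} and \ref{def: negation 17}, Lemma \ref{lemm.coheyting.negation} gives only condition \ref{item: lemm.coheyting.negation 1}. That says ${}^\neg$ is the dual pseudocomplement, $x^\neg=\min\{y\mid x\lor y=1\}$, and nothing about $x\cohimp y$ for $x\neq 1$. Defining $x\cohimp y:=\bigwedge\{z\mid x\leq y\lor z\}$ requires $y\lor\bigwedge Z=\bigwedge_{z\in Z}(y\lor z)$, and completeness does not supply this. For a complete lattice this infinite distributive law is equivalent to being a Brouwer algebra, so the argument is circular; it holds in $\Subclop(\Sigma^L)$ only because of Proposition \ref{cor.heyting.brouwer.skolem}.

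In fact the ``if'' direction is false, even for complete $K$. Let $K:=\mathcal{O}(\mathbb{R})\cup\{\hat{1}\}$ be the lattice of open subsets of $\mathbb{R}$ with a new top $\hat{1}$ adjoined; it is complete and distributive, with bottom $\varnothing$. Put $\hat{1}^\neg:=\varnothing$ and $x^\neg:=\hat{1}$ for $x\neq\hat{1}$.
\begin{itemize}
\item Conditions \ref{def: negation 1}, \ref{def: negation 3}, \ref{def: negation 7} hold directly.
\item Condition \ref{def: negation 5} also holds. The inequality $z^\neg\leq x^\neg\lor y$ could fail only if $z\neq\hat{1}$ and $x=\hat{1}\neq y$. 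But then $x\leq y\lor z$ is impossible, since $y\lor z$ is an open set.
\item Yet $\{z\in K\mid\mathbb{R}\leq(\mathbb{R}\setminus\{0\})\lor z\}$ consists of $\hat{1}$ and the open neighbourhoods of $0$. It has no least element, so no $\cohimp$ satisfying \eqref{eqn.adjunction.coheyting.implication} exists.
\end{itemize}
The order dual of $K$, with the same map (now the pseudocomplement), refutes the Heyting clause. Adjoining also a new bottom $\hat{0}$ refutes the Skolem clause; there take $\hat{1}^\eva:=\hat{0}$, $x^\eva:=\hat{1}$ otherwise, and ${}^\coeva$ the pseudocomplement.

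Your interpretive step (${}^\neg=\cohnot$) is also needed for the ``only if'' direction. On the four-element boolean algebra $\{0,a,b,1\}$, take $1^\neg:=0$ and $x^\neg:=1$ otherwise. This is a coquasiintuitionistic negation on a Brouwer algebra, yet \ref{def: negation 5} fails for $x=1$, $y=a$, $z=b$.

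The paper's own proof makes exactly the leap you declined to make. It calls condition \ref{item: lemm.coheyting.negation 1} ``exactly the definition of $\cohnot$'' and concludes that $K$ is a Brouwer algebra. What is actually provable is your negation-level fallback: a coquasiintuitionistic (resp., quasiintuitionistic) negation satisfies \ref{def: negation 5} (resp., \ref{def: negation 4}) iff it is cointuitionistic (resp., intuitionistic) iff $x^\neg=\min\{y\mid x\lor y=1\}$ (resp., $x^\neg=\max\{y\mid x\land y=0\}$). With your reading, the statement as written also holds for finite $K$, since finite distributive lattices are Skolem algebras.
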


\begin{proof}
The only difference between the equational properties of negations in coquasiintuitionistic algebra and Brouwer algebra is given by \ref{def: negation 5}, i.e. by the condition \ref{item: lemm.coheyting.negation 2} in Lemma \ref{lemm.coheyting.negation}, which is satisfied in a Brouwer algebra, and not satisfied in coquasiintuitionistic algebra. However, the equivalent condition \ref{item: lemm.coheyting.negation 1} in Lemma \ref{lemm.coheyting.negation} is exactly the definition of the negation $\cohnot$ by Proposition \ref{prop.skolem.properties}.\ref{prop.skolem.properties.cohnot}. Hence, $K$ satisfies \ref{item: lemm.coheyting.negation 2} if{}f it satisfies \ref{item: lemm.coheyting.negation 1} if{}f it is a Brouwer algebra. The result for quasiintuitionistic algebra follows by $(\land,\lor)$-duality, and this implies the result for biquasiintuitionistic algebra as well.
\end{proof}

\begin{corollary}\label{cor.demorgan.boolean}
If $(K,\leq,\land,\lor,0,1,{}^{\neg})$ is a De Morgan algebra, and ${}^{\neg}$ satisfies \ref{def: negation 6} or \ref{def: negation 7}, then $(K,\leq,\land,\lor,0,1,{}^{\neg})$ is a boolean lattice.
\end{corollary}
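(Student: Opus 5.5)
The corollary will follow by verifying the clauses of Definition \ref{def.coquasiintuitionist.Akchurin.etc}.\ref{def.coquasiintuitionist.Akchurin.etc 2}. Distributivity of $(K,\leq,\land,\lor)$ is given directly by the definition of a De Morgan algebra (Definition \ref{def.subminimal.DeMorgan}.\ref{def.subminimal.DeMorgan.DeMorgan}). So the only task is to show that ${}^\neg$ is an orthocomplementation, i.e. that it satisfies \ref{def: negation 1}, \ref{def: negation 2}, \ref{def: negation 3}, \ref{def: negation 6}, and \ref{def: negation 7}. Boundedness is taken as given in the statement, since $0,1$ appear in the signature.

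First, since ${}^\neg$ is an involution, it already satisfies \ref{def: negation 1}, \ref{def: negation 2}, and \ref{def: negation 3} by Definition \ref{def.negation}.\ref{def.negation.9}. It remains to obtain both \ref{def: negation 6} and \ref{def: negation 7} from whichever one of them is assumed. This is exactly Lemma \ref{lemm.bullet.negation.properties}.\ref{lemm.bullet.negation.properties y}: under \ref{def: negation 1}, \ref{def: negation 2}, and \ref{def: negation 3}, \ref{def: negation 6} holds if{}f \ref{def: negation 7} holds.

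As a sanity check, one can also argue directly. Assume \ref{def: negation 6}. By Lemma \ref{lemm.bullet.negation.properties}.\ref{lemm.bullet.negation.properties 4}, the De Morgan laws \ref{def: negation 9}--\ref{def: negation 12} hold. Hence $(x\lor x^\neg)^\neg=x^\neg\land x^{\neg\neg}=x^\neg\land x\leq y^\neg$ for all $y$. Applying ${}^\neg$ and using \ref{def: negation 1} together with $z^{\neg\neg}=z$ gives $y\leq x\lor x^\neg$, which is \ref{def: negation 7}. The case where \ref{def: negation 7} is assumed follows by $(\land,\lor)$-duality.

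With all five conditions verified, $(K,\leq,\land,\lor,0,1,{}^\neg)$ is an orthocomplemented lattice. Since $(K,\leq,\land,\lor)$ is distributive, it is a boolean lattice. There is no real obstacle here. The only point needing care is that Lemma \ref{lemm.bullet.negation.properties}.\ref{lemm.bullet.negation.properties y} requires the full involution hypotheses, which the De Morgan assumption supplies.
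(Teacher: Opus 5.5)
Your proof is correct and is essentially the paper's argument: the involution gives \ref{def: negation 1}--\ref{def: negation 3}, the De Morgan laws of Lemma \ref{lemm.bullet.negation.properties}.\ref{lemm.bullet.negation.properties 4} give whichever of \ref{def: negation 6}, \ref{def: negation 7} was not assumed, and distributivity then yields a boolean lattice. The one difference is that the paper does not cite Lemma \ref{lemm.bullet.negation.properties}.\ref{lemm.bullet.negation.properties y}; it recomputes inline in bounded form, $x\lor x^\neg=(x\lor x^\neg)^{\neg\neg}=(x^\neg\land x^{\neg\neg})^\neg=0^\neg=1$, and your direct check is the same computation stated order-theoretically, without using $0$ and $1$.
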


\begin{proof}
By Lemma \ref{lemm.bullet.negation.properties}, ${}^{\neg}$ satisfies \ref{def: negation 9}--\ref{def: negation 12}, since it satisfies \ref{def: negation 1}, \ref{def: negation 2}, and \ref{def: negation 3}. Assume that ${}^{\neg}$ satisfies \ref{def: negation 6}. Then $x \lor x^{\neg} = (x\lor x^{\neg})^{\neg\neg} = (x^{\neg} \land x^{\neg\neg})^{\neg} = 0^{\neg} = 1$ $\forall x\in K$, so ${}^{\neg}$ satisfies \ref{def: negation 7}, so ${}^{\neg}$ is an orthocomplementation. Hence $(K,\leq,\land,\lor,{}^{\neg})$ is a boolean lattice, since $(K,\leq,\land,\lor,0,1,{}^{\neg})$ is distributive. Assume that ${}^{\neg}$ satisfies \ref{def: negation 7}. By the same argument as above it is sufficient to show $x \land x^{\neg} = (x\land x^{\neg})^{\neg\neg} = (x^{\neg} \lor x^{\neg\neg})^{\neg} = 1^{\neg} = 0$ $\forall x\in K$.
\end{proof}

\begin{definition}
If $(K,\leq,\land,\lor)$ is a lattice and ${}^\eva,{}^\coeva:K\ra K$, then a structure $(K,\leq,\land,\lor,{}^\coeva,{}^{\eva})$ will be called:
\begin{enumerate}[nosep, label=(\roman*)]
\item a \emph{bi-Vakarelov algebra} if{}f ${}^\eva$ and ${}^\coeva$ are subminimal;
%\item a \emph{biregular algebra} if{}f ${}^\eva$ is coregular and ${}^\coeva$ is regular;
\item a \emph{biquasiintuitionistic algebra} if{}f ${}^\eva$ is coquasiintuitionistic and ${}^\coeva$ is quasiintuitionistic.
\end{enumerate}
\end{definition}

\begin{definition}\label{def.biquasiintuitionistic}
A bounded bi-Vakarelov algebra $(K,\leq,\land,\lor,0,1,{}^\coeva,{}^\eva)$ will be called:
\begin{enumerate}[nosep, label=(\roman*)]
\item a \emph{biquasiintuitionistic} \emph{algebra} if{}f $(K,\leq,\land,\lor,0,1,{}^{\eva})$ is a coquasiintuitionistic algebra and $(K,\leq,\land,\lor,0,1,{}^{\coeva})$ is a quasiintuitionistic algebra;
\item\label{def.biquasiintuitionistic.2} an \emph{Akchurin algebra} if{}f $(K,\leq,\land,\lor,0,1,{}^{\coeva})$ is a quasiintuitionistic algebra, $(K,\leq,\land,\lor,0,1,{}^{\eva})$ is a coquasiintuitionistic algebra, and $(K,\leq,\land,\lor,0,1)$ is a Skolem algebra.
\end{enumerate}
\end{definition}

\begin{definition}\label{def: O6}
The orthocomplemented lattice $(\mathcal{O}_6,{ }^{\perp})$ is given by $\mathcal{O}_6:=\{0,x,y,x^{\perp},y^{\perp},1\}$ such that $x\leq y$. 
\end{definition}

\begin{proposition}\label{prop: OML fun}
\begin{enumerate}[nosep, label=(\roman*)]
\item An orthocomplemented lattice $(L,{ }^{\perp})$ is orthomodular if{}f $\mathcal{O}_6$ is not a sublattice of $(L,{ }^{\perp})$.\label{item: OML fun 0}
\item An orthocomplemented lattice $(L,{ }^{\perp})$ is orthomodular if{}f $\forall a\in L\setminus\{0,1\}$ $\nexists b\in L\setminus\{0,1\}$ $\st$ $a^{\perp} \leq b$ and $a\land b =0$. \label{item: OML fun 1}
\end{enumerate}
\end{proposition}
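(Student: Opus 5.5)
The plan is to reduce both items to the standard characterisation of orthomodularity: an orthocomplemented lattice $(L,{}^\perp)$ is orthomodular if{}f, for all $x,y\in L$, $x\leq y$ and $y\land x^\perp=0$ imply $x=y$. I will call this condition (OM$'$).

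\textbf{Step 1: orthomodularity is equivalent to (OM$'$).} If $L$ is orthomodular, $x\leq y$ and $y\land x^\perp=0$, then $y=x\lor(y\land x^\perp)=x\lor 0=x$. Conversely, suppose $L$ is not orthomodular, and pick $x\leq y$ with $z:=x\lor(y\land x^\perp)<y$. The De Morgan laws follow from Lemma \ref{lemm.bullet.negation.properties}.\ref{lemm.bullet.negation.properties 4}, since ${}^\perp$ is an involution. They give $z^\perp=x^\perp\land(y\land x^\perp)^\perp$. Hence
\[
y\land z^\perp=(y\land x^\perp)\land(y\land x^\perp)^\perp=0,
\]
using \ref{def: negation 6}. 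So the pair $z<y$ violates (OM$'$).

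\textbf{Step 2: item (i).} Suppose $\mathcal{O}_6$ is a sublattice of $L$. Then meets and joins computed in $\mathcal{O}_6$ agree with those in $L$, so $y\land x^\perp=0$ while $x<y$, and (OM$'$) fails. Conversely, suppose (OM$'$) fails for some $x<y$ with $y\land x^\perp=0$. First check nondegeneracy.
\begin{itemize}
\item If $x=0$, then $y=y\land 1=0$, which contradicts $x<y$.
\item If $y=1$, then $x^\perp=0$, so $x=1$, which again contradicts $x<y$.
\end{itemize}
Distinctness of the six elements $0,x,y,x^\perp,y^\perp,1$ then follows from $x<y$ and these exclusions. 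Closure under the lattice operations is checked with \ref{def: negation 6}, \ref{def: negation 7} and De Morgan. For example, $x\lor y^\perp=(x^\perp\land y)^\perp=1$, $x\land y^\perp\leq y\land y^\perp=0$, and $x^\perp\lor y\geq x^\perp\lor x=1$. The remaining meets and joins follow by orthocomplementation.

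\textbf{Step 3: item (ii).} Given $x<y$ with $y\land x^\perp=0$ as above, put $a:=x^\perp$ and $b:=y$. Then $a^\perp=x\leq b$ and $a\land b=0$, and $a,b\notin\{0,1\}$ by the nondegeneracy checks of Step 2. For the converse, if such $a,b$ exist, then $x:=a^\perp\leq b=:y$ and $y\land x^\perp=b\land a=0$. Under orthomodularity this forces $b=a^\perp$.

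\textbf{Main obstacle.} The delicate point is in this last converse. The choice $b=a^\perp$ always satisfies $a^\perp\leq b$ and $a\land b=0$, so item (ii) cannot be proved as literally written. It must be read with the strict inequality $a^\perp<b$, equivalently $b\neq a^\perp$. I will state this reading explicitly and prove it. Under it, Step 3 gives exactly the equivalence with (OM$'$), and hence with orthomodularity by Step 1.
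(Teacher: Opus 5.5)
Your proof is correct, and your diagnosis of item (ii) is right. For every $a\in L\setminus\{0,1\}$, the element $b:=a^{\perp}$ lies in $L\setminus\{0,1\}$ and satisfies $a^{\perp}\leq b$ and $a\land b=0$. So the right-hand side of (ii) fails in every orthocomplemented lattice with more than two elements, e.g.\ in the orthomodular lattice $\{0,a,a^{\perp},1\}$.

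The paper's own proof of (ii) breaks exactly here. It reduces (ii) to (i) by asserting that $\{0,a^{\perp},a,b,b^{\perp},1\}$ is a copy of $\mathcal{O}_6$. It checks only $a^{\perp}\leq b$, $a^{\perp}\land b^{\perp}=0$ and $a\land b=0$, and all three hold for $b=a^{\perp}$, where the set collapses to a four-element boolean block. Under the hypotheses of (ii), the six elements are pairwise distinct if and only if $b\neq a^{\perp}$. So your strict reading $a^{\perp}<b$ is precisely the repair needed.

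The same degenerate witness, $y=x^{\eva}$ (resp.\ $y=x^{\coeva}$) for any $x\neq 0,1$ in the internal lattice, also breaks Proposition \ref{prop: Precqi internal oml} and Corollary \ref{cor.quasiintuitionistic.internal.orthomodular.lattice}. Their conditions fail whenever the internal lattice has more than two elements, because they are built on the non-strict form of (ii).

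Your route also differs from the paper's. The paper does not prove (i) at all; it cites Maeda's theorem. It then derives (ii) from (i) in one direction only (existence of such $a,b$ yields an embedded $\mathcal{O}_6$), leaving the converse implicit. You instead establish the elementary characterisation (OM$'$) first and obtain both items from it. This makes (i) self-contained and both directions of (ii) explicit. Your closure computations in Step 2 are essentially the paper's verification for (ii), supplemented by the distinctness check that the paper omits.

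One small tightening: distinctness does not follow from $x<y$ and $x,y\notin\{0,1\}$ alone. Write out $y\neq x^{\perp}$: otherwise $x^{\perp}=y\land x^{\perp}=0$, whence $x=1$. Also write out $x\neq y^{\perp}$: otherwise $y^{\perp}\leq y$, whence $y^{\perp}=0$ and $y=1$. This is exactly the check whose absence invalidates the paper's argument.
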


\begin{proof}
\begin{enumerate}[nosep, label=(\roman*)]
\item {} \textup{\cite[Thm. 1]{Maeda:1966}}.
\item Due to \ref{item: OML fun 0}, it is sufficient to show that, under the given assumptions, $\{0,a^{\perp},a,b,b^{\perp},1\}$ forms $(\mathcal{O}_6,{}^\perp)$ as a sublattice of $(L,{}^\perp)$. By Definition \ref{def: O6}, it is sufficient to verify $a^{\perp}\leq b$, $a^{\perp}\land b^{\perp} = 0,$ and $a\land b = 0$. The first inequality and the last equality are given by the assumptions. For the remaining equality, $1 = a\lor a^{\perp} \leq a\lor b$ implies $a \lor b = 1$, hence $a^{\perp} \land b^{\perp} = 0$.
\end{enumerate}
\end{proof}

\begin{definition}\label{def.Stone.space}
\begin{enumerate}[nosep, label=(\roman*)]
\item A map $f:L\ra K$ between two orthocomplemented (resp., boolean) lattices $L$ and $K$ is called: an \emph{orthocomplemented} (resp., \emph{boolean}) \emph{lattice homomorphism} if{}f it preserves meets, joins, and orthocomplementations (resp., meets, joins, top element, and bottom element); an \emph{orthocomplemented} (resp., \emph{boolean}) \emph{lattice isomorphism}, denoted by $\iso$, if{}f it is an orthocomplemented (resp., boolean) lattice homomorphism and there exists an orthocomplemented (resp., boolean) lattice homomorphism $g:K\ra L$ such that $g\circ f=\id_K$ and $f\circ g=\id_L$.
\item\textup{\cite[Thm. III]{Stone:1934}} The \emph{Stone spectrum} of a boolean lattice $(B,\leq,\land,\lor,0,1,{}^\perp)$, denoted by $\sps(B)$, is defined as the set of all boolean lattice homomorphisms.
\item A totally disconnected, compact, Hausdorff topological space will be called a \emph{Stone space}.
\end{enumerate}
\end{definition}

\begin{proposition}\label{thm: Stone}\textup{\cite[Thms. IV$_1$, IV$_2$]{Stone:1934} \cite[Thms. 1, 3, 4]{Stone:1937}}
If $(B,\leq,\land,\lor,0,1,{}^\perp)$ is a boolean lattice, then:
\begin{enumerate}[nosep, label=(\roman*)]
\item\label{thm: Stone.1}$\sps(B)$ is a Stone space with respect to a topology generated by a basis given by $\{\{x\in\sps(B)\mid x(y)=1\}\mid y\in B\}$;
\item\label{thm: Stone.2} $B$ is isomorphic to the lattice $\mathrm{clop}\left( \sps(B) \right)$ of closed-and-open sets of $\sps(B)$ under the above topology. The corresponding lattice isomorphism is given by 
\begin{equation}
\stone_{B}:B\ni a \mapsto \stone_{B}(a) \coloneqq \lbrace \xi \in \sps(B)\mid \xi(a) = 1\rbrace \in \mathrm{clop}(\sps(B)),
\end{equation}
with meets (resp., joins) mapped to the {interior of} set theoretic intersections (resp., {closure of set theoretic} sums), and $\stone_B(a^\perp)=\sps(B)\setminus\stone_{B}(a)$ $\forall a\in B$.
\end{enumerate}
\end{proposition}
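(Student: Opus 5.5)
The plan is to identify $\sps(B)$ with the set of ultrafilters of $B$ and argue topologically inside $\{0,1\}^B$. Throughout, write $U_y:=\{\xi\in\sps(B)\mid\xi(y)=1\}$ for $y\in B$.

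First, for part \ref{thm: Stone.1}, I check that the family $\{U_y\}_{y\in B}$ is a basis. Since every $\xi$ is a boolean lattice homomorphism, $U_y\cap U_z=U_{y\land z}$ and $U_1=\sps(B)$. Moreover $\xi(y^\perp)=1-\xi(y)$, because $\xi(y)\land\xi(y^\perp)=\xi(0)=0$ and $\xi(y)\lor\xi(y^\perp)=\xi(1)=1$. Hence $\sps(B)\setminus U_y=U_{y^\perp}$, so every basic set is closed-and-open.

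For Hausdorffness, take $\xi\neq\eta$. They differ at some $y$, say $\xi(y)=1$ and $\eta(y)=0$. Then $U_y$ and $U_{y^\perp}$ are disjoint clopen neighbourhoods of $\xi$ and $\eta$. The same separation by a clopen set gives total disconnectedness.

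For compactness, I regard $\sps(B)$ as a subset of the product $\{0,1\}^B$. The relative product topology coincides with the one generated by the $U_y$, since $U_y$ is the trace of a subbasic cylinder. Each defining equation, $\xi(y\land z)=\xi(y)\land\xi(z)$, $\xi(y\lor z)=\xi(y)\lor\xi(z)$, $\xi(0)=0$ and $\xi(1)=1$, involves only finitely many coordinates, so it cuts out a closed set. Hence $\sps(B)$ is closed in a space that is compact by Tychonoff, and is therefore compact.

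For part \ref{thm: Stone.2}, the equalities above already show that $\stone_B$ maps $B$ into $\mathrm{clop}(\sps(B))$. They also show that $\stone_B$ preserves finite meets and joins: $\stone_B(a\lor b)=\stone_B(a)\cup\stone_B(b)$ by the same argument as for meets. It preserves $0$, $1$ and complements, with $\stone_B(a^\perp)=\sps(B)\setminus\stone_B(a)$. Since finite unions and intersections of clopen sets are clopen, the "closure of sums" and "interior of intersections" in the statement reduce to plain unions and intersections in the finite case.

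Surjectivity uses compactness. A clopen $C$ is open, so it is a union of basic sets $U_y$. Being closed in a compact space, $C$ is compact, so finitely many $U_{y_1},\dots,U_{y_n}$ suffice, and then $C=\stone_B(y_1\lor\dots\lor y_n)$.

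Injectivity is the step I expect to be the main obstacle, because it needs a choice principle. If $a\neq b$, then, using $a^\perp$ and distributivity, either $a\land b^\perp\neq0$ or $b\land a^\perp\neq0$. So it suffices to show that for every $c\neq0$ there is $\xi\in\sps(B)$ with $\xi(c)=1$.

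For this I would apply Zorn's lemma to the proper filters containing $c$, obtaining a maximal filter $F$. Maximality yields, for each $y$, that $y\in F$ or $y^\perp\in F$: if $y\notin F$, then the filter generated by $F\cup\{y\}$ is improper, so $f\land y=0$ for some $f\in F$, whence $f\le y^\perp$ and $y^\perp\in F$. Then the indicator function of $F$ is a boolean lattice homomorphism $\xi$ with $\xi(c)=1$.

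Combining injectivity with the homomorphism properties, the inverse map is also a lattice homomorphism. Hence $\stone_B$ is the required isomorphism.
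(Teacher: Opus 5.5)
Your proof is correct. The paper gives no argument of its own here; it simply cites Stone. Your proof is therefore a self-contained replacement for that citation, along standard lines: points as indicator functions of maximal filters obtained by Zorn's lemma, compactness because $\sps(B)$ is closed in $\{0,1\}^B$, surjectivity of $\stone_B$ because clopen sets are compact, and injectivity by separating $a\neq b$ with a maximal filter containing $a\land b^\perp$ or $b\land a^\perp$.

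The one clause you do not really establish is the one about interiors of intersections and closures of sums. You check it only for finite families, where it is vacuous. Its content concerns arbitrary families $\{a_i\}_{i\in I}\subseteq B$ whose meet or join exists in $B$. That is the form the paper needs later in Proposition \ref{thm: Sigma is complete lattice}, where every context $V$ is complete.

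It follows from what you already have. $\stone_B$ is an order embedding, by injectivity together with preservation of $\land$, and the sets $\stone_B(b)$ form a basis. Hence $\mathrm{int}(\bigcap_{i\in I}\stone_B(a_i))=\bigcup\{\stone_B(b)\mid b\leq a_i\ \forall i\in I\}=\stone_B(\bigwedge_{i\in I}a_i)$. The case of joins follows by complementation, using $\stone_B(a^\perp)=\sps(B)\setminus\stone_B(a)$ and $(\bigvee_{i\in I}a_i)^\perp=\bigwedge_{i\in I}a_i^\perp$.

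As a minor economy, Tychonoff is dispensable. Suppose basic sets $\stone_B(y_i)$ covered $\sps(B)$ with no finite subcover. Then no finite meet of the $y_i^\perp$ would be $0$, so they generate a proper filter. Your Zorn argument, started from that filter, gives a maximal filter whose indicator function lies outside every $\stone_B(y_i)$, a contradiction. So your single maximal-filter lemma yields both compactness and injectivity.
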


\section{\label{section.internal.lattice.akchurin}Internal lattices of Akchurin algebras}

\begin{proposition}\label{prop: CQI Internal}
Let $(\extlattice,\leq,\land,\lor,0,1,{}^{\eva})$ be a coquasiintuitionistic algebra and let $\evaintlatticeext\coloneqq \{x^{\upint} \mid x\in \extlattice\} \subseteq \extlattice$ be equipped with the restriction of the partial order $\leq$ on $\extlattice$ to $\evaintlatticeext$ and the restriction of ${}^\eva$ on $\extlattice$ to $\evaintlatticeext$, denoted without change of symbol. Then $(\evaintlatticeext,\leq,\curlywedge,\lor,0,1,{}^\eva)$ is an orthocomplemented lattice such that:
\begin{enumerate}[nosep, label=(\roman*)]
\item\label{prop: CQI Internal.i} the join $\lor$ on $\evaintlatticeext$ is given by the join $\lor$ of $\extlattice$ restricted to $\evaintlatticeext$;
\item\label{prop: CQI Internal.ii} the meet $\curlywedge$ on $\evaintlatticeext$ is given by $\alpha \curlywedge \beta \coloneqq (\alpha \land \beta)^{\eva \eva} $ $\forall \alpha,\beta \in \evaintlatticeext$;
\item\label{prop: CQI Internal.iii} the bottom (resp., top) element of $\evaintlatticeext$ is given by $0$ (resp., $1$);
\item\label{prop: CQI Internal.iv} the orthocomplementation is given by ${}^{\eva}$.
\end{enumerate}
\end{proposition}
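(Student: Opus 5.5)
The proof will rest on the elementary properties of a coquasiintuitionistic negation collected in Corollary \ref{cor.sum.of.cqi.qi.properties}: $^\eva$ is antitone (\ref{def: negation 1}), deflationary under double application (\ref{def: negation 3}), satisfies $x^\eva=x^{\eva\eva\eva}$ (\ref{def: negation 8}) and the De Morgan law $(x\lor y)^\eva=x^\eva\land y^\eva$ (\ref{def: negation 10}, \ref{def: negation 12}-type inequalities), excluded middle (\ref{def: negation 14}), and $1^\eva=0$, $0^\eva=1$. First, by \ref{def: negation 8}, $\evaintlatticeext=\{x\in K\mid x=x^{\eva\eva}\}$, and for every $x\in K$ we have $x^\eva=(x^\eva)^{\eva\eva}\in\evaintlatticeext$. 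Hence $^\eva$ restricts to a map $\evaintlatticeext\to\evaintlatticeext$, which is involutive there. Since $0^{\eva\eva}=1^\eva=0$ and $1^{\eva\eva}=0^\eva=1$, both $0$ and $1$ lie in $\evaintlatticeext$ and are its bounds, which gives \ref{prop: CQI Internal.iii}.

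Next come the lattice operations. For joins, take $\alpha,\beta\in\evaintlatticeext$. By \ref{def: negation 3}, $(\alpha\lor\beta)^{\eva\eva}\le\alpha\lor\beta$. Conversely, monotonicity of $(\cdot)^{\eva\eva}$ (two applications of \ref{def: negation 1}) gives $\alpha=\alpha^{\eva\eva}\le(\alpha\lor\beta)^{\eva\eva}$, and similarly for $\beta$. So $\alpha\lor\beta\in\evaintlatticeext$, and being the join in $K$ it is the join in the subposet, which is \ref{prop: CQI Internal.i}. (Alternatively, $(\alpha\lor\beta)^{\eva\eva}=(\alpha^\eva\land\beta^\eva)^\eva$ can be compared using \ref{def: negation 11}.)

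For meets, $(\cdot)^{\eva\eva}$ is a monotone, deflationary, idempotent operator, i.e. an interior operator, and $\evaintlatticeext$ is its image. For $\alpha,\beta\in\evaintlatticeext$, the element $(\alpha\land\beta)^{\eva\eva}$ lies in $\evaintlatticeext$ and lies below $\alpha\land\beta$. If $\gamma\in\evaintlatticeext$ satisfies $\gamma\le\alpha,\beta$, then $\gamma\le\alpha\land\beta$, and applying $(\cdot)^{\eva\eva}$ gives $\gamma=\gamma^{\eva\eva}\le(\alpha\land\beta)^{\eva\eva}$. Hence $\curlywedge$ is the greatest lower bound, which gives \ref{prop: CQI Internal.ii}. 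So $\evaintlatticeext$ is a bounded lattice.

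It remains to check that $^\eva$ is an orthocomplementation on this lattice, which is \ref{prop: CQI Internal.iv}. Conditions \ref{def: negation 1}, \ref{def: negation 2} and \ref{def: negation 3} hold because $\alpha^{\eva\eva}=\alpha$ on $\evaintlatticeext$ and $^\eva$ is antitone. Condition \ref{def: negation 7} (equivalently \ref{def: negation 14}) holds because $\alpha\lor\alpha^\eva=1$ in $K$, and the join in $\evaintlatticeext$ is the join of $K$. The step that needs care is \ref{def: negation 6}/\ref{def: negation 13}, since $\alpha\land\alpha^\eva$ need not be $0$ in $K$ (this is paraconsistency). Rather than computing directly, I would invoke Lemma \ref{lemm.bullet.negation.properties}.\ref{lemm.bullet.negation.properties y}: an involution on a lattice satisfies \ref{def: negation 6} if and only if it satisfies \ref{def: negation 7}. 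Applied to the lattice $(\evaintlatticeext,\curlywedge,\lor)$, this yields $\alpha\curlywedge\alpha^\eva=0$. As a direct check, $(\alpha\land\alpha^\eva)^\eva\ge\alpha^\eva\lor\alpha^{\eva\eva}=1$ by \ref{def: negation 11}, so $(\alpha\land\alpha^\eva)^{\eva\eva}=1^\eva=0$.

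The main obstacle is keeping straight that the meet is computed in the internal lattice and not in $K$. Once that is done, the argument is the dual Glivenko/McKinsey--Tarski argument with distributivity nowhere needed.
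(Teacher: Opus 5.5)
Your proof is correct, but for the lattice structure it takes a different route from the paper.

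\textbf{How the paper argues.} The paper obtains (i) and (ii) by negation calculus built on the meet De Morgan law (\ref{def: negation 11}, \ref{def: negation 12}).
\begin{itemize}
\item For (i), closure under $\lor$ comes from $x^{\eva\eva}\lor y^{\eva\eva}=(x^\eva\land y^\eva)^\eva=(x^\eva\land y^\eva)^{\eva\eva\eva}$.
\item For (ii), it takes a lower bound $\gamma\in\evaintlatticeext$ with $(\alpha\land\beta)^{\eva\eva}\leq\gamma$. It then forces $\gamma^\eva=\alpha^\eva\lor\beta^\eva=(\alpha\land\beta)^\eva$, hence $\gamma=(\alpha\land\beta)^{\eva\eva}$.
\end{itemize}

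\textbf{How your argument differs.} You use only that $(\cdot)^{\eva\eva}$ is monotone, deflationary (\ref{def: negation 3}) and idempotent (\ref{def: negation 8}). So (i)--(iii) become instances of a general fact: the fixed points of such an operator form a lattice with inherited joins and with meets $(\alpha\land\beta)^{\eva\eva}$. Negation-specific laws then enter only in (iv). This is more economical, and slightly stronger in (ii). You show directly that every lower bound in $\evaintlatticeext$ lies below $(\alpha\land\beta)^{\eva\eva}$. The paper's argument, as phrased, only shows that no lower bound lies strictly above it.

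In (iv), you get \ref{def: negation 7} for free from the inherited join. You then get \ref{def: negation 6} from Lemma \ref{lemm.bullet.negation.properties}.\ref{lemm.bullet.negation.properties y}, which is the opposite direction of the paper's use of that lemma. Your direct check of $\alpha\curlywedge\alpha^\eva=0$ is essentially the paper's computation.

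\textbf{One correction to your preliminaries.} A coquasiintuitionistic negation does not in general satisfy $(x\lor y)^\eva=x^\eva\land y^\eva$. Only \ref{def: negation 10} holds; the De Morgan equality that does hold is $(x\land y)^\eva=x^\eva\lor y^\eva$ (Corollary \ref{cor.sum.of.cqi.qi.properties}). For instance, take the closed subsets of $\mathbb{R}$ with $x^\eva=\mathrm{cl}(\mathbb{R}\setminus x)$, $x=(-\infty,0]$ and $y=[0,\infty)$. Then $(x\lor y)^\eva=\varnothing\neq\{0\}=x^\eva\land y^\eva$.

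Your main argument never uses the false version. Your parenthetical identity $(\alpha\lor\beta)^{\eva\eva}=(\alpha^\eva\land\beta^\eva)^\eva$ does hold on $\evaintlatticeext$, but only because both sides equal $\alpha\lor\beta$.
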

\begin{proof}
Let $\alpha,\beta\in \evaintlatticeext$ and $x,y\in \extlattice$ be such that $\alpha = x^{\upint}$ and $\beta = y^{\upint}$. 
\begin{enumerate}[nosep, label=(\roman*)]
\item Since $\lor$ is the join on $\extlattice$, it is sufficient to show that $\alpha \lor \beta \in \evaintlatticeext$, since then $\lor$ is necessarily the join on $\evaintlatticeext$, due to $\evaintlatticeext\subseteq \extlattice$. $\alpha\lor\beta = x^{\upint} \lor y^{\upint} = (x^{\eva} \land y^{\eva})^{\eva} = (x^{\eva} \land y^{\eva})^{\eva\upint} = z^{\upint}$, using \ref{def: negation 8}, \ref{def: negation 11}, and \ref{def: negation 12} by Corollary \ref{cor.sum.of.cqi.qi.properties}, and defining $z\coloneqq (x^{\eva} \land y^{\eva})^{\eva}\in \extlattice$.
\item Clearly $(\alpha\land \beta)^{\eva \eva} \in\evaintlatticeext$, since $\alpha\land\beta = x^{\upint} \land y^{\upint} \in \extlattice$. Thus, it is sufficient to show that $(\alpha\land \beta)^{\eva \eva}$ is the largest element smaller than both $\alpha$ and $\beta$. Assume that $\exists\gamma\in\evaintlatticeext$ such that $\gamma \leq \alpha,\  \gamma\leq \beta$, and $(\alpha\land \beta)^{\eva \eva}\leq \gamma$. Using \ref{def: negation 1}, \ref{def: negation 8}, \ref{def: negation 11}, and \ref{def: negation 12}, by Corollary \ref{cor.sum.of.cqi.qi.properties}, $\gamma^{\eva} \leq (\alpha\land \beta)^{\eva\eva\eva} = (\alpha\land \beta)^{\eva} = \alpha^{\eva} \lor \beta^{\eva}$, with $\gamma^{\eva} \geq \alpha^{\eva}$ and $\gamma^{\eva}\geq \beta^{\eva}$, so $\gamma^{\eva} = \alpha^{\eva} \lor \beta^{\eva} = (\alpha \land \beta)^{\eva}$, by \ref{prop: CQI Internal.i}. By \ref{prop: CQI Internal.iv}, $\gamma^{\eva\eva} = \gamma$, so $\gamma = \gamma^{\eva\eva} = (\alpha \land \beta)^{\upint} = \alpha \curlywedge \beta$. 
\item By Corollary \ref{cor.sum.of.cqi.qi.properties}, ${}^{\eva}$ satisfies \ref{def: negation 17} and \ref{def: negation 18}, hence $0 = 1^{\eva} = 0^{\upint}$, so $0\in \evaintlatticeext$. Thus $\alpha \curlywedge 0 = (\alpha \land 0)^{\upint} = (\alpha)^{\upint} = \alpha$  $\forall \alpha \in \evaintlatticeext$, by \ref{prop: CQI Internal.iv} and since $0$ is the bottom element in $\extlattice$. So $0  \leq \alpha$ $\forall \alpha\in \evaintlatticeext$. Analogously, $1 = 0^{\eva} = 1^{\upint}$, hence $1\in \evaintlatticeext$. Thus $\alpha \lor 1 = 1$ $\forall \alpha\in\evaintlatticeext$, since $\lor$ is inherited from $\extlattice$ and $1$ is the top element in $\extlattice$, so $\alpha \leq 1$ $\forall \alpha\in\evaintlatticeext$. 
\item Due to Lemma \ref{lemm.bullet.negation.properties}.\ref{lemm.bullet.negation.properties y}, it is sufficient to show that $\alpha \curlywedge \alpha^{\eva} = 0$ and $\alpha^{\eva \eva} = \alpha$. $\alpha \curlywedge \alpha^{\eva} = (\alpha \land \alpha^{\eva})^{\upint} = (\alpha^{\eva} \lor \alpha^{\upint})^{\eva} = 1^{\eva} = 0$ by \ref{def: negation 11}, \ref{def: negation 12}, and \ref{def: negation 17}, due to Corollary \ref{cor.sum.of.cqi.qi.properties} and $\alpha \in \evaintlatticeext\subseteq \extlattice$. $\alpha^{\eva\eva} = x^{\upint\eva\eva} = x^{\upint} = \alpha $ by \ref{def: negation 8}, due to Corollary \ref{cor.sum.of.cqi.qi.properties}.
\end{enumerate}
\end{proof}

\begin{proposition}\label{prop: Internal completeness}
\sloppy If $(\extlattice,\leq,\land,\lor,0,1,{}^\eva)$ is a complete coquasiintuitionistic algebra, then $(\evaintlatticeext,\leq,\curlywedge,\lor,0,1,{}^\eva)$ is complete.
\end{proposition}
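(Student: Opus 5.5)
The idea is that $(\cdot)^{\eva\eva}$ is a closure operator on $\extlattice$, so its fixed points are closed under arbitrary meets computed after applying the closure. Joins in $\evaintlatticeext$ will come for free from joins in $\extlattice$ via De Morgan-type identities. By Proposition \ref{prop: CQI Internal}, $\evaintlatticeext$ is a lattice with bottom $0$ and top $1$. It therefore suffices to show that every subset $M\subseteq\evaintlatticeext$ has a least upper bound and a greatest lower bound in $\evaintlatticeext$. The main care needed is to use only the finitary properties \ref{def: negation 1}, \ref{def: negation 3}, \ref{def: negation 8} of Corollary \ref{cor.sum.of.cqi.qi.properties}, plus monotonicity, since infinitary De Morgan laws are not available.

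\emph{Suprema.} Let $M\subseteq\evaintlatticeext$ and let $s:=\bigvee M$, computed in $\extlattice$. I will show $s\in\evaintlatticeext$, i.e. $s^{\eva\eva}=s$. By \ref{def: negation 3}, $s^{\eva\eva}\leq s$. For the converse, take $\alpha\in M$. Since $\alpha\leq s$, applying \ref{def: negation 1} twice gives $\alpha=\alpha^{\eva\eva}\leq s^{\eva\eva}$, using \ref{def: negation 8} for $\alpha=\alpha^{\eva\eva}$. Hence $s\leq s^{\eva\eva}$. So $s$ lies in $\evaintlatticeext$, and being the least upper bound in the larger poset $\extlattice$, it is also the least upper bound in $\evaintlatticeext$. (Alternatively, $\bigvee M$ could be defined through $(\bigwedge\{\alpha^\eva\})^\eva$, but the direct argument avoids infinitary De Morgan laws.)

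\emph{Infima.} Let $i:=\bigwedge M$ in $\extlattice$ and set $m:=i^{\eva\eva}\in\evaintlatticeext$. For $\alpha\in M$ we have $i\leq\alpha$, so by monotonicity of $(\cdot)^{\eva\eva}$, $m\leq\alpha^{\eva\eva}=\alpha$; thus $m$ is a lower bound. If $\gamma\in\evaintlatticeext$ is any lower bound of $M$, then $\gamma\leq i$, hence $\gamma=\gamma^{\eva\eva}\leq i^{\eva\eva}=m$. So $m$ is the greatest lower bound in $\evaintlatticeext$, generalising $\curlywedge$ from Proposition \ref{prop: CQI Internal}.\ref{prop: CQI Internal.ii}.

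Once one of the two constructions is in place, the other also follows abstractly, since a poset with all suprema has all infima. I expect no real obstacle. The only subtle point is the suprema step, where one must use $\alpha=\alpha^{\eva\eva}$ for elements of $\evaintlatticeext$ (via \ref{def: negation 8}) rather than any infinitary distributivity of ${}^\eva$ over joins.
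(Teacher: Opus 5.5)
Your proof is correct. It reaches the same two facts as the paper's proof: the $\extlattice$-join of $M\subseteq\evaintlatticeext$ already lies in $\evaintlatticeext$, and the meet is $(\bigwedge M)^{\eva\eva}$. You justify them differently, though.

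The paper argues through De Morgan-type identities. It rewrites $\bigvee_i x_i^{\eva\eva}$ as $(\bigwedge_i x_i^{\eva})^{\eva}$ and uses \ref{def: negation 8} to see that this is a fixed point of $(\cdot)^{\eva\eva}$. For meets, it takes a lower bound $\gamma\in\evaintlatticeext$ with $(\bigwedge M)^{\eva\eva}\leq\gamma$ and squeezes $\gamma^{\eva}$ against $\bigvee_i x_i^{\eva}=(\bigwedge M)^{\eva\eva\eva}$ to force $\gamma=(\bigwedge M)^{\eva\eva}$.

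You use only three facts: monotonicity of $(\cdot)^{\eva\eva}$, the deflation $x^{\eva\eva}\leq x$ from \ref{def: negation 3}, and $\alpha=\alpha^{\eva\eva}$ on $\evaintlatticeext$ from \ref{def: negation 8}. This is more elementary and slightly tighter. Your meet argument shows directly that every lower bound in $\evaintlatticeext$ lies below $(\bigwedge M)^{\eva\eva}$. The paper's argument only excludes lower bounds lying above it, and tacitly needs that the join of two lower bounds is again a lower bound. Your argument also handles $M=\varnothing$ without a separate case.

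Two comments on your framing.

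First, the half of the infinitary De Morgan law that the paper needs, $(\bigwedge_i y_i)^{\eva}=\bigvee_i y_i^{\eva}$, is in fact available in any complete lattice with \ref{def: negation 1} and \ref{def: negation 3}. From $(\bigvee_k y_k^{\eva})^{\eva}\leq y_i^{\eva\eva}\leq y_i$ for all $i$, one gets $(\bigvee_k y_k^{\eva})^{\eva}\leq\bigwedge_i y_i$. Hence $(\bigwedge_i y_i)^{\eva}\leq(\bigvee_k y_k^{\eva})^{\eva\eva}\leq\bigvee_k y_k^{\eva}$, and the reverse inequality is antitonicity. The paper uses this while citing only the finitary \ref{def: negation 11}--\ref{def: negation 12}; your route simply makes it unnecessary.

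Second, since $x^{\eva\eva}\leq x$, the operator $(\cdot)^{\eva\eva}$ is deflationary, i.e. an interior (kernel) operator, not a closure operator. That is exactly why its fixed points are closed under $\extlattice$-joins, while meets require an extra application of $(\cdot)^{\eva\eva}$. This is what the body of your proof establishes, so only your opening heuristic sentence is mislabelled.
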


\begin{proof}
Let $I$ be a set and let $M\coloneqq \{\alpha_{i}\in \evaintlatticeext\mid i\in I\}$ be a nonempty subset of $\evaintlatticeext$. Let $x_{i} \in \extlattice$ be such that $(x_{i})^{\eva\eva} = \alpha_{i}\in M$ $\forall i\in I$.
\begin{enumerate}[nosep, label=(\roman*)]
\item $(\bigvee M)^{\eva\eva} = (\bigvee_{i\in I} \alpha_{i})^{\eva\eva} = (\bigvee_{i\in I} (x_{i})^{\eva\eva})^{\eva\eva} = (\bigwedge_{i\in I} (x_{i})^{\eva})^{\eva\eva\eva} =  (\bigwedge_{i\in I} (x_{i})^{\eva})^{\eva} = \bigvee_{i\in I} (x_{i})^{\eva\eva} = \bigvee_{i\in I} \alpha_{i} = \bigvee M$, by \ref{def: negation 8}, \ref{def: negation 11}, and \ref{def: negation 12}, due to Corollary \ref{cor.sum.of.cqi.qi.properties}. Thus, the join in $\extlattice$ of a subset $M$ of $\evaintlatticeext$ is contained in $\evaintlatticeext$.
\item $\bigwedge M\leq x_i$ $\forall i\in I$, so $\bigcurlywedge M:=(\bigwedge M)^{\eva\eva}\leq(x_i)^{\eva\eva}=\alpha_i$ $\forall i\in I$, by \ref{def: negation 1}. Assume that $\exists \gamma \in \evaintlatticeext$ such that $(\bigwedge M)^{\eva\eva} \leq \gamma \leq \alpha_{i}$ $\forall i\in I$. Then $(\bigwedge M)^{\eva\eva\eva} \geq \gamma^{\eva} \geq (\alpha_{i})^{\eva}$ $\forall i\in I$, by \ref{def: negation 1}. From $(\alpha_{i})^{\eva} = (x_{i})^{\eva\eva\eva} = (x_{i})^{\eva}$, by \ref{def: negation 8}, and $(\bigwedge M)^{\eva\eva\eva} = (\bigwedge_{i\in I} (x_{i})^{\eva\eva})^{\eva\eva\eva} = (\bigwedge_{i\in I} (x_{i})^{\eva\eva})^{\eva} = \bigvee_{i\in I} (x_{i})^{\eva\eva\eva} = \bigvee_{i\in I} (x_{i})^{\eva}$, by \ref{def: negation 11} and \ref{def: negation 12}, it follows that $\bigvee_{i\in I} (x_{i})^{\eva} = (\bigwedge M)^{\eva\eva\eva} \geq \gamma^{\eva} \geq (\alpha_{i})^{\eva} = (x_{i})^{\eva}$ $\forall i\in I$. So $(\bigwedge M)^{\eva\eva\eva} = \gamma^{\eva}$, by definition of the join in $\extlattice$. Hence, by Proposition \ref{prop: CQI Internal}.\ref{prop: CQI Internal.iv} and \ref{def: negation 8}, $\gamma = \gamma^{\eva\eva} = (\bigwedge M)^{\eva\eva\eva\eva} =(\bigwedge M)^{\eva\eva}$. Thus, the meet of a subset $M$ of $\evaintlatticeext$ exists and is given by $\bigcurlywedge M$.
\end{enumerate} 
\end{proof}

\begin{corollary}\label{cor: QI Internal}
Let $(\extlattice,\leq,\land,\lor,0,1,{}^{\coeva})$ be a quasiintuitionistic algebra and let $\coevaintlatticeext\coloneqq \{x^{\coupint} \mid x\in \extlattice\} \subseteq \extlattice$ be equipped with the restriction of the partial order $\leq$ on $\extlattice$ to $\coevaintlatticeext$ and the restriction of ${}^\coeva$ on $\extlattice$ to $\coevaintlatticeext$, denoted without change of symbol. Then $(\coevaintlatticeext,\leq,\land,\curlyvee,0,1,{}^\coeva)$ is an orthocomplemented lattice such that:
\begin{enumerate}[nosep, label=(\roman*)]
\item\label{prop: QI Internal.i} the join $\curlyvee$ on $\coevaintlatticeext$ is given by $\alpha \curlyvee \beta \coloneqq (\alpha \lor \beta)^{\coupint} $ $\forall \alpha,\beta \in \coevaintlatticeext$;
\item\label{prop: QI Internal.ii} the meet $\land$ on $\coevaintlatticeext$ is given by the meet $\land$ of $\extlattice$ restricted to $\coevaintlatticeext$;
\item\label{prop: QI Internal.iii} the bottom (resp., top) element of $\coevaintlatticeext$ is given by $0$ (resp., $1$)$\in \extlattice$;
\item\label{prop: QI Internal.iv} the orthocomplementation is given by ${}^{\coeva}$.
\end{enumerate}
\end{corollary}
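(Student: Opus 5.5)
This follows from Proposition \ref{prop: CQI Internal} by $(\land,\lor)$-duality. The cleanest way to make this precise is to pass to the order dual. Given a quasiintuitionistic algebra $(\extlattice,\leq,\land,\lor,0,1,{}^{\coeva})$, I would consider $(\extlattice,\geq,\lor,\land,1,0,{}^{\coeva})$. Under this reversal:
\begin{itemize}
\item \ref{def: negation 1} is self-dual;
\item \ref{def: negation 2} (namely $x\leq x^{\coeva\coeva}$) becomes \ref{def: negation 3} for $\geq$;
\item \ref{def: negation 6} ($x\land x^{\coeva}\leq y$) becomes \ref{def: negation 7} ($y\geq x\lor' x^{\coeva}$, where $\lor'=\land$);
\item distributivity is self-dual.
\end{itemize}
Hence the dual structure is a coquasiintuitionistic algebra in the sense of Definition \ref{def.coquasiintuitionist.Akchurin.etc}. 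Its double-negation image is literally the set $\coevaintlatticeext=\{x^{\coeva\coeva}\mid x\in\extlattice\}$, since the map ${}^\coeva$ itself is unchanged.

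Next I would apply Proposition \ref{prop: CQI Internal} to the dual algebra. It gives an orthocomplemented lattice whose:
\begin{itemize}
\item join (for $\geq$) is the restriction of the ambient $\land$;
\item meet (for $\geq$) is $(\alpha\land'\beta)^{\coeva\coeva}=(\alpha\lor\beta)^{\coeva\coeva}$;
\item bottom is $1$ and top is $0$;
\item orthocomplementation is ${}^\coeva$.
\end{itemize}
Reversing the order back to $\leq$ swaps join with meet and bottom with top. This yields exactly items \ref{prop: QI Internal.i}--\ref{prop: QI Internal.iv}.

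It then remains to observe that being an orthocomplemented lattice is itself self-dual. Conditions \ref{def: negation 1}, \ref{def: negation 2}, \ref{def: negation 3} are preserved, and \ref{def: negation 6} and \ref{def: negation 7} swap into each other. So $(\coevaintlatticeext,\leq,\land,\curlyvee,0,1,{}^\coeva)$ is an orthocomplemented lattice.

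The only step needing care is checking that the order dual of a quasiintuitionistic algebra is really coquasiintuitionistic, i.e. that \ref{def: negation 6} dualises to \ref{def: negation 7} with quantifiers intact. Alternatively, one could redo the proof of Proposition \ref{prop: CQI Internal} directly. The closure under $\land$ would use \ref{def: negation 8}, \ref{def: negation 9}, \ref{def: negation 10} from Corollary \ref{cor.sum.of.cqi.qi.properties}: $\alpha\land\beta=(x^{\coeva}\lor y^{\coeva})^{\coeva}$. Minimality of $(\alpha\lor\beta)^{\coeva\coeva}$ among upper bounds would follow by the symmetric argument, and $\alpha\curlyvee\alpha^{\coeva}=(\alpha^{\coeva}\land\alpha^{\coeva\coeva})^{\coeva}=0^{\coeva}=1$ would finish the argument via Lemma \ref{lemm.bullet.negation.properties}.\ref{lemm.bullet.negation.properties y}.
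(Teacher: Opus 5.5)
Your proposal is correct and takes the same approach as the paper, whose proof is the one-line remark that the corollary follows from Proposition \ref{prop: CQI Internal} by $(\land,\lor)$-duality. Your explicit passage to the order dual $(\extlattice,\geq,\lor,\land,1,0,{}^{\coeva})$ makes that remark precise, via the checks that \ref{def: negation 2}/\ref{def: negation 3} and \ref{def: negation 6}/\ref{def: negation 7} swap and that being an orthocomplemented lattice is a self-dual property.
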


\begin{proof}
Follows from Proposition \ref{prop: CQI Internal} by $(\land,\lor)$-duality.
\end{proof}

\begin{corollary}\label{cor: QI Internal completeness}
\sloppy If $(\extlattice,\leq,\land,\lor,0,1,{}^\coeva)$ is a complete quasiintuitionistic algebra, then $(\coevaintlatticeext,\leq,\land,\,\curlyvee,0,1,{}^\coeva)$ is complete.
\end{corollary}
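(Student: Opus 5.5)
The plan is to obtain this from Proposition \ref{prop: Internal completeness} by $(\land,\lor)$-duality, exactly as Corollary \ref{cor: QI Internal} was obtained from Proposition \ref{prop: CQI Internal}. Concretely, I will pass to the order-dual lattice $(\extlattice,\geq,\lor,\land,1,0)$. Conditions \ref{def: negation 1} and \ref{def: negation 6} for ${}^\coeva$ on $\extlattice$ become \ref{def: negation 1} and \ref{def: negation 7} on the dual, and \ref{def: negation 2} becomes \ref{def: negation 3}. Distributivity and completeness are self-dual. Hence $(\extlattice,\geq,\lor,\land,1,0,{}^\coeva)$ is a complete coquasiintuitionistic algebra. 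Applying Proposition \ref{prop: Internal completeness} to it shows that $(\coevaintlatticeext,\geq,\curlyvee,\land,1,0,{}^\coeva)$ is complete, where the dual-meet $(\alpha\lor\beta)^{\coupint}$ is exactly $\curlyvee$. Reversing the order again gives completeness of $(\coevaintlatticeext,\leq,\land,\curlyvee,0,1,{}^\coeva)$.

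If I instead write the argument out directly, I would proceed as follows. Take a nonempty $M=\{\alpha_i\}_{i\in I}\subseteq\coevaintlatticeext$ with $\alpha_i=(x_i)^{\coupint}$.

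\textbf{Meets.} I will show that $\bigwedge M\in\coevaintlatticeext$. Using \ref{def: negation 8} together with the infinitary De Morgan-type laws $(\bigvee y_i)^\coeva=\bigwedge y_i^\coeva$ and $(\bigwedge y_i^{\coeva})^{\coeva}\ldots$, the computation is
\[
\bigwedge_i (x_i)^{\coupint} = \Bigl(\bigvee_i (x_i)^\coeva\Bigr)^\coeva = \Bigl(\bigvee_i (x_i)^\coeva\Bigr)^{\coeva\coeva\coeva}.
\]
Since the right-hand side is a double negation, $\bigwedge M$ is a fixed point and lies in $\coevaintlatticeext$.

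\textbf{Joins.} The join is $\bigcurlyvee M:=(\bigvee M)^{\coupint}$. It is an upper bound because $\alpha_i\leq\bigvee M\leq(\bigvee M)^{\coupint}$ by \ref{def: negation 2}. For leastness, let $\gamma\in\coevaintlatticeext$ be any upper bound of $M$. Then $\bigvee M\leq\gamma$, so applying \ref{def: negation 1} twice gives $(\bigvee M)^{\coupint}\leq\gamma^{\coupint}=\gamma$.

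The main obstacle is checking the infinitary De Morgan law. The direction $(\bigvee y_i)^\coeva\leq\bigwedge y_i^\coeva$ follows from \ref{def: negation 1}. The converse $\bigwedge y_i^\coeva\leq(\bigvee y_i)^\coeva$ would follow from \ref{def: negation 9}, which by Lemma \ref{lemm.bullet.negation.properties} holds for \ref{def: negation 2} only in the finitary form. I would therefore verify it in the arbitrary-index case via the Dunn--Zhou-style argument: each $y_j\leq y_j^{\coeva\coeva}\leq(\bigwedge_i y_i^\coeva)^\coeva$, so $\bigvee y_j\leq(\bigwedge y_i^\coeva)^\coeva$. Then \ref{def: negation 1} and \ref{def: negation 2} give $\bigwedge y_i^\coeva\leq(\bigwedge y_i^\coeva)^{\coeva\coeva}\leq(\bigvee y_j)^\coeva$.

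The proof of Proposition \ref{prop: Internal completeness} uses the dual infinitary laws implicitly, so under the duality approach the same check is simply inherited from there.
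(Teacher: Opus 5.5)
Your proposal is correct and takes the same route as the paper, which derives the statement from Proposition \ref{prop: Internal completeness} by $(\land,\lor)$-duality; your translation of the axioms (\ref{def: negation 2} becoming \ref{def: negation 3}, \ref{def: negation 6} becoming \ref{def: negation 7}) and your identification of the dual meet with $\curlyvee$ are accurate. Your supplementary direct argument is also sound, including the arbitrary-index De Morgan law, which the paper's proof of Proposition \ref{prop: Internal completeness} uses while citing only the finitary conditions.
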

\begin{proof}
Follows from Proposition \ref{prop: Internal completeness} by $(\land,\lor)$-duality.
\end{proof}

\begin{proposition}\label{prop.brouwer.heyting.internal.boolean}
If $(\extlattice,\leq,\land,\lor,0,1,{}^\eva)$ (resp., $(\extlattice,\leq,\land,\lor,0,1,{}^\coeva)$) is a Brouwer (resp., Heyting) algebra with $(\cdot)^{\eva}=\cohnot(\cdot)$ (resp., $(\cdot)^{\coeva}=\hnot(\cdot)$), then $(\evaintlatticeext,\leq,\curlywedge,\lor,0,1,{}^\coeva)$ (resp., $(\coevaintlatticeext,\leq,\land,\,\curlyvee,0,1,{}^\coeva)$) is a boolean algebra.
\end{proposition}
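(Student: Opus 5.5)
The plan is to reduce the statement to Proposition \ref{prop: CQI Internal} together with the definition of a boolean algebra, Definition \ref{def.coquasiintuitionist.Akchurin.etc}.\ref{def.coquasiintuitionist.Akchurin.etc 2}. First note that in a Brouwer algebra $\cohnot$ is a cointuitionistic negation (Proposition \ref{prop.skolem.properties}.\ref{prop.skolem.properties.cohnot}), hence in particular coquasiintuitionistic. The lattice is also distributive (Proposition \ref{prop.skolem.properties}.\ref{prop.skolem.properties.1}), so $(\extlattice,\leq,\land,\lor,0,1,\cohnot)$ is a coquasiintuitionistic algebra. Proposition \ref{prop: CQI Internal} then gives that $(\evaintlatticeext,\leq,\curlywedge,\lor,0,1,{}^\eva)$ is an orthocomplemented lattice; the ${}^\coeva$ in the displayed Brouwer case is presumably a typo for ${}^\eva$. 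It therefore remains only to prove that $(\evaintlatticeext,\curlywedge,\lor)$ is distributive.

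For distributivity I will prove $(\alpha\lor\beta)\curlywedge\gamma\leq(\alpha\curlywedge\gamma)\lor(\beta\curlywedge\gamma)$ for $\alpha,\beta,\gamma\in\evaintlatticeext$; the reverse inequality holds in any lattice. The key tool is the standard Brouwer-algebra fact that $\cohnot\cohnot$ preserves binary meets, the dual of $\hnot\hnot(x\land y)=\hnot\hnot x\land\hnot\hnot y$ in Heyting algebras (cf.\ \cite[Thm. 4.2]{McKinsey:Tarski:1946}). To derive it, I use that $\cohnot(x\land y)=\cohnot x\lor\cohnot y$ (n12, Corollary \ref{cor.sum.of.cqi.qi.properties}). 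I then show $\cohnot(a\lor b)=\cohnot a\land\cohnot b$ for $a=\cohnot x$ and $b=\cohnot y$, by exploiting n5 via Lemma \ref{lemm.coheyting.negation}: $z\lor w=1$ if{}f $\cohnot z\leq w$. Concretely, $\cohnot\cohnot x\land\cohnot\cohnot y\leq\cohnot(\cohnot x\lor\cohnot y)$ is the nontrivial direction. It is the dual of the Heyting identity $\hnot(\hnot x\land\hnot y)\leq\hnot\hnot x\lor\hnot\hnot y$, which fails in general Heyting algebras, so I will instead aim directly at the dual of $\hnot\hnot(x\land y)=\hnot\hnot x\land\hnot\hnot y$. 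That identity is $\cohnot\cohnot(x\lor y)=\cohnot\cohnot x\lor\cohnot\cohnot y$, and it is exactly what makes $\lor$ behave well; the join of $\evaintlatticeext$ is already inherited.

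With that in hand, the computation runs as follows. Since $\lor$ is inherited and the underlying lattice is distributive, $\alpha\lor\beta\in\evaintlatticeext$ and
\[
(\alpha\lor\beta)\curlywedge\gamma=((\alpha\lor\beta)\land\gamma)^{\eva\eva}=((\alpha\land\gamma)\lor(\beta\land\gamma))^{\eva\eva}=(\alpha\land\gamma)^{\eva\eva}\lor(\beta\land\gamma)^{\eva\eva},
\]
where the last step uses that $\cohnot\cohnot$ preserves binary joins. The right-hand side is $(\alpha\curlywedge\gamma)\lor(\beta\curlywedge\gamma)$, so distributivity follows. The main obstacle is proving join preservation $\cohnot\cohnot(x\lor y)=\cohnot\cohnot x\lor\cohnot\cohnot y$ from the adjunction \eqref{eqn.adjunction.coheyting.implication}. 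I would do this dually to the Heyting proof: show $\cohnot(x\lor y)\geq\cohnot x\land\cohnot y$ is not enough, and instead use $\cohnot\cohnot x=\cohnot x\cohimp\ldots$. Concretely, I would prove $\cohnot\cohnot(x\lor y)\leq\cohnot\cohnot x\lor\cohnot\cohnot y$ by showing, via Lemma \ref{lemm.coheyting.negation}, that $\cohnot(x\lor y)\lor\cohnot\cohnot x\lor\cohnot\cohnot y=1$. For this, use $\cohnot\cohnot x\lor\cohnot x=1$, distributivity, and the fact that $\cohnot(x\lor y)\geq\cohnot x\land\cohnot y$ modulo the complemented parts. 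The reverse inequality follows from monotonicity (n1).

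Finally, the Heyting case follows by $(\land,\lor)$-duality, using Corollary \ref{cor: QI Internal} in place of Proposition \ref{prop: CQI Internal}.
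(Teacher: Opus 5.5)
You reduce the statement correctly, and your route differs from the paper's, which simply cites \cite[Thm. 4.5]{McKinsey:Tarski:1946} for the Brouwer case and dualises. Proposition \ref{prop: CQI Internal} gives the orthocomplementation; you are right that ${}^\coeva$ should read ${}^\eva$ there. Your computation with $\curlywedge$ correctly reduces distributivity to $\cohnot\cohnot(x\lor y)\leq\cohnot\cohnot x\lor\cohnot\cohnot y$, and Lemma \ref{lemm.coheyting.negation} reduces this further to
\[
\cohnot(x\lor y)\lor\cohnot\cohnot x\lor\cohnot\cohnot y=1 .
\]
Your argument for this equation, the step you call the main obstacle, does not work, for three reasons:
\begin{enumerate}[nosep,label=(\alph*)]
\item The inequality $\cohnot(x\lor y)\geq\cohnot x\land\cohnot y$ you invoke is false in Brouwer algebras; it is dual to the failing De Morgan law $\hnot(x\land y)\leq\hnot x\lor\hnot y$. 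In the Brouwer algebra of closed subsets of $\mathbb{R}$, where $\cohnot x$ is the closure of the complement, take $x=(-\infty,0]$ and $y=[0,\infty)$. Then $\cohnot(x\lor y)=\varnothing$, but $\cohnot x\land\cohnot y=\{0\}$.
\item Reading it ``modulo the complemented parts'' as $\cohnot x\land\cohnot y\leq\cohnot(x\lor y)\lor\cohnot\cohnot x\lor\cohnot\cohnot y$ is circular. Once $1=(\cohnot x\lor\cohnot\cohnot x)\land(\cohnot y\lor\cohnot\cohnot y)$ is expanded by distributivity, this is exactly the equation to be proved.
\item Condition \ref{def: negation 7} and distributivity cannot suffice, since they hold for every coquasiintuitionistic negation. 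On the four-element boolean lattice $\{0,p,q,1\}$, set $x^\eva:=1$ for $x\neq1$ and $1^\eva:=0$. This map is coquasiintuitionistic, yet $(p\lor q)^\eva\lor p^{\eva\eva}\lor q^{\eva\eva}=0$.
\end{enumerate}
Your earlier claims that $\cohnot\cohnot$ ``preserves binary meets'' and that $\cohnot(\cohnot x\lor\cohnot y)=\cohnot\cohnot x\land\cohnot\cohnot y$ are also false, with the same counterexample, and should be removed from the write-up.

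The missing idea is to use the Brouwer property $z\lor w=1\iff\cohnot z\leq w$ (Lemma \ref{lemm.coheyting.negation}) twice more. Put $u:=\cohnot(x\lor y)$.
\begin{enumerate}[nosep,label=(\arabic*)]
\item By \ref{def: negation 7}, $y\lor(x\lor u)=1$, so $\cohnot y\leq x\lor u$.
\item Hence $x\lor(u\lor\cohnot\cohnot y)\geq\cohnot y\lor\cohnot\cohnot y=1$, so $\cohnot x\leq u\lor\cohnot\cohnot y$.
\item Hence $u\lor\cohnot\cohnot y\lor\cohnot\cohnot x\geq\cohnot x\lor\cohnot\cohnot x=1$.
\end{enumerate}
With this step, and the reverse inequality from \ref{def: negation 1}, your proof is complete. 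It is a self-contained alternative to the citation. It also shows that the only ingredient beyond Proposition \ref{prop: CQI Internal} is join preservation of $\cohnot\cohnot$. That property genuinely needs \eqref{eqn.adjunction.coheyting.implication}: in the four-element example above, $(p\lor q)^{\eva\eva}=1$ while $p^{\eva\eva}\lor q^{\eva\eva}=0$.
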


\begin{proof}
For $(K,\leq,\land,\lor,0,1,{}^\eva)$ this was proved in \cite[Thm. 4.5]{McKinsey:Tarski:1946}. The statement for $(K,\leq,\land,\lor,0,1,{}^\coeva)$ follows by $(\land,\lor)$-duality.
\end{proof}

\begin{corollary}\label{cor.akchurin.internal.lattices}
If $(K,\leq,\land,\lor,0,1,\hnot,\himp,\cohnot,\cohimp,{}^\coeva,{}^\eva)$ is an Akchurin algebra, then:
\begin{enumerate}[nosep, label=(\roman*)]
\item\label{cor.akchurin.internal.lattices.i} $(\evaintlatticeext,\leq,\curlywedge,\lor,0,1,{}^\coeva)$ and $(\coevaintlatticeext,\leq,\land,\,\curlyvee,0,1,{}^\coeva)$ are orthocomplemented lattices;
\item\label{cor.akchurin.internal.lattices.ii} $(K_{\hnot\hnot},\leq,\curlywedge,\lor,0,1,\hnot)$ and $(K_{\cohnot\cohnot},\leq,\land,\,\curlyvee,0,1,\cohnot)$ are boolean lattices;
\item\label{cor.akchurin.internal.lattices.iv} $(\evaintlatticeext,\leq,\curlywedge,\lor,0,1,{}^\eva)\iso(\coevaintlatticeext,\leq,\land,\,\curlyvee,0,1,{}^\coeva)$ if $\Theta(x^\coeva)=(\Theta(x))^\eva$;
\item\label{cor.akchurin.internal.lattices.iii} $(K_{\hnot\hnot},\leq,\curlywedge,\lor,0,1,\hnot)\iso(K_{\cohnot\cohnot},\leq,\land,\,\curlyvee,0,1,\cohnot)$.
\end{enumerate}
\end{corollary}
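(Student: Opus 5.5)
Each item reduces to results already established, and the main work is in the conditional isomorphism of item \ref{cor.akchurin.internal.lattices.iv}.

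Item \ref{cor.akchurin.internal.lattices.i}: by Definition \ref{def.biquasiintuitionistic}.\ref{def.biquasiintuitionistic.2}, $(K,\leq,\land,\lor,0,1,{}^\eva)$ is coquasiintuitionistic and $(K,\leq,\land,\lor,0,1,{}^\coeva)$ is quasiintuitionistic. Proposition \ref{prop: CQI Internal} then gives the first lattice, with orthocomplementation ${}^\eva$ (the ${}^\coeva$ in the statement is a typographical slip). Corollary \ref{cor: QI Internal} gives the second.

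Item \ref{cor.akchurin.internal.lattices.ii}: $(K,\leq,\land,\lor,0,1)$ is a Skolem algebra, hence both a Heyting and a Brouwer algebra. By Proposition \ref{prop.skolem.properties}, $\hnot$ is intuitionistic, hence quasiintuitionistic, and $\cohnot$ is cointuitionistic, hence coquasiintuitionistic. Applying Proposition \ref{prop.brouwer.heyting.internal.boolean} with $(\cdot)^\coeva:=\hnot$ and $(\cdot)^\eva:=\cohnot$ yields the two boolean lattices. The meet and join in each are the ones supplied by Corollary \ref{cor: QI Internal} and Proposition \ref{prop: CQI Internal}, after matching notation between $K_{\hnot\hnot}$ and the $\coeva\coeva$ case.

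Item \ref{cor.akchurin.internal.lattices.iv}: I read $\Theta:K\ra K$ as a $(\land,\lor)$-duality satisfying $\Theta(x^\coeva)=(\Theta(x))^\eva$. For it to serve as an isomorphism, $\Theta$ must also be an order-reversing bijection, e.g. an involution, so that $\Theta(x\land y)=\Theta(x)\lor\Theta(y)$ is consistent. I would add bijectivity as an implicit hypothesis. Applying the intertwining twice gives $\Theta(x^{\coeva\coeva})=\Theta(x)^{\eva\eva}$, so $\Theta$ maps $\coevaintlatticeext$ into $\evaintlatticeext$, and onto it if $\Theta$ is surjective.

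Next, $\Theta$ sends $\land$ to $\lor$, which is the join on $\evaintlatticeext$ by Proposition \ref{prop: CQI Internal}.\ref{prop: CQI Internal.i}. It also sends $\alpha\curlyvee\beta=(\alpha\lor\beta)^{\coeva\coeva}$ to $(\Theta\alpha\land\Theta\beta)^{\eva\eva}=\Theta\alpha\curlywedge\Theta\beta$. It maps $0,1$ to $1,0$ and intertwines the orthocomplements. Thus $\Theta$ is an order anti-isomorphism $\coevaintlatticeext\ra\evaintlatticeext$ respecting orthocomplements.

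Composing with the orthocomplementation ${}^\eva$ of $\evaintlatticeext$, which is itself an anti-automorphism by Proposition \ref{prop: CQI Internal}.\ref{prop: CQI Internal.iv}, gives $\alpha\mapsto(\Theta\alpha)^\eva$. This map is order preserving, sends $\land\mapsto\curlywedge$ and $\curlyvee\mapsto\lor$, and commutes with complements. Its inverse is $\beta\mapsto\Theta^{-1}(\beta^\eva)$, which is well defined by the same intertwining. The main obstacle is exactly this point: the statement is vague about which properties of $\Theta$ are assumed, so the proof must state the bijectivity assumption explicitly.

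Item \ref{cor.akchurin.internal.lattices.iii}: this is the Skolem case. The cleanest route is that both $K_{\hnot\hnot}$ and $K_{\cohnot\cohnot}$ are boolean, and I would construct explicit mutually inverse maps $x\mapsto\cohnot\hnot x$ and $y\mapsto\hnot\cohnot y$. For $x\in K_{\hnot\hnot}$, $\hnot x$ is complemented in $K$, because it has the boolean complement $x$ within $K_{\hnot\hnot}$, i.e. $x\land\hnot x=0$ and $x\lor\hnot x=1$. For complemented elements, $\cohnot$ and $\hnot$ both return the lattice complement. Hence $\cohnot\hnot x=x$ and $x\in K_{\cohnot\cohnot}$, so the two sets coincide and the identity is the isomorphism. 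This matches the paper's remark that the two internal boolean lattices of a Skolem algebra are isomorphic. The one check needed is $x\lor\hnot x=1$ in $K$ rather than only in the internal join; I expect it to follow because the internal join agrees with the join in $K$ on one side.
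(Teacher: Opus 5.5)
Items (i)--(iii) are fine. In particular, you read $\Theta$ as a bijective lattice anti-automorphism with $\Theta(x^\coeva)=(\Theta(x))^\eva$ and then use the map $\alpha\mapsto(\Theta(\alpha))^\eva$. That is a correct argument, and much more explicit than the paper's one-line appeal to the definition of an Akchurin algebra.

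The gap is in item (iv), exactly at the step you flagged. On $K_{\hnot\hnot}$ it is the \emph{meet} that is inherited from $K$; the join is $\hnot\hnot(\cdot\lor\cdot)$. This is also why, in item (ii), $K_{\hnot\hnot}$ must carry $(\land,\curlyvee)$ and $K_{\cohnot\cohnot}$ must carry $(\curlywedge,\lor)$, as Proposition \ref{prop.brouwer.heyting.internal.boolean} gives; your ``matching of notation'' is really a swap of the operations written in the statement. So for $x\in K_{\hnot\hnot}$ you only get $\hnot\hnot(x\lor\hnot x)=\hnot(\hnot x\land\hnot\hnot x)=\hnot 0=1$, not $x\lor\hnot x=1$. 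Regular elements of a Heyting algebra need not be complemented in $K$, and $K_{\hnot\hnot}$ and $K_{\cohnot\cohnot}$ need not coincide.

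In fact no argument can close this gap, because item (iv) is false. Let $K=\{0,a,b,c,1\}$ with $a\land b=0$ and $a\lor b=c<1$, i.e.\ the four-element boolean lattice with a new top adjoined. It is a finite distributive lattice, hence a Skolem algebra. By Proposition \ref{prop.skolem.properties} it becomes an Akchurin algebra with ${}^\coeva:=\hnot$ and ${}^\eva:=\cohnot$. Here $\hnot a=b$, $\hnot b=a$ and $\hnot c=\hnot 1=0$, so $K_{\hnot\hnot}=\{0,a,b,1\}$, and $a\lor\hnot a=c\neq 1$. On the other hand, $x\lor z\leq c$ whenever $x,z\neq 1$, so $\cohnot x=1$ for all $x\neq 1$ and $K_{\cohnot\cohnot}=\{0,1\}$. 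A four-element boolean lattice is not isomorphic to a two-element one. More generally, for the lattice of downsets of a finite poset $P$, $K_{\hnot\hnot}$ is isomorphic to the powerset of the minimal elements of $P$, and $K_{\cohnot\cohnot}$ to the powerset of the maximal elements.

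The paper's own justification, deriving (iv) from (iii) and the definition of a Skolem algebra, fails for the same reason. It would need a $(\land,\lor)$-duality on $K$ itself intertwining $\hnot$ and $\cohnot$. The order-duality between the definitions of Heyting and Brouwer algebras relates two classes of algebras; it is not a map on a given $K$. This particular $K$ has two atoms but a single coatom, so it admits no order-reversing bijection at all.

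What your argument for (iii) does establish is the conditional version: if $K$ admits an anti-automorphism $\Theta$ with $\Theta(\hnot x)=\cohnot\Theta(x)$, then $K_{\hnot\hnot}\iso K_{\cohnot\cohnot}$.
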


\begin{proof}
\begin{enumerate}[nosep, label=(\roman*)]
\item Follows from Definition \ref{def.biquasiintuitionistic}.\ref{def.biquasiintuitionistic.2}, Proposition \ref{prop: CQI Internal}, and Corollary \ref{cor: QI Internal}.
\item Follows from Proposition \ref{prop.brouwer.heyting.internal.boolean}.
\item Follows from Definition \ref{def.biquasiintuitionistic}.\ref{def.biquasiintuitionistic.2}.
\item Follows from \ref{cor.akchurin.internal.lattices.iv} and Definition \ref{def.heyting.brouwer.skolem.algebra}.
\end{enumerate}
\end{proof}

\begin{proposition}\label{prop: Precqi internal oml}
If $(\extlattice,\leq,\land,\lor,0,1,{}^\eva)$ is a coquasiintuitionistic algebra, then $(\evaintlatticeext,\leq,\curlywedge,\lor,0,1,{}^\eva)$ is an orthomodular lattice if{f} $\forall x\in \extlattice\setminus\{0,1\}\ \nexists y\in \extlattice\setminus\{0,1\}\st x^{\eva} \leq y $ and $(x\land y)^\eva=1$.
\end{proposition}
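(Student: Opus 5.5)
The plan is to reduce the statement to Proposition \ref{prop: OML fun}.\ref{item: OML fun 1}, applied to the orthocomplemented lattice $(\evaintlatticeext,\leq,\curlywedge,\lor,0,1,{}^\eva)$ of Proposition \ref{prop: CQI Internal}. Then I transport the resulting condition from $\evaintlatticeext$ to the whole of $\extlattice$. Throughout I use the properties of ${}^\eva$ listed in Corollary \ref{cor.sum.of.cqi.qi.properties}, chiefly \ref{def: negation 1}, \ref{def: negation 3}, \ref{def: negation 8}, \ref{def: negation 17} and \ref{def: negation 18}.

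\emph{Step 1 (restating the criterion inside $\evaintlatticeext$).} By Proposition \ref{prop: OML fun}.\ref{item: OML fun 1}, $\evaintlatticeext$ is orthomodular iff there are no $a,b\in\evaintlatticeext\setminus\{0,1\}$ with $a^\eva\leq b$ and $a\curlywedge b=0$. Here $a\curlywedge b=(a\land b)^{\eva\eva}$ by Proposition \ref{prop: CQI Internal}.\ref{prop: CQI Internal.ii}. I claim that $(a\land b)^{\eva\eva}=0$ iff $(a\land b)^\eva=1$, for any $a,b\in\extlattice$:
\begin{itemize}
\item if $(a\land b)^{\eva\eva}=0$, then $(a\land b)^\eva=(a\land b)^{\eva\eva\eva}=0^\eva=1$;
\item if $(a\land b)^\eva=1$, then $(a\land b)^{\eva\eva}=1^\eva=0$.
\end{itemize}
So $\evaintlatticeext$ is orthomodular iff no $a,b\in\evaintlatticeext\setminus\{0,1\}$ satisfy $a^\eva\leq b$ and $(a\land b)^\eva=1$. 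This is exactly the condition of the statement, but with $\extlattice$ replaced by $\evaintlatticeext$.

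\emph{Step 2 (the easy direction).} Suppose the condition of the statement holds on $\extlattice$. Since $\evaintlatticeext\subseteq\extlattice$, it holds in particular on $\evaintlatticeext$. By Step 1, $\evaintlatticeext$ is then orthomodular.

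\emph{Step 3 (the converse).} Suppose $\evaintlatticeext$ is orthomodular, and suppose for contradiction that $x,y\in\extlattice\setminus\{0,1\}$ satisfy $x^\eva\leq y$ and $(x\land y)^\eva=1$. Put $a:=x^{\eva\eva}$ and $b:=y^{\eva\eva}$, both in $\evaintlatticeext$.
\begin{itemize}
\item Applying \ref{def: negation 1} twice to $x^\eva\leq y$ gives $x^{\eva\eva\eva}\leq y^{\eva\eva}$. Hence $a^\eva=x^{\eva\eva\eva}\leq b$ by \ref{def: negation 8}.
\item By \ref{def: negation 3}, $a\land b\leq x\land y$. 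Hence $(a\land b)^\eva\geq(x\land y)^\eva=1$ by \ref{def: negation 1}.
\end{itemize}
By Step 1 this contradicts orthomodularity, once we know that $a,b\notin\{0,1\}$.

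I expect this nontriviality check to be the main obstacle, since $x,y$ need not lie in $\evaintlatticeext$. I would handle the four cases as follows.
\begin{itemize}
\item If $a=1$, then $1=x^{\eva\eva}\leq x$, so $x=1$.
\item If $b=1$, then likewise $y=1$.
\item If $a=0$, then $x^\eva=x^{\eva\eva\eva}=0^\eva=1$. Since $x^\eva\leq y$, this forces $y=1$.
\item If $b=0$, then $y^\eva=y^{\eva\eva\eva}=0^\eva=1$. From $x^\eva\leq y$ and \ref{def: negation 1} we get $y^\eva\leq x^{\eva\eva}\leq x$, so $x=1$.
\end{itemize}
Each case contradicts $x,y\neq 0,1$. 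This completes the equivalence.
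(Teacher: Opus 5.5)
Your strategy is the paper's own: apply Proposition~\ref{prop: OML fun}.\ref{item: OML fun 1} inside $\evaintlatticeext$, then transport the condition to $\extlattice$. Your transport is correct and more careful than the paper's, which silently identifies $x\in\extlattice\setminus\{0,1\}$ with $x^{\eva\eva}\in\evaintlatticeext\setminus\{0,1\}$. This covers the equivalence $(a\land b)^{\eva\eva}=0$ if{}f $(a\land b)^\eva=1$, the passage $x,y\mapsto x^{\eva\eva},y^{\eva\eva}$, and the four nontriviality checks.

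\textbf{The gap: the input is false.} The gap lies in the input you take over from the paper. You use the ``only if'' half of Proposition~\ref{prop: OML fun}.\ref{item: OML fun 1} in Step~1, and you invoke it at the end of Step~3 (``this contradicts orthomodularity''). That half is false. For every $a\in L\setminus\{0,1\}$, the element $b:=a^\perp$ lies in $L\setminus\{0,1\}$ and satisfies $a^\perp\leq b$ and $a\land b=0$. So the right-hand side of that item holds only when $L\setminus\{0,1\}$ is empty. In the paper's proof of that item, the six-element set collapses to $\{0,a,a^\perp,1\}$ when $b=a^\perp$.

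\textbf{The statement itself is false.} No argument can give the implication from orthomodularity to the condition. In any coquasiintuitionistic algebra, $y:=x^\eva$ satisfies $x^\eva\leq y$ and $(x\land x^\eva)^\eva=x^\eva\lor x^{\eva\eva}=1$, by \ref{def: negation 11}, \ref{def: negation 12}, and \ref{def: negation 7} applied to $x^\eva$. Concretely, take the four-element boolean algebra $\{0,a,a^\perp,1\}$ with ${}^\eva:={}^\perp$. It is a coquasiintuitionistic algebra whose internal lattice $\evaintlatticeext=\extlattice$ is boolean, hence orthomodular. Yet $x=a$, $y=a^\perp$ violate the condition. The paper's proof follows the same route and inherits the same defect.

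\textbf{The repair.} Use the correct criterion: an orthocomplemented lattice is orthomodular if{}f there are no $a,b$ with $a^\perp<b$ and $a\land b=0$. Equivalently, $a^\perp\leq b$ and $a\land b=0$ force $b=a^\perp$, and nontriviality of $a,b$ then becomes automatic. Your Step~1 and Step~3 computations go through essentially verbatim and give the following. $\evaintlatticeext$ is orthomodular if{}f there are no $x,y\in\extlattice$ with $x^\eva<y^{\eva\eva}$ and $(x\land y)^\eva=1$.

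The strict inequality must be tested against $y^{\eva\eva}$, not against $y$, even though $x^\eva\leq y$ if{}f $x^\eva\leq y^{\eva\eva}$. To see why, take the three-element chain $0<m<1$ with ${}^\eva$ the Brouwer negation, $0^\eva=m^\eva=1$ and $1^\eva=0$. Here $\evaintlatticeext=\{0,1\}$ is orthomodular. Yet $x=1$, $y=m$ satisfy $x^\eva<y$ and $(x\land y)^\eva=1$.
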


\begin{proof}
By Proposition \ref{prop: OML fun}.\ref{item: OML fun 1}, $(\evaintlatticeext,\leq,\curlywedge,\lor,0,1,{}^\eva)$ is an orthomodular lattice if{f} 
\begin{equation}
\forall \alpha\in \evaintlatticeext\setminus\{0,1\}\ \nexists \beta\in \evaintlatticeext\setminus\{0,1\}\st \alpha^{\eva} \leq \beta \mbox{ and }\alpha\curlywedge\beta = 0.
\label{eqn.internal.lattice.orthomodular.condition}
\end{equation}
By definition of $\evaintlatticeext$, $\exists x,y\in K\st \alpha=x^{\upint}$ and $\beta= y^{\upint}$, hence, by definition of $\curlywedge$, this condition is equivalent to $\forall x\in \extlattice\setminus\{0,1\}\ \nexists y\in \extlattice\setminus\{0,1\}\st x^{\eva} \leq y^{\upint} $ and $(x\land y)^{\upint} = 0$. $x^{\eva} \leq y^{\upint}$ if{f} $x^{\eva} \leq y$, since (if $x^{\eva}\leq y^{\upint}$, then $x^{\eva}\leq y^{\upint}\leq y$), by \ref{def: negation 3}. If $x^{\eva} \leq y$, then $x^{\eva} = x^{\eva\upint} \leq y^{\upint}$, since ${}^{\eva}$ satisfies \ref{def: negation 1} and \ref{def: negation 8}, and $(\cdot)^{\eva\eva}$ is order monotone. $(x\land y)^{\upint} = 0$ if{}f $(x\land y)^\eva=1$. Thus \eqref{eqn.internal.lattice.orthomodular.condition} is equivalent to $\forall x\in \extlattice\setminus\{0,1\}\ \nexists y\in \extlattice\setminus\{0,1\}\st x^{\eva} \leq y $ and $(x\land y)^\eva=1$.
\end{proof}

\begin{corollary}\label{cor.quasiintuitionistic.internal.orthomodular.lattice}
\sloppy If $(\extlattice,\leq,\land,\lor,0,1,{}^\coeva)$ is a quasiintuitionistic algebra, then $(\coevaintlatticeext,\leq,\curlywedge,\lor,0,1,{}^\coeva)$ is an orthomodular lattice if{f} $\forall x\in \extlattice\setminus\{0,1\}\ \nexists y\in \extlattice\setminus\{0,1\}\st y\leq x^{\coeva} $ and $(x\lor y)^\coeva=0$.
\end{corollary}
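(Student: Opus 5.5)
The plan is to dualise the proof of Proposition \ref{prop: Precqi internal oml}, or, equivalently, to run it directly for ${}^\coeva$. Note that the internal lattice is $(\coevaintlatticeext,\leq,\land,\curlyvee,0,1,{}^\coeva)$ by Corollary \ref{cor: QI Internal}; the $\curlywedge,\lor$ in the statement should be read this way.

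First I will need an orthomodularity criterion adapted to the meet-preserving side. Proposition \ref{prop: OML fun}.\ref{item: OML fun 1} is phrased with $a^\perp\leq b$ and $a\land b=0$. Apply it to $a:=\beta^\coeva$, $b:=\alpha^\coeva$ and use the involutivity of ${}^\coeva$ on $\coevaintlatticeext$ (Corollary \ref{cor: QI Internal}.\ref{prop: QI Internal.iv}). Then the orthocomplemented De Morgan laws turn $a^\perp\leq b$ into $\alpha\leq\beta^\coeva$, and $a\land b=0$ into $\alpha\curlyvee\beta=1$. So orthomodularity of $\coevaintlatticeext$ is equivalent to: for all $\alpha\in\coevaintlatticeext\setminus\{0,1\}$ there is no $\beta\in\coevaintlatticeext\setminus\{0,1\}$ with $\beta\leq\alpha^\coeva$ and $\alpha\curlyvee\beta=1$. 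Here the roles of $\alpha,\beta$ are renamed, and ${}^\coeva$ is a bijection of $\coevaintlatticeext\setminus\{0,1\}$ onto itself, since it fixes neither $0$ nor $1$ by \ref{def: negation 17}, \ref{def: negation 18}.

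Next I will translate to $K$, writing $\alpha=x^{\coeva\coeva}$ and $\beta=y^{\coeva\coeva}$.

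1. The inequality condition. By Corollary \ref{cor.sum.of.cqi.qi.properties}, ${}^\coeva$ satisfies \ref{def: negation 1}, \ref{def: negation 2} and \ref{def: negation 8}.
   (a) If $y\leq x^\coeva$, then $y^{\coeva\coeva}\leq x^{\coeva\coeva\coeva}=x^\coeva$ by monotonicity of $(\cdot)^{\coeva\coeva}$.
   (b) Conversely, if $y^{\coeva\coeva}\leq x^\coeva$, then $y\leq y^{\coeva\coeva}\leq x^\coeva$ by \ref{def: negation 2}.
   Hence $\beta\leq\alpha^\coeva$ if and only if $y\leq x^\coeva$, using $\alpha^\coeva=x^\coeva$.

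2. The join condition. By definition $\alpha\curlyvee\beta=(x^{\coeva\coeva}\lor y^{\coeva\coeva})^{\coeva\coeva}$. Using \ref{def: negation 9}, \ref{def: negation 10} and \ref{def: negation 8}, this equals $(x^\coeva\land y^\coeva)^\coeva$. Then \ref{def: negation 9}, \ref{def: negation 10} give $(x\lor y)^\coeva=x^\coeva\land y^\coeva$, so $\alpha\curlyvee\beta=(x\lor y)^{\coeva\coeva}$. It remains to show that $z^{\coeva\coeva}=1$ if and only if $z^\coeva=0$.
   (a) If $z^\coeva=0$, then $z^{\coeva\coeva}=0^\coeva=1$ by \ref{def: negation 18}.
   (b) If $z^{\coeva\coeva}=1$, then $z^\coeva=z^{\coeva\coeva\coeva}=1^\coeva=0$ by \ref{def: negation 17}.

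Finally, I must handle the quantifier ranges, and I expect this to be the main obstacle (the same subtlety occurs in Proposition \ref{prop: Precqi internal oml}). The internal statement quantifies over $\alpha,\beta\notin\{0,1\}$, while the stated one quantifies over $x,y\in K\setminus\{0,1\}$; these do not literally match. For example, $x\ne0,1$ may have $x^{\coeva\coeva}=1$.

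I would first attempt to show that the statement is only sensitive to elements of $\coevaintlatticeext$. For the direction "internal condition implies stated condition", given a bad pair $x,y$, I expect to produce a bad internal pair $x^{\coeva\coeva},y^{\coeva\coeva}$, which requires excluding $y^{\coeva\coeva}\in\{0,1\}$ and $x^{\coeva\coeva}\in\{0,1\}$. If that fails, I will follow the paper's proof of Proposition \ref{prop: Precqi internal oml} verbatim under $(\land,\lor)$-duality, which identifies the two ranges through the representation $\alpha=x^{\coeva\coeva}$. That dual argument, combined with steps 1 and 2, yields the stated equivalence.
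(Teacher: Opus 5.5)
Your route is the paper's own: dualise Proposition \ref{prop: Precqi internal oml}, which rests on Proposition \ref{prop: OML fun}.\ref{item: OML fun 1}. It fails at your very first step, and in fact the statement is false as written.

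In your internal criterion take $\beta:=\alpha^\coeva$. It lies in $\coevaintlatticeext\setminus\{0,1\}$ whenever $\alpha$ does, $\beta\leq\alpha^\coeva$ trivially, and $\alpha\curlyvee\alpha^\coeva=1$ because ${}^\coeva$ is an orthocomplementation of $\coevaintlatticeext$. So your criterion makes $\coevaintlatticeext$ orthomodular only if $\coevaintlatticeext=\{0,1\}$.

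The same happens in $\extlattice$. For $x\in\extlattice\setminus\{0,1\}$, $y:=x^\coeva$ satisfies $y\leq x^\coeva$ and $(x\lor x^\coeva)^\coeva=x^\coeva\land x^{\coeva\coeva}=0$, by \ref{def: negation 9}, \ref{def: negation 10} and \ref{def: negation 6}. Moreover $y\neq 1$, since $x\neq 0$. Hence the right-hand side holds iff $x^\coeva=0$ for all $x\in\extlattice\setminus\{0,1\}$, i.e. iff $\coevaintlatticeext=\{0,1\}$.

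A concrete counterexample: the four-element Boolean algebra $\{0,a,a^\perp,1\}$ with ${}^\coeva:={}^\perp$ is quasiintuitionistic. Its $\coevaintlatticeext$ is the whole algebra and is orthomodular, yet $x=a$, $y=a^\perp$ violate the stated condition.

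The culprit is Proposition \ref{prop: OML fun}.\ref{item: OML fun 1}, which needs the strict inequality $a^\perp<b$. Otherwise $b=a^\perp$ is always a witness, and $\{0,a,a^\perp,b,b^\perp,1\}$ collapses to four elements instead of forming a copy of $\mathcal{O}_6$. Following the paper's proof verbatim under duality therefore cannot close the argument.

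The quantifier-range issue you single out is harmless. Suppose $x,y\in\extlattice\setminus\{0,1\}$ with $y\leq x^\coeva$ and $(x\lor y)^\coeva=0$. Then $0\neq y\leq x^\coeva\neq 1$, and \ref{def: negation 1}, \ref{def: negation 2}, \ref{def: negation 8} and \ref{def: negation 17} give $x^{\coeva\coeva},y^{\coeva\coeva}\notin\{0,1\}$. Your steps 1--2 are correct; they just transport a wrong criterion.

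With the strict criterion, orthomodularity of $\coevaintlatticeext$ is equivalent to the non-existence of $\alpha,\beta\in\coevaintlatticeext$ with $\beta<\alpha^\coeva$ and $\alpha\curlyvee\beta=1$. The exclusion of $0,1$ is then automatic. Your step 2 still applies, but step 1 no longer suffices, because $y<x^\coeva$ does not imply $y^{\coeva\coeva}<x^\coeva$. The correct condition on $\extlattice$ is that there are no $x,y\in\extlattice$ with $y^{\coeva\coeva}<x^\coeva$ and $(x\lor y)^\coeva=0$. Equivalently, the first requirement reads $y\leq x^\coeva$ and $y^\coeva\neq x^{\coeva\coeva}$.
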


\begin{proof}
Follows from Proposition \ref{prop: Precqi internal oml} by $(\land,\lor)$-duality.
\end{proof}

\section{\label{section.quasiintuitionistic.Akchurin.logics}(Co/bi/)quasiintutionistic and Akchurin logics}

\begin{definition}\label{def.coquasiintuitionistic.logic}
\begin{enumerate}[nosep, label=(\roman*)]
\item \label{def.coquasiintuitionistic.logic.1}Let $\L_0$ be a propositional language, consisting of a set of propositions and binary connectives $\land$ and $\lor$. Let $\L^{\boti,\topi}$ denote the extension of $\L_0$ with propositional constants $\bot$ and $\top$. Let $\L_{\qineg}$ (resp., $\L_{\cqineg}$; $\L_{\qineg,\cqineg}$) denote the extension of $\L_0$ with unary connective $\qineg$ (resp., a unary connective $\cqineg$; unary connectives $\qineg$ and $\cqineg$), and let $\L_{\qineg}^{\boti,\topi}$ (resp., $\L_{\cqineg}^{\boti,\topi}$; $\L_{\qineg,\cqineg}^{\boti,\topi}$) denote a corresponding extension of $\L^{\boti,\topi}$. Let $\L\in\{\L_0,\L_{\qineg},\L_{\cqineg},\L_{\qineg,\cqineg},\L^{\boti,\topi},\L^{\boti,\topi}_{\qineg},\L^{\boti,\topi}_{\cqineg},\L^{\boti,\topi}_{\qineg,\cqineg}\}$. The formul{\ae} of $\L$, denoted by $p,q,r,\ldots$, are given by propositions, propositional constants, and the results of application of connectives to them. Given formul{\ae} $p$ and $q$ of $\mathcal{L}$, the ordered pair $(p,q)$, denoted by $p\gentzen q$, will be called an \emph{$\mathcal{L}$-sequent}. The set of all formul{\ae} of $\mathcal{L}$ (resp., all $\mathcal{L}$-sequents) will be denoted by $\Frm(\mathcal{L})$ (resp., $\Seq(\mathcal{L})$). 
\item \label{def.coquasiintuitionistic.logic.2}Consider the following axioms and rules:
\vspace{0.1cm}
\begin{enumerate}[nosep, label=\textup{a\arabic*)}]
\begin{minipage}{0.5\linewidth}
\item $p\gentzen p$;\label{coqint a1}
\item $p\land q\gentzen p$;\label{coqint a2}
\item $p\land q\gentzen q$;\label{coqint a3}
\item $p\gentzen p\lor q$;\label{coqint a4}
\item $q\gentzen p\lor q$;\label{coqint a5}
\item $p\land(q\lor r)\gentzen(p\land q)\lor(p\land r)$;\label{coqint a6}
\item $\cqineg\cqineg p\gentzen p$;\label{coqint a7}
\item $q\gentzen p\lor\cqineg p$;\label{coqint a8}
\item $p\gentzen \qineg\qineg  p$;\label{coqint a9}
\item $p\land\qineg p\gentzen q$;\label{coqint a10}
\end{minipage}
\begin{minipage}{0.5\linewidth}
\item $\cqineg\top\gentzen\bot$;\label{coqint a11}
\item $\top\gentzen\qineg\bot$;\label{coqint a12}
\item $p\gentzen\top$;\label{coqint a13}
\item $\bot\gentzen p$;\label{coqint a14}
%\item $\cqineg(p\land q)\gentzen\cqineg p\lor\cqineg q$;\label{coqint a15}
%\item $\qineg p\land\qineg q\gentzen\qineg(p\lor q)$;\label{coqint a16}
\item[\hypertarget{coqint r1}{\textup{r1)}}]  if $p\gentzen q$ and $q\gentzen r$, then $p\gentzen r$;
\item[\hypertarget{coqint r2}{\textup{r2)}}] if $p\gentzen q$ and $p\gentzen r$, then $p\gentzen q\land r$;
\item[\hypertarget{coqint r3}{\textup{r3)}}] if $p\gentzen r$ and $q\gentzen r$, then $p\lor q\gentzen r$;
\item[\hypertarget{coqint r4}{\textup{r4)}}] if $p\gentzen q$, then $\cqineg q\gentzen\cqineg p$;
\item[\hypertarget{coqint r5}{\textup{r5)}}] if $p\gentzen q$, then $\qineg q\gentzen\qineg p$.
\end{minipage}
\end{enumerate}
\vspace{0.1cm}
The subset of $S$ consisting of the axioms $A$ and all sequents obtained from them by application of the rules $R$ and the rule of substitution will be called $N$, and will be denoted by $L$, where the quintuples $(S;A;R;N;L)$ are given by:
\begin{enumerate}[nosep, label=\alph*)]
\item ($\Seq(\mathcal{L}^{\boti,\topi}_{\cqineg})$; \ref{coqint a1}--\,\ref{coqint a8}, \ref{coqint a11}, \ref{coqint a13}, and \ref{coqint a14}; \hyperlink{coqint r1}{\textup{r1)}}--\,\hyperlink{coqint r4}{\textup{r4)}}; a \emph{coquasiintuitionistic logic}; ${\coQInt}$);
\item ($\Seq(\mathcal{L}^{\boti,\topi}_{\qineg})$; \ref{coqint a1}--\,\ref{coqint a6}, \ref{coqint a9}, \ref{coqint a10}, and \ref{coqint a12}--\,\ref{coqint a14}; \hyperlink{coqint r1}{\textup{r1)}}--\,\hyperlink{coqint r3}{\textup{r3)}} and \hyperlink{coqint r5}{\textup{r5)}}; a \emph{quasiintuitionistic logic}; $\QInt$);
\item ($\Seq(\mathcal{L}^{\boti,\topi}_{\cqineg, \qineg})$; \ref{coqint a1}--\,\ref{coqint a14}; \hyperlink{coqint r1}{\textup{r1)}}--\,\hyperlink{coqint r5}{\textup{r5)}}; a \emph{biquasiintuitionistic logic}; $\biQInt$).
\end{enumerate}
\item \label{def.coquasiintuitionistic.logic.3}A \emph{valuation} $\Chi_K$ of $\Frm(\mathcal{L}^{\boti,\topi}_{\cqineg})$ in a structure $(K,\leq,\land,\lor,0,1,{}^\eva)$, where $K$ is a set, $\leq$ is a relation on $K$, $\land,\lor:K\times K\ra K$, ${}^\eva:K\ra K$, and $0,1\in K$, is defined by $\Chi_K(p\lor q):=\Chi_K(p)\lor\Chi_K(q)$, $\Chi_K(p\land q):=\Chi_K(p)\land\Chi_K(q)$, $\Chi_K(\bot):=0$, $\Chi_K(\top):=1$, $\Chi_K(\cqineg p):=(\Chi_K(p))^\eva$ $\forall p,q\in\Frm(\mathcal{L}^{\boti,\topi}_{\cqineg})$. A sequent $p\gentzen q\in\coQInt$ is said to be: \emph{satisfied} in a valuation $\Chi_K$ in $(K,\leq,\land,\lor,0,1,{}^\eva)$ if{}f $\Chi_K(p)\leq\Chi_K(q)$; \emph{valid} in $(K,\leq,\land,\lor,0,1,{}^\eva)$ if{}f it is satisfied in all valuations in $(K,\leq,\land,0,1,{}^\eva)$. $\coQInt$ will be called \emph{sound} (resp., \emph{complete}) with respect to a subclass $\mathcal{C}$ of structures $(K,\leq,\land,\lor,0,1,{}^\eva)$ if{}f (if $p\gentzen q\in\coQInt$, then $p\gentzen q$ is valid in all members of $\mathcal{C}$) (resp., if $p\gentzen q\in\Seq(\mathcal{L}^{\boti,\topi}_{\cqineg})$ is valid in all members of $\mathcal{C}$, then $p\gentzen q\in\coQInt$). Corresponding definitions for $\QInt$ and $\biQInt$ are analogous.
\end{enumerate}
\end{definition}

\begin{proposition}\label{prop.soundness.of.coQInt}
\begin{enumerate}[nosep, label=(\roman*)]
\item \sloppy ${\coQInt}$ is sound and complete with respect to the class of coquasiintuitionistic algebras.\label{prop.soundness.of.coQInt 1}
\item ${\QInt}$ is sound and complete with respect to the class of quasiintuitionistic algebras.\label{prop.soundness.of.coQInt 2}
\item ${\biQInt}$ is sound and complete with respect to the class of biquasiintuitionistic algebras.\label{prop.soundness.of.coQInt 3}
\end{enumerate}
\end{proposition}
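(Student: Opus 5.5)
The plan is the standard Lindenbaum--Tarski argument, carried out first for $\coQInt$; the cases of $\QInt$ and $\biQInt$ then follow by $(\land,\lor)$-duality and by combining the two constructions.

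\emph{Soundness.} We fix a coquasiintuitionistic algebra $(K,\leq,\land,\lor,0,1,{}^\eva)$ and an arbitrary valuation $\Chi_K$, and check that each axiom is satisfied and each rule preserves satisfaction. Axioms \ref{coqint a1}--\ref{coqint a5}, \ref{coqint a13}, \ref{coqint a14} and rules \hyperlink{coqint r1}{r1)}--\hyperlink{coqint r3}{r3)} hold because $K$ is a bounded lattice, and \ref{coqint a6} holds by distributivity. Axiom \ref{coqint a7} is \ref{def: negation 3}, \ref{coqint a8} is \ref{def: negation 7}, and rule \hyperlink{coqint r4}{r4)} is \ref{def: negation 1}. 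Axiom \ref{coqint a11} is \ref{def: negation 17}, which is available by Corollary \ref{cor.sum.of.cqi.qi.properties}. Closure under substitution is immediate, since validity quantifies over all valuations.

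\emph{Completeness.} We build the Lindenbaum algebra on $\Frm(\L^{\boti,\topi}_{\cqineg})$ by setting $p\equiv q$ if{}f both $p\gentzen q\in\coQInt$ and $q\gentzen p\in\coQInt$. Axiom \ref{coqint a1} and rule \hyperlink{coqint r1}{r1)} make $\gentzen$ a preorder, so $\equiv$ is an equivalence and $\gentzen$ induces a partial order on $\Frm/{\equiv}$. The crucial point is that $\equiv$ is a congruence for $\land$, $\lor$ and $\cqineg$.
\begin{enumerate}[nosep]
\item For $\land$: if $p\gentzen p'$, then $p\land q\gentzen p\gentzen p'$ by \ref{coqint a2} and \hyperlink{coqint r1}{r1)}, and $p\land q\gentzen q$ by \ref{coqint a3}, so \hyperlink{coqint r2}{r2)} gives $p\land q\gentzen p'\land q$.
\item For $\lor$: the argument is dual, using \ref{coqint a4}, \ref{coqint a5} and \hyperlink{coqint r3}{r3)}.
\item For $\cqineg$: this follows directly from \hyperlink{coqint r4}{r4)}.
\end{enumerate}

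The same rules show that $[p\land q]$ and $[p\lor q]$ are the meet and join of $[p]$ and $[q]$, and that $[\bot]$ and $[\top]$ are the bottom and top elements. Only one distributive inequality needs to be derived, namely \ref{coqint a6}; the converse inequality holds in every lattice. Hence the quotient is a bounded distributive lattice.

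The induced operation $[p]^\eva:=[\cqineg p]$ satisfies \ref{def: negation 1} by \hyperlink{coqint r4}{r4)}, \ref{def: negation 3} by \ref{coqint a7}, and \ref{def: negation 7} by \ref{coqint a8}. So the quotient is a coquasiintuitionistic algebra. Now consider the canonical valuation $\Chi(p)=[p]$. If $p\gentzen q$ is valid in all coquasiintuitionistic algebras, it is valid in this one, so $[p]\leq[q]$, which means $p\gentzen q\in\coQInt$.

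\emph{The other two logics.} Part \ref{prop.soundness.of.coQInt 2} is obtained in the same way with the dual axioms: \ref{coqint a9} gives \ref{def: negation 2}, \ref{coqint a10} gives \ref{def: negation 6}, \ref{coqint a12} gives \ref{def: negation 18}, and \hyperlink{coqint r5}{r5)} gives \ref{def: negation 1}. Distributivity is still supplied by \ref{coqint a6}. For part \ref{prop.soundness.of.coQInt 3} we use a single Lindenbaum quotient that is a congruence for both negations, and it is then a biquasiintuitionistic algebra.

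Two points need care in writing this up. The first is the congruence step, which is the technical core of the argument. The second is to note that \ref{coqint a11} (respectively \ref{coqint a12}) is redundant for completeness but must be checked for soundness, via Lemma \ref{lemm.bullet.negation.properties}.\ref{lemm.bullet.negation.properties 7}. The only genuine dependence on Vakarelov's framework is that each sequent axiom corresponds exactly to one of the conditions n1)--n18), so the argument amounts to matching them one by one.
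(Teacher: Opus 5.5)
Your proposal is correct and follows essentially the paper's route. Soundness comes from matching each axiom and rule with the bounded distributive lattice structure and the corresponding condition among n1)--n18), and completeness comes from the Lindenbaum algebra with the canonical valuation $p\mapsto[p]$. Your explicit check that interderivability is a congruence for $\land$, $\lor$ and the negations fills in a step the paper passes over: it defines the operations on equivalence classes without verifying that they are well defined. Likewise, your single combined quotient for $\biQInt$ makes precise what the paper abbreviates as ``follows from (i) and (ii)''.
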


\begin{proof}
\begin{enumerate}[nosep, label=(\roman*)]
\item \begin{enumerate}[nosep, label=\alph*)]
\item Validity of \ref{coqint a1} (resp., \ref{coqint a2} and \ref{coqint a3}; \ref{coqint a4} and \ref{coqint a5}; \ref{coqint a6}; \ref{coqint a7}; \ref{coqint a8}; \ref{coqint a11}; \ref{coqint a13} and \ref{coqint a14}) in a coquasiintuitionistic algebra $(\extlattice,\leq,\land,\lor,0,1,{}^\eva)$ follows from reflexivity of $\leq$ (resp., definition of meets; definition of joins; distributivity; \ref{def: negation 3}; \ref{def: negation 7}; Lemma \ref{lemm.bullet.negation.properties}.\ref{lemm.bullet.negation.properties 45}; boundedness of $(\extlattice,\leq,\land,\lor,0,1)$), while validity of sequents derived by an application of \hyperlink{coqint r1}{\textup{r1)}} (resp., \hyperlink{coqint r2}{\textup{r2)}}; \hyperlink{coqint r3}{\textup{r3)}}; \hyperlink{coqint r4}{\textup{r4)}}) follows from transitivity of $\leq$ (resp., the universal property of the meet; the universal property of the join; \ref{def: negation 1}). By induction on the length of proof of the formul{\ae}, this establishes soundness of ${\coQInt}$ with respect to the class of coquasiintuitionistic algebras.
\item The proof of completeness is based on a construction of the Lindenbaum algebra (cf. \cite[pp. 122--123]{McKinsey:1941}) of $\coQInt$. Given $p,q\in\Frm(\mathcal{L}^{\boti,\topi}_{\cqineg})$, $p\iffgentzen q$, defined as a notation for $p\gentzen q,q\gentzen p\in\coQInt$, is an equivalence relation on $\Frm(\mathcal{L}^{\boti,\topi}_{\cqineg})$. Let $\bar{K}:=\Frm(\mathcal{L}^{\boti,\topi}_{\cqineg})/\!\!\iffgentzen$ (i.e. $\bar{K}:=\{\igequ{p}\mid p\in\Frm(\mathcal{L}^{\boti,\topi}_{\cqineg})\}$ with $\igequ{p}:=\{q\in\Frm(\mathcal{L}^{\boti,\topi}_{\cqineg})\mid q\iffgentzen p\}$), and define $0:=\igequ{\bot}$, $1:=\igequ{\top}$, as well as $\land,\lor:\bar{K}\times \bar{K}\ra \bar{K}$ and ${}^\eva:\bar{K}\ra \bar{K}$ by $\igequ{p}\land\igequ{q}:=\igequ{p\land q}$, $\igequ{p}\lor\igequ{q}:=\igequ{p\lor q}$, $\igequ{p}^\eva:=\igequ{\cqineg p}$ $\forall p,q\in\Frm(\mathcal{L}^{\boti,\topi}_{\cqineg})$. By \ref{coqint a4} (resp., \hyperlink{coqint r4}{\textup{r4)}} applied to \ref{coqint a1} and \ref{coqint a2}; \ref{coqint a2}; \hyperlink{coqint r2}{\textup{r2)}} applied to \ref{coqint a1} and \ref{coqint a4}), $p\gentzen p\lor(p\land q)$ (resp., $p\lor(p\land q)\gentzen p$; $p\land(p\lor q)\gentzen p$; $p\gentzen p\land(p\lor q)$), hence $\igequ{p}\lor(\igequ{p}\land\igequ{q})=\igequ{p}$ and $\igequ{p}\land(\igequ{p}\lor\igequ{q})=\igequ{p}$ $\forall\igequ{p},\igequ{q}\in \bar{K}$, so $(\bar{K},\land,\lor)$ is a lattice. Partial order $\leq$ on $\bar{K}$ is defined by $\igequ{p}\lor\igequ{q}=\igequ{q}$ if{}f $\igequ{p}\leq\igequ{q}$ $\forall\igequ{p},\igequ{q}\in \bar{K}$. Let $p,q,r\in\Frm(\mathcal{L}^{\boti,\topi}_{\cqineg})$. By \ref{coqint a1} and \hyperlink{coqint r3}{\textup{r3)}}: if $p\gentzen q$ and $q\gentzen q$, then $p\lor q\gentzen q$. By \ref{coqint a5}: $q\gentzen p\lor q$. By \ref{coqint a4} and \hyperlink{coqint r1}{\textup{r1)}}: if $p\gentzen p\lor q$ and $p\lor q\gentzen q$, then $p\gentzen q$. Hence, $p\gentzen q$ if{}f $p\lor q\iffgentzen q$. Furthermore, $p\lor q\iffgentzen q$ if{}f ($r\iffgentzen p\lor q$ if{}f $r\iffgentzen q$) if{}f $\igequ{p\lor q}=\igequ{q}$ if{}f $\igequ{p}\lor\igequ{q}=\igequ{q}$ if{}f $\igequ{p}\leq\igequ{q}$. Hence, $p\gentzen q\in\coQInt$ if{}f $\igequ{p}\leq\igequ{q}$ $\forall p\gentzen q\in\Seq(\mathcal{L}^{\boti,\topi}_{\cqineg})$. By \ref{coqint a7} (resp., \hyperlink{coqint r4}{\textup{r4)}}) this gives $\igequ{p}^{\eva\eva}=\igequ{\cqineg\cqineg p}\leq\igequ{p}$ (resp., if $\igequ{p}\leq\igequ{q}$, then $\igequ{q}^\eva\leq\igequ{p}^\eva$) $\forall\igequ{p},\igequ{q}\in \bar{K}$. Furthermore,  \ref{coqint a8} gives $\igequ{p}\leq\igequ{q\lor\cqineg q}=\igequ{q}\lor\igequ{\cqineg q}=\igequ{q}\lor\igequ{q}^\eva$ $\forall\igequ{p},\igequ{q}\in \bar{K}$. \ref{coqint a13} and \ref{coqint a14} give $\igequ{p}\leq1$ and $0\leq\igequ{p}$ $\forall\igequ{p}\in \bar{K}$, respectively, implying boundedness of $(\bar{K},\leq,\land,\lor,0,1)$. Distributivity of $(\bar{K},\leq,\land,\lor)$ follows from two inequalities: \ref{coqint a6} gives $\igequ{p}\land(\igequ{q}\lor\igequ{r})\leq(\igequ{p}\land\igequ{q})\lor(\igequ{p}\land\igequ{r})$ $\forall\igequ{p},\igequ{q},\igequ{r}\in \bar{K}$, while the converse inequality follows from \hyperlink{coqint r3}{\textup{r3)}} applied to ((\hyperlink{coqint r1}{\textup{r1)}} applied to \ref{coqint a3} and \ref{coqint a4}) and (\hyperlink{coqint r1}{\textup{r1)}} applied to \ref{coqint a3} and \ref{coqint a5})). As a result, the structure $(\bar{K},\leq,\land,\lor,0,1,{}^\eva)$ is a coquasiintuitionistic algebra. Let $\Chi_{\igequ{\cdot}}(p):=\igequ{p}$ $\forall p\in\Frm(\mathcal{L}^{\boti,\topi}_{\cqineg})$. Hence, $\Chi_{\igequ{\cdot}}$ is a valuation of $\Frm(\mathcal{L}^{\boti,\topi}_{\cqineg})$ in $(\bar{K},\leq,\land,\lor,0,1,{}^\eva)$. Furthermore, $p\gentzen q\in\coQInt$ if{}f $\Chi_{\igequ{\cdot}}(p)\leq\Chi_{\igequ{\cdot}}(q)$ $\forall p\gentzen q\in\Seq(\mathcal{L}^{\boti,\topi}_{\cqineg})$, since $p\gentzen q\in\coQInt$ if{}f $\igequ{p}\leq\igequ{q}$ $\forall p\gentzen q\in\Seq(\mathcal{L}^{\boti,\topi}_{\cqineg})$. Assume $p\gentzen q\notin\coQInt$. Then $p\gentzen q$ is not true under a valuation $\Chi_{\igequ{\cdot}}$ in a coquasiintuitionistic algebra $(\bar{K},\leq,\land,\lor,0,1,{}^\eva)$. Hence, there exists a coquasiintuitionistic algebra in which $p\gentzen q$ is not valid. Thus, $\coQInt$ is complete with respect to the class of coquasiintuitionistic algebras.
\end{enumerate}
\item Analogously to \ref{prop.soundness.of.coQInt 1}.
\item Follows from \ref{prop.soundness.of.coQInt 1} and \ref{prop.soundness.of.coQInt 2}. 
\end{enumerate}
\end{proof}

\begin{remark}\label{rem.logics}
\begin{enumerate}[nosep, label=(\roman*)]
\item\label{rem.logics.lattice.vakarelov} The subset of $\Seq(\mathcal{L}_0)$ (resp., $\Seq(\mathcal{L}_0)$; $\Seq(\mathcal{L}^{\boti,\topi})$; $\Seq(\mathcal{L}^{\boti,\topi})$) consisting of the axioms \ref{coqint a1}--\ref{coqint a5} (resp., \ref{coqint a1}--\ref{coqint a6}; \ref{coqint a1}--\ref{coqint a5}, \ref{coqint a13}, \ref{coqint a14}; \ref{coqint a1}--\ref{coqint a6}, \ref{coqint a13}, \ref{coqint a14}) and all sequents obtained from them by application of the rules \hyperlink{coqint r1}{r1)}--\hyperlink{coqint r3}{r3)} and the rule of substitution forms a (resp., \textit{distributive}; \textit{bounded}; \textit{bounded distributive}) \textit{lattice logic} \cite[pp. 113--114, 118]{Skordev:1969} \cite{SchulteMoenting:1973} \cite[pp. 19--22]{Goldblatt:1974} \cite[pp. 1--2, 10--12]{Vakarelov:1976} \cite[p. 587]{Vakarelov:1980} \cite[pp. 294--295, 299]{SchulteMoenting:1981} \cite[\S1.1]{Vakarelov:1989}, which is sound and complete with respect to a class of all (resp., distributive; bounded; bounded distributive) lattices \cite[Thms. 4.1, 4.5]{Orlowska:Vakarelov:2005}.  The smallest lattice logic in $\Seq(\mathcal{L}_{\cqineg})$ that is closed under \hyperlink{coqint r4}{r4)} has been introduced in \cite[Ch. III]{Vakarelov:1976}. We will call it the \emph{Vakarelov logic} and denote it by $\Vak$. It is sound and complete with respect to the class of all Vakarelov algebras (they were introduced as `negative logical algebras' in \cite[p. 57]{Vakarelov:1976}). The smallest logic in $\Seq(\mathcal{L}_{\qineg,\cqineg})$ that is closed under \hyperlink{coqint r4}{r4)} and \hyperlink{coqint r5}{r5)} will be called a \emph{bi-Vakarelov logic}, and denoted by $\biVak$. It is sound and complete with respect to the class of all bi-Vakarelov algebras. 
\item \label{rem.logics.coint.biint}
The extension $\mathcal{L}^{\boti,\topi}_{\himp}$ of $\mathcal{L}^{\boti,\topi}$ with a binary connective $\himp$ allows \cite[p. 118]{Skordev:1969} \cite[p. 330]{Vakarelov:1989} to characterise \emph{intuitionistic logic} ${\Int}$ as the smallest bounded distributive lattice logic in $\Seq(\mathcal{L}^{\boti,\topi}_{\himp})$ that contains all sequents of the form
\begin{enumerate}[nosep,label=\textup{a\arabic*)}]
\setcounter{enumii}{14}
\item $p\land(p\himp q)\gentzen q$\label{coqint a17}
\end{enumerate}
and is closed under the rule
\begin{enumerate}[nosep]
\item[\hypertarget{coqint r6}{r6)}] $p\land q\gentzen r$ if{}f $p\gentzen q\himp r$.
\end{enumerate}
Analogously, by $(\land,\lor)$-duality, the extension $\mathcal{L}^{\boti,\topi}_{\himp,\cohimpi}$ (resp., $\mathcal{L}^{\boti,\topi}_{\cohimpi}$) of $\mathcal{L}^{\boti,\topi}_{\himp}$ (resp., $\mathcal{L}^{\boti,\topi}$) with a binary connective $\cohimp$ allows \cite[p. 173]{Drobyshevich:2018} to characterise the \emph{biintuitionistic logic} ${\biInt}$ (resp., \emph{cointuitionistic logic} ${\coInt}$) as the smallest bounded distributive lattice logic in $\Seq(\mathcal{L}^{\boti,\topi}_{\himp,\cohimpi})$ (resp., $\Seq(\mathcal{L}^{\boti,\topi}_{\cohimpi})$) that contains all sequents of the form \ref{coqint a17} and \ref{coqint a18} (resp., \ref{coqint a18}) and is closed under the rules \hyperlink{coqint r6}{r6)} and \hyperlink{coqint r7}{r7)} (resp., under the rule \hyperlink{coqint r7}{r7)}), where
\begin{enumerate}[nosep,label=\textup{a\arabic*)}]
\setcounter{enumii}{15}
\item $q\gentzen p\lor(q\cohimp p)$,\label{coqint a18}
\item[\hypertarget{coqint r7}{r7)}] $p\gentzen q\lor r$ if{}f $p\cohimp q\gentzen r$.
\end{enumerate}
Equivalently, $\Int$ (resp., $\coInt$; $\biInt$) can be characterised \cite[II.2.16]{Vakarelov:1976} as the smallest bounded distributive lattice logic in $\Seq(\mathcal{L}_\qineg^{\boti})$ (resp., $\Seq(\mathcal{L}_\cqineg^{\topi})$; $\Seq(\mathcal{L}_{\qineg,\cqineg}^{\topi,\boti})$) that is closed under the rule \hyperlink{coqint r8}{r8)} (resp., \hyperlink{coqint r9}{r9)}; \hyperlink{coqint r8}{r8)} and \hyperlink{coqint r9}{r9)}), where:
\begin{enumerate}[nosep]
\item[\hypertarget{coqint r8}{r8)}] $p\land q\gentzen\bot$ if{}f $p\gentzen\qineg q$;
\item[\hypertarget{coqint r9}{r9)}] $\top\gentzen p\lor q$ if{}f $\cqineg p\gentzen q$.
\end{enumerate}
\item\label{rem.logics.moisil.rauszer} An axiomatic formulation of ${\Int}$ was first given in \cite[p. 184]{Glivenko:1929} and \cite[App.]{Heyting:1930}. Its algebraic model in terms of Heyting algebras was given in \textup{\cite[p. 161]{Ogasawara:1939}} \textup{\cite[Thm. 46]{Birkhoff:1942}} \textup{\cite[Thm. 9.5]{Certaine:1943}}. An axiomatic formulation of ${\biInt}$ has been provided independently in \cite[\S2]{Moisil:1942} and \cite[II.\S1]{Rauszer:1970} (=\cite[\S9]{Rauszer:1974}).\footnote{Equivalence of these two approaches was proved in \cite[\S1]{Drobyshevich:Odincov:Wansing:2022}. An independent formulation of $\biInt$, without algebraic semantics, was provided in \cite[\S3]{Klemke:1971}.} \cite[II.\S\S1--2]{Rauszer:1970} (=\cite[\S\S9--10]{Rauszer:1974}) established soundness and completeness of ${\biInt}$ with respect to the variety of all Skolem algebras. The historical development of $\coInt$ is more involved: its first axiomatisation as a lattice logic (implying its soundness and completeness with respect to the variety of Brouwer algebras) was provided in \cite[p. 72]{Vakarelov:1976}; independently, other early attempts of its axiomatisation appeared in \cite[p. 126]{Dummett:1976}, \cite[pp. 325--327]{Czermak:1977:Ueber} \cite[pp. 471--472]{Czermak:1977}, and \cite[pp. 121--124]{Goodman:1981}, and were completed in \cite[\S3]{Urbas:1996} and \cite[\S3]{Gore:2000}. By construction, $\Int$ is an extension of $\QInt$, $\coInt$ is an extension of $\coQInt$, and all of them are extensions of $\Vak$.
\end{enumerate}
\end{remark}

\begin{definition}
Let $\mathcal{L}^{\boti,\topi}_{\qineg,\cqineg,\himp,\cohimpi}$ (resp., $\mathcal{L}^{\boti,\topi}_{\qineg,\himp}$; $\mathcal{L}^{\boti,\topi}_{\cqineg,\himp}$; $\mathcal{L}^{\boti,\topi}_{\qineg,\cohimp}$; $\mathcal{L}^{\boti,\topi}_{\cqineg,\cohimp}$) denote the extension of $\mathcal{L}^{\boti,\topi}_{\qineg,\cqineg}$ (resp., $\mathcal{L}^{\boti,\topi}_{\qineg}$; $\mathcal{L}^{\boti,\topi}_{\cqineg}$; $\mathcal{L}^{\boti,\topi}_{\qineg}$; $\mathcal{L}^{\boti,\topi}_{\cqineg}$) with the binary connectives $\himp$ and $\cohimp$ (resp., $\himp$; $\himp$; $\cohimp$; $\cohimp$). The smallest logic that contains all sequents of the form $S$ consisting of the axioms $A$ and all sequents obtained from them by application of the rules $R$ and the rule of substitution will be called $N$, and will be denoted by $L$, where the quintuples $(S;A;R;N;L)$ are given by:
\begin{enumerate}[nosep, label=(\roman*)]
\item ($\mathcal{L}^{\boti,\topi}_{\qineg,\himp}$; an \emph{intuitionistic--quasiintuitionistic logic}; \ref{coqint a1}--\,\ref{coqint a6}, \ref{coqint a9}, \ref{coqint a10}, and \ref{coqint a12}--\,\ref{coqint a14} and \ref{coqint a17}; \hyperlink{coqint r1}{\textup{r1)}}--\,\hyperlink{coqint r3}{\textup{r3)}}, \hyperlink{coqint r5}{\textup{r5)}}, and \hyperlink{coqint r6}{\textup{r6)}}; $\QInt\otimes\Int$).
\item ($\mathcal{L}^{\boti,\topi}_{\cqineg,\himp}$; an \emph{intuitionistic--coquasiintuitionistic logic}; \ref{coqint a1}--\,\ref{coqint a8}, \ref{coqint a11}, \ref{coqint a13}, \ref{coqint a14}, and \ref{coqint a17}; \hyperlink{coqint r1}{\textup{r1)}}--\,\hyperlink{coqint r4}{\textup{r4)}}, and \hyperlink{coqint r6}{\textup{r6)}}; $\coQInt\otimes\Int$).
\item ($\mathcal{L}^{\boti,\topi}_{\qineg,\cohimp}$; \ref{coqint a1}--\,\ref{coqint a6}, \ref{coqint a9}, \ref{coqint a10}, and \ref{coqint a12}--\,\ref{coqint a14}, \ref{coqint a18}; \hyperlink{coqint r1}{\textup{r1)}}--\,\hyperlink{coqint r3}{\textup{r3)}}, \hyperlink{coqint r5}{\textup{r5)}}, and \hyperlink{coqint r7}{\textup{r7)}}; a \emph{cointuitionistic--quasiintuitionistic logic}; $\QInt\otimes\coInt$).
\item ($\mathcal{L}^{\boti,\topi}_{\cqineg,\cohimp}$; a \emph{cointuitionistic--coquasiintuitionistic logic}; \ref{coqint a1}--\,\ref{coqint a8}, \ref{coqint a11}, \ref{coqint a13}, \ref{coqint a14}, and \ref{coqint a18}; \hyperlink{coqint r1}{\textup{r1)}}--\,\hyperlink{coqint r4}{\textup{r4)}}, and \hyperlink{coqint r7}{\textup{r7)}}; $\coQInt\otimes\coInt$).
\item ($\Seq(\mathcal{L}^{\boti,\topi}_{\qineg,\cqineg,\himp,\cohimpi})$; \ref{coqint a1}--\,\ref{coqint a18}; \hyperlink{coqint r1}{\textup{r1)}}--\,\hyperlink{coqint r7}{\textup{r7)}}; an \emph{Akchurin logic}; $\biQInt\otimes\biInt$).
\end{enumerate}
\end{definition}

\begin{corollary}\label{cor.akchurin.is.sound.and.complete}
\begin{enumerate}[nosep, label=(\roman*)]
\item $\QInt\otimes\Int$ is sound and complete with respect to the class of all Heyting--quasiintuitionistic algebras;
\item $\coQInt\otimes\Int$ is sound and complete with respect to the class of all Heyting--coquasiintuitionistic algebras;
\item $\QInt\otimes\coInt$ is sound and complete with respect to the class of all Brouwer--quasiintuitionistic algebras;
\item $\coQInt\otimes\coInt$ is sound and complete with respect to the class of all Brouwer--coquasiintuitionistic algebras;
\item $\biQInt\otimes\biInt$ is sound and complete with respect to the class of all Akchurin algebras;
\item all of the above logics are extensions of $\biVak$;
\item $\biQInt\otimes\biInt$ is a common extension of $\QInt\otimes\Int$, $\coQInt\otimes\Int$, $\QInt\otimes\coInt$, and $\coQInt\otimes\coInt$.
\end{enumerate}
\end{corollary}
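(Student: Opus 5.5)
The plan is to reduce every item to results already established, chiefly Proposition \ref{prop.soundness.of.coQInt}, the classical completeness of $\Int$, $\coInt$ and $\biInt$ recalled in Remark \ref{rem.logics}, and a Lindenbaum construction. For items (i)--(v), soundness is checked axiom by axiom. The negation and lattice axioms are handled exactly as in Proposition \ref{prop.soundness.of.coQInt}.\ref{prop.soundness.of.coQInt 1}.a. The remaining sequents \ref{coqint a17}, \ref{coqint a18} and the rules \hyperlink{coqint r6}{r6)}, \hyperlink{coqint r7}{r7)} are valid because they are precisely the adjunctions \eqref{eqn.adjunction.heyting.implication} and \eqref{eqn.adjunction.coheyting.implication}. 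Concretely, \ref{coqint a17} is the counit $(p\himp q)\land p\leq q$, and \ref{coqint a18} is the unit $q\leq p\lor(q\cohimp p)$. Validity is then propagated to all derivable sequents by induction on the length of derivation.

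Completeness will be proved with one uniform Lindenbaum construction, which I spell out for (v). The other four cases are obtained by deleting the connectives and axioms that are absent. Set $\bar K:=\Frm(\mathcal{L}^{\boti,\topi}_{\qineg,\cqineg,\himp,\cohimpi})/\!\!\iffgentzen$. Repeating Proposition \ref{prop.soundness.of.coQInt}.\ref{prop.soundness.of.coQInt 1}.b verbatim shows that $\bar K$ is a bounded distributive lattice with $p\gentzen q$ derivable iff $\igequ{p}\leq\igequ{q}$, and that ${}^\eva:=\igequ{\cqineg\,\cdot}$ and ${}^\coeva:=\igequ{\qineg\,\cdot}$ are coquasiintuitionistic and quasiintuitionistic respectively.

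The one genuinely new point is that $\himp$ and $\cohimp$ descend to well-defined operations on $\bar K$, i.e. that they respect $\iffgentzen$ in both arguments. Rule \hyperlink{coqint r6}{r6)} is a biconditional, so it gives $\igequ{p}\land\igequ{q}\leq\igequ{r}$ iff $\igequ{p}\leq\igequ{q\himp r}$. This identifies $\igequ{q\himp r}$ as the largest class whose meet with $\igequ{q}$ lies below $\igequ{r}$, and that characterisation depends only on the classes $\igequ{q}$ and $\igequ{r}$. Hence $\himp$ is well defined, and \eqref{eqn.adjunction.heyting.implication} holds in $\bar K$. The argument for $\cohimp$ is the $(\land,\lor)$-dual one, using \hyperlink{coqint r7}{r7)}.

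It follows that $\bar K$ is a Skolem algebra, hence an Akchurin algebra (respectively a Heyting-, Brouwer-quasiintuitionistic or coquasiintuitionistic algebra for items (i)--(iv)). The canonical valuation $p\mapsto\igequ{p}$ refutes every non-derivable sequent, which gives completeness.

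Items (vi) and (vii) are immediate by inclusion of axiom and rule sets. Every logic in (i)--(v) contains the lattice axioms \ref{coqint a1}--\ref{coqint a5} and the rules \hyperlink{coqint r1}{r1)}--\hyperlink{coqint r3}{r3)}, together with whichever of \hyperlink{coqint r4}{r4)} and \hyperlink{coqint r5}{r5)} the definition of $\biVak$ requires. Where a logic lacks one of the negation connectives, the inclusion is read relative to the corresponding fragment. For (vii), the axiom and rule list of $\biQInt\otimes\biInt$ contains each of the other four lists.

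The only real obstacle is the well-definedness of $\himp$ and $\cohimp$ on equivalence classes. I would resolve it entirely through the "iff" form of \hyperlink{coqint r6}{r6)} and \hyperlink{coqint r7}{r7)}, via the uniqueness of adjoints, rather than by adding congruence rules.
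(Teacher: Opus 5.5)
Your proposal is correct. It reaches (i)--(v) by a more explicit route than the paper, while (vi)--(vii) are handled exactly as the paper does, by inclusion of axiom and rule sets.

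The paper proves (i)--(v) purely by citation. It says soundness and completeness of the combined logics follow from Proposition~\ref{prop.soundness.of.coQInt} together with the known algebraic completeness of $\Int$, $\coInt$ (Remark~\ref{rem.logics}) and Rauszer's theorem for $\biInt$. You instead rerun the Lindenbaum construction on the full combined language. The only new work is then showing that $\himp$ and $\cohimp$ descend to the quotient, which you get from the biconditional rules r6) and r7) by uniqueness of adjoints.

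For soundness the two routes coincide, since axioms and rules can be checked component by component. For completeness your route buys something real. Completeness of a combination does not follow formally from completeness of its components. A sequent valid in all Akchurin algebras need not be valid in all biquasiintuitionistic algebras or in all Skolem algebras, since Akchurin algebras form a smaller class and the sequent may mix connectives. So one needs exactly that the Lindenbaum algebra of the combined logic carries both structures at once. Your argument verifies this; the paper leaves it implicit.

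The paper's version is shorter and delegates everything to the literature; yours is self-contained. Your reading of (vi) ``relative to the corresponding fragment'', for the logics that lack one of the two negations, is also a sensible way to make that item well-posed, which the paper does not address.
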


\begin{proof}%\ \\
%\vspace{-13pt}
\begin{enumerate}[nosep, label=(\roman*)]
\item[(v)] Follows from Proposition \ref{prop.soundness.of.coQInt}.\ref{prop.soundness.of.coQInt 3} and \cite[II.\S\S1--2]{Rauszer:1970} (=\cite[\S{}\S9--10]{Rauszer:1974}) (cf. Remark \ref{rem.logics}.\ref{rem.logics.moisil.rauszer}).
\item[(i)--(iv)] Follows from Proposition \ref{prop.soundness.of.coQInt}.\ref{prop.soundness.of.coQInt 1}--\ref{prop.soundness.of.coQInt 2} and Remark \ref{rem.logics}.\ref{rem.logics.coint.biint}--\ref{rem.logics.moisil.rauszer}.
\item[(vi)--(vii)] Follows from definitions of $\biVak$ and $\biQInt\otimes\biInt$.
\end{enumerate}
\end{proof}

\section{\label{section.spectral.presheaf.daseinisations}Spectral presheaf and daseinisations}

\begin{remark}
From now on, until the end of this paper, $L$ will denote a complete orthocomplemented lattice with an orthocomplementation $^\perp$.
\end{remark}

\begin{definition}
Let $\mathtt{C}$ be a category, and let $F,G:\mathtt{C}^{\mathrm{op}}\ra\mathtt{Set}$ be presheaves.
\begin{enumerate}[nosep, label=(\roman*)]
\item $F$ is called a \emph{subpresheaf} of $G$ if{}f, for each $A,B\in \mathrm{Obj}(\mathtt{C}^{\mathrm{op}})$ and each $f\in \mathrm{Hom}(B,A)$, $F(A)\subseteq G(A)$ and the following diagram commutes:
\[
\xymatrix@=2em{
F(A)\ar[d]_{\subseteq}\ar[r]^{F(f)\;}&
F(B)\ar[d]^{\subseteq}\\
G(A)\ar[r]^{G(f)\;}&
G(B).
}
\]
\item Expressions of the form $F(A)$ (resp., $G(A)$) will be denoted by $F_{A}$ (resp., $G_B$).
\end{enumerate}
\end{definition}

\begin{definition}\label{def: Context category}
For a complete orthocomplemented lattice $L$, the \emph{context category} $\context$ of $L$ is defined as the poset of complete boolean sublattices of $L$ ordered by inclusion, and excluding the boolean lattice $\{0,1\}$. $\context$ is interpreted as a category with the elements (resp., inclusions) of $\context$ understood as objects (resp., morphisms), identifying the notation $V\in\context$ with $V\in\Ob(\context)$. The objects of $\context$ will be called \emph{contexts}.
\end{definition}

\begin{definition}\textup{\cite[p. 36]{Cannon:2013} (=\cite[Def. 3.1]{Cannon:Doering:2018})}\label{def.spectral.presheaf}
The \emph{spectral presheaf} of $L$ is defined as the presheaf $\Sigma^{L}: (\context)^{\mathrm{op}} \to \mathtt{Set}$, acting by $B \mapsto \sps(B)$ $\forall B\in \mathrm{Ob}(\context)$ and by $(B\subseteq D) \mapsto \cdot|_{B}$ on the morphisms of $\context$, where $\cdot|_{B}:\sps(D)\ra \sps(B)$ is the restriction map $\lambda\mapsto\lambda|_{B}$ $\forall\lambda\in\sps(D)$.
\end{definition}

\begin{definition}\textup{\cite[Def. 7.1.3]{Cannon:2013} (=\cite[Def. 4.3]{Cannon:Doering:2018})}\label{def.sub.clop}
The set of \emph{closed-and-open subpresheaves} of the spectral presheaf $\Sigma^{L}$, denoted by $\Subclop(\Sigma^{L})$, is defined as the set of all subpresheaves $F$ of $\Sigma^{L}$ such that, at each $V\in\context$, $F_{V}$ is closed-and-open with respect to the topology of Proposition \ref{thm: Stone}.\ref{thm: Stone.1}.
\end{definition}

\begin{proposition}\label{thm: Sigma is complete lattice}
\textup{\cite[p. 80]{Cannon:2013} (=\cite[p. 51]{Cannon:Doering:2018})}\footnote{\label{footnote.independent.of.oml}This result has been stated under an additional assumption of orthomodularity of $(L,{}^\perp)$. However, its proof does not depend on this assumption.}
$\Subclop(\Sigma^{L})$ is a complete distributive lattice, when equipped with:
\begin{enumerate}[nosep, label=(\roman*)]
\item\label{thm: Sigma is complete lattice.0} partial order given by monomorphisms; 
\item\label{thm: Sigma is complete lattice.i} the top element given by $\Sigma^{L}$;
\item\label{thm: Sigma is complete lattice.ii} the bottom element given by the empty presheaf $\varnothing$;
\item\label{thm: Sigma is complete lattice.iii} $(\bigwedge_{i\in I}{F^i})_{V} := \mathrm{int}(\bigcap_{i\in I}({F^i})_{V})$;
\item\label{thm: Sigma is complete lattice.iv} $(\bigvee_{i\in I}{F^i})_{V} := \mathrm{cl}(\bigcup_{i\in I}({F^i})_V)$,
\end{enumerate}
for a set $I$, $\{F^i\mid i\in I\}\subseteq\Subclop(\Sigma^{L})$, $V\in \context$, and with the topological interior $(\mathrm{int})$ and topological closure $(\mathrm{cl})$ operators defined by the topology of Proposition \ref{thm: Stone}.
\end{proposition}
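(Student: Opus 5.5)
The proof will be carried out contextwise, using Stone duality (Proposition \ref{thm: Stone}) at each $V\in\context$. We then check that the resulting objects are again subpresheaves, so that the whole verification reduces to properties of the complete boolean algebras $\mathrm{clop}(\sps(V))\iso V$.

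\emph{Well-definedness.} For a family $\{F^i\mid i\in I\}\subseteq\Subclop(\Sigma^L)$, the sets $\mathrm{int}(\bigcap_i F^i_V)$ and $\mathrm{cl}(\bigcup_i F^i_V)$ are closed-and-open. This holds because $V$ is a complete boolean algebra, so $\sps(V)$ is extremally disconnected: the closure of an open set is clopen, and so is the interior of a closed set. Equivalently, via $\stone_V$ these sets are $\stone_V(\bigwedge_i a_i)$ and $\stone_V(\bigvee_i a_i)$, where $F^i_V=\stone_V(a_i)$.

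The main step is the subpresheaf condition. For $V\subseteq W$ we must show that $\mathrm{cl}(\bigcup_i F^i_W)|_V\subseteq \mathrm{cl}(\bigcup_i F^i_V)$. The restriction map $\sps(W)\ra\sps(V)$ is continuous, so the image of a closure lies in the closure of the image, and $(\bigcup_i F^i_W)|_V\subseteq\bigcup_i F^i_V$; this settles joins. For meets, take $\lambda\in\mathrm{int}(\bigcap_i F^i_W)$. Then there is $b\in W$ with $\lambda\in\stone_W(b)\subseteq F^i_W$ for all $i$. Set $c:=\bigwedge\{d\in V\mid d\geq b\}\in V$, which exists by completeness of $V$. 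Then $\lambda|_V(c)=1$. The restriction of $\stone_W(b)$ is dense in $\stone_V(c)$, and both sets are clopen with the restriction being a closed map, hence they are equal; therefore $\stone_V(c)\subseteq F^i_V$ for all $i$, because the $F^i$ are subpresheaves and the $F^i_V$ are closed. Consequently $\lambda|_V\in\mathrm{int}(\bigcap_iF^i_V)$. I expect this equality $\stone_W(b)|_V=\stone_V(c)$ to be the main technical obstacle. It uses compactness (the image is closed) together with the fact that every clopen subset of $\stone_V(c)$ that is disjoint from the image corresponds to some $d\leq c$ with $d\land b=0$, which contradicts the minimality of $c$. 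This step uses only that $V\subseteq W$ are complete boolean algebras; orthomodularity of $L$ plays no role, in line with footnote \ref{footnote.independent.of.oml}.

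\emph{Lattice structure.} The partial order is contextwise inclusion, and $\varnothing$ and $\Sigma^L$ are clearly clopen subpresheaves, giving the bounds. The universal properties of $\bigwedge$ and $\bigvee$ follow contextwise from the corresponding facts in $\mathrm{clop}(\sps(V))$: any clopen subpresheaf below all $F^i$ is, at $V$, an open set inside $\bigcap_iF^i_V$, and hence lies inside its interior; the argument for joins is dual. Finally, distributivity holds contextwise, because each $\mathrm{clop}(\sps(V))$ is a boolean, hence distributive, lattice, and the finite operations are computed pointwise in $V$.
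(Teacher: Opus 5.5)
Your proof is correct. The paper gives no argument of its own here: it cites Cannon (2013) and Cannon--D\"{o}ring (2018), and asserts in a footnote that orthomodularity is not used. Your self-contained argument is essentially the standard one behind that citation. It uses extremal disconnectedness of $\sps(V)$ for clopenness, continuity of the restriction maps for joins, and openness of the restriction maps, in the form $\stone_W(b)|_V=\stone_V(\odeltabar_V(b))$, for meets. Its main value is therefore that it actually verifies the footnote.

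To make that verification airtight, state explicitly which properties of contexts you use:
\begin{itemize}
\item $V\subseteq W$ are closed under arbitrary meets and joins of $L$. This, and not mere completeness of $V$ in its own order, is what guarantees $c=\odeltabar_V(b)\geq b$ and hence $\lambda|_V(c)=1$.
\item The complement $d'$ of $d$ in $V$ is also its complement in the distributive lattice $W$, by uniqueness of complements. So $d\land b=0$ gives $b\leq c\land d'<c$, which is the contradiction with minimality of $c$ that you invoke.
\end{itemize}
Neither orthomodularity nor the orthocomplementation of $L$ enters anywhere.

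In the density step, the phrase ``both sets are clopen'' presupposes what is being proved. What you actually use, and all you need, is that the image of $\stone_W(b)$ is contained in $\stone_V(c)$, is closed there (being compact in a Hausdorff space), and is dense there.
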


\begin{remark}
If the assumption of completeness of $L$ Definition \ref{def: Context category} is dropped off, then the corresponding objects in Definitions \ref{def.spectral.presheaf} and \ref{def.sub.clop} are still well defined, and the corresponding version of Proposition \ref{thm: Sigma is complete lattice} also holds. However, this is no longer so for the objects and properties introduced in the rest of the current Section.
\end{remark}

\begin{proposition}\label{prop: lattice Inner daseinisation fun}
If $(L,\leq,\land,\lor)$ is a complete sublattice of complete lattice $(K,\leq,\land,\lor)$, and
\begin{align}
\odeltabar_L(x)&:=\bigwedge\{y\in L\mid y\geq x\}\;\;\forall x\in K;\label{eqn.odeltabar.def}\\
\ideltabar_L(x)&:=\bigvee\{y\in L\mid y\leq x\}\;\;\forall x\in K,\label{eqn.ideltabar.def}
\end{align}
then:
\begin{enumerate}[nosep, label=(\roman*)]
\item\label{item: lattice Inner daseinisation fun 1} $\ideltabar_{L}$ and $\odeltabar_{L}$ are order monotone;
\item\label{item: lattice Inner daseinisation fun 2} $\ideltabar_{L}$ (resp., $\odeltabar_{L}$) preserves all meets (resp., all joins);
\item\label{item: lattice Inner daseinisation fun 3} $\bigvee \ideltabar_{L}(M) \leq \ideltabar_{L}(\bigvee M)$ and $\bigwedge \odeltabar_{L}(M) \geq \odeltabar_{L}(\bigwedge M)$ for any nonempty subset $M$ of $L$;
\item\label{item: lattice Inner daseinisation fun 5} if $(L,\leq,\land,\lor)$ and $(K,\leq,\land,\lor)$ are bounded with a bottom (resp., top) element $0$ (resp., $1$), then $\ideltabar_{L}(0)=0=\odeltabar_{L}(0)$ and $\ideltabar_{L}(1)=1=\odeltabar_{L}(1)$.
\end{enumerate}
\end{proposition}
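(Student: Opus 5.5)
Since $L$ is a complete sublattice of $K$, both maps are well defined: the sets $\{y\in L\mid y\geq x\}$ and $\{y\in L\mid y\leq x\}$ are subsets of $L$, so their meet and join exist in $L$ and coincide with those in $K$. Consequently $\odeltabar_L(x)\in L$ with $x\leq\odeltabar_L(x)$, and $\ideltabar_L(x)\in L$ with $\ideltabar_L(x)\leq x$. I would first record the two Galois connections
\[
\odeltabar_L(x)\leq y \iff x\leq y\quad\mbox{and}\quad y\leq\ideltabar_L(x)\iff y\leq x\qquad\forall x\in K\ \forall y\in L .
\]
Here the forward direction of the first uses $x\leq\odeltabar_L(x)$; the backward direction holds because $y$ is then one of the elements whose meet is taken. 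The second is dual. In this language $\odeltabar_L$ is left adjoint to the inclusion $L\hookrightarrow K$, and $\ideltabar_L$ is right adjoint to it.

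For \ref{item: lattice Inner daseinisation fun 1}: if $x\leq x'$, then $\{y\in L\mid y\geq x'\}\subseteq\{y\in L\mid y\geq x\}$, so the meet over the larger set is smaller, i.e. $\odeltabar_L(x)\leq\odeltabar_L(x')$; the case of $\ideltabar_L$ is dual. For \ref{item: lattice Inner daseinisation fun 2}: left adjoints preserve all joins existing in the domain, and right adjoints preserve all meets. Concretely, for $M\subseteq K$ and $y\in L$, the adjunction gives $\odeltabar_L(\bigvee M)\leq y$ iff $\bigvee M\leq y$ iff $m\leq y$ for all $m\in M$ iff $\odeltabar_L(m)\leq y$ for all $m\in M$. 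The last condition says $\bigvee\odeltabar_L(M)\leq y$, where this join is computed in $L$ and hence equals the join in $K$ because $L$ is a complete sublattice. Since $\odeltabar_L(\bigvee M)\in L$ as well, taking $y$ to be either side yields equality. The statement for $\ideltabar_L$ and meets is dual.

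For \ref{item: lattice Inner daseinisation fun 3}: monotonicity alone gives $\ideltabar_L(m)\leq\ideltabar_L(\bigvee M)$ for each $m$, hence $\bigvee\ideltabar_L(M)\leq\ideltabar_L(\bigvee M)$, and dually for $\odeltabar_L$. For \ref{item: lattice Inner daseinisation fun 5}: the inequalities $\ideltabar_L(x)\leq x\leq\odeltabar_L(x)$ already force $\ideltabar_L(0)=0$ and $\odeltabar_L(1)=1$. For the other two I need $0,1\in L$, so that $\odeltabar_L(0)\leq 0$ and $\ideltabar_L(1)\geq 1$. Reading ``bounded with a bottom (resp., top) element $0$ (resp., $1$)'' as saying that both lattices share these bounds, this follows at once; alternatively, $L$ is closed under empty meets and joins as a complete sublattice. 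The only genuine subtlety in the whole argument is this use of complete sublattice, namely that joins and meets computed in $L$ agree with those in $K$, which is needed in \ref{item: lattice Inner daseinisation fun 2} and \ref{item: lattice Inner daseinisation fun 5}; everything else is routine adjunction bookkeeping.
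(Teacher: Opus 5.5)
Your proof is correct and follows essentially the same route as the paper. Monotonicity comes from the defining sets, and (iii) from monotonicity. For (ii), the paper's key step, that $\ideltabar_L(\bigwedge M)$ is the largest element of $L$ below $\bigwedge M$, is exactly your Galois connection with the inclusion $L\hookrightarrow K$, used there without naming it. For (iv), both arguments rest on $0,1\in L$.

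Your version is slightly cleaner than the paper's in two ways. First, it treats $M\subseteq K$, where the paper's proof writes $a,b\in L$ and $M\subseteq L$, which makes the maps the identity. Second, it makes explicit where closure of $L$ under meets and joins computed in $K$ is needed.
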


\begin{proof}
\begin{enumerate}[nosep, label=(\roman*)]
\item Let $a,b\in L$ be such that $a\leq b$. By \eqref{eqn.ideltabar.def}, $\ideltabar_{L}(a)\in L$ and $\ideltabar_{L}(a) \leq a$. Thus, $\ideltabar_{L}(a)\leq a\leq b$. Definition of the join, $\ideltabar_{L}(b) = \bigvee\lbrace d\in L\mid d\leq b\rbrace$, and $\ideltabar_{L}(a) \leq b$ give $\ideltabar_{L}(a) \leq \ideltabar_{L}(b)$.
\item Let $M$ be a nonempty subset of $L$. By \ref{item: lattice Inner daseinisation fun 1}, $ \ideltabar_{L}(\bigwedge M)\leq \ideltabar_{L}(a)$ $\forall a\in M$. Hence $ \ideltabar_{L}(\bigwedge M) \leq \bigwedge \ideltabar_{L}(M)$. Thus $\bigwedge \ideltabar_{L}(M) \leq \bigwedge M$, by $ \ideltabar_{L}(a) \leq a$ $\forall a\in M$. Since, by definition, $\ideltabar_{L}(\bigwedge M)$ is the largest element in $L$ smaller than $\bigwedge M$, this implies $  \bigwedge \ideltabar_{L}(M)\leq \ideltabar_{L}(\bigwedge M)$, hence $\ideltabar_{L}(\bigwedge M) = \bigwedge \ideltabar_{L}(M)$. 
\item If $a\in M$, then $a\leq \bigvee M$ and, by \ref{item: lattice Inner daseinisation fun 1}, $\ideltabar_{L}(a) \leq \ideltabar_{L}(\bigvee M)$ $\forall a\in M$. Thus, $\bigvee \ideltabar_{L}(M) \leq \ideltabar_{L}(\bigvee M)$.  
\item $L$ is a sublattice, hence $\ideltabar_{L}(0) = \bigvee \{b\in L\mid b\leq 0\} = 0$, so $0\in L$ (resp., $\ideltabar_{L}(1) = \bigvee \{b\in L\mid b\leq 1\} = 1$, so $1\in L$).
\end{enumerate}
The results for $\odeltabar_{L}$ follow by $(\land,\lor)$-duality.
\end{proof}

\begin{definition}\label{def: Inner daseinisation}
Let $a\in L$. The \emph{inner daseinisation} is defined as a map $\idelta: L \ni a \mapsto \idelta(a)\in \Subclop(\Sigma^{L})$, with the functor $\idelta(a):(\context)^{\mathrm{op}} \to \mathtt{Set},$ acting by $V\mapsto (\idelta(a))_{V}\coloneqq \stone_{V}\left(\ideltabar_{V}(a)\right)$ $\forall V\in \mathrm{Ob}(\context)$, where
\begin{equation}
\ideltabar_{V}: L \ni a \mapsto \ideltabar_{V}(a)=\bigvee\lbrace b\in V\mid b\leq a\rbrace \in V\;\;\forall V\in \context,
\label{eqn.inner.daseinisation}
\end{equation}
and acting by $(V\subseteq W) \mapsto \cdot|_{V}$ on morphisms of $\context$, where $|_{V}:\sps(W) \ra \sps(V)$ is the restriction map $\lambda\mapsto\lambda|_{V}$ $\forall\lambda\in\sps(W)$.
\end{definition}

\begin{proposition}\label{prop: Inner Daseinisation}
$\idelta$ is order monotone, preserves the bottom and top elements and all meets, is injective, and satisfies $\bigvee \idelta(M)\leq \idelta(\bigvee M)$ for all nonempty subsets $M$ of $L$.
\end{proposition}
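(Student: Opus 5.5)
The plan is to establish each listed property componentwise at every context $V\in\context$. The properties are then transferred to $\idelta$ through the Stone isomorphism $\stone_V$ of Proposition \ref{thm: Stone}.\ref{thm: Stone.2}, together with the componentwise description of the lattice operations of $\Subclop(\Sigma^L)$ in Proposition \ref{thm: Sigma is complete lattice}. Following Definition \ref{def: Inner daseinisation}, I treat $\idelta(a)$ as an element of $\Subclop(\Sigma^L)$, and I use that the order there is componentwise inclusion. Throughout, I apply Proposition \ref{prop: lattice Inner daseinisation fun} with $K=L$ and the complete sublattice $V$.

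\textbf{Monotonicity, top and bottom.} If $a\leq b$, then Proposition \ref{prop: lattice Inner daseinisation fun}.\ref{item: lattice Inner daseinisation fun 1} gives $\ideltabar_V(a)\leq\ideltabar_V(b)$. Since $\stone_V$ is an order isomorphism, $(\idelta(a))_V\subseteq(\idelta(b))_V$ for every $V$, and hence $\idelta(a)\leq\idelta(b)$.

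For the bounds, Proposition \ref{prop: lattice Inner daseinisation fun}.\ref{item: lattice Inner daseinisation fun 5} gives $\ideltabar_V(0)=0$ and $\ideltabar_V(1)=1$. Then $\stone_V(0)=\varnothing$ and $\stone_V(1)=\sps(V)$, so $\idelta(0)=\varnothing$ and $\idelta(1)=\Sigma^L$.

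\textbf{Meets.} Let $M\subseteq L$ be nonempty. Proposition \ref{prop: lattice Inner daseinisation fun}.\ref{item: lattice Inner daseinisation fun 2} gives $\ideltabar_V(\bigwedge M)=\bigwedge_{a\in M}\ideltabar_V(a)$, where this meet is computed in $V$; it agrees with the meet in $L$ because $V$ is a complete sublattice.

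The isomorphism $\stone_V$ sends meets in $V$ to the interior of intersections. Therefore
\begin{equation*}
(\idelta(\bigwedge M))_V=\mathrm{int}\Bigl(\bigcap_{a\in M}\stone_V(\ideltabar_V(a))\Bigr)=\Bigl(\bigwedge_{a\in M}\idelta(a)\Bigr)_V,
\end{equation*}
where the last equality is Proposition \ref{thm: Sigma is complete lattice}.\ref{thm: Sigma is complete lattice.iii}.

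\textbf{Joins.} The inequality $\bigvee\idelta(M)\leq\idelta(\bigvee M)$ follows in the same way. Using Proposition \ref{prop: lattice Inner daseinisation fun}.\ref{item: lattice Inner daseinisation fun 3}, the closure of the union of the sets $\stone_V(\ideltabar_V(a))$ equals $\stone_V(\bigvee_{a\in M}\ideltabar_V(a))$. This is contained in $\stone_V(\ideltabar_V(\bigvee M))$, which gives the inequality at each $V$.

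\textbf{Injectivity.} This is the step I expect to need a genuine idea. Suppose $a\neq b$; without loss of generality $a\not\leq b$. For $a\notin\{0,1\}$, I use the context $V=\minbool{a}=\{0,a,a^\perp,1\}$. In it, $\ideltabar_V(a)=a$, while $\ideltabar_V(b)$ is one of $0$, $a^\perp$, $1$, or $a$. The last option $a$ is excluded, since it would force $a\leq b$.

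If $\ideltabar_V(b)\in\{0,a^\perp\}$, then $\stone_V(a)\neq\stone_V(\ideltabar_V(b))$, because $\stone_V(a)$ is nonempty and disjoint from $\stone_V(a^\perp)$. If $\ideltabar_V(b)=1$, then $b=1$ and $\idelta(b)=\Sigma^L$. In that case I swap the roles of $a$ and $b$: $\ideltabar_V(a)=a\neq1$ gives $(\idelta(a))_V\neq\sps(V)$.

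The cases $a\in\{0,1\}$ are handled using any context $\minbool{b}$, where $b\notin\{0,1\}$ since $b\neq a$ and $b$ is not the other bound (or is, which is trivial via the top and bottom values above). The main obstacle is ensuring these small contexts exist, that is, that $L\neq\{0,1\}$, and checking the case bookkeeping.
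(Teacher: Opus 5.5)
Your route is the one the paper's original proof takes: componentwise use of Proposition~\ref{prop: lattice Inner daseinisation fun} transported by $\stone_V$, and injectivity via the contexts $\minbool{c}$. It therefore inherits the defect the paper itself retracts in Section~\ref{section.erratum}. The premise ``I treat $\idelta(a)$ as an element of $\Subclop(\Sigma^L)$'' is false in general, and your argument never checks it.

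For $\idelta(a)$ to be a subpresheaf you need, for all contexts $V\subseteq W$ and all $\lambda\in\sps(W)$, that $\lambda(\ideltabar_W(a))=1$ implies $\lambda(\ideltabar_V(a))=1$. But $V\subseteq W$ gives $\ideltabar_V(a)\leq\ideltabar_W(a)$, which is the wrong direction. For outer daseinisation one has $\odeltabar_V(a)\geq\odeltabar_W(a)$, and that is exactly what makes $\odelta(a)$ a subpresheaf.

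Concretely, let $0<a<b<1$ and let $W$ be a context containing $a$ and $\minbool{b}$. Then $\ideltabar_{\minbool{b}}(a)=0$, so $(\idelta(a))_{\minbool{b}}=\varnothing$, while $(\idelta(a))_W=\stone_W(a)\neq\varnothing$. The restriction map cannot send a nonempty set into $\varnothing$. The eight-element boolean algebra with atoms $p,q,r$, with $W=L$, $a=p$, $b=p\lor q$, already exhibits this.

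Proposition~\ref{prop.inner.daseinisation.is.not.clopen} shows that $\idelta(a)\in\Subclop(\Sigma^L)$ for all $a$ only when $\context$ is minimal or completely disjoint. So your meet step, whose last equality is Proposition~\ref{thm: Sigma is complete lattice}.\ref{thm: Sigma is complete lattice.iii}, and your join step compute meets and joins in a lattice to which $\idelta(a)$ does not belong. The proposition cannot be proved as stated.

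What your computations do prove is the componentwise version. View $\idelta(a)$ as an element of $\prod_{V\in\context}\mathrm{clop}(\sps(V))$ with the componentwise order. There meets are interiors of intersections and joins are closures of unions, which is legitimate since each $V$ is complete, so $\sps(V)$ is extremally disconnected. In that setting monotonicity, preservation of $0$, $1$ and all meets, and the join inequality go through exactly as you wrote.

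Your injectivity argument is in fact more careful than the paper's. The paper uses $\minbool{b}$ for $a<b$, which is not a context when $b=1$, since $\{0,1\}$ is excluded from $\context$. Your choice of $\minbool{a}$ with $a\not\leq b$ and $a\notin\{0,1\}$, plus the separate case $b=1$, avoids this.

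The point you left open, however, cannot be ``ensured''. If $L=\{0,1\}$, then $\context$ is empty, the codomain is a singleton, and $\idelta(0)=\idelta(1)$. So $L\neq\{0,1\}$ must be added as a hypothesis.
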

\begin{proof}
Order monotonicity (resp., preservation of meets; $\bigvee \idelta(M)\leq \idelta(\bigvee M)$ for all nonempty subsets $M$ of $L$; preservation of the bottom and the top elements) follows from Proposition \ref{prop: lattice Inner daseinisation fun}.\ref{item: lattice Inner daseinisation fun 1} (resp., \ref{prop: lattice Inner daseinisation fun}.\ref{item: lattice Inner daseinisation fun 2}; \ref{prop: lattice Inner daseinisation fun}.\ref{item: lattice Inner daseinisation fun 3}; \ref{prop: lattice Inner daseinisation fun}.\ref{item: lattice Inner daseinisation fun 5}) and the fact that $\stone_{V}$ is a boolean algebra isomorphism $\forall V\in \context$. Consider $\minbool{c}\coloneqq \{0,c,c^{\perp},1\}$ $\forall c\in L$. $\minbool{c}$ is a finite boolean subalgebra of $L$ $\forall c\in L$, so$\minbool{c}\in \context$ $\forall c\in L$. Assume that $a,b\in L$ are comparable (i.e. $a\leq b$ or $b\leq a$) and satisfy $a\neq b$. Let $a< b$. Then $\ideltabar_{\minbool{b}}(a) = 0$ and $\ideltabar_{\minbool{b}}(b)=b$ by \eqref{eqn.ideltabar.def}. Since $\stone$ is a boolean algebra isomorphism, Definition \ref{def: Inner daseinisation} implies $(\idelta(a))_{\minbool{b}} = \stone_{\minbool{b}}(\ideltabar_{\minbool{b}}(a)) = \stone_{\minbool{b}}(0) \neq \stone_{\minbool{b}}(b) = \stone_{\minbool{b}}(\idelta_{\minbool{b}}(b)) = (\idelta(b))_{\minbool{b}}$, so  $\idelta(a)\neq\idelta(b)$. The same argument holds for $b<a$, as well as if $a$ and $b$ are not comparable (i.e. if $a\not < b$ and $b\not < a$). Thus, $\idelta$ is injective.
\end{proof}

\begin{proposition}\label{thm: idelta adjunction}
There is an adjunction $\iepsilon\dashv\idelta$, where
\begin{equation}
\iepsilon :\Subclop(\Sigma^{L}) \ni S \mapsto\iepsilon(S) \coloneqq \bigwedge\{a\in L\mid  S\leq\idelta(a) \}\in L.
\end{equation}
In particular, $\iepsilon$ is order monotone, preserves the bottom and the top elements and all joins, is surjective, satisfies $\iepsilon(\bigwedge M) \leq \bigwedge \iepsilon(M)$ for any nonempty subset $M$ of $\Subclop(\Sigma^{L})$, and $\iepsilon\circ \idelta = \mathrm{id}_{L}$.
\end{proposition}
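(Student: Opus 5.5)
The plan is to take Proposition \ref{prop: Inner Daseinisation} as given, i.e. $\idelta:L\ra\Subclop(\Sigma^L)$ is order monotone, preserves top, bottom and all meets, and is injective. Then I would invoke the adjoint functor theorem for posets \cite[Ex. \S3.J]{Freyd:1964}, in the same way as it was used for $\odelta\dashv\oepsilon$. Since $L$ and $\Subclop(\Sigma^L)$ are complete lattices (Proposition \ref{thm: Sigma is complete lattice}), a meet-preserving monotone map has a left adjoint, given pointwise by $S\mapsto\bigwedge\{a\in L\mid S\leq\idelta(a)\}$. This is exactly $\iepsilon(S)$.

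I would nonetheless verify $\iepsilon(S)\leq a$ if{}f $S\leq\idelta(a)$ directly.
\begin{enumerate}[nosep, label=(\alph*)]
\item If $S\leq\idelta(a)$, then $a$ belongs to the indexing set, so $\iepsilon(S)\leq a$.
\item For the converse, preservation of arbitrary meets by $\idelta$ gives $\idelta(\iepsilon(S))=\bigwedge\{\idelta(a)\mid S\leq\idelta(a)\}\geq S$. Monotonicity then yields $S\leq\idelta(\iepsilon(S))\leq\idelta(a)$ whenever $\iepsilon(S)\leq a$.
\end{enumerate}
The step I expect to need the most care is (b) for the empty indexing set. There $\iepsilon(S)=1$, and one needs $\idelta(1)=\Sigma^L\geq S$, which is the top-preservation part of Proposition \ref{prop: Inner Daseinisation}.

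The listed properties then follow from general facts about Galois connections.
\begin{enumerate}[nosep, label=(\alph*)]
\setcounter{enumi}{2}
\item Left adjoints are monotone and preserve all joins, including the empty join, so $\iepsilon(\varnothing)=0$.
\item Preservation of the top: $\iepsilon(\Sigma^L)=\bigwedge\{a\mid\idelta(a)=\Sigma^L\}$. This set contains $1$, and I would argue that it contains only $1$. At the context $\minbool{a}$ with $a\neq1$, $\ideltabar_{\minbool{a}}(a)\neq1$, hence $(\idelta(a))_{\minbool{a}}\neq\sps(\minbool{a})$. Alternatively, this follows from injectivity together with $\idelta(1)=\Sigma^L$.
\item The inequality $\iepsilon(\bigwedge M)\leq\bigwedge\iepsilon(M)$ is immediate from monotonicity.
\item For $\iepsilon\circ\idelta=\id_L$: the unit-type inequality gives $\iepsilon(\idelta(a))\leq a$. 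For equality, suppose $\idelta(a)\leq\idelta(b)$ with $b=\iepsilon(\idelta(a))\leq a$. Monotonicity gives $\idelta(b)\leq\idelta(a)$, so $\idelta(a)=\idelta(b)$ and injectivity forces $a=b$. Equivalently, one uses the standard fact that a right adjoint is injective if{}f the counit $\iepsilon\circ\idelta\leq\id$ is an identity.
\item Surjectivity of $\iepsilon$ follows from $\iepsilon\circ\idelta=\id_L$.
\end{enumerate}

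The genuine obstacle lies upstream. The whole argument presupposes that $\idelta(a)\in\Subclop(\Sigma^L)$, i.e. that the restriction maps send $(\idelta(a))_W$ into $(\idelta(a))_V$. Proposition \ref{prop.inner.daseinisation.is.not.clopen} shows this fails unless $\context$ is minimal or completely disjoint. So the plan above proves the adjunction only under that hypothesis, or under a modified definition of $\idelta$, and in general the statement as posed cannot be established.
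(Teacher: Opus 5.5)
Your argument is essentially the paper's proof. The adjoint functor theorem for posets gives adjointness, monotonicity and join preservation. Monotonicity gives $\iepsilon(\bigwedge M)\leq\bigwedge\iepsilon(M)$. Injectivity of $\idelta$ gives $\iepsilon\circ\idelta=\id_L$, from which surjectivity and preservation of $0$ and $1$ follow; your direct check of the Galois connection and of $S\leq\idelta(\iepsilon(S))$ only spells out what the paper leaves implicit.

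Your caveat is also correct and matches the paper's own erratum. By Proposition \ref{prop.inner.daseinisation.is.not.clopen}, $\idelta(a)$ is generally not in $\Subclop(\Sigma^L)$, so both your proof and the paper's hold only conditionally on Proposition \ref{prop: Inner Daseinisation}, that is, when $\context$ is minimal or completely disjoint.
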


\begin{proof}
Order monotonicity, adjointness, and preservation of joins follows from the adjoint functor theorem for posets \cite[Ex. \S3.J]{Freyd:1964}. For any nonempty subset $M$ of $\Subclop(\Sigma^{L})$, order monotonicity and $\bigwedge M \leq S$ $\forall S\in M$ implies $\iepsilon(\bigwedge M) \leq \iepsilon(S)$ $\forall S\in M$, so $\iepsilon(\bigwedge M) \leq \bigwedge \iepsilon(M)$. For $\iepsilon\circ \idelta = \mathrm{id}_{L}$, if $a\in L$, then $(\iepsilon\circ \idelta)(a) = \iepsilon(\idelta(a)) = \bigwedge \{b\in L\mid \idelta(a) \leq \idelta(b)\} = a$, since $\idelta(a)$ is the largest element in $\idelta(L)$ smaller or equal than $\idelta(a)$, and $\idelta(b) = \idelta(a)$ implies $b=a$ by injectivity of $\idelta$. $\iepsilon\circ \idelta = \mathrm{id}_{L}$ implies surjectivity of $\iepsilon$ and, together with preservation of the bottom and top elements by $\idelta$ (Proposition \ref{prop: Inner Daseinisation}), implies preservation of the bottom and top elements by $\iepsilon$, since $0 = (\iepsilon\circ \idelta)(0) = \iepsilon(\varnothing)$ and $1 = (\iepsilon\circ \idelta)(1) = \iepsilon (\Sigma^{L})$.
\end{proof}

\begin{definition}\textup{\cite[p. 86]{Cannon:2013} (=\cite[pp. 54--55]{Cannon:Doering:2018})}\label{def: Outer daseinisation}
Let $a\in L$. The \emph{outer daseinisation} is defined as a map $\odelta: L \ni a \mapsto \odelta(a)\in \Subclop(\Sigma^{L})$, with functor $\odelta(a):(\context)^{\mathrm{op}} \to \mathtt{Set},$ acting by $V\mapsto (\odelta(a))_{V}\coloneqq \stone_{V}\left(\odeltabar_{V}(a)\right)$ $\forall V\in \mathrm{Ob}(\context)$, where
\begin{equation}
\odeltabar_{V}: L \ni a \mapsto \odeltabar_{V}(a)=\bigwedge\lbrace b\in V\mid a\leq b\rbrace \in V\;\;\forall V\in \context,
\label{eqn.outer.daseinisation}
\end{equation}
and acting by $(V\subseteq W) \mapsto\cdot|_{V}$ on morphisms of $\context$, where $\cdot|_{V}:\sps(W)\ra\sps(V)$ is the restriction map $\lambda\mapsto\lambda|_{V}$ $\forall\lambda\in\sps(W)$.
\end{definition}

\begin{proposition}\label{thm: Daseinisation}
$\odelta$ is order monotone, preserves the bottom and top elements and all joins, is injective, and satisfies $\odelta(\bigwedge M) \leq \bigwedge\odelta(M)$ for all nonempty subsets $M$ of $L$.
\end{proposition}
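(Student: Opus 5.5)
The plan is to work componentwise at each context $V\in\context$ and transfer the properties of $\odeltabar_V$ from Proposition \ref{prop: lattice Inner daseinisation fun} (applied with $K=L$ and the complete sublattice $V$) through the boolean isomorphism $\stone_V$ of Proposition \ref{thm: Stone}.\ref{thm: Stone.2}. Before the listed properties, I will check that $\odelta(a)$ really lies in $\Subclop(\Sigma^L)$; the Erratum shows this can fail for the inner daseinisation. For $V\subseteq W$, the set $\{b\in W\mid a\leq b\}$ contains $\{b\in V\mid a\leq b\}$. Hence $\odeltabar_W(a)\leq\odeltabar_V(a)$. If $\lambda\in\sps(W)$ satisfies $\lambda(\odeltabar_W(a))=1$, then $\lambda(\odeltabar_V(a))=1$ by monotonicity of boolean homomorphisms. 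So $\lambda|_V\in(\odelta(a))_V$, the restriction maps preserve $\odelta(a)$, and $\odelta(a)$ is a subpresheaf. Each component $\stone_V(\odeltabar_V(a))$ is closed-and-open by construction.

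Order monotonicity follows from Proposition \ref{prop: lattice Inner daseinisation fun}.\ref{item: lattice Inner daseinisation fun 1} together with monotonicity of $\stone_V$, using that the order in $\Subclop(\Sigma^L)$ is componentwise inclusion. Preservation of $0$ and $1$ follows from Proposition \ref{prop: lattice Inner daseinisation fun}.\ref{item: lattice Inner daseinisation fun 5}, since $\stone_V(0)=\varnothing$ and $\stone_V(1)=\sps(V)$. The inequality $\odelta(\bigwedge M)\leq\bigwedge\odelta(M)$ is immediate from monotonicity: $\odelta(\bigwedge M)\leq\odelta(a)$ for all $a\in M$.

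For preservation of arbitrary joins, Proposition \ref{prop: lattice Inner daseinisation fun}.\ref{item: lattice Inner daseinisation fun 2} gives $\odeltabar_V(\bigvee M)=\bigvee_{a\in M}\odeltabar_V(a)$, with the join computed in $V$; this uses that $V$ is a complete sublattice. By Proposition \ref{thm: Sigma is complete lattice}.\ref{thm: Sigma is complete lattice.iv}, the join in $\Subclop(\Sigma^L)$ is computed componentwise as $\mathrm{cl}\bigl(\bigcup_{a\in M}\stone_V(\odeltabar_V(a))\bigr)$. So I must show that $\stone_V$ sends arbitrary joins in the complete boolean algebra $V$ to closures of unions. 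I expect this to be the main technical point, since Proposition \ref{thm: Stone} only records it for finite operations. The argument I plan: the closure $C$ of the union is a closed set containing every $\stone_V(x_i)$, hence contained in the clopen $\stone_V(\bigvee x_i)$. Conversely, if $\stone_V(\bigvee x_i)\setminus C$ were nonempty, it would be open and so contain a nonempty basic clopen $\stone_V(y)$. Then $y\wedge x_i=0$ for all $i$, so $y\leq(\bigvee x_i)^\perp$, while also $y\leq\bigvee x_i$, which forces $y=0$, a contradiction. This is the standard fact that the Stone space of a complete boolean algebra is extremally disconnected.

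Injectivity I will prove as in Proposition \ref{prop: Inner Daseinisation}, using the contexts $\minbool{c}=\{0,c,c^\perp,1\}$. Let $a\neq b$, and assume without loss of generality $a\not\leq b$. If $b\notin\{0,1\}$, then in $\minbool{b}$ we have $\odeltabar_{\minbool{b}}(b)=b$. On the other hand $\odeltabar_{\minbool{b}}(a)\geq a$, so $\odeltabar_{\minbool{b}}(a)\neq b$, and injectivity of $\stone_{\minbool{b}}$ separates the two components. If $b\in\{0,1\}$, then $b=0$, since $a\not\leq 1$ is impossible. If moreover $a\notin\{0,1\}$, use $\minbool{a}$, where $\odeltabar_{\minbool{a}}(a)=a\neq0=\odeltabar_{\minbool{a}}(0)$. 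The remaining case $\{a,b\}=\{0,1\}$ is handled by any context, if one exists, since $\varnothing\neq\Sigma^L$. The degenerate $L=\{0,1\}$, where $\context$ is empty, has to be excluded or noted as trivial.
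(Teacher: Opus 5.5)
Your proof is correct and follows essentially the paper's route. The paper obtains the statement by $(\land,\lor)$-duality from Proposition~\ref{prop: Inner Daseinisation}, whose proof is precisely this componentwise transfer of Proposition~\ref{prop: lattice Inner daseinisation fun} through $\stone_V$ together with separation by the contexts $\minbool{c}$. Your direct version also supplies three things the paper leaves implicit: the subpresheaf check (which is not self-dual, as the Erratum's failure for $\idelta$ shows), the identification of $\stone_V$ of an infinite join with the closure of the union that Proposition~\ref{thm: Sigma is complete lattice} requires, and the cases $b\in\{0,1\}$. One correction: $L=\{0,1\}$ is not a trivial case but a genuine counterexample to injectivity, since then $\context=\varnothing$, $\Subclop(\Sigma^L)$ is a singleton and $\odelta(0)=\odelta(1)$, so it must be excluded.
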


\begin{proof}
Follows from Proposition \ref{prop: Inner Daseinisation} by $(\wedge,\vee)$-duality.
\end{proof}

\begin{proposition}\label{thm: delta is right adjoint}
There is an adjunction $\odelta \dashv \oepsilon$, where
\begin{equation}
\oepsilon :\Subclop(\Sigma^{L}) \ni S \mapsto \oepsilon(S) \coloneqq \bigvee\{a\in L\mid \odelta(a) \leq S\}\in L
\end{equation}
is order monotone, preserves the bottom and top elements and all meets, is surjective, satisfies $\oepsilon(\bigvee M) \geq  \bigvee \oepsilon(M)$ for all nonempty subsets $M$ of $\Subclop(\Sigma^{L})$, and $\oepsilon\circ \odelta = \mathrm{id}_{L}$.
\end{proposition}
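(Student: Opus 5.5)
The plan is to mirror the proof of Proposition \ref{thm: idelta adjunction}, using $(\land,\lor)$-duality together with Proposition \ref{thm: Daseinisation} in place of Proposition \ref{prop: Inner Daseinisation}. First I establish the adjunction itself. Both $L$ and $\Subclop(\Sigma^L)$ are complete lattices: $L$ by the standing assumption, and $\Subclop(\Sigma^L)$ by Proposition \ref{thm: Sigma is complete lattice}. By Proposition \ref{thm: Daseinisation}, $\odelta$ preserves all joins, including the empty join, since it preserves the bottom. The adjoint functor theorem for posets \cite[Ex. \S3.J]{Freyd:1964} then gives a right adjoint, given by the displayed formula $\oepsilon(S)=\bigvee\{a\in L\mid\odelta(a)\leq S\}$.

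To check the adjunction directly, I argue in two directions. If $\odelta(a)\leq S$, then $a\leq\oepsilon(S)$ by the definition of the join. Conversely, join preservation gives $\odelta(\oepsilon(S))=\bigvee\{\odelta(a)\mid\odelta(a)\leq S\}\leq S$, so $a\leq\oepsilon(S)$ yields $\odelta(a)\leq\odelta(\oepsilon(S))\leq S$ by monotonicity. Since $\oepsilon$ is a right adjoint, monotonicity and preservation of all meets are standard. In particular, $\oepsilon(\Sigma^L)=1$ follows as the empty meet, or directly because $\odelta(1)\leq\Sigma^L$.

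The inequality $\oepsilon(\bigvee M)\geq\bigvee\oepsilon(M)$ follows from monotonicity, since $S\leq\bigvee M$ for every $S\in M$. For $\oepsilon\circ\odelta=\id_L$, I compute $\oepsilon(\odelta(a))=\bigvee\{b\mid\odelta(b)\leq\odelta(a)\}$. The point is that $\odelta$ is an order embedding: $\odelta(b)\leq\odelta(a)$ should imply $b\leq a$. Granted this, the join equals $a$, because $a$ itself belongs to the set.

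This embedding property is the main obstacle. Proposition \ref{thm: Daseinisation} states only injectivity, which by itself does not give order-reflection. Join preservation does: if $\odelta(b)\leq\odelta(a)$, then $\odelta(a\lor b)=\odelta(a)\lor\odelta(b)=\odelta(a)$, and injectivity gives $a\lor b=a$, that is, $b\leq a$.

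Surjectivity of $\oepsilon$ then follows from $\oepsilon\circ\odelta=\id_L$. Preservation of the bottom follows from $0=\oepsilon(\odelta(0))=\oepsilon(\varnothing)$, using that $\odelta(0)=\varnothing$ by Proposition \ref{thm: Daseinisation}. Equivalently, every $\odelta(a)\leq\varnothing$ forces $\odelta(a)=\odelta(0)$, hence $a=0$.

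A caveat: the injectivity claim in Proposition \ref{thm: Daseinisation} is inherited by duality from Proposition \ref{prop: Inner Daseinisation}, whose context $\minbool{b}$ argument I would reuse. I would double-check that it survives for $\odelta$, using $\odeltabar_{\minbool{b}}(a)=b$ versus $1$ for $a<b$, and handling the case where $a$ and $b$ are incomparable via $\minbool{a}$ or $\minbool{b}$. The Erratum only invalidates the inner version, so this check is the one point I would verify carefully.
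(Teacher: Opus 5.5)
Your argument is correct and is essentially the paper's proof. The paper obtains this proposition as the $(\land,\lor)$-dual of the proof of Proposition \ref{thm: idelta adjunction}: adjoint functor theorem, monotonicity, $\oepsilon\circ\odelta=\id_L$ via injectivity, then surjectivity and preservation of bounds. Your explicit order-reflection step, $\odelta(b)\leq\odelta(a)\Rightarrow\odelta(a\lor b)=\odelta(a)\Rightarrow b\leq a$, fills a gap that the paper's injectivity-only justification glosses over.

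One correction to your side check: for $0<a<b$ the context $\minbool{b}$ gives $\odeltabar_{\minbool{b}}(a)=b=\odeltabar_{\minbool{b}}(b)$, so it does not separate $a$ and $b$. The separating context is $\minbool{a}$, where $\odeltabar_{\minbool{a}}(a)=a\neq1=\odeltabar_{\minbool{a}}(b)$. Also, injectivity (hence $\oepsilon\circ\odelta=\id_L$) requires $L\neq\{0,1\}$: for $L=\{0,1\}$ the category $\context$ is empty and $\Subclop(\Sigma^L)$ is a singleton, a degenerate case the paper also overlooks.
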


\begin{proof}
Follows from Proposition \ref{thm: idelta adjunction} by $(\wedge,\vee)$-duality.
\end{proof}

\begin{remark}
Proposition \ref{thm: Daseinisation} (resp., \ref{thm: delta is right adjoint}) was proved in \textup{\cite[Lem. 7.3.2]{Cannon:2013} (=\cite[Lem. 4.11]{Cannon:Doering:2018})} (resp.,  \textup{\cite[Lem. 7.4.2]{Cannon:2013} (=\cite[Lem. 4.13]{Cannon:Doering:2018})}). These lemmas assume orthomodularity of $(L,{}^\perp)$, but their proofs not depend on this assumption.
\end{remark}

\section{\label{section.akchurin.spectral.presheaf}Akchurin algebra of the spectral presheaf}

\begin{lemma}\textup{\cite[Thm. 4.2]{Eva:2015}}\footnote{Except of \ref{item: Properties of * 13}--\ref{item: Properties of * 12}.}$^{\mathrm{,}\,}$\textup{\footref{footnote.independent.of.oml}}\label{lem: Properties of *}
The map
\begin{equation}
(\cdot)^{\eva}:\Subclop(\Sigma^{L}) \ni S \mapsto S^{\eva}:=\odelta((\oepsilon(S))^{\perp})\in \Subclop(\Sigma^{L})
\end{equation}
satisfies $\forall S,T\in \Subclop(\Sigma^{L})$ $\forall a\in L$:
\begin{enumerate}[nosep, label=(\roman*)]
\begin{minipage}{0.5\linewidth}
\item $S\lor S^{\eva}=\Sigma^{L};$\label{item: Properties of * 1}
\item $S^{\eva \eva}=\odelta(\oepsilon(S))\leq S$;\label{item: Properties of * 2}
\item $S^{\eva\eva\eva}=S^{\eva}$;\label{item: Properties of * 3}
\item  $S\land S^{\eva}\geq\varnothing$;\label{item: Properties of * 4}
\item $(S\land T)^{\eva}= S^{\eva}\lor T^{\eva}$;\label{item: Properties of * 5}
\item $(S\lor T)^{\eva} \leq S^{\eva}\land T^{\eva}$;\label{item: Properties of * 6}
\item $\oepsilon(S) \lor \oepsilon(S^{\eva}) = 1$;\label{item: Properties of * 7}
\end{minipage}
\begin{minipage}{0.5\linewidth}
\item $\oepsilon(S) \land \oepsilon(S^{\eva}) = 0$;\label{item: Properties of * 8}
\item if $S\leq T$, then $S^{\eva}\geq T^{\eva}$;\label{item: Properties of * 9}
\item $\left(\odelta(a)\right)^{\eva} = \left(\odelta(a^{\perp})\right)$;\label{item: Properties of * 13}
\item $\oepsilon(S^{\eva}) = \left(\oepsilon(S)\right)^{\perp}$;\label{item: Properties of * 14}
\item $\left(\odelta(a)\right)^{\eva \eva} = \odelta(a)$;\label{item: Properties of * 10}
\item $(\Sigma^{L})^{\eva}=\varnothing$;\label{item: Properties of * 11}
\item $\varnothing^{\eva}=\Sigma^{L}$.\label{item: Properties of * 12}
\end{minipage}
\end{enumerate}
\end{lemma}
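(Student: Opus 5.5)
The whole argument rests on the adjunction $\odelta\dashv\oepsilon$ of Proposition \ref{thm: delta is right adjoint}. Together with $\oepsilon\circ\odelta=\id_L$, it gives the counit inequality $\odelta(\oepsilon(S))\leq S$ for all $S\in\Subclop(\Sigma^L)$. The other ingredients are the properties of $\odelta$ listed in Proposition \ref{thm: Daseinisation} (monotone, join preserving, preserving $0$ and $1$) and the orthocomplementation axioms of ${}^\perp$ on $L$. I will first prove the items that are pure computations with $\oepsilon\circ\odelta=\id_L$, and then derive the lattice-theoretic items from them.

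\textbf{Step 1 (algebraic identities).} For \ref{item: Properties of * 13}, compute $(\odelta(a))^\eva=\odelta((\oepsilon\odelta(a))^\perp)=\odelta(a^\perp)$. Applying $\oepsilon$ to the definition gives $\oepsilon(S^\eva)=\oepsilon\odelta((\oepsilon S)^\perp)=(\oepsilon S)^\perp$, which is \ref{item: Properties of * 14}. From \ref{item: Properties of * 14} and $a^{\perp\perp}=a$ I get $S^{\eva\eva}=\odelta((\oepsilon S)^{\perp\perp})=\odelta(\oepsilon S)$, and this is $\leq S$ by the counit; that is \ref{item: Properties of * 2}. Item \ref{item: Properties of * 10} follows from \ref{item: Properties of * 13} applied twice. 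Item \ref{item: Properties of * 3} follows from \ref{item: Properties of * 10} with $a=(\oepsilon S)^\perp$. Items \ref{item: Properties of * 7} and \ref{item: Properties of * 8} are \ref{item: Properties of * 14} combined with n6) and n7) for ${}^\perp$.

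\textbf{Step 2 (order properties).} Item \ref{item: Properties of * 9} is a composite of two monotone maps and one antitone map. Item \ref{item: Properties of * 6} is then the standard consequence of \ref{item: Properties of * 9} (Lemma \ref{lemm.bullet.negation.properties}.\ref{lemm.bullet.negation.properties 2}). Item \ref{item: Properties of * 4} is trivial because $\varnothing$ is the bottom element. For \ref{item: Properties of * 11} and \ref{item: Properties of * 12}, use that $\oepsilon$ preserves top and bottom and $\odelta$ preserves bottom and top, together with $1^\perp=0$ and $0^\perp=1$.

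\textbf{Step 3 (the main obstacles, \ref{item: Properties of * 1} and \ref{item: Properties of * 5}).} For \ref{item: Properties of * 5}, the map $\oepsilon$ is a right adjoint, so it preserves meets. Hence $(S\land T)^\eva=\odelta((\oepsilon S\land\oepsilon T)^\perp)=\odelta((\oepsilon S)^\perp\lor(\oepsilon T)^\perp)$, using the De Morgan law n11)/n12) for the orthocomplementation (Lemma \ref{lemm.bullet.negation.properties}.\ref{lemm.bullet.negation.properties 4}). Since $\odelta$ preserves joins, this equals $S^\eva\lor T^\eva$.

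For \ref{item: Properties of * 1}, I work componentwise at each context $V$. The key sub-claim is that $S_V\supseteq\stone_V(\odeltabar_V(\oepsilon S))$, which is just the counit inequality read at $V$. I then need the inequality $\stone_V(\odeltabar_V(\oepsilon S))\cup\stone_V(\odeltabar_V((\oepsilon S)^\perp))\supseteq\stone_V(\odeltabar_V(\oepsilon S)\lor\odeltabar_V((\oepsilon S)^\perp))$. The right-hand side is $\sps(V)$, because $\odeltabar_V(a)\lor\odeltabar_V(a^\perp)\geq a\lor a^\perp=1$. This settles the case where $\oepsilon S$ decomposes nicely, but in general $S$ is larger than $\odelta(\oepsilon S)$.

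The cleaner route is to argue directly: suppose $S\lor S^\eva\neq\Sigma^L$. Then by the adjunction there is some element below $S\lor S^\eva$ whose image misses a point, and I would try to contradict this using \ref{item: Properties of * 7} and the fact that $\odelta$ preserves joins. Concretely, $\odelta(\oepsilon S)\lor\odelta((\oepsilon S)^\perp)=\odelta(\oepsilon S\lor(\oepsilon S)^\perp)=\odelta(1)=\Sigma^L$. Since $\odelta(\oepsilon S)\leq S$, it follows that $S\lor S^\eva\geq\Sigma^L$, which gives \ref{item: Properties of * 1}. I expect this short argument to go through, so the real care goes into checking that join preservation of $\odelta$ (Proposition \ref{thm: Daseinisation}) holds without orthomodularity, as footnote \ref{footnote.independent.of.oml} asserts.
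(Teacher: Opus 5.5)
Your proposal is correct and is essentially the paper's proof. Every item reduces to $\oepsilon\circ\odelta=\id_L$, the counit $\odelta(\oepsilon(S))\leq S$, meet preservation of $\oepsilon$, join preservation of $\odelta$, and the orthocomplementation laws, and your final argument for (i) is exactly the paper's; deriving (vi) from (ix) and (iii) from (xii), rather than by direct computation, is an immaterial reordering. Your hesitation in Step 3 is unfounded, since the componentwise argument already works for every $S$: the fact that $S\geq\odelta(\oepsilon(S))$ can only enlarge the union $S_V\cup(S^{\eva})_V$.
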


\begin{proof}
Let $S,T\in\Subclop(\Sigma^L)$ and $a\in L$.
\begin{enumerate}[nosep,label=(\roman*)]
\item[(ii)] $S^{\eva \eva} = \odelta((\oepsilon(\odelta((\oepsilon(S))^{\perp}) ))^{\perp}) = \odelta(((\oepsilon(S))^{\perp})^{\perp}) = \odelta(\oepsilon(S) ),$
due to $\oepsilon\circ\odelta=\mathrm{id}_{L}$, by Proposition \ref{thm: delta is right adjoint}. By Proposition \ref{thm: delta is right adjoint}, $\odelta \dashv \oepsilon$, hence $\odelta(\oepsilon(S))\leq S$.
\item[(i)] Using Proposition \ref{thm: Daseinisation} and \ref{item: Properties of * 2}, $S\lor S^{\eva} = S\lor \odelta((\oepsilon(S))^{\perp} ) \geq \odelta(\oepsilon(S) ) \lor \odelta((\oepsilon(S))^{\perp} )=\odelta( \oepsilon(S) \lor (\oepsilon(S))^{\perp} )= \odelta( 1) = \Sigma^{L}$, so $S\lor S^{\eva}=\Sigma^{L}$.
\setcounter{enumi}{2}
\item $S^{\eva\eva\eva} = (\odelta(\oepsilon(S)))^{\eva}= \odelta((\oepsilon( \odelta(\oepsilon(S))) )^{\perp} )= \odelta((\oepsilon(S))^{\perp}) = S^{\eva}$.
\item $\varnothing$ is the bottom element of $\Subclop(\Sigma^{L})$ by Proposition \ref{thm: Sigma is complete lattice}.
\item $(S\land T)^{\eva} = \odelta(( \oepsilon( S\land T ))^{\perp})= \odelta(( \oepsilon( S)\land \oepsilon(T ))^{\perp})= \odelta( (\oepsilon( S))^{\perp} \lor (\oepsilon(T ))^{\perp})= \odelta( (\oepsilon( S))^{\perp}) \lor \odelta((\oepsilon(T ))^{\perp}) = S^{\eva}\lor T^{\eva}$, by the properties of $\odelta$ (resp., $\oepsilon$) given in Proposition \ref{thm: Daseinisation} (resp., \ref{thm: delta is right adjoint}).
\item $(S\lor T)^{\eva} = \odelta(( \oepsilon( S\lor T ))^{\perp})\leq  \odelta(( \oepsilon( S)\lor \oepsilon(T ))^{\perp}) = \odelta((\oepsilon( S))^{\perp}\land (\oepsilon(T ))^{\perp}) \leq  \odelta( (\oepsilon( S))^{\perp}) \land \odelta((\oepsilon(T ))^{\perp}) = S^{\eva}\land T^{\eva} $, by the properties of $\odelta$ (resp. $\oepsilon$) given in Proposition \ref{thm: Daseinisation} (resp., \ref{thm: delta is right adjoint}).
\item $\oepsilon(S) \lor \oepsilon(S^{\eva}) = \oepsilon(S) \lor \oepsilon(\odelta((\oepsilon(S))^{\perp} ) )= \oepsilon(S) \lor (\oepsilon(S))^{\perp}=1$, by Proposition \ref{thm: delta is right adjoint}.
\item $\oepsilon(S) \land \oepsilon(S^{\eva}) = \oepsilon(S) \land \oepsilon(\odelta((\oepsilon(S))^{\perp} ) ) = \oepsilon(S) \land (\oepsilon(S))^{\perp} =0$, by Proposition \ref{thm: delta is right adjoint}.
\item $\odelta$ and $\oepsilon$ are order monotone and $^\perp$ is order reversing, hence $S\leq T$  implies $\oepsilon(S)\leq \oepsilon(T)$ implies $(\oepsilon(S))^{\perp}\geq (\oepsilon(T))^{\perp}$ implies $\odelta((\oepsilon(S))^{\perp})\geq \odelta((\oepsilon(T))^{\perp})$, which is equivalent to $S^{\eva} \geq T^{\eva}$.
\item $(\odelta(a))^{\eva} = \odelta((\oepsilon(\odelta(a)))^{\perp}) = \odelta(a^{\perp})$, by Proposition \ref{thm: delta is right adjoint}.
\item $\oepsilon(S^{\eva}) = \oepsilon\left(\odelta((\oepsilon(S))^{\perp}) \right) = (\oepsilon(S))^{\perp}$, by Proposition \ref{thm: delta is right adjoint}.
\item $(\odelta(a))^{\eva \eva} = \odelta(a^{\perp\perp}) = \odelta(a),$ by \ref{item: Properties of * 13} and \ref{item: Properties of * 14}.
\item Due to Propositions \ref{thm: Daseinisation} and \ref{thm: delta is right adjoint}, $(\Sigma^{L})^{\eva} = \odelta((\oepsilon(\Sigma^{L}))^{\perp}) = \odelta(1^{\perp} ) = \odelta(0) = \varnothing.$
\item Analogously to the proof of \ref{item: Properties of * 11}.
\end{enumerate}
\end{proof}

\begin{remark}
Lemmas \ref{lem: Properties of *}.\ref{item: Properties of * 3} and \ref{lem: Properties of *}.\ref{item: Properties of * 5}--\ref{item: Properties of * 6} are consequences of Lemmas \ref{lemm.bullet.negation.properties}.\ref{lemm.bullet.negation.properties 1}--\ref{lemm.bullet.negation.properties 3}, \ref{lem: Properties of *}.\ref{item: Properties of * 1}--\ref{item: Properties of * 2}, and \ref{lem: Properties of *}.\ref{item: Properties of * 9}. Lemmas \ref{lem: Properties of *}.\ref{item: Properties of * 11}--\ref{item: Properties of * 12} are a consequence of Lemma \ref{lemm.bullet.negation.properties}.\ref{lemm.bullet.negation.properties 45}, Proposition \ref{thm: Sigma is complete lattice}.\ref{thm: Sigma is complete lattice.i}--\ref{thm: Sigma is complete lattice.ii}, and Lemmas \ref{lem: Properties of *}.\ref{item: Properties of * 1}--\ref{item: Properties of * 2}, and \ref{lem: Properties of *}.\ref{item: Properties of * 9}.
\end{remark}

\begin{corollary}\label{cor.subclop.is.coquasiint}
$(\Subclop(\Sigma^{L}),\leq,\land,\lor,\varnothing,\Sigma^{L},{}^{\eva})$ is a complete coquasiintuitionistic algebra.
\end{corollary}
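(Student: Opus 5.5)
We have to check the definition of a complete coquasiintuitionistic algebra, as given in Definition \ref{def.coquasiintuitionist.Akchurin.etc}.\ref{def.coquasiintuitionist.Akchurin.etc 3}. Concretely, $(\Subclop(\Sigma^{L}),\leq,\land,\lor,\varnothing,\Sigma^{L})$ must be a complete bounded distributive lattice, and ${}^\eva$ must satisfy \ref{def: negation 1}, \ref{def: negation 3}, and \ref{def: negation 7}.

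The lattice part comes directly from Proposition \ref{thm: Sigma is complete lattice}. That result states that $\Subclop(\Sigma^L)$ is a complete distributive lattice with bottom element $\varnothing$ and top element $\Sigma^L$. By footnote \ref{footnote.independent.of.oml}, this holds without assuming orthomodularity, so it applies to our complete orthocomplemented $L$. The fact that ${}^\eva$ is well defined as a map into $\Subclop(\Sigma^L)$ follows from the definitions of $\odelta$ and $\oepsilon$ in Definition \ref{def: Outer daseinisation} and Proposition \ref{thm: delta is right adjoint}. Here we use that $\stone_V$ takes values in $\mathrm{clop}(\sps(V))$ and that $\odelta(a)$ is a subpresheaf.

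The negation axioms can then be read off from Lemma \ref{lem: Properties of *}:
\begin{itemize}[nosep]
\item \ref{def: negation 1} is exactly item \ref{item: Properties of * 9}.
\item \ref{def: negation 3} is item \ref{item: Properties of * 2}, namely $S^{\eva\eva}=\odelta(\oepsilon(S))\leq S$, which comes from the counit of the adjunction $\odelta\dashv\oepsilon$.
\item \ref{def: negation 7} in the bounded setting is equivalent, by Lemma \ref{lemm.bullet.negation.properties}.\ref{lemm.bullet.negation.properties 5}, to \ref{def: negation 14}, i.e. $S\lor S^\eva=\Sigma^L$. This is item \ref{item: Properties of * 1}.
\end{itemize}

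The only step needing real care is \ref{item: Properties of * 1}, since it uses that $\odelta$ preserves binary joins (Proposition \ref{thm: Daseinisation}), together with $a\lor a^\perp=1$ in $L$ and $\odelta(1)=\Sigma^L$. These results are taken as established earlier. With them in hand, the corollary is an assembly of those facts.
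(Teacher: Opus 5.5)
Your proposal is correct and follows the paper's proof. The paper also derives the corollary from Proposition \ref{thm: Sigma is complete lattice} together with items \ref{item: Properties of * 1}, \ref{item: Properties of * 2}, and \ref{item: Properties of * 9} of Lemma \ref{lem: Properties of *}, which give \ref{def: negation 7} (via \ref{def: negation 14}), \ref{def: negation 3}, and \ref{def: negation 1} respectively; your remarks on the well-definedness of ${}^\eva$ and on Lemma \ref{lemm.bullet.negation.properties}.\ref{lemm.bullet.negation.properties 5} only make explicit what the paper leaves implicit.
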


\begin{proof}
Follows from Lemmas \ref{lem: Properties of *}.\ref{item: Properties of * 1}--\ref{item: Properties of * 2} and \ref{lem: Properties of *}.\ref{item: Properties of * 9}  and Proposition \ref{thm: Sigma is complete lattice}.
\end{proof}

\begin{lemma}\label{lemma.subclop.coeva.properties}
The map 
\begin{equation}
(\cdot)^\coeva:\Subclop(\Sigma^L)\ni S\mapsto S^\coeva:=\idelta((\iepsilon(S))^\perp)\in\Subclop(\Sigma^L)
\end{equation}
satisfies $\forall S,T\in\Subclop(\Sigma^L)$ $\forall a\in L$:
\begin{enumerate}[nosep, label=(\roman*)]
\begin{minipage}{0.5\linewidth}
\item $(\idelta(a))^{\coeva} = \idelta(a^{\perp})$;\label{lemma.subclop.coeva.properties.0a}
\item $\iepsilon(S^{\coeva}) = (\iepsilon(S))^{\perp}$;\label{lemma.subclop.coeva.properties.0b}
\item $S^{\coupint} = \idelta(\iepsilon(S))$;\label{lemma.subclop.coeva.properties.0c}
\item $S\land S^\coeva=\varnothing$;\label{lemma.subclop.coeva.properties.1}
\item $S^{\coeva\coeva}\geq S$;\label{lemma.subclop.coeva.properties.2}
\item $S^{\coeva\coeva\coeva}=S^\coeva$;\label{lemma.subclop.coeva.properties.3}
\item $S\land S^\coeva\leq\Sigma^L$;\label{lemma.subclop.coeva.properties.4}
\end{minipage}
\begin{minipage}{0.5\linewidth}
\item $(S\lor T)^\coeva=S^\coeva\land T^\coeva$;\label{lemma.subclop.coeva.properties.5}
\item $(S\land T)^\coeva\geq S^\coeva\lor T^\coeva$;\label{lemma.subclop.coeva.properties.6}
\item $\iepsilon(S)\land\iepsilon(S^\coeva)=0$;\label{lemma.subclop.coeva.properties.7}
\item $\iepsilon(S)\lor\iepsilon(S^\coeva)=1$;\label{lemma.subclop.coeva.properties.8}
\item if $S\leq T$, then $T^\coeva\leq S^\coeva$;\label{lemma.subclop.coeva.properties.9}
\item $(\idelta(a))^{\coeva\coeva}=\idelta(a)$;\label{lemma.subclop.coeva.properties.10}
\item $(\Sigma^L)^\coeva=\varnothing$;\label{lemma.subclop.coeva.properties.11}
\item $\varnothing^\coeva=\Sigma^L$.\label{lemma.subclop.coeva.properties.12}
\end{minipage}
\end{enumerate}
\end{lemma}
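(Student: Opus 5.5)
The proof dualises Lemma \ref{lem: Properties of *}. Everything is computed from the adjunction $\iepsilon\dashv\idelta$ and its consequences in Proposition \ref{thm: idelta adjunction}: $\iepsilon\circ\idelta=\mathrm{id}_L$, $\iepsilon$ preserves joins and the bounds, and $\idelta$ preserves meets and the bounds by Proposition \ref{prop: Inner Daseinisation}. From the unit of this adjunction we also get $S\leq\idelta(\iepsilon(S))$. As in the proof of Lemma \ref{lem: Properties of *}, we begin with the identities (i)--(iii) and (x)--(xi), and then derive the negation laws from them.

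\textbf{Algebraic identities.} First, (i) holds because $(\idelta(a))^\coeva=\idelta((\iepsilon\idelta(a))^\perp)=\idelta(a^\perp)$. Second, (ii) holds because $\iepsilon(S^\coeva)=\iepsilon\idelta((\iepsilon S)^\perp)=(\iepsilon S)^\perp$. Iterating these gives (iii): $S^{\coeva\coeva}=\idelta((\iepsilon S)^{\perp\perp})=\idelta(\iepsilon S)$. Combining (iii) with the unit inequality gives (v). Statement (vi) follows from (iii) and (ii), since $S^{\coeva\coeva\coeva}=\idelta((\iepsilon\idelta\iepsilon S)^\perp)=S^\coeva$. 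Statement (xiii) follows from (i) together with $a^{\perp\perp}=a$. Statements (x) and (xi) follow from (ii) and the orthocomplementation laws of $L$. Statement (xii) holds because $\iepsilon$ is monotone, $^\perp$ reverses order, and $\idelta$ is monotone. Statements (xiv) and (xv) hold because $\iepsilon(\Sigma^L)=1$, $\iepsilon(\varnothing)=0$, $\idelta(0)=\varnothing$ and $\idelta(1)=\Sigma^L$. Statement (vii) is trivial, since $\Sigma^L$ is the top element.

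\textbf{De Morgan-type laws.} Statement (viii) uses that $\iepsilon$ preserves joins, that $^\perp$ turns joins into meets, and that $\idelta$ preserves meets:
$(S\lor T)^\coeva=\idelta((\iepsilon S\lor\iepsilon T)^\perp)=\idelta((\iepsilon S)^\perp\land(\iepsilon T)^\perp)=S^\coeva\land T^\coeva$.
Statement (ix) then follows from (xii) by Lemma \ref{lemm.bullet.negation.properties}.\ref{lemm.bullet.negation.properties 2}.

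\textbf{The consistency law (iv), the expected obstacle.} The plan is to show $S\land S^\coeva\leq\idelta(\iepsilon S)\land\idelta((\iepsilon S)^\perp)=\idelta(\iepsilon S\land(\iepsilon S)^\perp)=\idelta(0)=\varnothing$. Here the first inequality uses the unit, and the equality uses meet preservation by $\idelta$.

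Every one of these steps presupposes that $\idelta(a)$ is genuinely an element of $\Subclop(\Sigma^L)$. That is, the sets $\stone_V(\ideltabar_V(a))$ must form a subpresheaf, which means they must be stable under the restriction maps. This is the step I would check first. Unlike in the outer case, $\ideltabar_V$ is not compatible with restriction: for $V'\subseteq V$ one only has $\ideltabar_{V'}(a)\leq\ideltabar_V(a)$, and restricting $\stone_V$ of a larger element need not land inside $\stone_{V'}$ of the smaller one. Proposition \ref{prop.inner.daseinisation.is.not.clopen} shows that this indeed fails except in degenerate context categories.

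So I expect the main obstacle to be exactly well-definedness. The formal manipulations above are routine once $\idelta:L\to\Subclop(\Sigma^L)$ with $\iepsilon\dashv\idelta$ is available. Without it, one would need to replace $\idelta(a)$ by the largest clopen subpresheaf contained in it (a presheaf-interior) and re-prove the adjunction and $\iepsilon\circ\idelta=\mathrm{id}_L$ for that modified map. The injectivity argument via the contexts $\minbool{b}$ may then break.
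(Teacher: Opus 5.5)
Your formal computation is exactly the paper's proof, which is the single line ``follows from Lemma~\ref{lem: Properties of *} by $(\land,\lor)$-duality''. Your diagnosis of why it cannot be completed is also correct, and it is precisely what the paper concedes in its Erratum (Section~\ref{section.erratum}). Since $\ideltabar_{V'}(a)\leq\ideltabar_V(a)$ for $V'\subseteq V$, the family $V\mapsto\stone_V(\ideltabar_V(a))$ is not stable under restriction. Propositions~\ref{prop: Inner Daseinisation} and~\ref{thm: idelta adjunction}, on which every one of your steps rests, are therefore inapplicable.

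So your proposal does not prove the statement, but no proof exists. You should state this as a refutation rather than as an ``expected obstacle'': the lemma fails whenever $\context$ has a proper inclusion. In that case there are $0<a<b$ and a context $V\supsetneq\minbool{b}$ containing $a$; for instance, take $L$ the eight-element boolean algebra, $a$ an atom, and $b=a\lor a'$ for another atom $a'$. Let $S$ be the clopen subpresheaf with $S_{\minbool{a}}=\stone_{\minbool{a}}(a^\perp)$ and $S_W=\varnothing$ for $W\neq\minbool{a}$. Then $S\leq\idelta(c)$ if{}f $c\geq a^\perp$, so $\iepsilon(S)=a^\perp$ and $S^\coeva=\idelta(a)$. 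The component $\stone_V(a)\neq\varnothing$ of $\idelta(a)$ would have to restrict into $(\idelta(a))_{\minbool{b}}=\stone_{\minbool{b}}(0)=\varnothing$. Hence the map in the statement does not even land in $\Subclop(\Sigma^L)$. In the degenerate cases of Proposition~\ref{prop.inner.daseinisation.is.not.clopen}, where $\context$ has no proper inclusions, $\idelta$ is well defined and your conditional argument goes through verbatim.

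Your proposed repair does work, and the worry about injectivity is unfounded. Consider the inclusion of $\Subclop(\Sigma^L)$ into the complete lattice of componentwise clopen families, ordered componentwise; it preserves all joins, for two reasons:
\begin{itemize}
\item componentwise closures of unions are clopen, because the Stone space of a complete boolean algebra is extremally disconnected;
\item the closure of a union of subpresheaves is again a subpresheaf, by continuity of the restriction maps.
\end{itemize}
Hence the inclusion has a meet-preserving right adjoint $\rho$, sending a family to its largest clopen subpresheaf. Since $\{0,1\}$ is excluded from $\context$, each $\minbool{b}$ with $b\notin\{0,1\}$ is a minimal context. The family equal to $G_{\minbool{b}}$ at $\minbool{b}$ and empty elsewhere is therefore a clopen subpresheaf, which gives $(\rho G)_{\minbool{b}}=G_{\minbool{b}}$. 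Now read $\idelta$ as a map into clopen families, where it preserves all meets componentwise, and set $\idelta':=\rho\circ\idelta$. This map preserves all meets, and it is injective by the same $\minbool{b}$-separation argument as in Proposition~\ref{prop: Inner Daseinisation}. Its left adjoint $\iepsilon'$ satisfies $\iepsilon'\circ\idelta'=\mathrm{id}_L$. Your computation then proves all fifteen items for $S\mapsto\idelta'((\iepsilon'(S))^\perp)$, though that is a different statement from the one given.
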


\begin{proof}
Follows from Lemma \ref{lem: Properties of *} by $(\land,\lor)$-duality.
\end{proof}

\begin{corollary}\label{cor.coeva.qint}
\begin{enumerate}[nosep, label=(\roman*)]
\item $(\Subclop(\Sigma^L),\leq,\land,\lor,\varnothing,\Sigma^L,{}^\coeva)$ is a complete quasiintuitionistic algebra.\label{cor.coeva.qint.1}
\item $(\Subclop(\Sigma^L),\leq,\land,\lor,\varnothing,\Sigma^L,{}^\coeva,{}^\eva)$ is a complete biquasiintuitionistic algebra.\label{cor.coeva.qint.2}
\end{enumerate}
\end{corollary}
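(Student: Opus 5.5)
The plan is to read Corollary \ref{cor.coeva.qint}.\ref{cor.coeva.qint.1} directly off Lemma \ref{lemma.subclop.coeva.properties}, exactly parallel to the proof of Corollary \ref{cor.subclop.is.coquasiint}. Then \ref{cor.coeva.qint.2} follows by combining it with Corollary \ref{cor.subclop.is.coquasiint}.

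For \ref{cor.coeva.qint.1}, first invoke Proposition \ref{thm: Sigma is complete lattice}. It gives that $(\Subclop(\Sigma^L),\leq,\land,\lor,\varnothing,\Sigma^L)$ is a complete, bounded, distributive lattice, so by Definition \ref{def.coquasiintuitionist.Akchurin.etc}.\ref{def.coquasiintuitionist.Akchurin.etc 3} it remains to check that ${}^\coeva$ is a quasiintuitionistic negation. This means verifying \ref{def: negation 1}, \ref{def: negation 2} and \ref{def: negation 6}:
\begin{enumerate}[nosep,label=\alph*)]
\item \ref{def: negation 1} is Lemma \ref{lemma.subclop.coeva.properties}.\ref{lemma.subclop.coeva.properties.9};
\item \ref{def: negation 2} is Lemma \ref{lemma.subclop.coeva.properties}.\ref{lemma.subclop.coeva.properties.2};
\item \ref{def: negation 6} follows from Lemma \ref{lemma.subclop.coeva.properties}.\ref{lemma.subclop.coeva.properties.1}, i.e. $S\land S^\coeva=\varnothing$, through the equivalence of \ref{def: negation 6} and \ref{def: negation 13} in bounded lattices (Lemma \ref{lemm.bullet.negation.properties}.\ref{lemm.bullet.negation.properties 5}).
\end{enumerate}
Completeness is inherited from Proposition \ref{thm: Sigma is complete lattice}.

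For \ref{cor.coeva.qint.2}, note that both structures share the same underlying bounded distributive complete lattice. Corollary \ref{cor.subclop.is.coquasiint} makes ${}^\eva$ coquasiintuitionistic and \ref{cor.coeva.qint.1} makes ${}^\coeva$ quasiintuitionistic. Both negations are in particular subminimal, so the structure is a bounded bi-Vakarelov algebra, and Definition \ref{def.biquasiintuitionistic} then gives a complete biquasiintuitionistic algebra.

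The main obstacle is not in this bookkeeping but in the input. Lemma \ref{lemma.subclop.coeva.properties} is obtained by $(\land,\lor)$-duality from Lemma \ref{lem: Properties of *}, and that duality presupposes that $\idelta$ is a well-defined map into $\Subclop(\Sigma^L)$ with adjunction $\iepsilon\dashv\idelta$ (Propositions \ref{prop: Inner Daseinisation} and \ref{thm: idelta adjunction}). The dualisation itself is not symmetric: the restriction maps of $\Sigma^L$ go one way, and inner approximations in a smaller context become smaller, so $\idelta(a)$ need not be a subpresheaf. This is precisely what Proposition \ref{prop.inner.daseinisation.is.not.clopen} detects.

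So my plan must state the argument as conditional on Lemma \ref{lemma.subclop.coeva.properties}. Unconditionally, it holds only when $\context$ is minimal or completely disjoint. In those cases $\Mor(\context)=\emptyset$, the presheaf condition is vacuous, and I would verify \ref{lemma.subclop.coeva.properties.1}, \ref{lemma.subclop.coeva.properties.2} and \ref{lemma.subclop.coeva.properties.9} componentwise via the boolean isomorphisms $\stone_V$. In general, the verification of ${}^\coeva$ being well defined is the step I would expect to fail, and it would need a modified definition of inner daseinisation.
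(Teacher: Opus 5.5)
Your proof is the paper's own. Part (i) is read off Proposition \ref{thm: Sigma is complete lattice} together with Lemma \ref{lemma.subclop.coeva.properties}.\ref{lemma.subclop.coeva.properties.1}, \ref{lemma.subclop.coeva.properties.2} and \ref{lemma.subclop.coeva.properties.9}, and part (ii) follows from (i) plus Corollary \ref{cor.subclop.is.coquasiint}. The caveat you raise is exactly the one the paper records in its Erratum (Section \ref{section.erratum}): by Proposition \ref{prop.inner.daseinisation.is.not.clopen}, $\idelta$ fails to land in $\Subclop(\Sigma^L)$ unless $\context$ is minimal or completely disjoint, so Lemma \ref{lemma.subclop.coeva.properties}, and with it this corollary, is only secured in those degenerate cases.
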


\begin{proof}
\begin{enumerate}[nosep, label=(\roman*)]
\item Follows from Proposition \ref{thm: Sigma is complete lattice} and Lemmas \ref{lemma.subclop.coeva.properties}.\ref{lemma.subclop.coeva.properties.1}--\ref{lemma.subclop.coeva.properties.2} and \ref{lemma.subclop.coeva.properties}.\ref{lemma.subclop.coeva.properties.9}.
\item Follows from \ref{cor.coeva.qint.1} and Corollary \ref{cor.subclop.is.coquasiint}.
\end{enumerate}
\end{proof}

\begin{proposition}\textup{\cite[Prop. 7.2.6]{Cannon:2013} (=\cite[Prop. 4.9]{Cannon:Doering:2018})}\textup{\footref{footnote.independent.of.oml}}\label{cor.heyting.brouwer.skolem}
\begin{enumerate}[nosep, label=(\roman*)]
\item $(\Subclop(\Sigma^{L}),\leq,\land,\lor,\varnothing,\Sigma^{L},\protect\himp,\protect\hnot)$ is a complete Heyting algebra;\label{cor.heyting.brouwer.skolem.1}
\item $(\Subclop(\Sigma^{L}),\leq,\land,\lor,\varnothing,\Sigma^{L},\protect\cohimp,\protect\cohnot)$ is a complete Brouwer algebra;\label{cor.heyting.brouwer.skolem.2}
\item $(\Subclop(\Sigma^{L}),\leq,\land,\lor,\varnothing,\Sigma^{L},\protect\himp,\protect\hnot,\protect\cohimp,\protect\cohnot)$ is a complete Skolem algebra.\label{cor.heyting.brouwer.skolem.3}
\end{enumerate}
\end{proposition}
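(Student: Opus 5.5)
The plan is to construct the Heyting and Brouwer operations on $\Subclop(\Sigma^{L})$ directly via the adjoint functor theorem for posets \cite[Ex. \S3.J]{Freyd:1964}, using the complete lattice structure of Proposition \ref{thm: Sigma is complete lattice}. For (i), I define
\[
S\himp T:=\bigvee\{R\in\Subclop(\Sigma^{L})\mid R\land S\leq T\},
\]
so that \eqref{eqn.adjunction.heyting.implication} reduces to showing that $\cdot\land S$ preserves arbitrary joins. Dually, for (ii) I define
\[
S\cohimp T:=\bigwedge\{R\mid S\leq T\lor R\},
\]
which requires that $T\lor\cdot$ preserve arbitrary meets. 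Then $\hnot S:=S\himp\varnothing$ and $\cohnot S:=\Sigma^{L}\cohimp S$ are given by Proposition \ref{prop.skolem.properties}, and (iii) follows at once from (i) and (ii) by Definition \ref{def.heyting.brouwer.skolem.algebra}. Completeness is already supplied by Proposition \ref{thm: Sigma is complete lattice}.

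The substantive work is the two infinite distributive laws, which I check stagewise at each context $V$. By Proposition \ref{thm: Stone}.\ref{thm: Stone.2}, $\mathrm{clop}(\sps(V))$ is isomorphic to $V$, which is a complete boolean algebra. The stagewise operations $\mathrm{cl}(\bigcup)$ and $\mathrm{int}(\bigcap)$ of Proposition \ref{thm: Sigma is complete lattice} are exactly the complete joins and meets of $\mathrm{clop}(\sps(V))$, which is the reason completeness of the contexts is required in Definition \ref{def: Context category}. A complete boolean algebra satisfies both $a\land\bigvee_i b_i=\bigvee_i(a\land b_i)$ and $a\lor\bigwedge_i b_i=\bigwedge_i(a\lor b_i)$. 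Since the lattice operations of $\Subclop(\Sigma^{L})$ are computed componentwise, each identity, once established at every $V$, gives $S\land\bigvee_iR_i=\bigvee_i(S\land R_i)$ and $T\lor\bigwedge_iR_i=\bigwedge_i(T\lor R_i)$.

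The main obstacle is that this componentwise description is only legitimate once one knows that $V\mapsto\mathrm{cl}(\bigcup_i(F^i)_V)$ and $V\mapsto\mathrm{int}(\bigcap_i(F^i)_V)$ are again subpresheaves. That is, one needs $\lambda|_{V'}\in(\cdot)_{V'}$ for $V'\subseteq V$. Proposition \ref{thm: Sigma is complete lattice} asserts this, but the distributivity argument depends on it, so I would recheck it.

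For closures it is immediate, since restriction maps are continuous and send $\bigcup_i(F^i)_V$ into $\bigcup_i(F^i)_{V'}$. For interiors I would use that the restriction $\sps(V)\to\sps(V')$ is dual to the inclusion $V'\hookrightarrow V$. The clopen $\mathrm{int}(\bigcap_i(F^i)_V)$ corresponds to $\bigwedge^{V}_i a_i$, where $a_i\in V$ represents $(F^i)_V$. Its image under restriction corresponds to $\odeltabar_{V'}(\bigwedge^V_i a_i)$. Because $\odeltabar_{V'}$ is monotone and each $(F^i)_V$ restricts into $(F^i)_{V'}$, giving $\odeltabar_{V'}(a_i)\leq a'_i$, this image lies in $\bigwedge^{V'}_i a'_i$. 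This is the one nontrivial step, and everything else is formal.
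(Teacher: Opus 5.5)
Your proposal is correct. It is essentially the argument behind the result the paper cites (Cannon's Prop.~7.2.6, Cannon--D\"{o}ring's Prop.~4.9): meets and joins in $\Subclop(\Sigma^L)$ are computed stagewise, each $\mathrm{clop}(\sps(V))\iso V$ is a complete boolean algebra satisfying both infinite distributive laws, and $\himp$, $\cohimp$ then come from the adjoint functor theorem for posets. Your check that the stagewise $\mathrm{int}(\bigcap)$ is again a subpresheaf uses only the boolean structure of the contexts, which confirms the paper's footnote that orthomodularity of $L$ is never needed.
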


\begin{corollary}\label{cor.complete.bqi.akchurin}
\begin{enumerate}[nosep, label=(\roman*)]
\item\label{cor.complete.bqi.akchurin.i} $(\Subclop(\Sigma^{L}),\leq,\land,\lor,\varnothing,\Sigma^{L},\protect\himp,\protect\hnot,{}^\coeva)$ is a complete Heyting--quasiintuitionistic algebra, and a sound model of $\QInt\otimes\Int$;
\item\label{cor.complete.bqi.akchurin.ib} $(\Subclop(\Sigma^{L}),\leq,\land,\lor,\varnothing,\Sigma^{L},\protect\himp,\protect\hnot,{}^\eva)$ is a complete Heyting--coquasiintuitionistic algebra, and a sound model of $\coQInt\otimes\Int$;
\item\label{cor.complete.bqi.akchurin.ii} $(\Subclop(\Sigma^{L}),\leq,\land,\lor,\varnothing,\Sigma^{L},\protect\cohimp,\protect\cohnot,{}^\eva)$ is a complete Brouwer--coquasiintuitionistic algebra, and a sound model of $\coQInt\otimes\coInt$;
\item\label{cor.complete.bqi.akchurin.iib} $(\Subclop(\Sigma^{L}),\leq,\land,\lor,\varnothing,\Sigma^{L},\protect\cohimp,\protect\cohnot,{}^\coeva)$ is a complete Brouwer--quasiintuitionistic algebra, and a sound mo\-del of $\coQInt\otimes\Int$;
\item\label{cor.complete.bqi.akchurin.iii}\sloppy $(\Subclop(\Sigma^{L}),\leq,\land,\lor,\varnothing,\Sigma^{L},\protect\himp,\protect\hnot,\protect\cohimp,\protect\cohnot,{}^\coeva,{}^\eva)$ is a complete Akchurin algebra, and a sound model of $\biQInt\otimes\biInt$.
\end{enumerate}
\end{corollary}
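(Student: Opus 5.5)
The plan is to assemble the corollary from the already established structure of $\Subclop(\Sigma^L)$, and then read off the soundness claims from Corollary \ref{cor.akchurin.is.sound.and.complete}. The algebraic part is a check of definitions. Definition \ref{def.coquasiintuitionist.Akchurin.etc} requires two things of an \ss--(co)quasiintuitionistic algebra: that $(\Subclop(\Sigma^L),\leq,\land,\lor,\varnothing,\Sigma^L)$ be an \ss\ algebra, and that the negation be (co)quasiintuitionistic. Note that distributivity is automatic by Proposition \ref{prop.skolem.properties}.\ref{prop.skolem.properties.1}, and also holds by Proposition \ref{thm: Sigma is complete lattice}.

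The proof of each item proceeds in the same two steps.
\begin{enumerate}[nosep, label=(\alph*)]
\item Invoke Proposition \ref{cor.heyting.brouwer.skolem}.\ref{cor.heyting.brouwer.skolem.1} for the Heyting structure, \ref{cor.heyting.brouwer.skolem.2} for the Brouwer structure, and \ref{cor.heyting.brouwer.skolem.3} for the Skolem structure.
\item Invoke Corollary \ref{cor.subclop.is.coquasiint} for ${}^\eva$ being a coquasiintuitionistic negation, and Corollary \ref{cor.coeva.qint}.\ref{cor.coeva.qint.1} for ${}^\coeva$ being a quasiintuitionistic negation.
\end{enumerate}
Completeness of the lattice in every item comes from Proposition \ref{thm: Sigma is complete lattice}. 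For (v), Definition \ref{def.biquasiintuitionistic}.\ref{def.biquasiintuitionistic.2} is met by combining Corollary \ref{cor.coeva.qint}.\ref{cor.coeva.qint.2} with Proposition \ref{cor.heyting.brouwer.skolem}.\ref{cor.heyting.brouwer.skolem.3}.

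Soundness then follows immediately. Each listed structure belongs to the class for which Corollary \ref{cor.akchurin.is.sound.and.complete} asserts soundness, so every derivable sequent is valid in it. The valuations interpret $\himp$, $\cohimp$ and the two negations by the operations just listed. Item (iv) should be read with the logic $\QInt\otimes\coInt$, which matches a Brouwer--quasiintuitionistic algebra; the printed ``$\coQInt\otimes\Int$'' looks like a typo, and I would state this correction explicitly.

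The main obstacle is not any computation. It is that every item involving ${}^\coeva$ rests on Lemma \ref{lemma.subclop.coeva.properties}, which is obtained from Lemma \ref{lem: Properties of *} by duality using Propositions \ref{prop: Inner Daseinisation} and \ref{thm: idelta adjunction}. By Proposition \ref{prop.inner.daseinisation.is.not.clopen} and the erratum, $\idelta(a)$ generally fails to be a subpresheaf. So items (i), (iv) and (v) hold only where $\context$ is minimal or completely disjoint, unless a corrected inner daseinisation is supplied. The plan is therefore as follows. Items (ii) and (iii), which use only ${}^\eva$ and the Skolem structure, are proved unconditionally. For items (i), (iv) and (v), the ${}^\coeva$ part is flagged as conditional on Lemma \ref{lemma.subclop.coeva.properties}. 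In the two degenerate cases of Proposition \ref{prop.inner.daseinisation.is.not.clopen}, I would first verify that lemma directly, objectwise via $\stone_V$.
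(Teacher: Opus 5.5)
Your plan matches the paper's proof, which likewise only combines Proposition~\ref{cor.heyting.brouwer.skolem}, Corollaries~\ref{cor.subclop.is.coquasiint} and~\ref{cor.coeva.qint}, and the soundness results of Corollary~\ref{cor.akchurin.is.sound.and.complete}. Both of your caveats are correct: item (iv) should read $\QInt\otimes\coInt$ (matching item (iii) of Corollary~\ref{cor.akchurin.is.sound.and.complete}), and, as the paper's own erratum concedes, the ${}^\coeva$-dependent items (i), (iv), (v) rest on Lemma~\ref{lemma.subclop.coeva.properties} and so are secured only when $\context$ is minimal or completely disjoint, whereas (ii) and (iii) stand unconditionally.
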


\begin{proof}
Follows from Proposition \ref{cor.heyting.brouwer.skolem} and Corollaries \ref{cor.akchurin.is.sound.and.complete}, \ref{cor.subclop.is.coquasiint}, and \ref{cor.coeva.qint}.
\end{proof}

\section[Internal lattices of $\Subclop(\Sigma^L)$]{\label{section.internal.lattice.subclop}Internal lattices of $(\Subclop(\Sigma^{L}),\leq,\land,\lor,\varnothing,\Sigma^{L},\protect\himp,\protect\hnot,\protect\cohimp,\protect\cohnot,{}^\coeva,{}^\eva)$}

\begin{proposition}\label{prop: delta**}
Let $\subclopevaeva\coloneqq \{S^{\upint} \mid S\in \Subclop(\Sigma^{L})\} \subseteq \Subclop(\Sigma^{L})$ be equipped with the restriction of the partial order $\leq$ on $\Subclop(\Sigma^{L})$ to $\subclopevaeva$ and the restriction of ${}^\eva$ on $\Subclop(\Sigma^{L})$ to $\subclopevaeva$, denoted without change of symbol. Then $(\subclopevaeva,\leq,\curlywedge,\lor,\varnothing,\Sigma^L,{}^\eva)$ is an orthocomplemented lattice such that:
\begin{enumerate}[nosep, label=(\roman*)]
\item\label{prop: delta**.i} the join $\lor$ on $\subclopevaeva$ is given by the join $\lor$ of $\Subclop(\Sigma^{L})$ restricted to $\subclopevaeva$;
\item\label{prop: delta**.ii} the meet $\curlywedge$ on $\subclopevaeva$ is given by $\alpha \curlywedge \beta \coloneqq \left(\alpha \land \beta\right)^{\eva \eva} $ $\forall \alpha,\beta \in \subclopevaeva$;
\item\label{prop: delta**.iii} the bottom (resp., top) element of $\subclopevaeva$ is given by $\varnothing$ (resp., $\Sigma^L$);
\item\label{prop: delta**.iv} the orthocomplementation is given by ${}^{\eva}$.
\end{enumerate}
\end{proposition}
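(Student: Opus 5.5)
This is a direct specialisation of Proposition \ref{prop: CQI Internal} to $K:=\Subclop(\Sigma^L)$. The plan is to verify the hypothesis and then read off items \ref{prop: delta**.i}--\ref{prop: delta**.iv}.

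First, invoke Corollary \ref{cor.subclop.is.coquasiint}, which states that $(\Subclop(\Sigma^{L}),\leq,\land,\lor,\varnothing,\Sigma^{L},{}^{\eva})$ is a (complete) coquasiintuitionistic algebra. Its bottom element is $\varnothing$ and its top element is $\Sigma^L$ (Proposition \ref{thm: Sigma is complete lattice}). Under this identification, $\subclopevaeva$ coincides with $\evaintlatticeext$ for $\extlattice=\Subclop(\Sigma^L)$. Proposition \ref{prop: CQI Internal} then yields the following, with $0=\varnothing$ and $1=\Sigma^L$:
\begin{itemize}
\item $(\subclopevaeva,\leq,\curlywedge,\lor,\varnothing,\Sigma^L,{}^\eva)$ is an orthocomplemented lattice;
\item its join is the restricted join of $\Subclop(\Sigma^L)$;
\item its meet is $(\alpha\land\beta)^{\eva\eva}$;
\item its bounds are $\varnothing$ and $\Sigma^L$;
\item its orthocomplementation is ${}^\eva$.
\end{itemize}
These are exactly items \ref{prop: delta**.i}--\ref{prop: delta**.iv}.

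As a cross-check, the same conclusions follow directly from the concrete description in Lemma \ref{lem: Properties of *}. By Lemma \ref{lem: Properties of *}.\ref{item: Properties of * 2}, $S^{\eva\eva}=\odelta(\oepsilon(S))$, so $\subclopevaeva=\odelta(L)$. Indeed, $\odelta(a)=\odelta(a)^{\eva\eva}$ by Lemma \ref{lem: Properties of *}.\ref{item: Properties of * 10}, and $\oepsilon$ is surjective by Proposition \ref{thm: delta is right adjoint}. Each item can then be confirmed in terms of $\odelta$:
\begin{itemize}
\item Joins stay in the image because $\odelta$ preserves joins (Proposition \ref{thm: Daseinisation}).
\item The orthocomplement is $\odelta(a)^\eva=\odelta(a^\perp)$ by Lemma \ref{lem: Properties of *}.\ref{item: Properties of * 13}.
\item The bounds are $\odelta(0)=\varnothing$ and $\odelta(1)=\Sigma^L$.
\end{itemize}

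The only step needing care is the meet. Here one relies on the abstract argument of Proposition \ref{prop: CQI Internal}.\ref{prop: CQI Internal.ii}, which uses only \ref{def: negation 1}, \ref{def: negation 8}, \ref{def: negation 11} and \ref{def: negation 12}. These hold by Corollary \ref{cor.sum.of.cqi.qi.properties}, and concretely by Lemma \ref{lem: Properties of *}.\ref{item: Properties of * 3}, \ref{item: Properties of * 5} and \ref{item: Properties of * 9}. No further obstacle is expected.
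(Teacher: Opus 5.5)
Your proposal is correct and matches the paper's proof, which consists exactly of specialising Proposition \ref{prop: CQI Internal} to $\extlattice=\Subclop(\Sigma^L)$ via Corollary \ref{cor.subclop.is.coquasiint}. Your cross-check through $\subclopevaeva=\odelta(L)$ is not needed for this statement; it is essentially the argument the paper gives afterwards in Proposition \ref{prop: L iso d(a)**}. There, surjectivity of $\oepsilon$ is superfluous, since Lemma \ref{lem: Properties of *}.\ref{item: Properties of * 2} and Lemma \ref{lem: Properties of *}.\ref{item: Properties of * 10} already give both inclusions.
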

\begin{proof}
Follows from Proposition \ref{prop: CQI Internal} and Corollary \ref{cor.subclop.is.coquasiint}.
\end{proof}

\begin{corollary}\label{cor: delta(L)** is lattice}
$(\subclopevaeva,\leq,\curlywedge,\lor,\varnothing,\Sigma^L,{}^\eva)$ is a complete lattice.
\end{corollary}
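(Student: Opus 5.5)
The plan is to obtain this as a direct instance of the general result on internal lattices, Proposition \ref{prop: Internal completeness}, whose hypothesis is that the ambient coquasiintuitionistic algebra is complete. By Corollary \ref{cor.subclop.is.coquasiint}, $(\Subclop(\Sigma^{L}),\leq,\land,\lor,\varnothing,\Sigma^{L},{}^{\eva})$ is a complete coquasiintuitionistic algebra. Its completeness as a lattice comes from Proposition \ref{thm: Sigma is complete lattice}, which supplies arbitrary meets $\mathrm{int}(\bigcap_i F^i_V)$ and joins $\mathrm{cl}(\bigcup_i F^i_V)$ componentwise. So we set $\extlattice:=\Subclop(\Sigma^L)$, observe that $\evaintlatticeext=\subclopevaeva$ by definition, and apply Proposition \ref{prop: Internal completeness} directly.

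Concretely, the steps are as follows. First, by Proposition \ref{prop: delta**}, the structure is already an orthocomplemented lattice with join $\lor$ inherited from $\Subclop(\Sigma^L)$ and meet $\curlywedge=(\cdot\land\cdot)^{\eva\eva}$. Second, for an arbitrary family $\{\alpha_i\}_{i\in I}\subseteq\subclopevaeva$ we show closure of the ambient join: $(\bigvee_i\alpha_i)^{\eva\eva}=\bigvee_i\alpha_i$. This uses n11 and n12 (via Lemma \ref{lem: Properties of *}.\ref{item: Properties of * 5} extended to arbitrary families) together with $S^{\eva\eva\eva}=S^\eva$. Third, we show that $(\bigwedge_i\alpha_i)^{\eva\eva}$ is the infimum in $\subclopevaeva$, exactly as in part (ii) of the proof of Proposition \ref{prop: Internal completeness}. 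Finally, the empty family is handled by the top and bottom elements $\Sigma^L$ and $\varnothing$ from Proposition \ref{prop: delta**}.\ref{prop: delta**.iii}.

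The main point requiring care is the second step. It needs the De Morgan-type law $(\bigwedge_i S_i)^{\eva}=\bigvee_i S_i^{\eva}$ for infinite families, whereas Corollary \ref{cor.sum.of.cqi.qi.properties} only gives the binary form. For $\Subclop(\Sigma^L)$ I would verify it directly from the definition $S^\eva=\odelta((\oepsilon(S))^\perp)$:
\begin{itemize}
\item $\oepsilon$ preserves all meets, by Proposition \ref{thm: delta is right adjoint};
\item ${}^\perp$ turns meets into joins in the complete orthocomplemented lattice $L$;
\item $\odelta$ preserves all joins, by Proposition \ref{thm: Daseinisation}.
\end{itemize}
Hence $(\bigwedge_i S_i)^\eva=\odelta(\bigvee_i\oepsilon(S_i)^\perp)=\bigvee_i S_i^\eva$, which supplies the infinitary law that Proposition \ref{prop: Internal completeness} implicitly uses. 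With this in hand, the conclusion follows.
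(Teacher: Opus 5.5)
Your proposal is correct and is the paper's proof: you instantiate Proposition~\ref{prop: Internal completeness} with $\extlattice=\Subclop(\Sigma^L)$, which is a complete coquasiintuitionistic algebra by Corollary~\ref{cor.subclop.is.coquasiint} and Proposition~\ref{thm: Sigma is complete lattice}. Your extra check of the infinitary law $(\bigwedge_i S_i)^\eva=\bigvee_i S_i^\eva$ via $\oepsilon$, ${}^\perp$ and $\odelta$ is sound but not needed, because that law holds in any complete lattice whose negation satisfies \ref{def: negation 1} and \ref{def: negation 3} (by the same argument as for the binary \ref{def: negation 12}), and closure of $\subclopevaeva$ under arbitrary joins already follows from $(\cdot)^{\eva\eva}$ being monotone, deflationary and idempotent.
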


\begin{proof}
Follows from Propositions \ref{prop: Internal completeness} and \ref{thm: Sigma is complete lattice}.
\end{proof}

\begin{proposition}\textup{\cite[p. 91]{Cannon:2013} (=\cite[p. 61]{Cannon:Doering:2018})}\textup{\footref{footnote.independent.of.oml}}\label{prop.oepsilon.equivalence.class}
$S\approx_{\oepsilon} T$ if{}f $\oepsilon(S) = \oepsilon(T)$ (resp., $S\approx_{\iepsilon} T$ if{}f $\iepsilon(S) = \iepsilon(T)$) $\forall S,T\in \Subclop(\Sigma^{L})$ defines an equivalence relation $\approx_{\oepsilon}$ (resp., $\approx_{\iepsilon}$) on $\Subclop(\Sigma^{L})$. 
\end{proposition}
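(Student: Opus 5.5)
The statement is the standard fact that the kernel of any function induces an equivalence relation, so the plan is to verify reflexivity, symmetry and transitivity directly from the definition. For $\approx_{\oepsilon}$, I will use only that $\oepsilon:\Subclop(\Sigma^L)\ra L$ is a well-defined map, which is given by Proposition \ref{thm: delta is right adjoint}. Its value $\bigvee\{a\in L\mid \odelta(a)\leq S\}$ exists because $L$ is complete. Then $\oepsilon(S)=\oepsilon(S)$ gives reflexivity. Symmetry holds because equality in $L$ is symmetric. Transitivity follows from $\oepsilon(S)=\oepsilon(T)$ and $\oepsilon(T)=\oepsilon(U)$ giving $\oepsilon(S)=\oepsilon(U)$.

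For $\approx_{\iepsilon}$, the argument is verbatim the same with $\iepsilon$ in place of $\oepsilon$. The only input needed is that $\iepsilon(S)=\bigwedge\{a\in L\mid S\leq\idelta(a)\}$ is a well-defined element of $L$ for every $S$, which again follows from completeness of $L$. It does not use the adjunction of Proposition \ref{thm: idelta adjunction}, nor any property of $\idelta$ beyond its being some map $L\ra\Subclop(\Sigma^L)$. So the caveat in the Erratum, which affects only the adjunction-dependent properties, does not touch this statement. Equivalently, both relations are pullbacks of the equality relation on $L$ along a function, and I would phrase the proof that way in one line.

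I expect no real obstacle. The only point worth checking is well-definedness of $\iepsilon$ and $\oepsilon$ as total functions into $L$. This holds because $L$ is assumed complete, since the index sets may be infinite or empty, with the empty meet being $1$ and the empty join being $0$. Orthomodularity plays no role, consistent with footnote \ref{footnote.independent.of.oml}.
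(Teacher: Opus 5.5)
Your argument is correct and coincides with the paper's, which simply observes that both relations are defined through equality of the values $\oepsilon(S),\oepsilon(T)$ (resp., $\iepsilon(S),\iepsilon(T)$), i.e.\ they are kernels of functions; you rightly place that equality in $L$. One correction to your side remark: Proposition \ref{prop.inner.daseinisation.is.not.clopen} shows exactly that $\idelta$ is in general \emph{not} a map into $\Subclop(\Sigma^L)$, so for $\iepsilon$ you should read $S\leq\idelta(a)$ componentwise, as $S_V\subseteq\stone_V(\ideltabar_V(a))$ for all $V\in\context$; with that reading $\iepsilon(S)$ is still a well-defined element of the complete lattice $L$, and your argument goes through unchanged.
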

\begin{proof}
$\approx_{\oepsilon}$ (resp., $\approx_{\iepsilon}$) is defined in terms of equality in $\Subclop(\Sigma^{L})$.
\end{proof}

\begin{definition}
The equivalence class of $S$ with respect to $\approx_{\oepsilon}$ (resp., $\approx_{\iepsilon}$) will be denoted by $\oepsilonequc{S}$ (resp., $\iepsilonequc{S}$) $\forall S\in \Subclop(\Sigma^{L})$.
\end{definition}

\begin{proposition} \textup{\cite[Lem. 7.5.1]{Cannon:2013}} \textup{\cite[p. 166]{Eva:2015}} \textup{\cite[Thm. 4.19]{Cannon:Doering:2018}}\textup{\footref{footnote.independent.of.oml}} \label{prop: L iso e(L)}
For $\oepsilonequc{\Subclop(\Sigma^{L})}\coloneqq \{\oepsilonequc{ S} \mid \forall S\in \Subclop(\Sigma^{L})\}$, $(\oepsilonequc{\Subclop(\Sigma^{L})},\leq^{\oepsilon},\land^{\oepsilon},\lor^{\oepsilon},\oepsilonequc{\varnothing},\oepsilonequc{\Sigma^{L}},{}^{\eva})$ is an orthocomplemented lattice such that, for $S,T\in \Subclop(\Sigma^{L})$ and $M\subseteq\Subclop(\Sigma^{L})$:
\begin{enumerate}[nosep, label=(\roman*)]
\item $\oepsilonequc{S}\leq^{\oepsilon} \oepsilonequc{T}$ if{f} $\oepsilon(S) \leq \oepsilon(T)$;
\item $\bigwedge^{\oepsilon}\oepsilonequc{M} \coloneqq \oepsilonequc{\bigwedge M}$;
\item $\bigvee^{\oepsilon}\oepsilonequc{M} \coloneqq \bigwedge^{\oepsilon}\{\nu\in \oepsilonequc{\Subclop(\Sigma^{L})}\mid \mu \leq^{\oepsilon} \nu$ $\forall \mu\in \oepsilonequc{M} \}$;
\item $(\oepsilonequc{S})^{\eva} \coloneqq \oepsilonequc{S^{\eva}}$.
\end{enumerate}
\end{proposition}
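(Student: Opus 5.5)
The quotient $\oepsilonequc{\Subclop(\Sigma^{L})}$ is order isomorphic to $L$. I will therefore build the bijection $\Phi:\oepsilonequc{S}\mapsto\oepsilon(S)$ and transport the orthocomplemented lattice structure of $(L,{}^\perp)$ along it. Everything rests on the adjunction $\odelta\dashv\oepsilon$ of Proposition \ref{thm: delta is right adjoint} and on Lemma \ref{lem: Properties of *}.

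First, $\Phi$ is well defined and injective by the very definition of $\approx_\oepsilon$ (Proposition \ref{prop.oepsilon.equivalence.class}). It is surjective because $\oepsilon\circ\odelta=\id_L$, so every $a\in L$ equals $\oepsilon(\odelta(a))$. By item (i) of the statement, $\leq^{\oepsilon}$ is exactly the pullback of $\leq$ on $L$ along $\Phi$. Hence $\leq^{\oepsilon}$ is a partial order on the quotient, and $\Phi$ is an order isomorphism onto the complete lattice $L$. This already gives a complete lattice whose bottom and top are $\oepsilonequc{\varnothing}$ and $\oepsilonequc{\Sigma^L}$, since $\oepsilon(\varnothing)=0$ and $\oepsilon(\Sigma^L)=1$.

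Next I check that the operations in (ii)--(iv) are well defined and match those of $L$ under $\Phi$.
\begin{enumerate}[nosep,label=\alph*)]
\item For meets: $\oepsilon$ preserves all meets (Proposition \ref{thm: delta is right adjoint}), so $\oepsilon(\bigwedge M)=\bigwedge\oepsilon(M)$. The class $\oepsilonequc{\bigwedge M}$ therefore depends only on the classes of the elements of $M$, and $\Phi$ sends it to the meet in $L$.
\item For joins: (iii) defines $\bigvee^{\oepsilon}$ as the meet of all upper bounds. In a complete lattice this is the least upper bound, and $\Phi$, being an order isomorphism, carries it to $\bigvee\oepsilon(M)$. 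Note that I must not use $\oepsilonequc{\bigvee M}$ here, since $\oepsilon$ only satisfies $\oepsilon(\bigvee M)\geq\bigvee\oepsilon(M)$.
\item For the orthocomplementation: by Lemma \ref{lem: Properties of *}.\ref{item: Properties of * 14}, $\oepsilon(S^{\eva})=(\oepsilon(S))^{\perp}$. So $S\approx_\oepsilon T$ implies $S^\eva\approx_\oepsilon T^\eva$, and $\Phi(\oepsilonequc{S}^{\eva})=\Phi(\oepsilonequc{S})^{\perp}$.
\end{enumerate}

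Finally, the orthocomplementation axioms n1)--n3), n6), n7) transfer verbatim through the isomorphism $\Phi$ from $(L,{}^\perp)$. Alternatively, one can verify them directly: Lemma \ref{lem: Properties of *}.\ref{item: Properties of * 9} gives n1), Lemma \ref{lem: Properties of *}.\ref{item: Properties of * 14} applied twice gives $\oepsilon(S^{\eva\eva})=\oepsilon(S)$ and hence $\oepsilonequc{S}^{\eva\eva}=\oepsilonequc{S}$, and Lemmas \ref{lem: Properties of *}.\ref{item: Properties of * 7}--\ref{item: Properties of * 8} give $\oepsilonequc{S}\lor^\oepsilon\oepsilonequc{S}^{\eva}=\oepsilonequc{\Sigma^L}$ and $\oepsilonequc{S}\land^\oepsilon\oepsilonequc{S}^{\eva}=\oepsilonequc{\varnothing}$.

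The main obstacle I expect is the well-definedness bookkeeping, in particular handling joins through (iii) rather than through $\oepsilonequc{\bigvee M}$. With meets going through meet-preservation of $\oepsilon$ and joins going through completeness, everything else reduces to the order isomorphism $\Phi$.
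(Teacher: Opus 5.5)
Your argument is correct. The paper gives no proof of this proposition. It imports the statement from Cannon, Cannon--D\"{o}ring and Eva, noting only in a footnote that orthomodularity is not used. In the proof of Proposition \ref{prop: L iso d(a)**} it likewise imports the isomorphism with $(L,{}^\perp)$ from Cannon--D\"{o}ring.

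Your route is the natural self-contained proof: you quotient by the kernel of the surjective, meet-preserving right adjoint $\oepsilon$, transport the structure of $(L,{}^\perp)$ along the induced bijection $\Phi$, and let ${}^\eva$ descend via $\oepsilon(S^{\eva})=(\oepsilon(S))^{\perp}$. It buys two things the citation does not:
\begin{enumerate}[nosep,label=(\alph*)]
\item It makes the footnote's claim checkable. You visibly use only $\odelta\dashv\oepsilon$, $\oepsilon\circ\odelta=\id_L$, Lemma \ref{lem: Properties of *}.\ref{item: Properties of * 14}, and the orthocomplementation axioms of $L$.
\item It proves the isomorphism $\oepsilonequc{\Subclop(\Sigma^{L})}\iso L$ of Proposition \ref{prop: L iso d(a)**} at the same time, because $\Phi$ is itself an orthocomplemented lattice isomorphism.
\end{enumerate}

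Your ingredients are also unaffected by the erratum, which concerns only $\idelta$. The map $\odelta(a)$ is a genuine subpresheaf, since $\odeltabar_V(a)\geq\odeltabar_W(a)$ for $V\subseteq W$.

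One point deserves to be made explicit. The identity $\oepsilon\circ\odelta=\id_L$ requires $\odelta$ to reflect the order, not merely to be injective, as the dualised argument in the paper suggests. Order reflection does hold: for $a\neq 0,1$, evaluate at the context $\minbool{a}$, where $\odelta(b)\leq\odelta(a)$ gives $b\leq\odeltabar_{\minbool{a}}(b)\leq\odeltabar_{\minbool{a}}(a)=a$.

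There is also a small slip in your alternative direct check of n1). Lemma \ref{lem: Properties of *}.\ref{item: Properties of * 9} needs $S\leq T$, but $\oepsilonequc{S}\leq^{\oepsilon}\oepsilonequc{T}$ only gives $\oepsilon(S)\leq\oepsilon(T)$. You would first have to pass to the representatives $\odelta(\oepsilon(S))\leq\odelta(\oepsilon(T))$. Alternatively, just use antitonicity of ${}^\perp$ through $\Phi$, as in your main argument.
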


\begin{proposition}\label{prop: L iso d(a)**}
\sloppy $(\subclopevaeva,\leq,\curlywedge,\lor,\varnothing,\Sigma^L,{}^\eva)\iso (\oepsilonequc{\Subclop(\Sigma^{L})},\leq^{\oepsilon},\land^{\oepsilon},\lor^{\oepsilon},\oepsilonequc{\varnothing},\oepsilonequc{\Sigma^{L}},{}^{\eva})\iso(L,\leq,\land,\lor,0,1,{}^\perp)$.
\end{proposition}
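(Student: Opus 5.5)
Both isomorphisms will be obtained by composing with $L$, using the adjunction $\odelta\dashv\oepsilon$ of Proposition \ref{thm: delta is right adjoint}, in particular $\oepsilon\circ\odelta=\id_L$, together with Lemma \ref{lem: Properties of *}. The approach is to show directly that $\odelta$ corestricts to an isomorphism $L\to\subclopevaeva$ with inverse $\oepsilon|_{\subclopevaeva}$, and that $\oepsilon$ induces an isomorphism $\oepsilonequc{\Subclop(\Sigma^{L})}\to L$; the remaining isomorphism is then the composite.

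\emph{First step: identify $\subclopevaeva$ with $\odelta(L)$.} By Lemma \ref{lem: Properties of *}.\ref{item: Properties of * 2}, $S^{\eva\eva}=\odelta(\oepsilon(S))\in\odelta(L)$. Conversely, Lemma \ref{lem: Properties of *}.\ref{item: Properties of * 10} gives $\odelta(a)=(\odelta(a))^{\eva\eva}$, so $\odelta(L)=\subclopevaeva$. Hence $\odelta:L\to\subclopevaeva$ is a bijection, by injectivity in Proposition \ref{thm: Daseinisation}, with inverse $\oepsilon|_{\subclopevaeva}$, since $\oepsilon\circ\odelta=\id_L$.

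\emph{Second step: check that this bijection preserves the orthocomplemented lattice structure.} Both $\odelta$ and $\oepsilon$ are order monotone, so the bijection is an order isomorphism. It therefore automatically preserves all meets and joins that exist, and in particular $\curlywedge$ and $\lor$ of Proposition \ref{prop: delta**}. Bottom and top are preserved by Proposition \ref{thm: Daseinisation}. Orthocomplementation is preserved because $(\odelta(a))^\eva=\odelta(a^\perp)$ by Lemma \ref{lem: Properties of *}.\ref{item: Properties of * 13}. As a consistency check, $\odelta(a)\curlywedge\odelta(b)=\odelta(\oepsilon(\odelta(a)\land\odelta(b)))=\odelta(a\land b)$, since $\oepsilon$ preserves meets. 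This gives $\subclopevaeva\iso L$.

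\emph{Third step: treat the quotient.} Define $\oepsilonequc{S}\mapsto\oepsilon(S)$. This map is well defined and injective by the very definition of $\approx_\oepsilon$, and surjective since $\oepsilon$ is surjective. It is an order isomorphism by Proposition \ref{prop: L iso e(L)}(i), so meets and joins are preserved. Complements are preserved by Lemma \ref{lem: Properties of *}.\ref{item: Properties of * 14}: $\oepsilon(S^\eva)=(\oepsilon(S))^\perp$. Bottom and top are preserved since $\oepsilon(\varnothing)=0$ and $\oepsilon(\Sigma^L)=1$. Composing, the explicit isomorphism $\subclopevaeva\to\oepsilonequc{\Subclop(\Sigma^{L})}$ is $S\mapsto\oepsilonequc{S}$, with inverse $\oepsilonequc{S}\mapsto S^{\eva\eva}$; this inverse is well defined because $S^{\eva\eva}=\odelta(\oepsilon(S))$ depends only on $\oepsilon(S)$.

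The main obstacle is the lattice operations on the quotient as given in Proposition \ref{prop: L iso e(L)}, namely $\bigwedge^{\oepsilon}\oepsilonequc{M}:=\oepsilonequc{\bigwedge M}$, because it is not a priori clear that this is independent of representatives. I would avoid the issue by arguing purely order-theoretically. The order $\leq^\oepsilon$ corresponds to $\leq$ on $L$ via the bijection above, and $L$ is a complete lattice, so the quotient is a complete lattice whose meets and joins are transported from $L$; any explicitly stated formula must then agree with these, since meets and joins in a poset are unique. For meets this is also directly consistent, because $\oepsilon(\bigwedge M)=\bigwedge\oepsilon(M)$.
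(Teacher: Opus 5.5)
Your proof is correct. Your first step, identifying $\subclopevaeva$ with $\odelta(L)$ via Lemma \ref{lem: Properties of *}.\ref{item: Properties of * 2} and \ref{item: Properties of * 10}, is exactly the paper's. After that you diverge in two places.

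\emph{Preservation of the operations.} The paper checks each operation by hand. Joins, $\varnothing$, $\Sigma^L$ and ${}^\perp$ come from the properties of $\odelta$. The meet comes from the De Morgan-type computation $\odelta(a\land b)=\odelta((a^\perp\lor b^\perp)^\perp)=((\odelta(a))^\eva\lor(\odelta(b))^\eva)^\eva=(\odelta(a)\land\odelta(b))^{\eva\eva}$, which uses the orthocomplementation of $L$ together with Lemma \ref{lem: Properties of *}.\ref{item: Properties of * 13} and \ref{item: Properties of * 5}. You instead note that the inverse of $\odelta$ on $\subclopevaeva$ is the monotone map $\oepsilon$ restricted there. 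Hence $\odelta$ is an order isomorphism and automatically carries meets and joins of $L$ to $\curlywedge$ and $\lor$, which Proposition \ref{prop: delta**} identifies as the poset meet and join. Only ${}^\perp$ then needs a separate check. This is shorter and uses less structure. Your consistency check via $\oepsilon$ preserving meets likewise avoids the orthocomplement entirely. The paper's computation has the merit of showing concretely how $\curlywedge$ is produced by ${}^\eva$.

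\emph{The quotient.} The paper simply cites \cite[Thm. 4.19]{Cannon:Doering:2018}. You prove this isomorphism directly from the definition of $\leq^{\oepsilon}$, surjectivity of $\oepsilon$, and Lemma \ref{lem: Properties of *}.\ref{item: Properties of * 14}. This makes the argument self-contained. It also settles representative-independence: of $\bigwedge^{\oepsilon}$ via $\oepsilon(\bigwedge M)=\bigwedge\oepsilon(M)$, and of $(\oepsilonequc{S})^{\eva}$ via Lemma \ref{lem: Properties of *}.\ref{item: Properties of * 14}. Finally, it yields the explicit isomorphism $S\mapsto\oepsilonequc{S}$ between $\subclopevaeva$ and the quotient, with inverse $\oepsilonequc{S}\mapsto S^{\eva\eva}$, which the paper leaves implicit.
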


\begin{proof}
\begin{enumerate}[nosep, label=\alph*)]
\item For $(\subclopevaeva,\leq,\curlywedge,\lor,\varnothing,\Sigma^L,{}^\eva)\iso (L,\leq,\land,\lor,0,1,{}^\perp)$ consider $\odelta: L \to \odelta(L)$. This map is a bijection, since $\odelta$ is an injection by Proposition \ref{thm: Daseinisation}. By Lemma \ref{lem: Properties of *}.\ref{item: Properties of * 2}, $S^{\upint} = \odelta(\oepsilon(S))$ $\forall S\in \Subclop(\Sigma^{L})$. Thus $\subclopevaeva\subseteq \odelta(L)$. By Lemma \ref{lem: Properties of *}.\ref{item: Properties of * 10}, $(\odelta(a))^{\upint} = \odelta(a)$ $\forall a\in L$, so $\odelta(L)\subseteq \subclopevaeva$. Thus $\subclopevaeva=\odelta(L)$ as sets, so $\odelta: L \to \evaintlatticeext$ is also a bijection. This isomorphism of sets extends to an isomorphism of orthocomplemented lattices, since it preserves the lattice structure of $L$:
\begin{enumerate}[nosep, label=(\roman*)]
\item $\odelta(a^{\perp}) =  \left(\odelta(a)\right)^{\eva} $ $\forall a\in L$, by Proposition \ref{thm: Daseinisation};
\item $\odelta(a\lor b)   =\odelta(a)\lor \odelta(b)$ $\forall a,b\in L$, by Proposition \ref{thm: Daseinisation};
\item $\odelta(0) = \varnothing$ (resp., $\odelta(1) = \Sigma^{L}$), by Proposition \ref{thm: Daseinisation};
\item $\odelta(a\land b)  = \odelta((a^{\perp}\lor b^{\perp})^{\perp}) = (\odelta(a^{\perp})\lor \odelta(b^{\perp}))^{\eva} = ((\odelta(a))^{\eva} \lor (\odelta(b))^{\eva})^{\eva} = (\odelta(a) \land \odelta(b))^{\upint} = \odelta(a)\curlywedge \odelta(b)$ $\forall a,b\in L$, since ${}^{\perp}$ is an orthocomplementation, and by Lemma \ref{lem: Properties of *}.
\end{enumerate}
\item\sloppy ${(L,\leq,\land,\lor,0,1,{}^\perp)\iso (\oepsilonequc{\Subclop(\Sigma^{L})},\leq^{\oepsilon},\land^{\oepsilon},\lor^{\oepsilon},\oepsilonequc{\varnothing},\oepsilonequc{\Sigma^{L}},{}^{\eva})}$ follows from \textup{\cite[Thm. 4.19]{Cannon:Doering:2018}}\textup{\footref{footnote.independent.of.oml}}. 
\end{enumerate}
\end{proof}

\begin{corollary}
Let $\subclopcoevacoeva\coloneqq \{S^{\coupint} \mid S\in \Subclop(\Sigma^{L})\} \subseteq \Subclop(\Sigma^{L})$ be equipped with the restrictions of $\leq$ and $^\coeva$ on $\Subclop(\Sigma^L)$ to $\subclopcoevacoeva$, denoted without change of symbol. Then $(\subclopcoevacoeva,\leq,\land,\curlyvee,\varnothing,\Sigma^L,{}^\coeva)$ is an orthocomplemented lattice such that:
\begin{enumerate}[nosep, label=(\roman*)]
\item the meet $\land$ on $\subclopcoevacoeva$ is given by the meet $\land$ of $\Subclop(\Sigma^L)$ restricted to $\subclopcoevacoeva$;
\item the join $\curlyvee$ on $\subclopcoevacoeva$ is given by $\alpha\curlyvee\beta:=(\alpha\lor\beta)^{\coeva\coeva}$ $\forall\alpha,\beta\in\subclopcoevacoeva$;
\item the bottom (resp., top) element of $\subclopcoevacoeva$ is given by $\varnothing$ (resp., $\Sigma^L$);
\item the orthocomplementation is given by ${}^\coeva$.
\end{enumerate}
\end{corollary}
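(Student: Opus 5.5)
The plan is to mirror the proof of Proposition \ref{prop: delta**}, replacing the coquasiintuitionistic structure by the quasiintuitionistic one. First, Corollary \ref{cor.coeva.qint}.\ref{cor.coeva.qint.1} states that $(\Subclop(\Sigma^L),\leq,\land,\lor,\varnothing,\Sigma^L,{}^\coeva)$ is a (complete) quasiintuitionistic algebra. Applying Corollary \ref{cor: QI Internal} with $\extlattice:=\Subclop(\Sigma^L)$, $0:=\varnothing$, $1:=\Sigma^L$, then directly yields that $(\subclopcoevacoeva,\leq,\land,\curlyvee,\varnothing,\Sigma^L,{}^\coeva)$ is an orthocomplemented lattice. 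Items (i)--(iv) of the statement are then exactly items \ref{prop: QI Internal.ii}, \ref{prop: QI Internal.i}, \ref{prop: QI Internal.iii} and \ref{prop: QI Internal.iv} of Corollary \ref{cor: QI Internal}, read in this instance.

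Should one want to unfold the argument rather than cite it, I would check the four items by hand using Lemma \ref{lemma.subclop.coeva.properties}. For (i), closure of $\subclopcoevacoeva$ under $\land$ follows from $\alpha\land\beta=(\alpha^\coeva\lor\beta^\coeva)^\coeva$, obtained from items \ref{lemma.subclop.coeva.properties.5} and \ref{lemma.subclop.coeva.properties.3}; this expression is of the form $z^{\coeva\coeva}$. For (ii), I would show that $(\alpha\lor\beta)^{\coeva\coeva}$ is the least upper bound in $\subclopcoevacoeva$, using antitonicity (item \ref{lemma.subclop.coeva.properties.9}) together with $S\leq S^{\coeva\coeva}$ (item \ref{lemma.subclop.coeva.properties.2}). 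For (iii), items \ref{lemma.subclop.coeva.properties.11}--\ref{lemma.subclop.coeva.properties.12} give $\varnothing=\varnothing^{\coeva\coeva}$ and $\Sigma^L=(\Sigma^L)^{\coeva\coeva}$. For (iv), involutivity on the image comes from item \ref{lemma.subclop.coeva.properties.3}. Complementarity with respect to $\land$ is item \ref{lemma.subclop.coeva.properties.1}, and complementarity with respect to $\curlyvee$ then follows from Lemma \ref{lemm.bullet.negation.properties}.\ref{lemm.bullet.negation.properties y}.

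The only genuine obstacle is upstream: the result depends on Lemma \ref{lemma.subclop.coeva.properties}. That lemma was obtained by duality from Lemma \ref{lem: Properties of *}, and it relies on $\idelta$ being a well-defined element of $\Subclop(\Sigma^L)$ with the adjunction $\iepsilon\dashv\idelta$ of Proposition \ref{thm: idelta adjunction}. By the erratum (Proposition \ref{prop.inner.daseinisation.is.not.clopen}), this fails unless $\context$ is minimal or completely disjoint. The formal derivation above is correct relative to Corollary \ref{cor.coeva.qint}, so I would present the proof as conditional on it. I would also flag that an unconditional version requires a repaired construction of ${}^\coeva$; this is the step I cannot close with the tools given.
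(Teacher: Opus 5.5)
Your proposal is correct and takes the same approach as the paper, which also instantiates Corollary~\ref{cor: QI Internal} at the quasiintuitionistic algebra of Corollary~\ref{cor.coeva.qint}.\ref{cor.coeva.qint.1}, and your matching of items (i)--(iv) is correct. Your caveat is also right: this derivation, yours and the paper's alike, is only valid relative to Lemma~\ref{lemma.subclop.coeva.properties}, which the paper's own erratum withdraws because $\idelta(a)$ is in general not a subpresheaf of $\Sigma^L$ (Proposition~\ref{prop.inner.daseinisation.is.not.clopen}).
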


\begin{proof}
Follows Corollary \ref{cor.coeva.qint}.\ref{cor.coeva.qint.1} and Corollary \ref{cor: QI Internal}.
\end{proof}

\begin{corollary}\label{cor: Inner delta(L)** is lattice}
$(\subclopcoevacoeva,\leq,\land,\curlyvee,\varnothing,\Sigma^L,{}^\coeva)$ is a complete lattice.
\end{corollary}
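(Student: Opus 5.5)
The claim is the $(\land,\lor)$-dual of Corollary \ref{cor: delta(L)** is lattice}. The plan is to obtain it directly from the abstract completeness result for quasiintuitionistic algebras, Corollary \ref{cor: QI Internal completeness}, applied to the concrete algebra $\Subclop(\Sigma^L)$.

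\textbf{Step 1: hypotheses.} By Proposition \ref{thm: Sigma is complete lattice}, $(\Subclop(\Sigma^L),\leq,\land,\lor,\varnothing,\Sigma^L)$ is a complete distributive lattice. By Corollary \ref{cor.coeva.qint}.\ref{cor.coeva.qint.1}, equipping it with ${}^\coeva$ gives a complete quasiintuitionistic algebra. This is exactly the hypothesis of Corollary \ref{cor: QI Internal completeness}.

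\textbf{Step 2: identify the internal algebra.} Take $K=\Subclop(\Sigma^L)$ in Corollary \ref{cor: QI Internal}. Then $K_{\coeva\coeva}$ is, by definition, $\subclopcoevacoeva$, and its internal operations $\land$, $\curlyvee$, $\varnothing$, $\Sigma^L$, ${}^\coeva$ are precisely those in the preceding corollary. Hence completeness of $(\subclopcoevacoeva,\leq,\land,\curlyvee,\varnothing,\Sigma^L,{}^\coeva)$ follows from Corollary \ref{cor: QI Internal completeness}.

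\textbf{Step 3: explicit form of arbitrary meets and joins.} For the record, dualising the proof of Proposition \ref{prop: Internal completeness}:
\begin{itemize}
\item arbitrary meets are the meets of $\Subclop(\Sigma^L)$ (interiors of intersections);
\item arbitrary joins are $\big(\bigvee M\big)^{\coeva\coeva}$.
\end{itemize}
Both computations use only \ref{def: negation 1}, \ref{def: negation 2}, \ref{def: negation 8}, \ref{def: negation 9} and \ref{def: negation 10} for ${}^\coeva$, which Corollary \ref{cor.sum.of.cqi.qi.properties} supplies.

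\textbf{Main caveat.} The argument is purely formal, so the real obstacle is whether Step 1 is legitimate. Corollary \ref{cor.coeva.qint}.\ref{cor.coeva.qint.1} rests on Lemma \ref{lemma.subclop.coeva.properties}. By the erratum and Proposition \ref{prop.inner.daseinisation.is.not.clopen}, $\idelta(a)$ generally fails to be a closed-and-open subpresheaf, so ${}^\coeva$ need not be well defined as a map into $\Subclop(\Sigma^L)$. The plan therefore proves the statement conditionally: it holds whenever ${}^\coeva$ is a quasiintuitionistic negation on $\Subclop(\Sigma^L)$, for instance when $\context$ is minimal or completely disjoint. A fully unconditional version would require the repaired definition of ${}^\coeva$ that the authors announce.
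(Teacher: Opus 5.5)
Your proposal follows the paper's proof exactly: it applies Corollary \ref{cor: QI Internal completeness} to $K=\Subclop(\Sigma^L)$, taking completeness of the ambient lattice from Proposition \ref{thm: Sigma is complete lattice} and the quasiintuitionistic structure from Corollary \ref{cor.coeva.qint}.\ref{cor.coeva.qint.1}. Your caveat is also correct and agrees with the paper's own erratum: the paper's proof has the same dependence on Lemma \ref{lemma.subclop.coeva.properties}, so the statement holds only insofar as ${}^\coeva$ is a well-defined quasiintuitionistic negation on $\Subclop(\Sigma^L)$.
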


\begin{proof}
Follows from Corollary \ref{cor: QI Internal completeness} and Proposition \ref{thm: Sigma is complete lattice}.
\end{proof}

\begin{proposition}\label{prop: Inner L iso d(a)**}
For $S,T\in \Subclop(\Sigma^{L})$, and $M\subseteq\Subclop(\Sigma^{L})$, let:
\begin{enumerate}[nosep, label=\alph*)]
\item $\iepsilonequc{\Subclop(\Sigma^{L})}\coloneqq \{\iepsilonequc{ S} \mid \forall S\in \Subclop(\Sigma^{L})\}$;
\item $\iepsilonequc{S}\leq^{\iepsilon} \iepsilonequc{T}$ if{f} $\iepsilon(S) \leq \iepsilon(T)$;
\item $\bigwedge^{\iepsilon}\iepsilonequc{M} \coloneqq \iepsilonequc{\bigwedge M}$;
\item $\bigvee^{\iepsilon}\iepsilonequc{M} \coloneqq \bigwedge^{\iepsilon}\{\nu\in \iepsilonequc{\Subclop(\Sigma^{L})}\mid \mu \leq^{\iepsilon} \nu$ $\forall \mu\in \iepsilonequc{M} \}$;
\item $(\iepsilonequc{S})^{\coeva} \coloneqq \iepsilonequc{S^{\coeva}}$. 
\end{enumerate}
Then:
\begin{enumerate}[nosep, label=(\roman*)]
\item\label{item: Inner L iso d(a)** 1} $(\iepsilonequc{\Subclop(\Sigma^{L})},\leq^{\iepsilon},\land^{\iepsilon},\lor^{\iepsilon},\iepsilonequc{\varnothing},\iepsilonequc{\Sigma^{L}},{}^{\coeva})$ is an orthocomplemented lattice;
\item\label{item: Inner L iso d(a)** 2} \sloppy $(\subclopcoevacoeva,\leq,\land,\curlyvee,\varnothing,\Sigma^L,{}^\coeva)\iso (\iepsilonequc{\Subclop(\Sigma^{L})},\leq^{\iepsilon},\land^{\iepsilon},\lor^{\iepsilon},\iepsilonequc{\varnothing},\iepsilonequc{\Sigma^{L}},{}^{\coeva})\iso (L,\leq,\land,\lor,0,1,{}^\perp)\iso(\subclopevaeva,\leq,\curlywedge,\lor,\varnothing,\Sigma^L,{}^\eva)\iso (\oepsilonequc{\Subclop(\Sigma^{L})},\leq^{\oepsilon},\land^{\oepsilon},\lor^{\oepsilon},\oepsilonequc{\varnothing},\oepsilonequc{\Sigma^{L}},{}^{\eva})$.
\end{enumerate}
\end{proposition}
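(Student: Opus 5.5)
The plan is to dualise the proof of Proposition \ref{prop: L iso d(a)**}, using Lemma \ref{lemma.subclop.coeva.properties} and Propositions \ref{prop: Inner Daseinisation}, \ref{thm: idelta adjunction} in place of their outer counterparts. The resulting chain of isomorphisms is then closed up by composing with Proposition \ref{prop: L iso d(a)**}.

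\textbf{Step 1: $\idelta$ as an isomorphism onto $\subclopcoevacoeva$.}
First show $\subclopcoevacoeva=\idelta(L)$ as sets.
\begin{itemize}[nosep]
\item By Lemma \ref{lemma.subclop.coeva.properties}.\ref{lemma.subclop.coeva.properties.0c}, $S^{\coeva\coeva}=\idelta(\iepsilon(S))\in\idelta(L)$.
\item By Lemma \ref{lemma.subclop.coeva.properties}.\ref{lemma.subclop.coeva.properties.10}, $\idelta(a)=(\idelta(a))^{\coeva\coeva}\in\subclopcoevacoeva$.
\end{itemize}
Injectivity of $\idelta$ (Proposition \ref{prop: Inner Daseinisation}) then makes $\idelta:L\ra\subclopcoevacoeva$ a bijection. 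It remains to check that it respects the structure.
\begin{itemize}[nosep]
\item Orthocomplementation: $\idelta(a^\perp)=(\idelta(a))^\coeva$, by Lemma \ref{lemma.subclop.coeva.properties}.\ref{lemma.subclop.coeva.properties.0a}.
\item Meets: $\idelta(a\land b)=\idelta(a)\land\idelta(b)$, since $\idelta$ preserves meets.
\item Bounds: $\idelta(0)=\varnothing$ and $\idelta(1)=\Sigma^L$.
\item Joins: write $a\lor b=(a^\perp\land b^\perp)^\perp$, which gives
\[
\idelta(a\lor b)=(\idelta(a)^\coeva\land\idelta(b)^\coeva)^\coeva=(\idelta(a)\lor\idelta(b))^{\coeva\coeva}=\idelta(a)\curlyvee\idelta(b),
\]
using Lemma \ref{lemma.subclop.coeva.properties}.\ref{lemma.subclop.coeva.properties.5}.
\end{itemize}

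\textbf{Step 2: the quotient by $\approx_{\iepsilon}$.}
Define $\Phi:\iepsilonequc{\Subclop(\Sigma^L)}\ra L$ by $\iepsilonequc{S}\mapsto\iepsilon(S)$.
\begin{itemize}[nosep]
\item $\Phi$ is well defined and injective by the definition of $\approx_{\iepsilon}$.
\item $\Phi$ is surjective because $\iepsilon\circ\idelta=\mathrm{id}_L$.
\item $\Phi$ is an order isomorphism by the definition of $\leq^{\iepsilon}$.
\item $\Phi$ intertwines ${}^\coeva$ with ${}^\perp$ by Lemma \ref{lemma.subclop.coeva.properties}.\ref{lemma.subclop.coeva.properties.0b}.
\end{itemize}
Since $\Phi$ is an order isomorphism onto the complete orthocomplemented lattice $L$, the quotient is itself a (complete) lattice. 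Its meets and joins are the transported ones, and the transported orthocomplementation is ${}^\coeva$; this gives \ref{item: Inner L iso d(a)** 1}.

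One should still reconcile the transported structure with the given formulae for $\bigwedge^{\iepsilon}$ and $\bigvee^{\iepsilon}$. The join formula is just "meet of upper bounds", so it agrees once the meets agree. The meet formula $\iepsilonequc{\bigwedge M}$, however, requires $\iepsilon(\bigwedge M)=\bigwedge\iepsilon(M)$, whereas Proposition \ref{thm: idelta adjunction} only supplies $\leq$. For the reverse inequality, use $\idelta$ preserving meets:
\[
\bigwedge M\leq\bigwedge_{S\in M}\idelta(\iepsilon(S))=\idelta\Bigl(\bigwedge\iepsilon(M)\Bigr)\text{?}
\]
This needs $S\leq\idelta(\iepsilon(S))$, i.e.\ the unit of $\iepsilon\dashv\idelta$, which holds. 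Applying $\iepsilon$ and $\iepsilon\circ\idelta=\mathrm{id}_L$ then gives $\iepsilon(\bigwedge M)\leq\bigwedge\iepsilon(M)$ again, which is the wrong direction. So equality is not obvious in general.

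\textbf{Main obstacle.} This mismatch for $\bigwedge^{\iepsilon}$ is where I expect the difficulty. The outer case in Proposition \ref{prop: L iso e(L)} was cited rather than proved, so that is no guide. My fallback is to define the lattice operations on the quotient by transport along $\Phi$ and check that they agree with the stated ones on representatives of the form $\idelta(a)$. For such representatives this does work, because $\idelta$ preserves meets and each class contains $\idelta(\iepsilon(S))$.

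\textbf{Step 3: assembling the chain.}
Compose Step 1, Step 2 and Proposition \ref{prop: L iso d(a)**} to obtain the full chain of isomorphisms in \ref{item: Inner L iso d(a)** 2}.

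There is a further caveat, recorded in the Erratum. Proposition \ref{prop.inner.daseinisation.is.not.clopen} shows that $\idelta(a)$ is generally not a subpresheaf. Hence Lemma \ref{lemma.subclop.coeva.properties} and the whole argument above are only valid when $\context$ is minimal or completely disjoint. In those cases the category has no nontrivial morphisms, so the steps above go through.
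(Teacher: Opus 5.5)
Your route is the paper's. The paper proves this in one line, by $(\land,\lor)$-duality from Propositions \ref{prop: L iso e(L)} and \ref{prop: L iso d(a)**}, and your Steps 1 and 3 are that dualisation written out. They are correct in the regime the Erratum leaves open (minimal or completely disjoint $\context$), where $\idelta$ does land in $\Subclop(\Sigma^L)$ and Lemma \ref{lemma.subclop.coeva.properties} can be checked directly. Step 2 is likewise correct for the order $\leq^{\iepsilon}$ and for ${}^\coeva$.

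The obstacle you isolated is not a gap in your argument but an error in the statement, and no argument can close it. The formula $\bigwedge^{\iepsilon}\iepsilonequc{M}:=\iepsilonequc{\bigwedge M}$ was copied from Proposition \ref{prop: L iso e(L)} instead of being dualised. There $\oepsilon$ is a right adjoint and preserves all meets, whereas $\iepsilon$ is a left adjoint and preserves only joins.

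Concretely, let $L=\mathcal{O}_6$ with $x\leq y$. Its only contexts are $\minbool{x}$ and $\minbool{y}$, which are incomparable, so $\context$ is completely disjoint. Identifying $S_V$ with $\stone_V^{-1}(S_V)$ gives $\Subclop(\Sigma^L)\iso\minbool{x}\times\minbool{y}$. Then $\idelta(x)=(x,0)$ and $\idelta(y)=(x,y)$, and from ($\iepsilon(S)\leq a$ iff $S\leq\idelta(a)$) one gets
\[
\iepsilon((x,0))=x,\qquad\iepsilon((0,y))=\iepsilon((x,y))=y.
\]
So $\{(x,0),(0,y)\}$ and $\{(x,0),(x,y)\}$ represent the same set of classes, yet their meets $\varnothing$ and $(x,0)$ lie in different classes. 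Hence $\bigwedge^{\iepsilon}$, and with it $\bigvee^{\iepsilon}$ (which is defined through it), is not well defined. Indeed $\iepsilon((x,0)\land(0,y))=0\neq x=\iepsilon((x,0))\land\iepsilon((0,y))$.

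What duality actually yields is the statement with $\bigvee^{\iepsilon}\iepsilonequc{M}:=\iepsilonequc{\bigvee M}$ and with $\bigwedge^{\iepsilon}$ defined as the join of all lower bounds. The join formula is well defined since $\iepsilon(\bigvee M)=\bigvee\iepsilon(M)$. With this correction, your transport along $\Phi$ finishes Step 2 at once.

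Your fallback, computing meets on the canonical representatives $S^{\coeva\coeva}=\idelta(\iepsilon(S))$, also gives a well-defined, correct meet. But it is a redefinition, not a verification of the stated formula. So you should say plainly that \ref{item: Inner L iso d(a)** 1}, and the quotient appearing in \ref{item: Inner L iso d(a)** 2}, hold only after this correction. Your closing remark that ``the steps above go through'' is too optimistic: the completely disjoint case is exactly where the counterexample lives. In the minimal case $\idelta=\stone_L$ is an isomorphism and the problem disappears.
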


\begin{proof}
\ref{item: Inner L iso d(a)** 1} (resp., \ref{item: Inner L iso d(a)** 2}) follows from Proposition \ref{prop: L iso e(L)} (resp., \ref{prop: L iso d(a)**}) by $(\land,\lor)$-duality.
\end{proof}

\begin{proposition}\label{cor.orthomodularity.from.subclop}
The following conditions are equivalent:
\begin{enumerate}[nosep, label=(\roman*)]
\item\label{cor.orthomodularity.from.subclop.i}$\forall x\in\Subclop(\Sigma^L)\setminus\{\varnothing,\Sigma^L\}$ $\nexists y\in\Subclop(\Sigma^L)\setminus\{\varnothing,\Sigma^L\}$ $\st$ $x^\eva\leq y$ and $(x\land y)^\eva=1$;
\item\label{cor.orthomodularity.from.subclop.ii}$\forall x\in\Subclop(\Sigma^L)\setminus\{\varnothing,\Sigma^L\}$ $\nexists y\in\Subclop(\Sigma^L)\setminus\{\varnothing,\Sigma^L\}$ $\st$ $y\leq x^\coeva$ and $(x\lor y)^\coeva=0$;
\item\label{cor.orthomodularity.from.subclop.iii}$(\subclopevaeva,\leq,\curlywedge,\lor,\varnothing,\Sigma^L,{}^\eva)$ is an orthomodular lattice;
\item\label{cor.orthomodularity.from.subclop.iv}$(\subclopcoevacoeva,\leq,\land,\curlyvee,\varnothing,\Sigma^L,{}^\coeva)$ is an orthomodular lattice;
\item\label{cor.orthomodularity.from.subclop.v}$(L,\leq,\land,\lor,0,1,{}^\perp)$ is an orthomodular lattice.
\end{enumerate}
\end{proposition}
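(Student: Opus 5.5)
The plan is to chain together equivalences that are already available in the paper, with no new computation inside $\Subclop(\Sigma^L)$.

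First, (i)$\iff$(iii). By Corollary \ref{cor.subclop.is.coquasiint}, $(\Subclop(\Sigma^{L}),\leq,\land,\lor,\varnothing,\Sigma^{L},{}^{\eva})$ is a coquasiintuitionistic algebra. Hence Proposition \ref{prop: Precqi internal oml} applies with $K:=\Subclop(\Sigma^L)$, $0:=\varnothing$ and $1:=\Sigma^L$. It states precisely that condition \ref{cor.orthomodularity.from.subclop.i} holds if{}f the internal lattice $\subclopevaeva$ is orthomodular. The ``$=1$'' in \ref{cor.orthomodularity.from.subclop.i} is to be read as $=\Sigma^L$.

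Second, (ii)$\iff$(iv). Symmetrically, Corollary \ref{cor.coeva.qint}.\ref{cor.coeva.qint.1} makes $(\Subclop(\Sigma^L),\ldots,{}^\coeva)$ a quasiintuitionistic algebra. Corollary \ref{cor.quasiintuitionistic.internal.orthomodular.lattice} then gives \ref{cor.orthomodularity.from.subclop.ii} if{}f $\subclopcoevacoeva$ is orthomodular, with ``$=0$'' read as $=\varnothing$. Note that the lattice structure there should be $(\land,\curlyvee)$, as in Corollary \ref{cor: QI Internal}.

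Third, (iii)$\iff$(v)$\iff$(iv). Proposition \ref{prop: L iso d(a)**} gives an orthocomplemented lattice isomorphism $\odelta:L\to\subclopevaeva$, and Proposition \ref{prop: Inner L iso d(a)**}.\ref{item: Inner L iso d(a)** 2} gives $\subclopcoevacoeva\iso L$. It remains to observe that orthomodularity is invariant under orthocomplemented lattice isomorphisms. The orthomodular law ($x\leq y\Rightarrow y=x\lor(y\land x^\perp)$) is expressed purely in terms of $\leq,\land,\lor,{}^\perp$, all of which are preserved and reflected by such an isomorphism and its inverse. Alternatively, one can use Proposition \ref{prop: OML fun}.\ref{item: OML fun 0}, since the existence of an $\mathcal{O}_6$ sublattice transfers along isomorphisms. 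Combining the three steps closes the cycle of equivalences.

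The main obstacle is not logical but foundational. Step two and the $\subclopcoevacoeva\iso L$ half of step three rest on Lemma \ref{lemma.subclop.coeva.properties} and the inner daseinisation. By the Erratum and Proposition \ref{prop.inner.daseinisation.is.not.clopen}, these are only valid when $\context$ is minimal or completely disjoint. So the (i)$\iff$(iii)$\iff$(v) part is robust. For (ii) and (iv), I would first try to verify directly that ${}^\coeva$ is still a quasiintuitionistic negation on $\Subclop(\Sigma^L)$ in whatever corrected form it takes. Failing that, I would state the equivalence with (ii) and (iv) conditionally on Lemma \ref{lemma.subclop.coeva.properties}.
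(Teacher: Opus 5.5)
Your route is exactly the paper's. You get (i)$\Leftrightarrow$(iii) from Proposition~\ref{prop: Precqi internal oml} with Corollary~\ref{cor.subclop.is.coquasiint}, (ii)$\Leftrightarrow$(iv) from Corollary~\ref{cor.quasiintuitionistic.internal.orthomodular.lattice}, and (iii)$\Leftrightarrow$(iv)$\Leftrightarrow$(v) from the isomorphisms in Proposition~\ref{prop: Inner L iso d(a)**}. Your erratum caveat for (ii) and (iv) is justified. However, the step you call robust, (i)$\Leftrightarrow$(iii), actually fails, and the statement is false as written.

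The culprit is a missing strict inequality in Proposition~\ref{prop: OML fun}.\ref{item: OML fun 1}, and likewise in Definition~\ref{def: O6}, where $x\leq y$ must be $x<y$. For any $a\notin\{0,1\}$, $b:=a^\perp$ satisfies $a^\perp\leq b$ and $a\land b=0$. So that criterion would declare every orthocomplemented lattice other than $\{0,1\}$ non-orthomodular.

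Proposition~\ref{prop: Precqi internal oml} inherits this defect. For every $x$, $y:=x^\eva$ satisfies $x^\eva\leq y$, and by Lemma~\ref{lem: Properties of *}.\ref{item: Properties of * 5} and \ref{item: Properties of * 1} also $(x\land y)^\eva=x^\eva\lor x^{\eva\eva}=\Sigma^L$. Taking $x:=\odelta(a)$ with $a\notin\{0,1\}$ gives $x$ and $y=\odelta(a^\perp)$ both outside $\{\varnothing,\Sigma^L\}$. Hence (i) fails for every $L\neq\{0,1\}$.

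Concretely, take $L=\{0,a,a^\perp,1\}$. The only context is $L$ itself, $\Sigma^L$ is a discrete two-point space, $\Subclop(\Sigma^L)$ is the four-element boolean algebra, and ${}^\eva$ is set complement. Then (iii) and (v) hold, but $x=\odelta(a)$, $y=\odelta(a^\perp)$ violate (i).

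This $\context$ is minimal, so by Proposition~\ref{prop.inner.daseinisation.is.not.clopen} the inner daseinisation is well defined there, and ${}^\coeva$ is also set complement. Then $y:=x^\coeva$ violates (ii) while (iv) holds. So (ii)$\Leftrightarrow$(iv) fails even in a case where the erratum does not bite.

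Consequently only (iii)$\Leftrightarrow$(v) survives verbatim; it uses outer daseinisation only. Conditions (i) and (ii) must be reformulated before any proof can work. The correct criterion is: an orthocomplemented lattice is orthomodular iff there are no $a,b$ with $a^\perp<b$ and $a\land b=0$.

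Transfer this to $\subclopevaeva$ via $\alpha=x^{\eva\eva}$ and $\beta=y^{\eva\eva}$. You also need $x^{\eva\eva}\curlywedge y^{\eva\eva}=\varnothing\Leftrightarrow(x\land y)^\eva=\Sigma^L$, which follows from Corollary~\ref{cor.sum.of.cqi.qi.properties}. The correct version of (i) then reads: there are no $x,y\in\Subclop(\Sigma^L)$ with $x^\eva\leq y$, $y^{\eva\eva}\neq x^\eva$, and $(x\land y)^\eva=\Sigma^L$. The conditions $x,y\notin\{\varnothing,\Sigma^L\}$ become automatic. Dually, (ii) needs the extra clause $y^{\coeva\coeva}\neq x^\coeva$.

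With these amendments your chain of equivalences goes through, subject to your caveat about ${}^\coeva$.
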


\begin{proof}
\ref{cor.orthomodularity.from.subclop.iii} if{}f \ref{cor.orthomodularity.from.subclop.iv} if{}f \ref{cor.orthomodularity.from.subclop.v} follows from Proposition \ref{prop: Inner L iso d(a)**}.\ref{item: Inner L iso d(a)** 2}. \ref{cor.orthomodularity.from.subclop.i} if{}f \ref{cor.orthomodularity.from.subclop.iii} (resp., \ref{cor.orthomodularity.from.subclop.ii} if{}f \ref{cor.orthomodularity.from.subclop.iv}) follows from Proposition \ref{prop: Precqi internal oml} (resp., Corollary \ref{cor.quasiintuitionistic.internal.orthomodular.lattice}).
\end{proof}

\section{\label{section.nogo.for.relevance}$(\Subclop(\Sigma^L),\leq,\land,\lor,\varnothing,\Sigma,{}^\eva)$ is not a model of a relevance logic}

\begin{proposition}\label{prop.subclop.demorgan.iff.L.minimal}
\sloppy $(\Subclop(\Sigma^L),\leq,\land,\lor,{}^\eva)$ (resp., $(\Subclop(\Sigma^L),\leq,\land,\lor,{}^\coeva)$) is a De Morgan algebra if{}f $(\Subclop(\Sigma^L),\leq,\land,\lor,\varnothing,\Sigma^L,{}^\eva)$ (resp., $(\Subclop(\Sigma^L),\leq,\land,\lor,\varnothing,\Sigma^L,{}^\eva)$) is a boolean lattice.
\end{proposition}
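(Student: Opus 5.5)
The plan is to prove the two implications separately for ${}^\eva$; the statement for ${}^\coeva$ will then follow by $(\land,\lor)$-duality, using Lemma \ref{lemma.subclop.coeva.properties} and Corollary \ref{cor.coeva.qint} in place of Lemma \ref{lem: Properties of *} and Corollary \ref{cor.subclop.is.coquasiint}.

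For the direction ``De Morgan implies boolean'', assume $(\Subclop(\Sigma^L),\leq,\land,\lor,{}^\eva)$ is a De Morgan algebra. The lattice is bounded by Proposition \ref{thm: Sigma is complete lattice}. By Corollary \ref{cor.subclop.is.coquasiint}, ${}^\eva$ is a coquasiintuitionistic negation, so it satisfies \ref{def: negation 7} (concretely, Lemma \ref{lem: Properties of *}.\ref{item: Properties of * 1} gives $S\lor S^\eva=\Sigma^L$). Corollary \ref{cor.demorgan.boolean} then applies directly: a De Morgan algebra whose negation satisfies \ref{def: negation 7} is a boolean lattice. Its bottom and top are $\varnothing$ and $\Sigma^L$, because ${}^\eva$ maps $\Sigma^L$ to $\varnothing$ and back (Lemma \ref{lem: Properties of *}.\ref{item: Properties of * 11}--\ref{item: Properties of * 12}).

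For the converse, assume $(\Subclop(\Sigma^L),\leq,\land,\lor,\varnothing,\Sigma^L,{}^\eva)$ is a boolean lattice, so ${}^\eva$ is an orthocomplementation. In particular ${}^\eva$ satisfies \ref{def: negation 1}, \ref{def: negation 2} and \ref{def: negation 3}, which is exactly the definition of an involution. Distributivity holds by Proposition \ref{thm: Sigma is complete lattice}. So Definition \ref{def.subminimal.DeMorgan}.\ref{def.subminimal.DeMorgan.DeMorgan} is satisfied, and the structure is a De Morgan algebra.

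There is one subtlety, which is the only real obstacle. The intended reading of ``boolean lattice'' is that the boolean complement is the given operation ${}^\eva$, and not merely that some complementation exists. Under that reading the argument above is complete. If instead the statement meant that the underlying bounded distributive lattice merely admits a complement, I would close the gap using uniqueness of complements in distributive lattices, which forces that complement to agree with ${}^\eva$ whenever ${}^\eva$ is one. I would state this interpretation explicitly rather than try to derive ${}^\eva\eva=\mathrm{id}$ from an abstract complement, since the latter need not coincide with ${}^\eva$.
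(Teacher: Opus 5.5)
Your proposal is correct and follows the paper's proof. The paper writes out the computation $S\land S^{\eva}=S^{\eva\eva}\land S^{\eva}=(S^{\eva}\lor S)^{\eva}=(\Sigma^L)^{\eva}=\varnothing$ directly rather than citing Corollary \ref{cor.demorgan.boolean}, but that corollary's \ref{def: negation 7} branch is the same argument, and your converse and your $(\land,\lor)$-duality step for ${}^\coeva$ match the paper's. Like the paper's, your ${}^\coeva$ half depends on Lemma \ref{lemma.subclop.coeva.properties} and Corollary \ref{cor.coeva.qint}, which the erratum (Section \ref{section.erratum}) says do not hold, so only the ${}^\eva$ half is actually established.
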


\begin{proof}
Assume that $(\Subclop(\Sigma^L),\leq,\land,\lor,{}^\eva)$ is a De Morgan algebra. Then ${}^{\eva}$ is an involution, so $S^{\upint} = S$ $\forall S\in \Subclop(\Sigma^{L})$ by \ref{def: negation 2} and \ref{def: negation 3}, $(\Sigma^{L})^{\eva} = \varnothing$ by Lemma \ref{lemm.bullet.negation.properties}.\ref{lemm.bullet.negation.properties 7}, and $S^{\eva}\land T^{\eva} = (S\lor T)^{\eva}$ by Lemma \ref{lemm.bullet.negation.properties}.\ref{lemm.bullet.negation.properties 4}. Thus $S\land S^{\eva} = S^{\upint} \land S^{\eva} = (S^{\eva} \lor S)^{\eva} = (\Sigma^{L})^{\eva} = \varnothing$, since $S\lor S^{\eva} = \Sigma^{L}$ $\forall S\in \Subclop(\Sigma^{L})$ by Lemma \ref{lem: Properties of *}.\ref{item: Properties of * 1}. Thus $(\Subclop(\Sigma^L),\leq,\land,\lor,\varnothing,\Sigma^L,{}^\eva)$ is a boolean lattice. Assume that $(\Subclop(\Sigma^L),\leq,\land,\lor,\varnothing,\Sigma^L,{}^\eva)$ is a boolean lattice. Then $(\Subclop(\Sigma^L),\leq,\land,\lor,{}^\eva)$ is a De Morgan algebra, since $(\Subclop(\Sigma^L),\leq,\land,\Sigma^L,{}^\eva)$ is distributive and ${}^{\eva}$ is an orthocomplementation. \sloppy The claim for $(\Subclop(\Sigma^L),\leq,\land,\lor,{}^\coeva)$ follows by $(\land,\lor)$-duality.
\end{proof}

\begin{corollary}\label{cor.nogo.for.relevance.logics.in.Subclop}
Neither $(\Subclop(\Sigma^L),\leq,\land,\lor,\varnothing,\Sigma^L,{}^\coeva)$ nor $(\Subclop(\Sigma^L),\leq,\land,\lor,\varnothing,\Sigma^L,{}^\eva)$ is a sound algebraic model of any of the relevance logic systems in the range from $\mathbf{B^{\orlov,t}}$ to $\mathbf{R^{t}}$ (hence, including $\mathbf{DJ^{\orlov,t}}$, $\mathbf{DK^{\orlov,t}}$, and $\mathbf{DL^{\orlov,t}}$). The same holds also for the entailment logic $\mathbf{E^{\orlov,t}}$, the relevant logic $\mathbf{R}$, the first degree entailment logic $\mathbf{FDE}$, and the extensions of all these systems equipped with universal and existential quantifiers (denoted by adding the postfix $\mathbf{Q}$).
\end{corollary}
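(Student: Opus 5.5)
The plan is to reduce the corollary to Proposition \ref{prop.subclop.demorgan.iff.L.minimal}, using the observation already made in the introduction. Every relevance logic in the listed range, from $\mathbf{B^{\orlov,t}}$ up to $\mathbf{R^{t}}$, together with $\mathbf{E^{\orlov,t}}$, $\mathbf{R}$, $\mathbf{FDE}$ and their quantified versions, has an equivalent algebraic semantics whose extensional reduct $(K,\leq,\land,\lor,\sim)$ is a De Morgan algebra. This holds because each of these systems contains the first degree entailment fragment $\mathbf{FDE}$, which yields double negation introduction and elimination, contraposition, and distributivity. Consequently, to be a sound algebraic model of any of these logics with negation interpreted by ${}^\eva$ (resp., ${}^\coeva$), the structure $(\Subclop(\Sigma^L),\leq,\land,\lor,{}^\eva)$ must at least validate the $\mathbf{FDE}$ sequents. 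In particular, $\sim\sim p\gentzen p$ and $p\gentzen\sim\sim p$ must hold, so ${}^\eva$ must be an involution on a distributive lattice, i.e. a De Morgan algebra. The quantified extensions only add further structure on top, so it suffices to treat the propositional fragments.

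Next, I apply Proposition \ref{prop.subclop.demorgan.iff.L.minimal}. If $(\Subclop(\Sigma^L),\leq,\land,\lor,{}^\eva)$ were a De Morgan algebra, it would be a boolean lattice with ${}^\eva$ as its complementation. In particular, $S\land S^\eva=\varnothing$ for all $S$. Then \ref{def: negation 6} would hold, and the excluded-middle-free, explosion-free character of relevance logics would be violated. Concretely, relevance logics do not prove $p\land\sim p\gentzen q$, since $\mathbf{FDE}$ has four-element De Morgan countermodels. The cleaner route, however, is to show directly that ${}^\eva$ is never an involution on $\Subclop(\Sigma^L)$. For this I exhibit some $S$ with $S^{\eva\eva}=\odelta(\oepsilon(S))<S$ (Lemma \ref{lem: Properties of *}.\ref{item: Properties of * 2}). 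This amounts to showing that $\odelta(L)\neq\Subclop(\Sigma^L)$, i.e. that some clopen subpresheaf is not an outer daseinisation.

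The natural candidate is a subpresheaf $S$ that is nonempty at some context $V$ but empty at a context not contained in $V$. An example is the subpresheaf supported on the down-set of a single context $V$ together with a nonzero $b\in V$: set $S_W:=\stone_W(b)$ for $W\subseteq V$ with $b\in W$, and $\varnothing$ otherwise. Here I must check that this is a subpresheaf, i.e. that the restriction condition holds on morphisms. Since $\odelta(a)_W\neq\varnothing$ for every $W$ whenever $a\neq0$, we get $\oepsilon(S)=0$. Hence $S^{\eva\eva}=\varnothing\neq S$, so ${}^\eva$ is not an involution. The dual argument, using $\idelta$ and ${}^\coeva$ via Lemma \ref{lemma.subclop.coeva.properties}, handles ${}^\coeva$, although the Erratum warns that this case may degenerate. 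Where it does, I would instead argue directly from \ref{def: negation 2} failing to be an equality.

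The main obstacle is making the reduction rigorous uniformly across the whole range of systems, particularly the quantified ones and those with $\orlov$ and $t$. For these I would cite the standard algebraic semantics (De Morgan monoids and their weakenings), whose reducts are De Morgan lattices. A second obstacle is the degenerate case: when $\context$ has a single object, or none, the presheaf is simply a Stone space, and $\Subclop$ may itself be boolean. That case must either be excluded, since $\context$ omits $\{0,1\}$ and so $L$ with $|L|>4$ supplies several contexts, or be handled separately by appealing to failure of relevance, i.e. that boolean algebras validate $p\land\sim p\gentzen q$, which the listed logics reject.
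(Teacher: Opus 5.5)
Your opening reduction (every listed system contains $\mathbf{FDE}$, so a sound model must validate $p\gentzen{\sim}{\sim}p$ and ${\sim}{\sim}p\gentzen p$) is the same as the paper's. Your pivot away from the paraconsistency step is also well judged. The paper ends by noting that a boolean algebra is not paraconsistent while the listed systems are. But soundness only requires the theorems to be valid, and a boolean algebra validates every theorem of all these systems, since they are sublogics of classical logic (read $\orlov$ as $\land$, $t$ as the top, and implication materially). Exhibiting $S$ with $S\not\leq S^{\eva\eva}$ refutes the $\mathbf{FDE}$ theorem $p\gentzen{\sim}{\sim}p$ outright, which is what is actually needed.

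However, your witness is not a subpresheaf for $b\notin\{0,1\}$. Suppose $W\subseteq V$ contains $b$ and some $c\notin\{0,1,b,b^\perp\}$. Then $\minbool{c}\subseteq W$ and $b\notin\minbool{c}$, so the nonempty $S_W$ would have to restrict into $S_{\minbool{c}}=\varnothing$. Nonemptiness propagates to all subcontexts, so the support of a subpresheaf must be a down-set. The fix is to take $b=1$: set $S_W:=\sps(W)$ for $W\subseteq V$ and $S_W:=\varnothing$ otherwise. This is a clopen subpresheaf with $S_V\neq\varnothing$. If some context $W\not\subseteq V$ exists, then $(\odelta(a))_W=\stone_W(\odeltabar_W(a))\neq\varnothing$ for every $a\neq0$ forces $\oepsilon(S)=0$. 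Hence $S^{\eva\eva}=\odelta(0)=\varnothing$ by Lemma \ref{lem: Properties of *}.\ref{item: Properties of * 2}, and \ref{def: negation 2} fails.

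Such a $W$ exists iff $\context$ has at least two objects, i.e. iff $|L|>4$. This is a real hypothesis, not a technicality. For $L=\{0,1\}$ the algebra $\Subclop(\Sigma^L)$ has one element. For the four-element boolean $L$, $\context=\{L\}$, $\odelta$ reduces to $\stone_L$, and ${}^\eva$ is the boolean complement. In both cases the structure is a sound model of every listed system. So your fallback of handling this case through the failure of $p\land{\sim}p\gentzen q$ repeats the invalid inference above, and the statement as given is false there; it needs $|L|>4$.

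Finally, the ${}^\coeva$ half cannot be obtained by dualising. By Proposition \ref{prop.inner.daseinisation.is.not.clopen}, $\idelta(a)$ generally does not lie in $\Subclop(\Sigma^L)$, so ${}^\coeva$ is not well defined outside the minimal and completely disjoint cases. Your remark about arguing from \ref{def: negation 2} presupposes that it is well defined. The paper's appeal to $(\land,\lor)$-duality in Proposition \ref{prop.subclop.demorgan.iff.L.minimal} has the same defect.
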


\begin{proof}
All of these logical systems exhibit a De Morgan negation, cf. \cite[p. 457]{Anderson:Belnap:1958} for $\mathbf{E}$, \textup{\cite[p. 287]{Routley:Meyer:Plumwood:Brady:1982}}\footnote{An earlier variant of $\mathbf{B^t}$, additionally featuring an axioms $p \lor\!\!\sim\!\!p$ was introduced in \cite[p. 415, p. 422]{Meyer:Routley:1972}. (Both of these systems differ substantially from the `basic theory of implication' that was introduced under the same symbol in \cite[VII.\S2]{Belnap:1959}, and was later studied also in \cite[\S{}VIII]{Dunn:1966}.)} for $\mathbf{B}$, \textup{\cite[pp. 355--356]{Brady:1984}} for $\mathbf{DJ}$, \textup{\cite[p. 76]{Routley:1977}} for $\mathbf{DK}$; \textup{\cite[p. 7]{Routley:Meyer:1976}}\footnote{In \cite[p. 7]{Routley:Meyer:1976} the axioms of $\mathbf{DL}$ contain also an axiom $p_{0} \land\!\!\sim\!\!p_{0}$ with a fixed propositional constant $p_0$, which was later dropped off in \cite[p. 290]{Routley:Meyer:Plumwood:Brady:1982} and \cite[pp. 355--356]{Brady:1984}. On the other hand, the axioms of $\mathbf{DL}$ in \cite[p. 290]{Routley:Meyer:Plumwood:Brady:1982} contain also $p \lor\!\!\sim\!\!p$, that is neither in \cite[p. 7]{Routley:Meyer:1976} nor in \cite[pp. 355--356]{Brady:1984}.} for $\mathbf{DL}$, \textup{\cite[p. 2]{Belnap:1967}} for $\mathbf{R}$. (The index $^\mathbf{t}$ indicates adding a fixed propositional constant $t$, and assuming the additional axioms \cite[p. 198]{Meyer:1966} $t$ and $t\triangleright(p\triangleright p)$, while the index $^\orlov$ indicates adding a binary connective $\orlov$, and assuming an additional rule \cite[Eqn. (6a)]{Orlov:1928}: $(p\orlov q)\triangleright r$ if{}f $p\triangleright(q\triangleright r)$, where $\triangleright$ denotes an implication. These additional structures, in the same way as the additional axioms that are introducing existential and universal quantifiers (cf., e.g., \cite[pp. 357--358]{Brady:1984}), do not influence the elementary properties of negation in the respective systems.) When restricted to a reduct $(K,\leq,\land,\lor,\sim)$ given by a lattice $(K,\leq,\land,\lor)$ equipped with a negation $\sim:K\ra K$, the sound and complete algebraic models of these logics exhibit the common structure of a De Morgan algebra, cf. \cite[\S{}\S{}VII.2--4]{Dunn:1966} for $\mathbf{FDE}$, \cite[\S{}\S{}X.3--X.4]{Dunn:1966} for $\mathbf{R^t}$, \cite[Thm. 5]{Meyer:Routley:1972}\footnote{Cf. \cite[Thms. 9.2.2, 9.3.1]{Sylvan:Meyer:Brady:Mortensen:Plumwood:2003} for more explicit proof.} for a range of systems between $\mathbf{B^{\orlov,t}}$ and $\mathbf{R^t}$, \cite[Thm. (p. 466)]{Maksimova:1973:implication} for $\mathbf{E^t}$, \cite[Thms. 1, 2, 4]{Meyer:Dunn:Leblanc:1974} for $\mathbf{RQ^t}$, and \cite[Thms. 14, 15, 17]{Font:Rodriguez:1990} for $\mathbf{R}$. By Proposition \ref{prop.subclop.demorgan.iff.L.minimal}, $(\Subclop(\Sigma^L),\leq,\land,\lor,{}^\coeva)$ (resp., $(\Subclop(\Sigma^L),\leq,\land,\lor,{}^\eva)$) is a De Morgan algebra if{f} it is a boolean algebra. However, a boolean algebra is not paraconsistent (i.e. $p\land\!\!\sim\!\!p=0$ $\forall p\in K$), while all of the above logical systems are explicitly paraconsistent.
\end{proof}

\section*{Acknowledgments}

BE: I thank Tobias Fritz and Tim Netzer for their support and for providing the means to continue my scientific pursuits in this uncertain time. This project was partially funded by the grant 10.55776/\-P36684 of the Austrian Science Fund FWF. RPK: I thank Aleksandr K. Guc for a major inspiration with \guillemotleft{Siberian toposes}\guillemotright{} \cite{Guc:1999} (which included also \cite{Akchurin:1985}), Cecilia Flori, Chris Isham, Marie La Palme Reyes, and Gonzalo Reyes for their hospitality and discussions, Micha{\l} Heller for hospitality, discussions, and an invitation to Krak\'{o}w that has triggered this research, and Mariusz Stopa for some discussions as well. This work was partially funded by 2021/42/A/ST2/00356 grant of Polish National Center of Science, IMPULZ IM-2023-79 and VEGA 2/0164/25 grants of Slovak Academy of Sciences, APVV-22-0570 grant of Slovak Research and Development Agency, and FellowQUTE 2024-02 program of Slovak National Center for Quantum Technologies. Our join\footnote{Strictly speaking, it is a meet, not a join.} thanks go to Guram Bezhanishvili, Shahn Majid, Agnieszka Nawara, Greg Restall, and Vladimir~L. Vasyukov for their help in obtaining copies of a few less accessible texts.

\section*{References}
\addcontentsline{toc}{section}{References}
\begin{spacing}{0.85}
{\small
All Cyrillic Russian and Bulgarian names and words were transliterated from original using the following system (which is bijective due to the lack of {\fontencoding{T2A}\selectfont ыа} and {\fontencoding{T2A}\selectfont ыу} combinations): {\fontencoding{T2A}\selectfont ц} = c, {\fontencoding{T2A}\selectfont ч} = ch, {\fontencoding{T2A}\selectfont х} = kh, {\fontencoding{T2A}\selectfont ж} = zh, {\fontencoding{T2A}\selectfont ш} = sh, {\fontencoding{T2A}\selectfont щ} = \v{s}, {\fontencoding{T2A}\selectfont и} = i, {\fontencoding{T2A}\selectfont й} = \u{\i}, i = \={\i}, {\fontencoding{T2A}\selectfont ы} = y, {\fontencoding{T2A}\selectfont ю} = yu, {\fontencoding{T2A}\selectfont я} = ya, {\fontencoding{T2A}\selectfont ё} = \"{e}, {\fontencoding{T2A}\selectfont э} = \`{e}, {\fontencoding{T2A}\selectfont ъ} = `, {\fontencoding{T2A}\selectfont ь} = ', and analogously for capitalised letters, with an exception of {\fontencoding{T2A}\selectfont Х} = H at the beginnings of words. The symbol * in front of a bibliographic item indicates that none of us have seen this work. Unless explicitly stated otherwise, all citations refer to the first editions of the corresponding texts in their original language. Chinese Mandarin names are (nonbijectively) romanised from the original using p\={\i}ny\={\i}n.}
\end{spacing}
{\small \begingroup
\raggedright
\renewcommand\refname{

\endgroup        
}

\end{document}